\documentclass[12pt, letterpaper]{amsart}
\usepackage{amssymb}
\usepackage{appendix}
\usepackage[usenames]{color}
\usepackage{latexsym}
\usepackage{tikz-cd}
\usepackage{datetime2}
\usepackage[all]{xy}
\usepackage{microtype}

\newtheorem*{theorem*}{Theorem}
\newtheorem*{lemma*}{Lemma}
\newtheorem{theorem}{Theorem}[section]
\newtheorem{definition*}{Definition}
\newtheorem{lemma}[theorem]{Lemma}
\newtheorem{claim}{Claim}[theorem]
\newtheorem*{claim*}{Claim}

\newtheorem{corollary}[theorem]{Corollary}

\begin{document}
\title{Maximal Prikry Sequences}
\author{Ernest~Schimmerling}
\address{Department of Mathematical Sciences, Carnegie Mellon University, 
Pittsburgh, PA 15213-3890, USA}
\email{eschimm@andrew.cmu.edu}

\author{Jiaming~Zhang}
\address{Department of Mathematical Sciences, Carnegie Mellon University, 
Pittsburgh, PA 15213-3890, USA}
\email{jiaming5@andrew.cmu.edu}
\thanks{}

\maketitle

Draft \today

\section{Introduction}

Suppose $M$ is a transitive class model of ZFC and
$M \models U$ is a normal measure over $\nu$.
A {\em Prikry sequence} for $U$ is an unbounded subset $C$ of $\nu$
with order type $\omega$ such that,
for every $A \in \mathcal{P}(\nu) \cap M$,
$A \in U$ iff $C \subseteq^* A$.
This is the Mathias condition
for $M[C]$ being an $M$-generic extension for the Prikry poset $\mathbb{P}_U$.
A fundamental fact about Prikry forcing is that there is no $B \in \mathcal{P}(\nu) \cap M$
such that $B \supseteq C$ and $|B|^M < \nu$.
A Prikry sequence $C$ for $U$ is a {\em maximal} iff
for every Prikry sequence $D$ for $U$, $D \subseteq^* C$.
Clearly,
if $C$ and $D$ are both maximal Prikry sequences for $U$,
then their symmetric difference is finite,
so $M[C] = M[D]$.
Another basic result is that
if $C$ is any Prikry sequence for $U$
and $V = M[C]$,
then $C$ is a maximal Prikry sequence for $U$.

Jensen, and then Dodd and Jensen,
proved a series of covering theorems,
which are the origin of the work presented in this paper.
Among their innumerable roles,
the Dodd-Jensen covering theorems are
companions to results about Prikry forcing.
The proof of the following can be found in
 \cite{DJ} and \cite{D}.

\begin{theorem*}[Dodd-Jensen]
Assume that $0^\dagger$ does not exist.
Let $\nu$ be a singular cardinal.
Suppose that $\nu$  is regular in $K$.
Then
1) there is a normal measure $U$ over $\nu$ in $K$,
2) there is a maximal Prikry sequence $C$ for $U$, and
3) for every $A \subseteq \nu$ such that $|A| < \nu$,
there exists $B \in K[C]$ such that $B \supseteq A$ and $|B|^{K[C]}  < \nu$.
\end{theorem*}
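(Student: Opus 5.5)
The plan is to run the Dodd--Jensen covering machinery for $K$ (the core model below $0^\dagger$), working with mice built from measures and their iterations. Fix a singular $\nu$ that is regular in $K$. First I will show $\nu$ is measurable in $K$. Take a cofinal $X\subseteq\nu$ with $|X|<\nu$ and cover it by an elementary substructure $Y\prec H_\theta$ with $|Y|<\nu$ and $\operatorname{cof}(Y\cap\nu)$ large enough. Let $\pi\colon N\cong Y$ be the transitive collapse. Then compare $K\cap N$ (more precisely, the mice $\pi^{-1}``$ of the levels of $K$) with $K$. The Dodd--Jensen covering analysis says that, since $0^\dagger$ does not exist, either $Y\cap K$ is covered by a set in $K$ of size $<\nu$, or the failure is witnessed by a measure. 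The first alternative is impossible: it would give a $K$-set of $K$-size $<\nu$ cofinal in $\nu$, contradicting regularity of $\nu$ in $K$. So the comparison must use a measure on $\nu$ in $K$. By the usual argument, the critical point of the relevant iteration maps to $\nu$. This yields a normal measure $U\in K$ on $\nu$, which is part 1. Without $0^\dagger$ there is a unique such measure, and $U$ is the order-zero measure on $\nu$ in $K$, so the choice is canonical.

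Second, I will build Prikry sequences. Apply the covering argument to many structures $Y$. Each time, the embedding $\pi\colon \bar K\to K$ comes from an iteration of a mouse with top measure $\bar U$. The images of the critical points of the iterates of $\bar U$, which are indiscernibles lying below $\nu$, form an $\omega$-sequence $C_Y$ cofinal in $\nu$. Verifying the Mathias condition comes next. For $A\in\mathcal P(\nu)\cap K$, the set $A$ is in the range of some iterate, so the indiscernibles eventually enter $A$ iff $A\in U$. This is the standard fact that critical points of a linear iteration form a Prikry sequence for the image measure, transferred through $\pi$. So Prikry sequences exist. For maximality I want one $C$ with $D\subseteq^* C$ for every Prikry sequence $D$. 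The plan is to define $C$ as the set of all $\alpha<\nu$ that are "$U$-indiscernibles": measurable-in-$K$ cardinals $\alpha$ such that the $K$-measure on $\alpha$ is obtained from $U$ by a short iteration / reflection in the mice below $\nu$. A cleaner definition is the set of critical points $\alpha<\nu$ with $K\cap\mathcal P(\alpha)$ agreeing with the appropriate iterate. I then show two things. First, any Prikry sequence $D$ satisfies $D\subseteq^* C$: otherwise infinitely many points of $D$ lie in a set $A\in K$ disjoint from $C$-type points, and such a set $A\in U$ would have to be avoided. Second, $C$ has order type $\omega$. For this I use the fact that $\nu$ is singular in $V$ and the covering analysis to show $C$ cannot have an $(\omega+1)$-st element without $\nu$ being regular in the relevant iterate, which again needs $0^\dagger$ absent.

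Third, for part 3, let $A\subseteq\nu$ with $|A|<\nu$. Run the covering argument relative to $K[C]$. Take $Y\supseteq A$ as before. The failure of $K$-covering of $Y$ is explained exactly by the iteration of measures, and below $0^\dagger$ the only measure whose iteration reaches $\nu$ is $U$. Its indiscernibles are, by maximality, almost contained in $C$. Hence $Y\cap\nu$ is contained in a set of the form $\{f(\vec c): f\in F, \vec c\in[C]^{<\omega}\}$ with $F\in K$ and $|F|^K<\nu$, modulo measures on cardinals below $\nu$, which are handled by the usual induction. Such a set lies in $K[C]$ and has size $<\nu$ there.

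The main obstacle is the maximality together with the order-type-$\omega$ claim. One must check that the indiscernibles produced by different covering structures $Y$ all cohere into a single $\omega$-sequence in $V$. I would attack this by comparing two such iterations via Dodd--Jensen's uniqueness of iterations below $0^\dagger$, so that any two critical-point sequences agree on a tail.
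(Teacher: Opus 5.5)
The genuine gap is the maximality of $C$ in part 2. Your candidate definition of $C$ cannot be right. Under $\neg 0^\dagger$, $K$ has at most one measurable cardinal: measurables $\kappa_1<\kappa_2$ of $K$ would yield an embedding of $L[U_1]$ into itself with critical point $\kappa_2$, i.e.\ $0^\dagger$. So there are no $K$-measurables below $\nu$ at all. More generally, a Prikry sequence for an order-zero measure is eventually contained in $\{\alpha<\nu\mid \alpha\text{ is not measurable in }K\}\in U$. The relevant points $\mu_\eta$ are measurable only in the iterates $\mathcal M^{\mathcal T}_\eta$ of the comparison, and the paper uses exactly that $\pi(\mu_\eta)$ is \emph{not} measurable in $W$ (Lemma~\ref{L3-3}).

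More seriously, your step ``otherwise infinitely many points of $D$ lie in a set $A\in K$ disjoint from $C$-type points'' presupposes a set in $U$ separating $D\setminus C$ from $C$. Producing such a set is the whole content of maximality. In the paper (Lemma~\ref{L3-4}, Corollary~\ref{C2-7}) one fixes a single good $X$, sets $C=\pi_X[\overline C_X]$, and reflects a putative counterexample $D$ into $X$ using $C\in X$. One then builds a club $A\in K$ from the $\mathcal S$-structure. In the set-mouse case this is $A=\{\alpha<\nu\mid \alpha=\nu\cap\mathrm{Hull}^{\mathcal S_X}_{n+1}(\alpha\cup q)\}$ with $\mathcal S_X\lhd W$; in the weasel cases it is the set of closure points of a function built from the short extender $G\in W$ of Lemmas~\ref{L2-6}--\ref{L2-7}. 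A tail of $C$ lies in $A$ because each $\mu_\eta$ is a closure point of the hull. A point $d\in\pi^{-1}[D]$ strictly between consecutive critical points $\mu_\eta<\mu_\xi$ is not, because $\mathrm{Hull}((\mu_\eta+1)\cup p)$ is cofinal in $\mu_\xi$; this uses that the tail of the iteration applies single normal measures (Lemma~\ref{L3-2}). Since clubs of $K$ belong to $U$, every Prikry sequence is almost contained in $A$, hence in $C$. Your fallback of comparing iterations so that the various $C_Y$ agree on a tail would at best give coherence among sequences produced by covering structures. It says nothing about an arbitrary Prikry sequence in $V$, and coherence of that kind enters the paper only to get $C_X\in X$ when $|\nu|$ is not countably closed.

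Other steps also need repair. For the Mathias condition, an arbitrary $A\in\mathcal P(\nu)\cap K$ is not in $\mathrm{ran}(\pi)$, so ``$A$ is in the range of some iterate'' is unavailable; the iteration only shows that $\overline C_X$ generates $U(\mathcal M_X,\mu_X,0)$ on the collapsed side. Transferring this to $K$ needs $\overline C_X\in N_X$, so that ``$\overline C_X$ generates a measure over $\overline W$'' lifts by elementarity (using $\mathcal P(\mu_X)\cap\overline W\subseteq\mathcal M_X$). After that, the phalanx-iterability argument shows the generated measure lies in $K$ and equals $U$. Securing $\overline C_X\in N_X$ is precisely why Section~3 assumes $|\nu|$ countably closed and Section~6 needs the internally approachable chain and Lemmas~\ref{L6-4}--\ref{L6-9}. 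The Dodd--Jensen hypotheses give no such closure (e.g.\ add $\nu^+$ Cohen reals).

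For order type $\omega$, the point to examine is the $\omega$-th element $\alpha$ of $C$, not $\nu$. This $\alpha$ is regular in $K$ and has countable cofinality, so part 1 in its form for singular ordinals (Theorem~\ref{T2-1}) would make $\alpha$ measurable in $K$, which is impossible.

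Finally, in part 3 the measures on cardinals below $\nu$ are not ``handled by the usual induction''. Under $\neg0^\dagger$ there are none, and that is what makes the tail a linear $\omega$-iteration by one measure (Claim~\ref{c2-7-1}). This gives $X\cap\nu\subseteq\nu\cap\mathrm{Hull}^{\mathcal S_X}_{n+1}(\pi(\mu_{\zeta_0})\cup C\cup q)$ in the set-mouse case. When $K$-measurables are cofinal in $\nu$, the covering conclusion can fail (Theorem~\ref{proposition}).
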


One could add to the conclusion that $K = L[U]$ where $U$ is the unique normal measure over $\nu$ in $K$
and $K[C] = L[U][C] = L[C]$.  But we have stated the theorem in a form that can be generalized under anti-large cardinal
assumptions weaker than the non-existence of $0^\dagger$.  Indeed, Mitchell-Schimmerling \cite{MS2} provides the following
extension to 
conclusion 1) of the Dodd-Jensen theorem.

\begin{theorem*}[Mitchell-Schimmerling]
Assume there is no transitive class model of ZFC with a Woodin cardinal.
Suppose that $\nu$ is a singular cardinal that is regular in $K$.
Then $\nu$ is a measurable cardinal in $K$.
Moreover,
if $\mathrm{cf} ( \nu ) > \omega$,
then $o^K(\nu) \ge \mathrm{cf} ( \nu )$.
\end{theorem*}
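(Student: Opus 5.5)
The plan is to combine the weak covering property of $K$ below a Woodin cardinal with an analysis of the embeddings that arise when $K$ is compared with elementary substructures of a large $H_\theta$. Suppose $\nu$ is singular in $V$ but regular in $K$.

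\emph{Step 1.} Invoke weak covering for the core model $K$, which is available under the assumption that there is no inner model with a Woodin cardinal. For $\kappa=\nu^{+K}$ this gives $\mathrm{cf}(\nu^{+K}) \ge |\nu|$, and in particular $\nu^{+K}=\nu^+$ whenever $\nu$ is a singular cardinal. Then take an elementary substructure $X \prec H_\theta$ with $\mathrm{cf}(\nu)\subseteq X$ and $|X|<\nu$, containing a cofinal sequence in $\nu$. Choose $X$ so that $\sup(X\cap\nu)=\nu$, and so that $X$ is closed enough for the covering argument, for instance internally approachable of length $\omega_1$ or countably closed as appropriate. Let $\pi:N\to H_\theta$ invert the transitive collapse, and let $\bar K=\pi^{-1}(K)$.

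\emph{Step 2.} Coiterate $\bar K$ against $K$. By the covering argument of Mitchell and Steel, the $\bar K$ side does not move, so $\bar K$ is an initial segment of an iterate of $K$. Let $\bar\nu=\pi^{-1}(\nu)$. Because $\bar\nu$ is regular in $\bar K$ while $\pi``\bar\nu$ is cofinal in $\nu$, the iteration of $K$ must use extenders with critical point $\le\bar\nu$, and these must account for the collapse of $\nu$ to $\bar\nu$. More precisely, $\bar K$ and $K$ agree below $\bar\nu$, and $\pi\restriction \bar K$ induces an extender $E$ with critical point $\bar\nu$ that lies on the $K$-sequence. The key point is that the discontinuity of $\pi$ at $\bar\nu^{+\bar K}$, which comes from $\mathrm{cf}(\nu^{+K})=\nu^+ > |X|$, forces the $K$-iteration to apply an extender with critical point exactly $\bar\nu$. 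Otherwise $\bar\nu^{+\bar K}=\bar\nu^{+K}$ and $\pi$ would be continuous there, contradicting weak covering. Lifting through $\pi$ by elementarity, or by a reflection argument over many $X$, then gives a total measure on $\nu$ in $K$. So $\nu$ is measurable in $K$.

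\emph{Step 3.} Suppose $\mathrm{cf}(\nu)=\lambda>\omega$. Take a continuous increasing chain $\langle X_\xi:\xi<\lambda\rangle$ of such substructures whose suprema $\nu_\xi=\sup(X_\xi\cap\nu)$ are cofinal in $\nu$. Each $X_\xi$ yields, by Step 2, a measure on $\bar\nu_\xi$ in $\bar K_\xi$, and these pull back to measures of $K$ on $\nu$. I would argue that the Mitchell orders of these measures strictly increase along a club of $\xi$. The measure obtained at stage $\xi$ should belong to the ultrapower by the measure obtained at a later stage $\eta$, because $X_\xi\in X_\eta$ and the extender from $X_\xi$ gets absorbed. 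This yields an increasing sequence of length $\lambda$ in the Mitchell order, so $o^K(\nu)\ge\lambda$.

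The main obstacle is Step 2. There one must show that the iteration really applies a \emph{total} measure of $K$ with critical point $\bar\nu$, rather than a partial extender or one that drops. This needs the full strength of the comparison and the analysis of the structure $K$ when there is no Woodin cardinal. I would attack it by taking the least measurable in the iteration and using countable closure of $X$ to rule out drops.
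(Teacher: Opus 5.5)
Your Step 2 gets the mechanism wrong, and several facts it relies on are false.

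\begin{itemize}
\item \emph{Agreement.} Since $|X|<\nu$ and $X\cap\nu$ is cofinal in $\nu$, we have $\delta=\mathrm{crit}(\pi)<\bar\nu$. So $\bar K$ and $K$ already disagree at $\delta$; the paper's Requirement~1 is precisely $(\delta^+)^{\overline W}<(\delta^+)^W$. What agrees with $\bar K$ below $\bar\nu$ is the iterate $\mathcal M_X=\mathcal M^{\mathcal T_X}_{\theta_X}$ of $K$, not $K$ itself.
\item \emph{The extender from $\pi$.} The extender derived from $\pi$ has critical point $\delta$, not $\bar\nu$, and it is not on the $K$-sequence.
\item \emph{Discontinuity.} The discontinuity of $\pi$ at $(\bar\nu^+)^{\bar K}$ holds for cardinality reasons alone. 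So ``otherwise $\bar\nu^{+\bar K}=\bar\nu^{+K}$ and $\pi$ would be continuous there'' is a non sequitur.
\item \emph{Source of measurability.} Most importantly, $\bar\nu=\mu_X$ is not made measurable by applying a measure with critical point $\bar\nu$. It is measurable in $\mathcal M_X$ because it is the common image $i^{\mathcal T}_{\eta,\theta}(\mu_\eta)$ of a club of earlier critical points.
\item \emph{Drops.} The branch to $\mathcal M_X$ may drop. If $\nu$ is the least measurable of $K$ it must drop, since a non-dropping map $W\to\mathcal M_X$ would carry ``no measurable below $\nu$'' to $\mathcal M_X$. So you cannot rule out drops, and the proof has to handle set mice and protomice.
\end{itemize}

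The heart of the proof is Lemma~\ref{L2-11}. It is obtained in three moves:
\begin{itemize}
\item lift $\pi$ to $\Pi:\mathcal Q_{\mu_X}\to\mathcal S_{\mu_X}$;
\item identify $\mathcal S_{\mu_X}$ with an initial segment of $W$, or, up to embeddings with critical point $\ge\Omega_0$, with an ultrapower of $W$ by an extender in $W$ (Lemmas~\ref{L2-5}--\ref{L2-7});
\item use the regularity of $\nu$ in $K$ to show that no fine-structural hull of $\alpha\cup q$ with $\alpha<\mu_X$ is cofinal in $\mu_X$.
\end{itemize}

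Even after that, the measure lives in $\mathcal M_X$, not in $\bar K$. ``Lifting by elementarity'' would need a cofinal piece of the critical sequence inside $X$, i.e.\ ${}^{\mathrm{cf}(\nu)}X\subseteq X$, which is not available in general. The paper instead does the following:
\begin{itemize}
\item forms $U=\bigcup_{X\in\mathcal H}U(\mathcal S_X,\nu,0)$ over a stationary $\mathcal H$, using coherence and amenability;
\item proves iterability of the phalanx $((W,\mathrm{Ult}(W,U)),\nu)$ and compares it with $W$ to get $U\in W$;
\item invokes Schlutzenberg's theorem to put $U$ on the sequence.
\end{itemize}
None of this appears in your sketch.

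Step~3 fails as well. A chain with $\sup(X_\xi\cap\nu)<\nu$ falls outside the setting of Step~2. There $\pi_\xi$ maps $\bar\nu_\xi$ boundedly into $\nu$, and the regularity of $\nu$ in $K$ gives no leverage. For substructures that do satisfy the hypotheses, the measures obtained are not Mitchell-increasing. For $X\subseteq Y$ in $\mathcal H$ one has $U(\mathcal S_X,\nu,0)\subseteq U(\mathcal S_Y,\nu,0)$, so different substructures yield restrictions of one and the same measure. That coherence is exactly what makes the union $U$ an ultrafilter. ``The extender from $X_\xi$ gets absorbed'' gives no reason for one measure to belong to an ultrapower by another.

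The bound $o^K(\nu)\ge\mathrm{cf}(\nu)$ has to be extracted from a single critical sequence $\overline C_X$. When $\mathrm{cf}(\nu)>\omega$ this sequence is club in $\mu_X$, and its limit points carry the higher-order information. Compare how Lemmas~\ref{L3-2} and~\ref{L6-2} separate order-$0$ from order-$1$ measures via the set of measurables. The resulting higher-order measures on $\mu_X$ are then transferred to $K$ by the same machinery as the order-zero measure. As the proof of Lemma~\ref{L6-2} remarks, that machinery does not depend on the Mitchell order.
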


The last  inequality means that the Mitchell order on normal measures over $\nu$ in $K$
is at least the cofinality of $\nu$ in $V$.
Underpinning the theorem above is the result of Jensen and Steel \cite{JS}
that  the non-existence of an inner model with a Woodin cardinal
implies that $K$ exists.
Our main results in this paper rely heavily on the techniques used to prove the Dodd-Jensen and Mitchell-Schimmerling theorems stated above.
The first is a generalization of conclusions 2) and 3) of the Dodd-Jensen theorem.

\begin{theorem}\label{T1-1}
Assume there is no inner model with a Woodin cardinal.
Let $\nu$ be a singular cardinal.
Suppose that there is a unique normal measure $U$ over $\nu$ in $K$.
Then there  is a maximal Prikry sequence $C$ for $U$.
Assume further that the measurable cardinals of $K$ are bounded in $\nu$.
Then, for every $A \subseteq \nu$ with $|A| < \nu$,
there exists $B \in K[C]$
such that $B \supseteq A$ and $|B|^{K[C]} < \nu$.
\end{theorem}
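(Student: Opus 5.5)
The plan is to build $C$ from the iterations of $K$ that arise in the covering argument, following the Dodd--Jensen and Mitchell--Schimmerling proofs.

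\emph{Setup.} Fix a large regular $\theta$ and consider elementary substructures $X \prec H_\theta$ with $|X| < \nu$, $\nu \in X$, and $X$ containing $\mathrm{cf}(\nu)$ as a subset. Let $\pi \colon N \cong X$ be the transitive collapse and $\bar K = \pi^{-1}(K)$. By the comparison and covering machinery of Jensen--Steel $K$ (used as in \cite{MS2}), coiterate $\bar K$ against $K$. The standard argument shows that $\bar K$ does not move and that the $K$-side iteration $i\colon K \to K^*$ lifts $\pi$ in a suitable sense, with $K^*$ an iterate of $K$. Because $\nu$ is singular but regular in $K$, the $K$-side must apply measures with critical points at or above $\bar\nu = \pi^{-1}(\nu)$. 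Since $U$ is the unique normal measure on $\nu$ in $K$, the relevant total measures on the images of $\bar\nu$ are images of $U$. The critical points $\kappa_n$ of the resulting linear iteration by $U$ have order type $\omega$ and are cofinal in $\nu$. I expect to show that $\{\kappa_n\}$ is a Prikry sequence via the Mathias criterion: for $A \in \mathcal P(\nu)\cap K$, $A \in U$ iff almost all $\kappa_n$ lie in $A$. This uses the fact that each $\kappa_n$ is a critical point of an iterate of $U$ and that $A$ is moved by only finitely many earlier stages. This is essentially the argument of \cite{DJ}, run with Steel's $K$.

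\emph{Maximality.} Let $C$ be the union over all such $X$, or more precisely the set of all ordinals $\kappa<\nu$ that occur as critical points of iterates of $U$ in these covering iterations. I need to show that this set is still of order type $\omega$ (mod finite) and that every Prikry sequence $D$ satisfies $D \subseteq^* C$. For the latter, take a Prikry sequence $D$ and choose $X$ with $D \in X$. By the Mathias property, $D$ is eventually contained in every set in $U$, and hence eventually in the sets $i_{0,n}(B)$ for the images of $U$ along the iteration. Using the fact that $K$ does not know $D$, a standard argument shows that each element of $D$ beyond some point must be a critical point of the covering iteration; otherwise some set in $U$ avoiding it would be definable in $K$. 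For the former, I would show that any two covering iterations agree on their $U$-critical points above some point below $\nu$, by coiterating with a common bigger $X$. This coherence, namely that the $U$-critical points from different $X$ are finitely different, is the first delicate point.

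\emph{Covering.} Now assume the measurables of $K$ are bounded below $\nu$, say by $\mu<\nu$. Given $A$ with $|A|<\nu$, pick $X\supseteq A$ as above with $|X|\geq\mu$. Above $\mu$, the only measures applied on the $K$-side are the $U$-images, so the iteration map $i$ restricted to $K$ above $\mu$ is a finite or $\omega$-length $U$-iteration, definable in $K[C]$ from the finite data $C$ and from $\bar K$ of size $<\nu$. Short extenders below $\mu$ form an iteration whose index set has size $\le|X|$, and these can be coded in $K$ by the usual Jensen bounding argument, because the iteration below $\mu$ is set-sized in $K$. Then $B=\operatorname{ran}(i\circ\pi^{-1}\text{-like map})\cap\nu$ covers $A$, lies in $K[C]$, and has size $\le|X|\cdot\omega<\nu$ in $K[C]$.

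I expect the main obstacle to be the maximality and coherence step: showing that the $U$-critical points produced by arbitrary covering iterations cohere into a single $\omega$-sequence that absorbs every Prikry sequence. My first attempt would be to use the uniqueness of $U$ together with the Dodd--Jensen lemma, namely minimality of iterations, to compare two such iterations.
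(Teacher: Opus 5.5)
Your outline has the right overall shape, but the steps that carry the weight are either misdescribed or missing.

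\textbf{Where the sequence comes from.} It does not come from iterating $U$, nor from measures at or above $\bar\nu$. It consists of the critical points $\mu_\eta<\bar\nu$ on the main branch of the $K$-side tree $\mathcal T$ with $i^{\mathcal T}_{\eta,\theta}(\mu_\eta)=\bar\nu$ (Lemma~\ref{L2-11}). The extenders applied at these points are extenders from models of $\mathcal T$ with critical point $\mu_\eta$, and a priori they need not be order-zero measures.

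\textbf{Uniqueness of $U$ and order type.} Uniqueness of $U$ enters only through a real argument (Lemmas~\ref{L3-2} and~\ref{L6-2}). If other extenders were used cofinally, the critical sequence would generate both $U(\mathcal M_X,\bar\nu,0)$ and $U(\mathcal M_X,\bar\nu,1)$. Pushing these through $\pi$, and using phalanx iterability to put the resulting measures into $K$, gives two normal measures on $\nu$ in $K$. Order type $\omega$ also needs proof: Lemma~\ref{L3-3} applies Theorem~\ref{T2-1} to $\pi$ of the $\omega$-th critical point.

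\textbf{Transfer to $K$.} This is the most important omission. The Mathias criterion you sketch holds for $\overline C$ over $\mathcal M_X$, i.e.\ on the collapsed side. To conclude that $C=\pi[\overline C]$ is Prikry for $U$ over $K$ you need $\overline C\in N_X$, i.e.\ $C\in X$. When $|\nu|$ is countably closed this is free. In general it is the whole content of Section~6: the $\omega_1$-chain $\langle Y_i\mid i<\omega_1\rangle$, Corollary~\ref{C6-7} ($\mathcal P(\mu_X)\cap\mathcal M_X=\mathcal P(\mu_X)\cap W_X$), and Lemma~\ref{L6-9} ($C_X=^*C_{Y_j}$ with $Y_j\in X$). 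Your plan of a union over all $X$ followed by a coherence proof runs straight into this issue and leaves it open. The paper sidesteps coherence by fixing one good $X$ with $C_X\in X$, so that any Prikry sequence almost disjoint from $C_X$ may be assumed to lie in $X$ by elementarity.

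\textbf{The missing ingredient for maximality and covering.} Both rest on the lifted structure $\mathcal S_X$, the ultrapower of $\mathcal Q_X$ by the length-$\nu$ extender derived from $\pi$, lying in $K$. In the set-mouse case $\mathcal S_X\lhd W$ by Lemma~\ref{L2-5}. In the weasel case it agrees below $\Omega_0$ with $\mathrm{Ult}(W,G)$ for a short extender $G\in W$.

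For maximality you need a single set in $U$ that omits all but finitely many points of $D\setminus C$. Noting that each such point is avoided by some set in $U$ is trivial and does not give $D\subseteq^* C$. The paper's set is the club $A=\{\alpha<\nu\mid \alpha=\nu\cap\mathrm{Hull}^{\mathcal S_X}_{n+1}(\alpha\cup\Pi(p))\}\in K$, or the closure points of a function defined from the ultrapower map by $G$. Soundness of the iterates puts each $\pi(\mu_\eta)$ into $A$. If $d$ lies strictly between consecutive critical points $\mu_\eta<\mu_\xi$, the hull of $(\mu_\eta+1)\cup p$ is already cofinal in $\mu_\xi$, so $\pi(d)\notin A$ (Lemma~\ref{L3-4}).

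For covering, your $B$ is defined from $\bar K$, $\pi$ and implicitly $\mathcal T$, none of which is in $K[C]$, so no bounding argument can place it there. The paper instead uses boundedness of the measurables to show that the tail of $\mathcal T$ is a linear iteration with no extenders between consecutive critical points (Claim~\ref{c2-7-1}). Hence $\mathcal M_X=\mathrm{Hull}^{\mathcal M_X}_{n+1}(\mu_{\zeta_0}\cup\overline C_0\cup p)$. It then takes $B=\nu\cap\mathrm{Hull}^{\mathcal S_X}_{n+1}(\pi(\mu_{\zeta_0})\cup C_0\cup\Pi(p))$. This $B$ contains $X\cap\nu$ and lies in $K[C_0]$ precisely because $\mathcal S_X\in K$. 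The part below $\pi(\mu_{\zeta_0})$ is handled simply by putting all those ordinals into the hull.
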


For simplicity, in this introduction,
we are stating results for a singular cardinal, $\nu$.
In the actual statements of our theorems,
found later in the paper,
this will be relaxed to $\nu > \omega_2$ and $\nu$ is a singular ordinal.

Here are two ways in which Theorem~\ref{T1-1}  is optimal that are also presented in this paper.
The first author showed that if $\nu < \delta$ where $\nu$ is measurable and $\delta$ is Woodin,
and $U$ is any normal measure over $\nu$,
then there is a generic extension in which $U$ has a Prikry sequence but no maximal Prikry sequence.
The first author and Tom Benhamou showed  that if $\nu$ is a measurable cardinal and a limit of measurable cardinals,
then, for some normal measure $U$ over $\nu$, there is a generic extension $V[C][A]$ in which $\nu$ is a singular cardinal,
$C$ is a maximal Prikry sequence for $U$ and $A$
is a countable subset of $\nu$ but there is no $B \in V[C]$ such that $B \supseteq A$ and $|B|^{V[C]} < \nu$.

The hypotheses of Theorem~\ref{T1-1} say that $o^K(\nu) = 1$ and $U$ is the order zero measure over $\nu$ in $K$.
We will write $U = U(K,\nu,\lambda)$ for the order $\lambda$ normal measure over $\nu$ in $K$ when $\lambda < o^K(\nu)$.
It is natural to ask about versions of Theorem~\ref{T1-1} for $U(K,\nu,\lambda)$ when $\lambda > 0$.
Mitchell answered this question under a much stronger anti-large cardinal assumption and an additional cardinal arithmetic assumption
in  \cite{M},
which we will discuss later in this introduction.
For now,
we only need the following notion,
which comes from Mitchell's paper.

\begin{definition*}
Suppose that $ \lambda < o^K( \nu )$ and $C$ is an unbounded subset of $\nu$.
Then $C$  is a {\sl generating sequence} for $U(K,\nu,\lambda)$  iff there
exists a function $g : C \to \nu$ such that $g(\alpha) \le o^K ( \alpha )$ for every $\alpha \in C$ and,
for every $A \in \mathcal{P}(\nu) \cap K$,
$A \in U(K,\nu,\lambda)$ iff for every sufficiently large $\alpha \in C$,
\begin{itemize}
\item either $g ( \alpha ) = o^K ( \alpha )$ and $\alpha \in A$,
\item or $g ( \alpha ) < o^K ( \alpha )$ and $A \cap \alpha \in U( K , \alpha , g(\alpha ))$.
\end{itemize}
\end{definition*}

For example, if $C$ is a Prikry sequence for $U(K,\nu,\lambda)$,
then $C$ is a generating sequence for $U(K,\nu,\lambda)$
as witnessed by the function $g( \alpha ) = o^K ( \alpha )$.
The existence of a generating sequence can be viewed as an intermediate step in the proof of Dodd-Jensen theorem
that we stated earlier.  The proof of the Mitchell-Schimmerling theorem shows that if $\nu$ is a singular cardinal and $( 2^{\mathrm{cf}( \nu ) } )^+ <  \nu $,
then $U(K,\nu,0)$ exists and has a generating sequence;
without this extra cardinal arithmetic assumption,
it only gives partial  information.
Building on  the proof of Theorem~\ref{T1-1} and proofs of lemmas in Mitchell \cite{M},
the second author obtained the following result.

\begin{theorem}[Zhang]\label{T1-2}
Assume there is no inner model with a Woodin cardinal.
Let $\nu$ be a singular cardinal such that $\alpha^{\mathrm{cf} ( \nu )} < \nu$
for every cardinal $\alpha < \nu$.
Suppose that 
$\lambda < o^K(\nu)$
and $U(K,\nu,\lambda)$
has a generating sequence
but
$U( K,\nu,\xi)$ does not have a generating sequence whenever $\lambda < \xi < o^K( \nu )$.
Then there is a maximal Prikry sequence for $U(K,\nu,\lambda)$.
\end{theorem}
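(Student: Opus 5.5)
The plan is to build the maximal Prikry sequence directly from the covering machinery, as in the proof of Theorem~\ref{T1-1}. The candidate is the set of all ordinals that could be Prikry points for $U = U(K,\nu,\lambda)$:
\[
C = \{\alpha < \nu : \alpha \in \operatorname{ran}(\pi) \text{ and } \alpha \text{ is a critical point arising in the comparison of } K \text{ with a suitable } \bar K \}.
\]
Concretely, we take elementary embeddings $\pi : \bar K \to K_\theta$ with $\nu \in \operatorname{ran}(\pi)$. Using $\alpha^{\mathrm{cf}(\nu)} < \nu$, such hulls can be chosen to be closed under $\mathrm{cf}(\nu)$-sequences and of size $<\nu$. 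We then compare $\bar K$ with $K$. By the covering lemma for $K$ (Jensen--Steel, Mitchell--Schimmerling) the comparison is linear on the $\bar K$ side. On the $K$ side it is an iteration that uses measures on a cofinal set of critical points $\alpha < \nu$, together with orders $g(\alpha) \le o^K(\alpha)$. The points $\alpha$ at which the measure used has order $g(\alpha) = o^K(\alpha)$ in the relevant sense, that is, indiscernible-type points, are the Prikry candidates. First I will show, following Mitchell's lemmas in \cite{M}, that the set of critical points together with $g$ is a generating sequence for $U(K,\nu,\xi)$ for some $\xi < o^K(\nu)$. Here $\xi$ is determined by which measure on $\nu$ is pulled back.

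Second, I will use the hypothesis that no $U(K,\nu,\xi)$ with $\xi > \lambda$ has a generating sequence, together with the assumption that $U(K,\nu,\lambda)$ has one, to argue two things. The generating sequences arising from sufficiently closed hulls all generate $U(K,\nu,\lambda)$. Moreover, for any Prikry sequence $D$ for $U(K,\nu,\lambda)$, we can choose the hull to contain $D$, since $D$ is countable and closure under $\mathrm{cf}(\nu)$-sequences gives this. Once $D \subseteq \operatorname{ran}(\pi)$, each sufficiently large $\beta \in D$ must appear as a critical point in the comparison with $g(\beta) = o^K(\beta)$. If $\beta$ did not appear there, a standard argument would produce $A \in U$ in $K$ that misses $\beta$: either $A$ avoids points outside the hull's critical set, or $A$ is the set of ordinals of Mitchell order less than $\lambda$-type, defined from the comparison. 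Both options contradict $D \subseteq^* A$.

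Third, I will define $C$ canonically, independently of the hull, as the set of $\alpha < \nu$ such that $\alpha$ is a Prikry-type critical point (order $o^K(\alpha)$) of the comparison for every sufficiently large closed hull. Equivalently, $C$ is the $\subseteq^*$-eventual intersection along a chain of hulls of length $\mathrm{cf}(\nu)^+$. The cardinal arithmetic assumption keeps these hulls small and stabilizes the candidate set modulo finite. I then check that $C$ has order type $\omega$ and is a Prikry sequence. The Mathias condition follows because every $A \in U$ is in the hull, and its preimage is in the measure used on the $\bar K$ side, so all but finitely many critical points land in $A$. By step two, every Prikry sequence $D$ satisfies $D \subseteq^* C$.

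The main obstacle is the stabilization in step three. Different hulls could in principle yield different, even $\subseteq^*$-incomparable, sets of Prikry points. This could happen in particular when $\mathrm{cf}(\nu) > \omega$ and the generating sequence mixes points with $g(\alpha) < o^K(\alpha)$. This is exactly where the maximality of $\lambda$ has to be used. My first attempt would be as follows: if two hulls gave Prikry-point sets with infinite difference, then amalgamating the iterations would give a generating sequence in which those extra points carry measures of order $\ge \lambda$. Per Mitchell's analysis, this would generate some $U(K,\nu,\xi)$ with $\xi > \lambda$, contradicting the hypothesis.
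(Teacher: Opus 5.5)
There is a genuine gap. Your outline has the right ingredients: a $\mathrm{cf}(\nu)$-closed hull, the critical points of the comparison, and the hypothesis on $\lambda$. But the steps that carry the proof are missing, and the obstacle you single out is not where the difficulty lies.

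Most seriously, ``I then check that $C$ has order type $\omega$'' is unsupported. Nothing in your plan stops the Prikry-type points from accumulating. Nothing shows $\mathrm{cf}(\nu)=\omega$ either, and the existence claim requires it. Your remark that $D$ is countable presupposes it, and your worry about $\mathrm{cf}(\nu)>\omega$ shows it was missed: in that case there is no Prikry sequence at all.

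In the paper this is Lemma~\ref{L7-18}, a second essential use of the hypothesis on $\lambda$. Suppose the relevant points had cofinally many limit points. At such a limit point one reruns the covering argument with $\pi(\mu_\eta)$ in place of $\nu$, and sees that $U(\mathcal M_X,\mu_\eta,\overline\lambda_\eta)$ already exists. So the extender applied there is longer than that measure, and a cofinal $\mathrm{cf}(\nu)$-sequence of such limit points generates $U(W,\nu,\lambda+1)$, a contradiction. Then $\mathrm{cf}(\nu)=\omega$ falls out.

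Your candidate set is also wrong when $\lambda>0$. ``Points where the applied measure has order $o^K(\alpha)$'' can include points where, say, the order-$0$ measure was applied. Such points are not measurable in $K$, so they lie outside $\{\alpha<\nu:\alpha\text{ measurable in }K\}\in U(K,\nu,\lambda)$. The right set consists of the $\pi(\mu_\eta)$ with $\eta+1=\min(\eta,\theta)_{\mathcal T}$ and $E^{\mathcal T}_\eta$ equivalent to $U(\mathcal M^{\mathcal T}_\eta,\mu_\eta,\overline\lambda_\eta)$, where $i^{\mathcal T}_{\eta,\theta}(\overline\lambda_\eta)=\overline\lambda$.

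Your step two is likewise only gestured at. A set $A$ that misses one $\beta$ at a time contradicts nothing; you need a single $A\in U$ missing infinitely many points of $D$. The paper splits this into three arguments.
\begin{itemize}
\item[(i)] $D\subseteq^* C$. Lemma~\ref{L3-4} builds a club in $W$ from the $\mathcal S$-structure: the closure points of $\mathrm{Hull}^{\mathcal S_X}_{n+1}(\alpha\cup q)$, or, in the weasel cases, of $\alpha\mapsto\sup j(f)(b)$ for the extender $G$ of Lemma~\ref{L2-6} or~\ref{L2-7}. This club contains almost all of $C$ and no point of $(X\cap\nu)\setminus C$ beyond some bound.
\item[(ii)] A lower bound on the order applied along $D$ (Lemma~\ref{L7-6}). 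Weak covering yields in $W$ pairwise disjoint $x_\alpha\in U(W,\nu,\alpha)$, for $\alpha$ in a set of $W$-size $\nu$ containing $\lambda$ and the images of the orders actually applied. So the single set $x_\lambda$ lies in none of the measures used along $D$.
\item[(iii)] An upper bound (Lemma~\ref{L7-7}). This is where maximality of $\lambda$ is used: an overshoot would make $D$ generate $U(K,\nu,\lambda+1)$ via $g(\pi(\mu_\eta))=\pi(\overline\lambda_\eta)+1$.
\end{itemize}
Steps (ii) and (iii) rely on Lemma~\ref{L7-3}. It is $W$, not $\bar K$, that moves, and it moves along an iteration tree whose extenders may have generators above $\mu_\eta$, not along a linear iteration by measures.

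By contrast, no stabilization across hulls is needed. One fixes a single $X$ as in Lemma~\ref{L7-2}. Since the resulting sequence lies in $X$, any counterexample to maximality reflects into $X$ by elementarity. What that $X$ must satisfy, and your proposal omits, is $\mathcal P(\mu_X)\cap\mathcal M_X=\mathcal P(\mu_X)\cap W_X$. That agreement is what lets a generating statement in $N_X$ transfer by $\pi$ to all of $\mathcal P(\nu)\cap W$ (Corollary~\ref{C7-5}). Your claim that every $A\in U$ lies in the hull is false, since $|X|<\nu$.
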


We have not succeeded in eliminating the closure assumption on $\nu$ but, together, we showed  the following.
\footnote{Not yet added in this version.}

\begin{theorem}\label{T1-3}
Assume there is no inner model with a Woodin cardinal.
Let $\nu$ be a singular cardinal.
Suppose that 
$\lambda < o^K(\nu) < \nu^+$
and $U(K,\nu,\lambda)$
has a generating sequence
but
$U( K,\nu,\xi)$ does not have a generating sequence whenever $\lambda < \xi < o^K( \nu )$.
Then there is a maximal Prikry sequence for $U(K,\nu,\lambda)$.
\end{theorem}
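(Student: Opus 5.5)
The plan is to run the argument for Theorem~\ref{T1-2} with the closure hypothesis $\alpha^{\mathrm{cf}(\nu)}<\nu$ removed. In its place we use $o^K(\nu)<\nu^+$, which makes the sequence of normal measures on $\nu$ in $K$ small enough that it can be coded by objects we control. The candidate maximal Prikry sequence is defined canonically. Fix a generating sequence $C_0$ with witness $g_0$ for $U=U(K,\nu,\lambda)$. For a sufficiently large $\theta$, take an elementary substructure $X\prec H_\theta$ with $|X|<\nu$ containing the relevant objects, and let $\pi:N\cong X$ be its transitive collapse. Let $\bar K=\pi^{-1}(K)$ and compare $\bar K$ (or its $K^c$-style core) with $K$. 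Since there is no inner model with a Woodin cardinal, $K$ exists and is rigid, so the comparison of $\bar K$ with $K$ is by an iteration of $K$ alone. Let $C$ consist of the critical points $\alpha$ of the measures used on the iteration branch with $\alpha\to\nu$ whose measure has order exactly $\lambda$ in the relevant image of $o^K$. We then show that this $C$ is a Prikry sequence for $U$ and that $D\subseteq^* C$ for every Prikry sequence $D$ for $U$.

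The first step is to show that $C$ is a Prikry sequence, not just a generating sequence. The maximality hypothesis gives this. Suppose cofinally many indices in the iteration used measures $U(K,\alpha,\xi_\alpha)$ with $\xi_\alpha$ mapping to some $\xi>\lambda$. Then, exactly as in the Mitchell-Schimmerling analysis, these would yield a generating sequence for $U(K,\nu,\xi)$, contradicting the hypothesis. So we may take $g(\alpha)=o^K(\alpha)$ on a tail. In Theorem~\ref{T1-2} this pigeonhole over the orders $\xi$ used $\alpha^{\mathrm{cf}(\nu)}<\nu$ to find a $\xi$ realized cofinally and to cover $\mathrm{cf}(\nu)$-sequences of measures. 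Here we instead fix in $K$ an enumeration $\langle \xi_i:i<\nu\rangle$ of $o^K(\nu)$, using $o^K(\nu)<\nu^+$. For each $\alpha$ in the sequence we reflect this enumeration down. Then the order of the measure used at $\alpha$ is determined by an index $i<\alpha$, and a Fodor-style stabilization along a cofinal $\omega$-sequence picks a tail on which the order is constant modulo the coding. This replaces the cardinal arithmetic.

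The second step is maximality. Let $D$ be any Prikry sequence for $U$. For all but finitely many $\delta\in D$, we show $\delta\in C$. Put $D$ into $X$ and consider the ultrapower of $K$ by the measures indexed by $D$. The Mathias condition shows that this iteration of $K$ agrees with the one that produces $C$ on a tail: the branch through $\delta$ must apply a measure of order exactly $\lambda$ at $\delta$. The key tool is the Dodd-Jensen/Mitchell lemma that for every $A\in K$, the set of $\delta$ with $A\cap\delta\in U(K,\delta,\lambda)$ for $A\in U$ is read off from the iteration. Finiteness of $D\setminus C$ follows by an argument like Prikry's: if infinitely many $\delta\in D$ were missing from $C$, they would form a Prikry sequence whose range avoids a set $B\in K$ of measure one, defined from $\bar K$ and $\pi$. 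For this we need $B\in K$ and not just $B\in V$, and that is where rigidity and the coding of $o^K(\nu)$ enter.

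I expect the main obstacle to be the stabilization step in the first part. Without $\alpha^{\mathrm{cf}(\nu)}<\nu$ we cannot cover all $\mathrm{cf}(\nu)$-sequences of orders. The enumeration of length $\nu$ given by $o^K(\nu)<\nu^+$ has to be shown coherent along the sequence: its reflections to $\alpha\in C$ must be the corresponding enumerations of $o^K(\alpha)$ on a tail. I would try to obtain this coherence from the fine-structural condensation of the initial segment of $K$ that defines the enumeration, using that it is $\nu$-sound and projects to $\nu$.
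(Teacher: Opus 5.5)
The paper does not actually contain a proof of this theorem; the footnote says it is not yet added. The relevant comparison is therefore with Section~5 (closure removed, $o^K(\nu)=1$) and Section~6 ($o^K(\nu)>1$, with closure). Measured against these, your plan does not confront the real obstacle.

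\textbf{What closure actually does.} Closure is not used to pigeonhole orders. It is used to put the $\mathrm{cf}(\nu)$-sequences and functions built from $\mathcal T_X$ into $X$, so that elementarity of $\pi_X$ transfers ``generates a measure over $W_X$'' to ``generates a measure over $W$''. The following all rest on this:
\begin{itemize}
\item Corollary~\ref{C7-5};
\item Lemmas~\ref{L7-6}, \ref{L7-7} and~\ref{L7-18};
\item the agreement $\mathcal P(\mu_X)\cap\mathcal M_X=\mathcal P(\mu_X)\cap W_X$ of Lemma~\ref{L7-2}.
\end{itemize}

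\textbf{Maximality is circular as written.} Maximality (Corollary~\ref{C2-7}) needs $C_X\in X$, so that a counterexample $D$ can be reflected into $X$. Your step ``put $D$ into $X$'' does not achieve this. With $X$ fixed you cannot do it for arbitrary $D$. If you choose $X$ after $D$, then $C=C_X$ changes, and you would have to show that $C_X$ is independent of $X$ modulo finite sets, which is the whole difficulty. Section~5 handles this as follows:
\begin{itemize}
\item $X$ is built as the union of an internally approachable chain $\langle Y_i\mid i<\omega_1\rangle$;
\item each $\mathcal S_{Y_iX}$ is compared with $\mathcal M_X$ (Lemmas~\ref{L2-9}, \ref{L2-10}, \ref{L6-4}, \ref{L6-6});
\item this yields Corollary~\ref{C6-7} and $C_X=^*C_{Y_j}$, hence $C_X\in X$ (Lemma~\ref{L6-9}).
\end{itemize}
Nothing in your proposal plays this role. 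For uncountable $\mathrm{cf}(\nu)$ it would have to be redone with chains of length $\mathrm{cf}(\nu)^+$.

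\textbf{Two justifications are wrong.} First, rigidity of $K$ is not why $\bar K$ does not move in the comparison. That is Requirement~1, and without closure it is available only for the $X$ of Theorem~\ref{T2-3}. Second, the measure-one set $B\in K$ does not come from rigidity or a coding of $o^K(\nu)$. It comes from $\mathcal S_X\lhd W$ or $G\in W$ (Lemmas~\ref{L2-5}--\ref{L2-7}), as in Lemma~\ref{L3-4}.

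\textbf{The step you flag is unnecessary and false.} A regressive function on a cofinal $\omega$-sequence need not be constant on any infinite set; take $\alpha_n\mapsto\alpha_{n-1}$. You also do not yet know $\mathrm{cf}(\nu)=\omega$ at that point; in Section~6 it is a conclusion of Lemma~\ref{L7-18}. No stabilization is needed. If the orders used along $\mathbf d$ exceed $\bar\lambda_\eta$ cofinally, then $g(\pi(\mu_\eta))=\pi(\bar\lambda_\eta)+1$ makes $\mathbf d$ a generating sequence for $U(W,\nu,\lambda+1)$ (Lemma~\ref{L7-7}).

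\textbf{Step one omits two cases.} You do not treat orders below $\bar\lambda_\eta$ (Lemma~\ref{L7-6}). You also do not show that the limit points of $C$ are bounded (Lemma~\ref{L7-18}). Without that, ``$g=o^K$ on a tail'' does not give order type $\omega$.

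\textbf{Where your enumeration idea can help.} It is better aimed at removing closure from these particular steps. Since $|o^K(\nu)|^K\le\nu$, a single pairwise disjoint family $\langle x_\xi\mid\xi<o^K(\nu)\rangle\in K$ with $x_\xi\in U(K,\nu,\xi)$ lies in $X$. That replaces the closure used in Claim~\ref{c7-6-1}. A canonical enumeration of $o^K(\nu)$ in type $\nu$, with a fixed index naming $\lambda$, can make the order function definable from the sequence and parameters in $X$. But this only puts $g$ and the relevant subsequences into $X$ once the sequence itself is there. Getting the sequence into $X$ is the missing ingredient.
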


One should recall here that
$(\nu^+)^K = \nu^+$ if $K$ exists and $\nu$ is a singular cardinal by Mitchell-Schimmerling \cite{MS1}.

We conclude this introduction with a description of Mitchell's definable singularities theorem in \cite{M},
which also motivates our work.
Mitchell makes the anti-large cardinal assumption that there is no transitive class model of ZFC
that has a measurable cardinal $\kappa$ with $o(\kappa) = \kappa^{++}$.
Let $\nu$ be a singular cardinal.
Mitchell makes  the cardinal arithmetic assumption that $\alpha^{\mathrm{cf}(\nu)} < \nu$ for every $\alpha < \nu$.
Under these assumptions, Mitchell shows, that there is an appropriately definable witness that $\nu$ is singular.
In the trivial case, $\nu$ is singular in $K$,
so there is  an ordinal definable witness.
Observe that, if $U$ is a normal measure over $\nu$ in $K$
and $C$ is a maximal Prikry sequence for $U$,
then $C$ is a witness that $\nu$ is singular and 
$C$ is definable in the sense that it is
unique modulo  finite initial segments.
Under his assumptions,
Mitchell isolates five cases and,
in each case,
he proves there is a witness to the singularity of $\nu$
that is definable in an appropriate sense.
The case divisions are in terms of the ordinal $\beta \le o^K(\nu)$,
which is defined to be  least such that,
if $\beta < o^K(\nu)$,
then
$U(K,\nu,\beta)$ does not have a generating sequence.
Mitchell provides a table which indicates the appropriately definable witness in
each of the five cases. He also cites a known forcing example for each of the four non-trivial rows.
These are due to Prikry, Gitik, Radin, and Magidor.
A maximal Prikry sequence for $U(K,\kappa, \lambda)$ is the example when $\beta = \lambda + 1$.

We are interested in extending Mitchell's definable singularies theorem by 1) weakening his anti-large cardinal assumption to the non-existence
of an inner model with a Woodin cardinal and 2) eliminating his cardinal arithmetic assumption.
The results in this paper address  this goal for  the row of Mitchell's table that corresponds to Prikry forcing.
The second author has accomplished goal~1) for the remaining rows of Mitchell's table under the assumption that $o^K(\nu) < \nu^+$.
(One of the rows disappears under this additional assumption, leaving four rows.)
His result will appear in a separate paper.

\section{A Brief Review of \cite{MS2}}
The following theorem is the main result of \cite{MS2}.
\begin{theorem}\label{T2-1}
    Assume that there is no transitive class model of \emph{ZFC} with a Woodin cardinal. Let $\nu$ be a singular ordinal such that $\nu> \omega_2$ and $\mathrm{cf}(\nu) < |\nu|$. Suppose $\nu$ is a regular cardinal in $K$. Then $\nu$ is a measurable cardinal in $K$. Moreover, if $\mathrm{cf}(\nu) > \omega$, then $o^K(\nu) \geq \mathrm{cf}(\nu)$.
\end{theorem}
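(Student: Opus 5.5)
The plan follows the Mitchell--Schimmerling covering argument, in the Dodd--Jensen style. By Jensen--Steel \cite{JS}, the hypothesis that there is no inner model with a Woodin cardinal gives that $K$ exists and is rigid. $K$ also satisfies weak covering: $(\lambda^+)^K = \lambda^+$ for singular $\lambda$ and $\mathrm{cf}((\lambda^+)^K) \ge |\lambda|$ for $\lambda \ge \omega_2$.

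\textbf{Setting up the hulls.} Fix $\nu$ singular, regular in $K$, with $\nu > \omega_2$ and $\mathrm{cf}(\nu) < |\nu|$. Choose a sufficiently large regular $\theta$ and an elementary substructure $X \prec H_\theta$ such that:
\begin{itemize}
\item $|X| < |\nu|$, $X$ contains a cofinal subset of $\nu$ of size $\mathrm{cf}(\nu)$, and ${}^{\omega}X \subseteq X$ (or $X$ is internally approachable, as appropriate);
\item $\sup(X \cap \nu) = \nu$.
\end{itemize}
Let $\pi : N \to H_\theta$ invert the transitive collapse, let $\bar K = \pi^{-1}(K)$, and let $\bar\nu = \pi^{-1}(\nu)$. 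Then $\pi\restriction\bar K : \bar K \to K$ is elementary and cofinal into $\nu$ at $\bar\nu$. Since $|X|<|\nu|$, it cannot be onto $\nu$.

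\textbf{Comparison.} Compare $\bar K$ with $K$ by iteration trees. The absence of Woodin cardinals makes the trees linear enough on the relevant part, via the usual Q-structure and branch-uniqueness arguments, and $\bar K$ is universal enough because of weak covering. The outcome is that $\bar K$ iterates to an initial segment of $K$ (or $K$ itself) with no drop on the main branch. The Dodd--Jensen lemma (rigidity/minimality of iteration maps) then identifies the first measure applied below $\bar\nu$.

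\textbf{Deriving the measure.} Form the phalanx $(K, \bar K, \kappa)$, where $\kappa$ is the critical point, and use its iterability to show that the extender derived from $\pi\restriction\bar K$ with critical point near $\bar\nu$ is on the $K$-sequence. The key steps are these:
\begin{enumerate}
\item Show that $\nu$ regular in $K$ but $\pi$ not onto forces the $\bar K$-to-$K$ coiteration to use an extender indexed above $\bar\nu$ with critical point $\bar\nu$. If it did not, $\bar\nu$ would be singularized in $K$: the coiteration would give a $K$-definable cofinal map from a smaller ordinal, using cofinality of $\pi$ and $\mathrm{cf}^V(\nu)=\mathrm{cf}(\bar\nu)$ small.
\item Lift via $\pi$. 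Using elementarity and the fact that the comparison is coherent with $\pi$, conclude that $\nu$ is the critical point of a total extender on the $K$-sequence, hence measurable in $K$.
\end{enumerate}

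\textbf{The Mitchell-order clause.} For $\mathrm{cf}(\nu) > \omega$, run the argument along a continuous $\in$-chain $\langle X_i : i < \mathrm{cf}(\nu)\rangle$ of such hulls. Each stage yields a normal measure $U_i$ on $\nu$ in $K$. Show that the $U_i$ are strictly increasing in the Mitchell order on a club set of $i$, so $o^K(\nu) \ge \mathrm{cf}(\nu)$. Otherwise a pressing-down or Fodor argument on the order indices gives stationarily many $i$ sharing one measure $U$. Iterating $U$ then produces a closed unbounded sequence of indiscernibles, which in turn builds a $K$-definable club singularizing $\nu$, a contradiction.

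The main obstacle is step 1: the fine-structural argument that the coiteration of $\bar K$ against $K$ must apply a measure at $\bar\nu$ itself rather than just overlapping extenders. Here one needs both the countable closure of $X$, to keep the coiteration's iterates' images well behaved, and the hypothesis $\nu > \omega_2$, to apply weak covering at the right successor cardinals. I would first try to handle this by choosing $X$ so that $\pi^{-1}((\nu^+)^K)$ has cofinality at least $\omega_1$, and then use the $\omega_1$-completeness argument from Mitchell--Schimmerling \cite{MS1}.
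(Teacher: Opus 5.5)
\textbf{There is a genuine gap: your plan misdescribes step 1, leaves out the argument that actually carries step 2, and its route to the Mitchell-order clause does not work.}

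\textbf{Step 1 (the comparison).} It is not the collapse that iterates into $K$. The argument works with the soundness witness $W$ and $\overline W=\pi_X^{-1}(W)$, and $X$ is chosen (Requirement~1) so that $\overline W$ does not move at all. It is $W$ that iterates along $\mathcal T_X$ to $\mathcal M_X$, which agrees with $\overline W$ below $\mu_X=\pi_X^{-1}(\nu)$. Measurability of $\mu_X$ in $\mathcal M_X$ does not come from an extender with critical point $\mu_X$ being applied. It is Lemma~\ref{L2-11}: $\theta_X$ is a limit, $\mathcal P_X=\mathcal M_X$, and there is a club $I_X$ of $\eta$ on the branch with $i^{\mathcal T_X}_{\eta,\theta_X}(\mu^X_\eta)=\mu_X$. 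So the critical sequence generates $U(\mathcal M_X,\mu_X,0)$. Your ``otherwise $\bar\nu$ would be singularized'' is where the real work lies, and making it precise needs the $\mathcal P$-, $\mathcal Q$- and $\mathcal S$-structures. A singularizing definition over $\mathcal Q_X$, including the protomouse case, is pushed by $\Pi_X$ to $\mathcal S_X$. Requirement~2 together with Lemmas~\ref{L2-5}--\ref{L2-7} then ties $\mathcal S_X$ to $W$: it is an initial segment of $W$, or, up to maps with critical point $\geq\Omega_0$, an ultrapower of $W$ by an extender in $W$. Separately, the extender derived from $\pi_X$ has critical point $\delta_X$, far below $\mu_X$. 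So your phalanx $(K,\bar K,\kappa)$ with ``critical point near $\bar\nu$'' does not correspond to anything usable.

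\textbf{Step 2 (the transfer), the more serious gap.} ``Lift via $\pi$'' cannot work as stated. $U(\mathcal M_X,\mu_X,0)$ lives in an iterate of $W$, not in $N_X$, so $\pi_X$ is not defined on it. Moreover, $\mathcal P(\mu_X)\cap\mathcal M_X$ is in general only known to contain $\mathcal P(\mu_X)\cap\overline W$; equality needs a special choice of $X$, cf.\ Corollary~\ref{C6-7}. What transfers is the generating sequence: $C_X=\pi_X[\overline C_X]$ generates $U(\mathcal S_X,\nu,0)$.
\begin{itemize}
\item If $X$ were $\mathrm{cf}(\nu)$-closed, a cofinal piece of $C_X$ would lie in $X$, and elementarity would give an ultrafilter on $\mathcal P(\nu)\cap W$.
\item But no closure of $|\nu|$ is assumed. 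Your ${}^\omega X\subseteq X$ is impossible if, e.g., $2^{\aleph_0}\geq|\nu|$.
\item So the paper uses internally approachable hulls (Theorem~\ref{T2-3}) and sets $U=\bigcup_{X\in\mathcal H}U(\mathcal S_X,\nu,0)$ over a stationary $\mathcal H$.
\item It then needs coherence ($U(\mathcal S_X,\nu,0)\subseteq U(\mathcal S_Y,\nu,0)$ for $X\subseteq Y$), amenability to $W$, and iterability of the phalanx $((W,\mathrm{Ult}(W,U)),\nu)$.
\item A comparison with $W$ gives $U\in W$, and Schlutzenberg's theorem puts the equivalent extender on the $W$-sequence.
\end{itemize}
None of this appears in your plan.

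\textbf{The Mitchell-order clause.} Your argument for $o^K(\nu)\geq\mathrm{cf}(\nu)$ fails. By exactly that coherence, hulls along a chain all produce the same order-zero measure. So there is no reason for your $U_i$ to increase in the Mitchell order, and stationarily many $i$ sharing one measure is no contradiction. Also, iterating a measure on $\nu$ yields a club in $i_{0,\mathrm{cf}(\nu)}(\nu)$, not in $\nu$, so it gives no singularization of $\nu$ in $K$. The paper's setup for this clause instead fixes a club $D\subseteq\nu$ of order type $\mathrm{cf}(\nu)$ and works with the closed critical sequence on it, via the sets $B_X=C_X\cap D\cap X$. Their points are singular in $V$ but regular in $K$.
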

In this section, we review the main arguments in \cite{MS2} and fix the notation and terminology that will be used throughout this paper.

\subsection{Preliminaries}
We assume that there is a measurable cardinal $\Omega>\nu$ and let  $U_\Omega$ be a normal measure over $\Omega$.
Let $K$ be the core model defined over $(H_{\Omega^+},\in,U_{\Omega})$ equipped with the Mitchell-Steel indexing.
We refer the reader to \cite{MSt} and \cite{St} for standard terminology.
In fact, Theorem \ref{T2-1} holds true under both Mitchell-Steel indexing and Jensen indexing, but in this article, we will focus specifically on the Mitchell-Steel indexing.
Fix some regular cardinal $\Omega_0<\Omega$ such that $\Omega_0>|\nu|^{++}$.
Let $W$ be the model obtained by taking ultrapowers by order zero measures at each measurable cardinal of $K$ greater than $\Omega_0$. 
Following \cite{St}, $W$ is an $A_0$-soundness witness of $K\upharpoonright\Omega_0$.

The machinery for proving covering lemmas relies on the analysis of suitable elementary substructures. 
Let 
$$X\prec(H_{\Omega^+},\in,U_{\Omega})$$
be an arbitrary elementary substructure such that $\aleph_1\leq|X|<|\nu|$, $\{\nu,W\}\subseteq X$ and $\sup(\nu\cap X) = \nu$.
As we proceed, we will identify two additional requirements on $X$.
Let $\pi_X:N_X\to X$ be the inverse of the Mostowski collapse of $X$, $\delta_X = \mathrm{crit}(\pi_X)$, $W_X = \pi_X^{-1}(W)$, and $\mu_X = \pi^{-1}_X(\nu)$. 
Let $(\mathcal T_X,\mathcal U_X)$ be the coiteration of $(W,W_X)$. 
As $W$ is universal, there are no drops of any kind along the main branch of $\mathcal U_X$, and 
$$\mathcal M^{\mathcal U_X}_{\infty}\lhd \mathcal M^{\mathcal T_X}_{\infty}.$$
When the substructure is fixed or implied by the background assumption, we suppress the subscript $X$ and write $\overline W$ for $W_X$.
Let us fix $X$ temporarily, and assume $X$ satisfies the following additional requirement:

\subsection*{Requirement 1}
$(\delta^+)^{\overline W}<(\delta^+)^W$ and $\mathcal U$ is trivial. 
In other words,
    $$\mathcal P(\delta)\cap \overline W\subsetneqq \mathcal P(\delta)\cap W\text{ and }\mathcal M^{\mathcal U}_{\infty} =  \mathcal M^{\mathcal U}_{0}= \overline W.$$
(Please refer to Appendix A.1 for a minor correction of this Requirement.)
Let $\alpha$ be a $\overline W$-cardinal.
We will introduce several objects indexed by the $\overline W$-cardinals.
Let $\eta(\alpha)$ be the least $\eta$, such that $\eta<\mathrm{lh}(\mathcal T)$ and $\mathcal M^{\mathcal T}_\eta$ agrees with $\overline W$ below $\alpha$.
Suppose $\mathcal M^{\mathcal T}_{\eta(\alpha)}$ is a set mouse (a mouse with ordinal height $<\Omega$). 
This implies that there is a drop along the branch $[0,\eta(\alpha))_{\mathcal T}$. 
In this case, let $n = \mathrm{deg}^{\mathcal T}(\eta(\alpha))$. We say $\mathcal M^{\mathcal T}_{\eta(\alpha)}$ is $(n+1)$-\emph{sound} above $\alpha$ if it is $n$-sound and 
$$\mathcal M^{\mathcal T}_{\eta(\alpha)} = \mathrm{Hull}^{\mathcal M^{\mathcal T}_{\eta(\alpha)}}_{n+1}(\alpha\cup p_{n+1}(\mathcal M^{\mathcal T}_{\eta(\alpha)})).$$
Alternatively, if $\mathcal M^{\mathcal T}_{\eta(\alpha)}$ is a weasel (a mouse with ordinal height $\Omega$), it has the hull and definability property at every ordinal in $[\alpha,\Omega_0)$. 
As a consequence, for every thick class $\Gamma$,
$$\Omega_0\subseteq \mathrm{Hull}^{\mathcal M^{\mathcal T}_{\eta(\alpha)}}(\alpha\cup\Gamma).$$
\subsection*{The $\mathcal P$-structures.} 
Let $\mathcal P_\alpha\unlhd \mathcal M^{\mathcal T}_{\eta(\alpha)}$ be the shortest initial segment $\mathcal P$, if there exists one, such that
$$\rho_{m+1}(\mathcal P)\leq \alpha<\rho_{m}(\mathcal P),$$
and we let $m(\alpha) = m$. 
If such $\mathcal P$ does not exist, we let $\mathcal P_\alpha = \mathcal M^{\mathcal T}_{\eta(\alpha)}$.
Notice that if $\mathcal P_\alpha\neq\mathcal M^{\mathcal T}_{\eta(\alpha)}$, then $\alpha$ is not a cardinal in $\mathcal M^{\mathcal T}_{\eta(\alpha)}$.

\subsection*{The $\mathcal Q$-structures.} We define a sequence of mice $\langle \mathcal Q_{\alpha}\mid \alpha\in \mathrm{Card}^{\overline W}\rangle$ by recursion. 
If $\alpha$ does not fall into the protomouse case, we define $\mathcal Q_\alpha = \mathcal P_\alpha$ and $n(\alpha) = m(\alpha)$.
The protomouse case means that
\begin{itemize}
    \item $\mathcal P_\alpha$ is a type II active mouse;
    \item $m(\alpha) = 0$;
    \item $\kappa = \mathrm{crit}(\dot F^{\mathcal P_{\alpha}})$;
    \item $\lambda = (\kappa^+)^{\mathcal P_\alpha}<\alpha$;
    \item $\sup(\pi[\lambda])<\pi(\lambda)$.
\end{itemize}
In this case, let $n(\alpha) = n(\lambda)$ and $\mathcal Q_{\alpha} = \mathrm{Ult}_{n(\alpha)}(\mathcal Q_\lambda,\dot F^{\mathcal P_{\alpha}})$.

\subsection*{The $\mathcal S$-structures.} We let
$$\Pi_\alpha:\mathcal Q_\alpha\to \mathrm{Ult}_{n(\alpha)}(\mathcal Q_\alpha,\pi,\sup(\pi[\alpha])) = \mathcal S_\alpha,$$
which is the ultrapower embedding from $\mathcal Q_\alpha$ to the $n(\alpha)$-ultrapower given by the extender derived from $\pi$ with length $\sup(\pi[\alpha])$. 
Our last requirement for $X$ is related to the $\mathcal S$-structures.

\subsection*{Requirement 2.} For every $\overline W$-cardinal $\alpha$, 
\begin{itemize}
    \item $\mathcal S_\alpha$ is wellfounded and iterable.
    \item The phalanx $((W,\mathcal S_\alpha),\sup(\pi[\alpha]))$ is iterable.
\end{itemize}

In \cite{MS2} and this article, we only look at structures $X$ which satisfy Requirements 1 and 2. 
The next two results identify the circumstances under which $X$ satisfies these requirements.

\begin{theorem}[\cite{MSS}, \cite{MS2}]\label{T2-2}
    Suppose that $X\prec (H_{\Omega^+},\in,U_\Omega)$ satisfies $|X|<|\nu|$, $\{\nu,W\}\subseteq X$ and $\sup(\nu\cap X) = \nu$.
    Suppose ${}^\omega X\subseteq X$.
    Then $X$ also satisfies Requirements 1 and 2.
\end{theorem}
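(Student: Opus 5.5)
The plan is to verify the two requirements separately, using countable closure ${}^\omega X\subseteq X$ at each step. The key point is that $\pi=\pi_X$ then preserves countable sequences: every countable subset of $N_X$ lies in $N_X$, and $\pi$ is continuous at ordinals of countable cofinality. Throughout, $\delta=\mathrm{crit}(\pi)$ has uncountable cofinality, since a countable cofinal sequence in $\delta$ would lie in $N_X$ and be fixed by $\pi$.

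For Requirement 2, I would argue by contradiction using the standard realization argument. Suppose some $\mathcal S_\alpha$, or some iterate of it or of the phalanx $((W,\mathcal S_\alpha),\sup(\pi[\alpha]))$, is illfounded. A failure of wellfoundedness or of iterability is then witnessed by a countable object: a countable elementary substructure of the relevant iteration tree together with a decreasing $\omega$-sequence, or a countable tree with no cofinal wellfounded branch. By ${}^\omega X\subseteq X$, the countable pieces of $\mathcal Q_\alpha$ and the countably many generators involved (ordinals below $\sup\pi[\alpha]$ and their $\pi$-preimages) can be captured inside $N_X$. This lets us realize the countable hull of $\mathcal S_\alpha$ back into $\pi(\mathcal Q_\alpha)$, or into an iterate of $W$, via a map $\sigma$ with $\sigma\circ\Pi_\alpha=\pi\restriction\mathcal Q_\alpha$ on the relevant pieces. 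The target $\pi(\mathcal Q_\alpha)$ is in turn iterable, because it is an initial segment of an iterate of $W$ in $V$. Countable iterability then transfers along $\sigma$, which gives the contradiction. For the phalanx we run the same argument with the pair of realizations $\mathrm{id}\colon W\to W$ and $\sigma$; these agree below $\sup\pi[\alpha]$, as the standard phalanx-realization lemma for Mitchell--Steel mice requires. In the protomouse case we first realize $\mathcal Q_\lambda$, then use the $\pi$-image of $\dot F^{\mathcal P_\alpha}$. Here we need $\sup\pi[\lambda]<\pi(\lambda)$ so that the composed realization is well defined.

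For Requirement 1 there are two steps.

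\textbf{Showing $\mathcal U$ is trivial.} Suppose the first extender used on the $\overline W$ side has index $\gamma$. Using that $W$ is universal and that $\overline W$ embeds into $W$ via $\pi$, the standard argument shows that $\overline W$ and $W$ agree below $\delta$. Comparison then first disagrees at or above $(\delta^+)^{\overline W}$. A disagreement where $\overline W$ has an extender $E$ while $W$ does not (or the two extenders differ) is ruled out as follows. Take the ultrapower of $W$ by the $(\mathrm{crit}\,\pi,\sup\pi[\cdot])$-extender derived from $\pi$. Using countable closure, this extender is countably complete, so the ultrapower is wellfounded. Its image of $\overline W$ agrees with $W$ past $\gamma$, and the comparison/Dodd--Jensen argument for the universal $W$ then yields a contradiction.

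\textbf{Showing $(\delta^+)^{\overline W}<(\delta^+)^W$.} The extender $E_\pi$ of length $\delta^{+\overline W}$ derived from $\pi$ is countably complete, so by the characterization of $K$ (every countably complete extender whose ultrapower is wellfounded and which coheres is on the sequence of $K$) it is in $W$ or on its sequence. This makes $\mathcal P(\delta)\cap\overline W$ an element of $W$ of size $\delta$ there, whence the strict inequality.

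I expect this last step to be the main obstacle. Placing $E_\pi$ on the $K$-sequence requires Steel's theorem that countably complete extenders cohering with $K$ belong to $K$, and this relies on the no-Woodin hypothesis through the existence of $K$.
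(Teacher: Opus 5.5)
The main gap is the clause of Requirement~1 you set out to prove, namely that $\mathcal U$ is trivial. The paper does not reprove Theorem~\ref{T2-2}; it cites \cite{MSS}, \cite{MS2}. Its Appendix A.1, however, says explicitly that this clause, inherited from \cite{MS2}, is an error. The paper says the cited results hold only in revised form: for every $\Omega_1\in X$ with $\nu<\Omega_1<\Omega$ and every $A_0$-soundness witness $W_1\in X$ for $K\upharpoonright\Omega_1$, one has $(\delta^+)^{\overline W_1}<(\delta^+)^{W_1}$, and the tree on $\overline W_1$ uses no extenders of length $<\overline\Omega_1$.

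Your sketch breaks at exactly this point, for three reasons.
\begin{enumerate}
\item The ultrapower of $W$ by an extender derived from $\pi$ exists only if $\mathcal P(\delta)\cap W\subseteq\overline W$. That is precisely what the first clause of Requirement~1 denies.
\item ``Its image of $\overline W$'' has no evident meaning, since $\overline W$ need not belong to $W$. A factor-map argument only gives agreement with $W$ up to roughly the length of the extender, not past an arbitrary index $\gamma$.
\item More fundamentally, any Dodd--Jensen or hull-property argument that $\overline W$ does not move uses the hull and definability properties of $W$. These are only available below $\Omega_0$, because $W$ is an $A_0$-soundness witness for $K\upharpoonright\Omega_0$ only.
\end{enumerate}
So you can exclude extenders of length $<\overline\Omega_0$ on the $\overline W$ side. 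You cannot exclude that the set-sized premouse $\overline W$, of height $\overline\Omega<\Omega$, moves above $\overline\Omega_0$. That weaker statement is exactly the shape of the corrected requirement, which the paper then applies to larger $\Omega_1$ to repair \cite[Lemma~5.2]{MS2}.

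The other two steps also do not go through as written.

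For $(\delta^+)^{\overline W}<(\delta^+)^W$: under the hypothesis for contradiction, $\mathcal P(\delta)\cap\overline W=\mathcal P(\delta)\cap W$. That set is trivially in $W$ and has $W$-cardinality $(\delta^+)^W$, not $\delta$. Putting the extender derived from $\pi$ (or its trivial completion) on the $W$-sequence contradicts nothing by itself, so the actual contradiction is still missing.

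For Requirement~2, $\pi(\mathcal Q_\alpha)$ is undefined. $\mathcal Q_\alpha$ is a model on the comparison tree on $W$ (or an ultrapower of one), not an element of $N_X$.
\begin{itemize}
\item What ${}^\omega X\subseteq X$ gives is countable completeness of the extender derived from $\pi$. This lets you realize a countable hull of $\mathcal S_\alpha$ into $\mathcal Q_\alpha$ itself, via $[a,f]\mapsto f(\tau[a])$ with $\tau$ order-preserving into $\alpha$.
\item That map pushes the exchange region below $\alpha$. So it cannot agree with $\mathrm{id}\colon W\to W$ below $\sup\pi[\alpha]$.
\item The $W$-component must instead be realized into $\overline W$, again using countable closure and elementarity.
\item The resulting phalanx $(\overline W,\mathcal Q_\alpha)$ is iterable because $\overline W$ is not moved below $\overline\Omega_0$, so it can be realized into models of the comparison tree on $W$.
\end{itemize}
Requirement~2 therefore depends on the corrected Requirement~1, and cannot be verified independently of it in the order you propose.
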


\begin{theorem}[\cite{MSS}, \cite{MS2}]\label{T2-3}
    Let $\varepsilon$ be a regular cardinal such that
    $$\max(\omega_2,\mathrm{cf}(\nu)^+)\leq\varepsilon<\nu.$$
    Suppose $\vec Y = \langle Y_i\mid i<\varepsilon\rangle$ is a continuous chain of substructures that is internally approachable, which means for each $i<\varepsilon$, $\vec  Y\upharpoonright i\in Y_i$.
    Moreover, for each $i<\varepsilon$, $|Y_i|<|\nu|$, $\{\nu,W\}\subseteq Y_i$ and $\sup(\nu\cap Y_i) = \nu$. 
    Suppose $Y_i\cap \varepsilon\in \varepsilon$ and $|Y_i| = |Y_i\cap \varepsilon|$. 
    Then there exists a club subset $\mathcal C\subseteq \varepsilon$ such that, for each $j\in \mathcal C$ with $\mathrm{cf}(j)>\omega$, $Y_j$ satisfies Requirements 1 and 2.
\end{theorem}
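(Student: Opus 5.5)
The plan is to follow the proof of the corresponding lemma in \cite{MSS} and \cite{MS2}, and to argue by contradiction using a Fodor-type pressing-down argument along the chain $\vec Y$. Write $N_i$, $\pi_i$, $\delta_i$, $\overline W_i$, $(\mathcal T_i,\mathcal U_i)$ for the objects attached to $Y_i$. Since $Y_i\cap\varepsilon\in\varepsilon$, the chain is continuous and $\varepsilon$ is regular with $\varepsilon<\nu$, we have $\delta_i=Y_i\cap\varepsilon$ on a club of $i$. Hence the map $i\mapsto\delta_i$ is continuous and increasing on that club. Moreover $\mathrm{cf}(\delta_j)=\mathrm{cf}(j)$ for limit $j$.

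\textbf{Requirement 1.} First I show that $\mathcal U_j$ is trivial for club many $j$ of uncountable cofinality. Suppose instead that there is a stationary set $S$ of such $j$ where $\mathcal U_j$ is nontrivial, or where $\mathcal P(\delta_j)\cap\overline W_j=\mathcal P(\delta_j)\cap W$. In the first case, the first extender used on the $\overline W_j$-side has critical point and index related to $\delta_j$. Internal approachability gives $\vec Y\upharpoonright i\in Y_{i+1}$, so initial segments of the construction are available inside later $Y$'s. Because $\mathrm{cf}(j)>\omega$, countable closure is not needed: the relevant extender is already determined by a bounded part of $\overline W_j$. By Fodor's lemma, that bounded part can be fixed on a stationary subset of $S$. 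Two indices $j<j'$ in this subset then yield a comparison disagreement that is resolved identically, contradicting the minimality of coiteration. The second case is the standard "$\delta$ is not a cardinal in $W$" argument. If $(\delta_j^+)^{\overline W_j}=(\delta_j^+)^W$ held stationarily often, then $\pi_j$ would extend to an elementary embedding $W\to W$ with critical point $\delta_j$. Patching these along a stationary set then produces a nontrivial embedding of $K$, contradicting the rigidity of $K$; the soundness of $W$ as an $A_0$-soundness witness (with $\Omega_0>|\nu|^{++}$) supplies the needed hull property. I would also check this step against the correction in Appendix A.1.

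\textbf{Requirement 2.} For wellfoundedness and iterability of $\mathcal S_\alpha$, and of the phalanx $((W,\mathcal S_\alpha),\sup(\pi[\alpha]))$, I would realize these structures back into $V$. When $\mathrm{cf}(j)>\omega$ and $\alpha$ is a $\overline W_j$-cardinal, $\mathcal S_\alpha$ embeds into $\pi_j(\mathcal Q_\alpha)$-type structures via the factor map through $\mathrm{Ult}(\mathcal Q_\alpha,\pi_j,\sup\pi_j[\alpha])$. Iterability then follows from iterability of $K$-like mice by the realization/copying argument in \cite{St}. In the protomouse case the $\mathcal Q$-structure is an ultrapower of an earlier $\mathcal Q_\lambda$, so I would proceed by induction on $\alpha$. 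The delicate point is $\mathrm{cf}(\sup\pi[\lambda])$ versus $\omega$: when $\omega$-closure fails, a bad countable-cofinality sequence may leave no suitable realization.

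\textbf{Main obstacle.} That last point is exactly where internal approachability and $\mathrm{cf}(j)>\omega$ must be used. I would try to show that the set of $j$ at which some $\mathcal S_\alpha$ fails to be realizable reflects down to a stationary set of $j$ with a fixed bounded witness. That fixed witness, lying in some $Y_i$, then contradicts the elementarity of $Y_j$ for larger $j$. Intersecting the clubs obtained for the two requirements gives the desired $\mathcal C$.
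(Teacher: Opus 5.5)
\paragraph{Requirement 1.}
The paper gives no proof of this statement. It quotes it from \cite{MSS} and \cite{MS2}, with the version without countable closure coming from \cite{MS1}. Appendix A.1 then says the cited results only become true once the clause ``$\mathcal U$ is trivial'' of Requirement 1 is weakened to ``$\overline W$ uses no extenders of length $<\pi_X^{-1}(\Omega_0)$'' (and likewise for every $A_0$-soundness witness $W_1\in X$ for $K\upharpoonright\Omega_1$).

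So your pressing-down argument for triviality of $\mathcal U_j$ targets a clause the paper itself weakens, and it is not an argument in any case:
\begin{itemize}
\item Nothing ties the first extender used on the $\overline W_j$ side to $\delta_j$. The moves that cannot be excluded use extenders of length $\geq\pi_j^{-1}(\Omega_0)$, where $\overline W_j$ need not agree with $W$, since it is the pullback of an iterate of $K$ by order-zero measures above $\Omega_0$.
\item There is no ``minimality of coiteration'' for two stationary indices to contradict.
\end{itemize}

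Your argument for $(\delta_j^+)^{\overline W_j}<(\delta_j^+)^W$ is also wrong. If $\mathcal P(\delta_j)\cap W\subseteq\overline W_j$, you get only the extender $E_j$ over $W$ derived from $\pi_j$, i.e.\ an embedding $W\to\mathrm{Ult}(W,E_j)$, not $W\to W$. $K$ has such embeddings whenever it has measurables, and no patching along a stationary set yields a map $K\to K$.

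A route that can work:
\begin{enumerate}
\item Prove $\mathrm{Ult}(W,E_j)$ is iterable.
\item Use the maximality of $K$ \cite{St} to put the initial segments of $E_j$ on the $W$-sequence. Then $\delta_j$ is strong up to $\pi_j(\delta_j)\geq\varepsilon$ in $K$.
\item If this happened for stationarily many $j$, then $\{\delta_j\}$ would be stationary in $\varepsilon$. Every $f:\varepsilon\to\varepsilon$ in $K$ would have such a $\delta_j$ among its closure points, so $\varepsilon$ would be Woodin in $K$, a contradiction.
\end{enumerate}

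\paragraph{Requirement 2.}
The proposed realization does not exist. $\mathcal Q_\alpha$ is an initial segment of a model of $\mathcal T$, or an ultrapower built from one, and is in general not an element of $N_{Y_j}$. So $\pi_j(\mathcal Q_\alpha)$ is meaningless, and there is no factor map out of $\mathrm{Ult}(\mathcal Q_\alpha,\pi_j,\sup\pi_j[\alpha])$.

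Iterability of $\mathcal S_\alpha$ and of the phalanx $((W,\mathcal S_\alpha),\sup\pi_j[\alpha])$ comes from the opposite direction. One maps countable hulls back into structures that are iterable because they arise from the coiteration. This needs a countable-completeness property of the extender derived from $\pi_j$. \cite{MSS} gets it from ${}^\omega X\subseteq X$; replacing it using only internal approachability and $\mathrm{cf}(j)>\omega$ is exactly what \cite{MS1} does. The same property is what the iterability of $\mathrm{Ult}(W,E_j)$ in step 1 above requires.

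Your ``main obstacle'' paragraph names this step but does not supply it, and the suggested reflection does not work as stated. Countably many measure-one sets $\pi_j(A_n)$ do all lie in one $Y_i\in Y_j$. But the sequence $\langle\pi_j(A_n)\mid n<\omega\rangle$ need not belong to $Y_j$, so elementarity of $Y_j$ cannot be applied to it. As it stands, neither requirement is established.
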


We can thus fix a stationary subset $\mathcal F$ of $\mathcal P(H_{\Omega^+})$ such that each $X\in\mathcal F$ satisfies Requirements 1 and 2. 
Let $\mathcal F$ be the collection of all $X$ such that there exist $\varepsilon$, $\vec  Y$ and $\mathcal C$ defined as in Theorem \ref{T2-3}, and $X = Y_j$ where $j\in \mathcal C$, $j = \varepsilon\cap Y_j = \mathrm{type}(j\cap \mathcal C)$, and $\mathrm{cf}(j) = \max(\omega_1,\mathrm{cf}(\nu))$.
The last requirement of $j$ is stronger than that of Theorem \ref{T2-3}.
This is to ensure that $X\cap \mathrm{Ord}$ is $<\mathrm{cf}(\nu)$-closed.
Also, notice that if $|\nu|$ is countably closed, that is, 
$$\forall \kappa<|\nu|(\kappa^\omega<|\nu|),$$
then there exists some $X\in\mathcal F$ such that $^\omega X\subseteq X$.
In the rest of this article, we will only look at substructures which are members of $\mathcal F$.

\subsection{Analysis of $\mathcal Q_\alpha$ and $\mathcal S_\alpha$}
In the following section, we present some crucial analysis of the $\mathcal Q$ and $\mathcal S$ structures.
These analyses played a central role in \cite{MS2}, and are also heavily used in this article.

\subsection*{The $\mathcal Q$-structures.}
Fix some $\overline W$-cardinal $\alpha$ such that $\mathcal Q_\alpha\neq\mathcal P_\alpha$. 
By the wellfoundedness of ordinals, we can find the following finite sequence of embeddings
$$\mathcal P_{\lambda_\ell} = \mathcal Q_{\lambda_{\ell}}\xrightarrow{\dot F^{\mathcal P_{\lambda_{\ell-1}}}}\mathcal Q_{\lambda_{\ell-1}}\xrightarrow{\dot F^{\mathcal P_{\lambda_{\ell-2}}}}\cdots \xrightarrow{\dot F^{\mathcal P_{\lambda_{0}}}}\mathcal Q_{\lambda_0} = \mathcal Q_{\alpha},$$
for some $1\leq\ell<\omega$. 
The sequence is unique and is referred to as the decomposition of $\mathcal Q_\alpha$.
For each $1\leq i\leq \ell$, let $\phi_i:\mathcal Q_{\lambda_i}\to \mathcal Q_{\alpha}$ be the composition of ultrapower embeddings. 
Let $s(F^{\mathcal P_{\lambda_i}})$ be the Dodd parameter of $F^{\mathcal P_{\lambda_i}}$.
See \cite[Section 2]{MS2} for the definition and basic facts about Dodd solidity and Dodd squashes.
Let 
$$t_\alpha = (s(\dot F^{\mathcal P_{\alpha}})\setminus \alpha)\cup \bigcup_{1\leq i<\ell}\phi_i(s(\dot F^{\mathcal P_{\lambda_i}})).$$
This parameter is also called as the class parameter of $\mathcal Q_\alpha$, denoted as $c(\mathcal Q_\alpha)$.
We have the following propositions:
\begin{itemize}
    \item If $\mathcal Q_\alpha$ is a set mouse, then $\mathcal Q_\alpha$ is $n(\alpha)+1$-sound relative to the parameter
    $$q_{\alpha} = (p_{n(\alpha)+1}\setminus\alpha)\cup t_\alpha,$$
    which means
    $$\mathcal Q_\alpha = \mathrm{Hull}^{\mathcal Q_\alpha}_{n(\alpha)+1}(\alpha\cup q_\alpha).$$
    Moreover, $q_{\alpha}$ is the least parameter $r$ such that 
    $$\mathcal P(\alpha)\cap \mathcal Q_\alpha\subseteq \mathrm{cHull}^{\mathcal Q_\alpha}_{n(\alpha)+1}(\alpha\cup r).$$
    Here, $\mathrm{cHull}$ indicates the Mostowski collapse of the hull.
    \item If $\mathcal Q_\alpha$ is a weasel, then $\mathcal Q_\alpha$ has the $t_\alpha$-definability property at every ordinal in the interval $[\alpha,\Omega_0)$, which means for every thick class $\Gamma$,
    $$\Omega_0\subseteq \mathrm{Hull}^{\mathcal Q_\alpha}(\alpha\cup t_\alpha\cup\Gamma).$$
    Moreover, $t_{\alpha}$ is the least parameter $r$ such that $\mathcal Q_\alpha$ has the $r$-hull property at $\alpha$. 
    In other words, for every thick class $\Gamma$, 
    $$\mathcal P(\alpha)\cap \mathcal Q_\alpha\subseteq \mathrm{cHull}^{\mathcal Q_\alpha}(\alpha\cup r\cup\Gamma).$$
\end{itemize}
We record the following comparison between $\mathcal P_\alpha$ and $\mathcal Q_\alpha$.
\begin{lemma}[Lemma 2.1, \cite{MS2}]\label{L2-4}
    Let $\alpha$ be a $\overline W$-cardinal. 
    Suppose $\mathcal P_\alpha\neq \mathcal Q_\alpha$. 
    Take a large enough ordinal $\xi<\alpha$ such that
    \begin{itemize}
        \item $(\kappa^+)^{\mathcal P_\alpha}\leq \xi$, where $\kappa = \mathrm{crit}(\dot F^{\mathcal P_\alpha})$;
        \item $p_1(\mathcal P_\alpha)\cap \alpha\subseteq \xi$;
        \item $s(\dot F^{\mathcal P_\alpha})\cap \alpha\subseteq \xi$;
        \item $p_1(\mathcal P_\alpha)$ is $\Sigma_1$-definable from parameters in $\xi\cup (s(\dot F^{\mathcal P_\alpha})\setminus \alpha)$;
        \item $s(\dot F^{\mathcal P_\alpha})$ is $\Sigma_1$-definable from parameters in $\xi\cup (p_1(\mathcal P_\alpha)\setminus \alpha)$.
    \end{itemize}
    Then one of the following holds:
    \begin{itemize}
        \item $\mathcal Q_\alpha$ is a set mouse, and
        $$\alpha\cap \mathrm{Hull}^{\mathcal P_\alpha}_{1}(\xi\cup p_{1}(\mathcal P_\alpha)) = \alpha\cap \mathrm{Hull}^{\mathcal Q_\alpha}_{n(\alpha)+1}(\xi\cup t_\alpha).$$
        \item $\mathcal Q_\alpha$ is a weasel. For every thick class $\Gamma$ of ordinals fixed by the embeddings 
        $$W\xrightarrow{i^{\mathcal T}_{0,\lambda_\ell}}\mathcal P_{\lambda_\ell}\xrightarrow{\phi_{\ell}}\mathcal Q_{\alpha},$$
        we have
        $$\alpha\cap \mathrm{Hull}^{\mathcal P_\alpha}_1(\xi\cup p_{1}(\mathcal P_\alpha)) = \alpha\cap \mathrm{Hull}^{\mathcal Q_\alpha}(\xi\cup t_\alpha\cup \Gamma).$$
    \end{itemize}
\end{lemma}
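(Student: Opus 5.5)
The plan is to argue by induction on the length $\ell$ of the decomposition of $\mathcal Q_\alpha$,
$$\mathcal P_{\lambda_\ell} = \mathcal Q_{\lambda_\ell}\to\cdots\to\mathcal Q_{\lambda_0}=\mathcal Q_\alpha,$$
and to compare hulls along each ultrapower step. In the base situation $\mathcal Q_\alpha = \mathrm{Ult}_{n(\lambda)}(\mathcal Q_\lambda,\dot F^{\mathcal P_\alpha})$ with $\lambda=(\kappa^+)^{\mathcal P_\alpha}$ and $\kappa = \mathrm{crit}(\dot F^{\mathcal P_\alpha})$.

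First I would describe every element of $\mathcal Q_\alpha$ below $\alpha$ in the standard way. It has the form $i(f)(a)$, where $i:\mathcal Q_\lambda\to\mathcal Q_\alpha$ is the ultrapower map, $a\in[\nu(\dot F)]^{<\omega}$, and $f$ is $r\Sigma_{n(\lambda)}$-definable over $\mathcal Q_\lambda$. Since $\mathcal P_\alpha$ and $\mathcal Q_\lambda$ agree up to $\lambda$, the functions $f:[\kappa]^{<\omega}\to\kappa$ may be taken in $\mathcal P_\alpha|\lambda$.

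Next I would use Dodd solidity. Every generator of $\dot F^{\mathcal P_\alpha}$ below $\alpha$ is $\Sigma_1$-definable over $\mathcal P_\alpha$ from parameters in $\xi\cup s(\dot F^{\mathcal P_\alpha})$, and hence, by the choice of $\xi$, from $\xi\cup (p_1(\mathcal P_\alpha)\setminus\alpha)$. Symmetrically, $p_1(\mathcal P_\alpha)$ is $\Sigma_1$-definable from $\xi\cup(s(\dot F^{\mathcal P_\alpha})\setminus\alpha)$. This gives the equality
$$\alpha\cap\mathrm{Hull}^{\mathcal P_\alpha}_1(\xi\cup p_1(\mathcal P_\alpha)) = \alpha\cap\{\dot F(f)(a) \mid a\subseteq \xi\cup s(\dot F^{\mathcal P_\alpha}),\ f\in\mathcal P_\alpha|\lambda\}.$$
On the $\mathcal Q_\alpha$ side, $i(f)(a)$ lies in $\mathrm{Hull}_{n(\alpha)+1}^{\mathcal Q_\alpha}(\xi\cup t_\alpha)$ whenever $f$ lies in the corresponding hull of $\mathcal Q_\lambda$ over $\kappa\cup q_\lambda$. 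By induction, $\mathcal Q_\lambda$ is sound over $\lambda\cup q_\lambda$ (in the weasel case it has the definability property over $\lambda\cup t_\lambda\cup\Gamma$). The image of $q_\lambda$ under $i$ is exactly the part of $t_\alpha$ that comes from $\phi_i(s(\dot F^{\mathcal P_{\lambda_i}}))$. Because $\lambda\le\xi$, all functions in $\mathcal P_\alpha|\lambda$ are available from parameters below $\xi$, which yields both inclusions.

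For the longer decompositions I would run the same computation along each ultrapower in turn. At every stage I would track that the hull of $\mathcal Q_{\lambda_i}$ over $\lambda_i\cup q_{\lambda_i}$ maps onto the hull over $\lambda_{i-1}\cup\phi(q_{\lambda_i})\cup s(\dot F^{\mathcal P_{\lambda_{i-1}}})$, so that the parameters accumulate exactly into $t_\alpha$. In the weasel case I would do the same, adding a thick class $\Gamma$ fixed by $i^{\mathcal T}_{0,\lambda_\ell}$ and by $\phi_\ell$. The definability property of $\mathcal P_{\lambda_\ell}$ (an iterate of $W$) then transfers to $\mathcal Q_\alpha$, since every ultrapower map fixes $\Gamma$ pointwise.

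I expect the main obstacle to be the fine-structural bookkeeping. Two points need care. The first is that $n(\alpha)+1$-hulls of the ultrapower correspond to $\Sigma_1$ hulls of the active $\mathcal P_\alpha$; here I would use that $\mathrm{Ult}_n$ preserves $r\Sigma_{n+1}$ formulas and apply Łoś's theorem for $i(f)(a)$. The second is verifying that nothing below $\alpha$ escapes, i.e. the reverse inclusion. For that step I would first show that every $r\Sigma_{n(\alpha)+1}$ term over $\mathcal Q_\alpha$ evaluated below $\alpha$ reduces, via Łoś, to a $\Sigma_1$ statement about $\dot F^{\mathcal P_\alpha}$. That statement is then decidable inside $\mathcal P_\alpha$ using parameters in $\xi\cup p_1(\mathcal P_\alpha)$.
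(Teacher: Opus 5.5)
Your overall route is the paper's own (compare its proof of the generalized version, Lemma~\ref{LA-1}). You trade $p_1(\mathcal P_\alpha)$ for $s(\dot F^{\mathcal P_\alpha})$ using the choice of $\xi$. You write elements of the $\Sigma_1$ hull as $\dot F(f)(a)$ with $f\in\mathcal P_\alpha|\lambda$ and $a\subseteq\xi\cup s(\dot F^{\mathcal P_\alpha})$, and transfer them to $\mathcal Q_\alpha=\mathrm{Ult}(\mathcal Q_\lambda,\dot F^{\mathcal P_\alpha})$ as $i(f)(a)$. For the converse you fold a Skolem term over $\mathcal Q_\alpha$ into a function on $\mathcal Q_\lambda$ by {\L}o\'s. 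Your induction on $\ell$ is unnecessary, since the paper does one ultrapower step and absorbs the deeper parameters into $q$. The converse inclusion is handled as in the paper.

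The forward inclusion, however, is not justified by what you write. You correctly note that $i(f)(a)$ lies in the $\mathcal Q_\alpha$-hull once $f$ lies in the hull of $\mathcal Q_\lambda$ over $\kappa\cup q_\lambda$. But you then supply only soundness of $\mathcal Q_\lambda$ over $\lambda\cup q_\lambda$, plus the remark $\lambda\le\xi$. That remark makes $f$ itself definable from ordinals below $\xi$, but what must be in the hull is $i(f)$. The map $i$ sends every ordinal $\gamma\in[\kappa,\lambda)$ used to define $f$ over $\mathcal Q_\lambda$ to $i(\gamma)\ge i(\kappa)>\alpha$. Such an ordinal is in general not available from $\xi\cup t_\alpha$. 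So the sentence ``because $\lambda\le\xi$, all functions in $\mathcal P_\alpha|\lambda$ are available from parameters below $\xi$, which yields both inclusions'' conflates $f$ with $i(f)$. The difference between generating $\mathcal Q_\lambda$ over $\kappa$ and generating it over $\lambda$ is exactly what needs an argument.

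The parameter bookkeeping is also wrong in the set-mouse case. There $q_\lambda$ contains $p_{n(\lambda)+1}(\mathcal Q_\lambda)\setminus\lambda$, and its image under $i$ is not part of $t_\alpha$; it belongs to the standard-parameter part of $q_\alpha$. So your claim that $i(q_\lambda)$ is exactly the $\phi_i(s(\dot F^{\mathcal P_{\lambda_i}}))$-part of $t_\alpha$ fails. The paper's argument accordingly works with the full parameter $q$ and uses only $s\subseteq q$.

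What you actually need is this: every $f\in\mathcal P(\kappa)\cap\mathcal Q_\lambda$ is definable over $\mathcal Q_\lambda$ from ordinals below $\kappa$ together with parameters whose $i$-images lie in $q_\alpha$ (in the weasel case, in $t_\alpha\cup\Gamma$). One way to obtain it is to pull back along $i$, by {\L}o\'s, the soundness of $\mathcal Q_\alpha$ above $\alpha$ relative to $q_\alpha$ (resp.\ its $t_\alpha$-definability property) recorded in Section~2. A definition of $i(f)$ over $\mathcal Q_\alpha$ from ordinals below $\alpha$ and $q_\alpha$ reflects to a definition of $f$ over $\mathcal Q_\lambda$ from ordinals below $\kappa$ and $i$-preimages of parameters. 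Pushing that forward again puts $i(f)$ into the hull over $\kappa\cup q_\alpha\subseteq\xi\cup q_\alpha$. With this supplied, your argument matches the paper's.
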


\subsection*{The $\mathcal S$-structure.} 
Since $\mathcal S_\alpha$ is the ultrapower of $\mathcal Q_\alpha$, we have the following observation:
\begin{itemize}
    \item If $\mathcal S_\alpha$ is a set mouse, then $\mathcal S_\alpha$ is $n(\alpha)+1$-sound above $\nu$ relative to the parameter $\Pi_\alpha(q_\alpha)$. That is,
    $$\mathcal S_\alpha = \mathrm{Hull}^{\mathcal S_\alpha}_{n(\alpha)+1}(\sup(\Pi_\alpha[\alpha])\cup \Pi_\alpha(q_\alpha)).$$
    \item If $\mathcal S_\alpha$ is a weasel, then for every thick class $\Gamma$, $S_\alpha$ has the $\Pi_\alpha(t_\alpha)$-definability property at every ordinal between $[\mathrm{sup}(\Pi_\alpha[\alpha]),\Omega_0)$. That is,
    $$\Omega_0\subseteq \mathrm{Hull}^{\mathcal S_\alpha}(\sup(\Pi_\alpha[\alpha])\cup \Pi_\alpha(t_\alpha)\cup \Gamma).$$
    The parameter $\Pi_\alpha(t_\alpha)$ is also called the class parameter of $\mathcal S_\alpha$, denoted as $c(\mathcal S_\alpha)$.
\end{itemize}
The following lemmas compare $\mathcal S_\alpha$ and $W$ under different cases. 
These statements serve as corrected versions of Lemma 2.2, 2.3 and 2.4 of \cite{MS2}, and their proofs are largely the same as the original ones.

\begin{lemma}[Lemma 2.2, \cite{MS2}]\label{L2-5}
    Suppose $\mathcal S_\alpha$ is a set mouse.
    Let $\Lambda_\alpha = \sup(\Pi_\alpha[\alpha])$. 
    Then either $\mathcal S_\alpha\lhd W$, or $E^W_{\Lambda_\alpha}\neq\emptyset$, and $\mathcal S_\alpha = \mathrm{Ult}(W^*,E^W_{\Lambda_\alpha})$, where $W^*$ is the longest initial segment of $W$ such that $E^W_{\Lambda_\alpha}$ can be applied.
\end{lemma}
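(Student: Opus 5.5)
We compare the set mouse $\mathcal S_\alpha$ with $W$ and read off the two alternatives from how the comparison can look. By Requirement 2, the phalanx $((W,\mathcal S_\alpha),\Lambda_\alpha)$ is iterable, and $\mathcal S_\alpha$ agrees with $W$ below $\Lambda_\alpha$. This agreement holds because $\mathcal Q_\alpha$ agrees with $\overline W$ below $\alpha$, hence with $N_X$'s version of $W$ below $\alpha$, and $\Pi_\alpha$ agrees with $\pi$ there. So we coiterate the phalanx against $W$, obtaining trees $(\mathcal T^*,\mathcal U^*)$ with $\mathcal T^*$ on the phalanx and $\mathcal U^*$ on $W$. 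Since $W$ is universal, the final model on the $\mathcal T^*$ side lies above $W$, and the main branch of $\mathcal T^*$ starts either at $W$ or at $\mathcal S_\alpha$.

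The first step is to show that the final model of $\mathcal T^*$ lies above $\mathcal S_\alpha$ and not $W$. If it lay above $W$ with no drop, we would get the standard Dodd--Jensen style contradiction with universality and the definability of $W$. The second step is to show that the branch from $\mathcal S_\alpha$ has no drop and trivial embedding, and that $\mathcal U^*$ is trivial. For this we use the soundness of $\mathcal S_\alpha$: it is $(n(\alpha)+1)$-sound above $\Lambda_\alpha$ with parameter $\Pi_\alpha(q_\alpha)$, and so it projects to or below $\Lambda_\alpha$ (or at least $\rho_{n(\alpha)+1}(\mathcal S_\alpha)\le\sup\Pi_\alpha[\alpha]$, since $\rho_{n(\alpha)+1}(\mathcal Q_\alpha)\le\alpha$). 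The usual argument then applies. Any extender used on the $\mathcal S_\alpha$ branch has critical point at least $\Lambda_\alpha$, while $\mathcal S_\alpha$ is generated by $\Lambda_\alpha\cup\{\Pi_\alpha(q_\alpha)\}$; by the solidity and parameter-preservation argument, such a nontrivial iteration cannot line up with an initial segment of a $W$-iterate. Hence $\mathcal S_\alpha$ itself is an initial segment of the last model of $\mathcal U^*$. Since the first extender used in $\mathcal U^*$ has index at least $\Lambda_\alpha$, $\mathcal S_\alpha$ agrees with $W$ up to that index.

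The two cases now separate. If $\mathcal U^*$ is trivial, then $\mathcal S_\alpha\unlhd W$. Equality is impossible because $W$ is a weasel and $\mathcal S_\alpha$ is a set mouse, so $\mathcal S_\alpha\lhd W$. Otherwise the first extender $E$ used in $\mathcal U^*$ has index $\gamma\ge\Lambda_\alpha$. We must show that $\gamma=\Lambda_\alpha$, that $\mathcal U^*$ consists of exactly one step $E=E^W_{\Lambda_\alpha}$ applied to the longest applicable $W^*\unlhd W$, and that $\mathcal S_\alpha=\mathrm{Ult}(W^*,E)$.
\begin{itemize}
\item For $\gamma=\Lambda_\alpha$: $\mathcal S_\alpha$ projects to $\le\Lambda_\alpha$, so $\Lambda_\alpha$ is not a cardinal in $\mathcal S_\alpha$. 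Agreement of $W$ with $\mathcal S_\alpha$ past $\Lambda_\alpha$ would therefore make $\mathcal S_\alpha$ an initial segment of $W$, which is the first case. So disagreement occurs exactly at $\Lambda_\alpha$, on the $W$ side.
\item For the single step: after applying $E^W_{\Lambda_\alpha}$, the model $\mathrm{Ult}(W^*,E)$ is sound above $\Lambda_\alpha$ and projects to $\mathrm{crit}(E)<\Lambda_\alpha$. Its agreement with $\mathcal S_\alpha$, together with the matching soundness of both structures above $\Lambda_\alpha$ and the standard comparison of sound mice projecting to the same ordinal, forces $\mathrm{Ult}(W^*,E)=\mathcal S_\alpha$.
\end{itemize}

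The main obstacle is this last identification. We must check that $W^*$ is chosen with the correct degree, and that the degree $n(\alpha)$ of $\mathcal S_\alpha$ matches the degree of the ultrapower. That requires comparing $\rho_{n(\alpha)+1}$ and the standard parameters on both sides, through Lemma~\ref{L2-4}-style parameter bookkeeping and Dodd solidity in the protomouse case.
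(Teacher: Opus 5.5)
\textbf{Verdict: same route as the paper, but the decisive last step is not established.} Your overall strategy is the one the paper intends. The paper defers to the proof in \cite{MS2} and writes out the same pattern for Lemma~\ref{LA-2}: coiterate $W$ against $((W,\mathcal S_\alpha),\Lambda_\alpha)$, show the phalanx side does not move and $\mathcal S_\alpha$ is an initial segment of the last model of $\mathcal U^*$, then split on whether $\mathcal U^*$ is trivial. (In your second step, ``$\mathcal U^*$ is trivial'' should read $\mathcal T^*$.) The gap is in the nontrivial case, which is exactly the new content of the corrected lemma, and both of your bullets fail as written.

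\textbf{First bullet.} The inference ``$\mathcal S_\alpha$ projects to $\le\Lambda_\alpha$, so $\Lambda_\alpha$ is not a cardinal in $\mathcal S_\alpha$'' is false. The projectum bound only says that a model properly end-extending $\mathcal S_\alpha$ collapses $\mathrm{Ord}\cap\mathcal S_\alpha$ to $\Lambda_\alpha$. Inside $\mathcal S_\alpha$, $\Lambda_\alpha$ is a cardinal, as it must be for $((W,\mathcal S_\alpha),\Lambda_\alpha)$ to be a phalanx at all. Also, agreement of $W$ and $\mathcal S_\alpha$ up to some $\gamma>\Lambda_\alpha$ yields $\mathcal S_\alpha\lhd W$ only if you know $\mathrm{Ord}\cap\mathcal S_\alpha<\gamma$. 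The fact that supplies this is that the length of each extender used in $\mathcal U^*$ is a cardinal of every later model, in particular of the last one. Together with $\rho(\mathcal S_\alpha)\le\Lambda_\alpha$, this bounds the height of $\mathcal S_\alpha$ when $\mathcal S_\alpha$ is a \emph{proper} initial segment of the last model. The case where $\mathcal S_\alpha$ \emph{is} the last model needs a separate argument.

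\textbf{Second bullet.} The appeal to ``the standard comparison of sound mice projecting to the same ordinal'' does not apply. You have $\rho(\mathrm{Ult}(W^*,E))\le\mathrm{crit}(E)<\Lambda_\alpha$, while $\rho(\mathcal S_\alpha)$ can equal $\Lambda_\alpha$ (whenever $\rho_{n(\alpha)+1}(\mathcal Q_\alpha)=\alpha$). If $E$ is total on $W$, then $\mathrm{Ult}(W,E)$ is a weasel. Most importantly, nothing in the bullet shows that $\mathcal U^*$ stops after one step.

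\textbf{What is needed.} This is the argument the paper uses at the end of the proof of Lemma~\ref{LA-2}:
\begin{itemize}
\item First show that $\mathcal S_\alpha$ is the last model of $\mathcal U^*$, not a proper initial segment of it, using the cardinality of extender lengths just described.
\item Then use that $\mathcal S_\alpha$ is sound above $\Lambda_\alpha$, i.e.\ generated by $\Lambda_\alpha\cup\Pi_\alpha(q_\alpha)$, to exclude every extender on the main branch of $\mathcal U^*$ other than the first.
\item The same soundness consideration identifies that first extender as $E^W_{\Lambda_\alpha}$ applied to $W^*$.
\end{itemize}
The degree and parameter bookkeeping you single out as the main obstacle is secondary. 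The missing idea is this soundness-versus-main-branch argument, together with the cardinality argument that locates $\mathcal S_\alpha$.
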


It is worth noticing that, if $\sup(\Pi_\alpha[\alpha])$ is a limit cardinal in $W$, then $\mathcal S_\alpha\lhd W$. 
This happens when $\alpha = \mu$, and thus $\sup(\Pi_\mu[\mu]) = \nu$.

\begin{lemma}[Lemma 2.3, \cite{MS2}]\label{L2-6}
    Suppose $\mathcal S_\alpha$ is a weasel, and $\mathcal P_\alpha = \mathcal Q_\alpha$. 
    Assume that $\alpha>\delta = \mathrm{crit}(\pi)$.
    We have the following chain of embeddings:
    $$W\xrightarrow{i^{\mathcal T}_{0,\eta(\alpha)}}\mathcal M^{\mathcal T}_{\eta(\alpha)} = \mathcal P_\alpha = \mathcal Q_\alpha\xrightarrow{\Pi_\alpha}\mathcal S_\alpha.$$
    Let the composition be $j:W\to \mathcal S_\alpha$. 
    Then $\mathrm{crit}(j)<\delta$.
    Let $G$ be the extender derived from $j$ with length $\sup(\pi[\alpha])$. 
    Then $G\in W$ and it is a short extender.
    Moreover, there exists two fully elementary embeddings
    $\iota_1:\mathrm{Ult}(W;G)\to\mathcal N$ and $\iota_2:\mathcal S_\alpha\to \mathcal N$ with critical points $\geq \Omega_0$.
\end{lemma}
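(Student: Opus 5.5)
The plan is to follow the proof of Lemma 2.3 of \cite{MS2}, adapted to the corrected setting. Write $j = \Pi_\alpha\circ i^{\mathcal T}_{0,\eta(\alpha)}:W\to\mathcal S_\alpha$. The first step is to show that $\mathrm{crit}(j)<\delta$. Since $\mathcal S_\alpha$ is a weasel, $\mathcal Q_\alpha=\mathcal P_\alpha=\mathcal M^{\mathcal T}_{\eta(\alpha)}$ is a weasel, so there is no drop on $[0,\eta(\alpha)]_{\mathcal T}$ and $j$ is a fully elementary class embedding.

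Suppose toward a contradiction that $\mathrm{crit}(j)\geq\delta$. Then $\mathcal M^{\mathcal T}_{\eta(\alpha)}$ agrees with $W$ below $\delta$, and also agrees with $\overline W$ below $\alpha>\delta$. Now use the hull property. By the propositions on $\mathcal Q$-structures, $\mathcal Q_\alpha$ has the $t_\alpha$-hull property at $\alpha$, and here $t_\alpha=\emptyset$ because $\mathcal P_\alpha=\mathcal Q_\alpha$ is not in the protomouse case. Moreover, the iteration maps fix a thick class $\Gamma$. Together these give
\[
\mathcal P(\delta)\cap\mathcal Q_\alpha=\mathcal P(\delta)\cap W .
\]
Since $\mathcal Q_\alpha$ agrees with $\overline W$ below $\alpha$, and $\alpha>\delta$ is a $\overline W$-cardinal, this yields $\mathcal P(\delta)\cap\overline W=\mathcal P(\delta)\cap W$. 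That contradicts Requirement 1, which says $(\delta^+)^{\overline W}<(\delta^+)^W$. Hence $\kappa:=\mathrm{crit}(j)<\delta$, and in fact the first extender used on the main branch of $\mathcal T$ has critical point below $\delta$.

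The next step is to show $G\in W$, where $G$ is the extender derived from $j$ with length $\Lambda=\sup(\pi[\alpha])$. Note that $\Lambda=\sup(\Pi_\alpha[\alpha])$. Since $\mathcal S_\alpha$ has the definability property above $\Lambda$ relative to $c(\mathcal S_\alpha)=\emptyset$, I will compare $W$ with the phalanx $((W,\mathcal S_\alpha),\Lambda)$, which is iterable by Requirement 2. Universality of $W$ and the standard argument show that the phalanx side comes out above $W$. The final model on the phalanx side sits above $\mathcal S_\alpha$, not $W$, because $j$ is not the identity below $\delta$. The resulting embeddings, restricted to a thick class, have critical points $\geq\Lambda$ on the $\mathcal S_\alpha$ side.

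From this one reads off that $G$ is, up to its generators below $\Lambda$, the extender of an iteration of $W$ that is an initial segment of $W$'s own coiteration. The point to verify is that $\mathcal P(\kappa)\cap W=\mathcal P(\kappa)\cap\mathcal S_\alpha$ and that each $a\in[\Lambda]^{<\omega}$ gives a component measure computable in $W$. Here I will use $\Lambda<\Omega_0$ and the fact that $W$ is closed enough (it is $\Omega_0$-closed below). This gives $G\in W$. Shortness of $G$ holds because $G$ measures only $\mathcal P(\kappa)$, with $\kappa<\delta<\Lambda$ and generators below $\Lambda<j(\kappa)$. I expect the main obstacle to be checking that the generators of $G$ are bounded by $j(\kappa)$ and that membership in $W$ survives when $\alpha$ is a successor $\overline W$-cardinal.

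Finally, compare $\mathrm{Ult}(W;G)$ and $\mathcal S_\alpha$. The factor map $k:\mathrm{Ult}(W;G)\to\mathcal S_\alpha$ has critical point $\geq\Lambda$. Both models are weasels with the definability property above $\Lambda$ relative to the empty parameter, and both are iterable: the first because $G\in W$, the second by Requirement 2. Coiterate them. Using the hull property at $\Lambda$ and the usual thick-class argument, neither side can use extenders with critical point below $\Omega_0$. Indeed, such an extender would produce a disagreement on some $\mathcal P(\beta)$ with $\beta<\Omega_0$, and this disagreement is ruled out by definability of $\Omega_0$ from $\Lambda\cup\Gamma$ in both models. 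So the coiteration yields a common model $\mathcal N$ together with maps $\iota_1:\mathrm{Ult}(W;G)\to\mathcal N$ and $\iota_2:\mathcal S_\alpha\to\mathcal N$ whose critical points are $\geq\Omega_0$, as required.
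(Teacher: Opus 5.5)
\textbf{Verdict.} The proposal has a genuine gap at the central claim: the argument for $G\in W$ is not valid, and shortness of $G$ is asserted rather than proved.

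\textbf{Parts that are fine.} Your first step works. If $\mathrm{crit}(j)\geq\delta$, then $\mathrm{crit}(i^{\mathcal T}_{0,\eta(\alpha)})\geq\delta$, so $\mathcal P(\delta)\cap W\subseteq\mathcal Q_\alpha$. Agreement of $\mathcal Q_\alpha$ with $\overline W$ below the $\overline W$-cardinal $\alpha>\delta$ then contradicts Requirement 1; no hull property is needed.

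The last step is also fine once $G$ is in hand, and it needs no coiteration. The factor map $k:\mathrm{Ult}(W;G)\to\mathcal S_\alpha$, $k([a,f]_G)=j(f)(a)$, has $\Lambda\cup\Gamma$ in its range for any thick $\Gamma$ fixed by $j$. The definability property of $\mathcal S_\alpha$ above $\Lambda$ (with $c(\mathcal S_\alpha)=\emptyset$) then gives $\Omega_0\subseteq\mathrm{ran}(k)$, hence $\mathrm{crit}(k)\geq\Omega_0$. So you may take $\mathcal N=\mathcal S_\alpha$, $\iota_1=k$, $\iota_2=\mathrm{id}$. If you want iterability of $\mathrm{Ult}(W;G)$, it comes from copying along $k$, not from $G\in W$.

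\textbf{The gap.} Your justification for $G\in W$ is that each component measure $G_a$ is computable in $W$ and that $W$ is ``closed enough.'' This does not work. $W$ is an iterate of $K$ and has no closure under sequences from $V$. Knowing $G_a\in W$ for each $a\in[\Lambda]^{<\omega}$ separately does not put $\langle G_a\mid a\in[\Lambda]^{<\omega}\rangle$ into $W$. Likewise, $\Lambda\leq j(\kappa)$ is only asserted, as you yourself note.

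\textbf{What is missing.} You need a step that moves $G$ to the $W$-side of the comparison you set up.
\begin{itemize}
\item Let $\mathcal U$ be the tree on $W$ and $\mathcal V$ the tree on $((W,\mathcal S_\alpha),\Lambda)$. The last models coincide, neither main branch drops, and the main branch of $\mathcal V$ passes through $\mathcal S_\alpha$. The reason for the last point is the usual hull-property argument: if the branch went through $W$, the first extenders on the two main branches would be compatible, which comparison rules out. It is not that $j$ moves ordinals below $\delta$.
\item Hence $\mathrm{crit}(i^{\mathcal V})\geq\Lambda$.
\item Take $\Gamma$ fixed by $i^{\mathcal U}$, $i^{\mathcal V}$ and $j$. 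By the hull property of $W$ at $\kappa$, each $A\in\mathcal P(\kappa)\cap W$ is $\tau^W[b,\gamma]\cap\kappa$ for some $b\in[\kappa]^{<\omega}$ and $\gamma\in[\Gamma]^{<\omega}$. So $i^{\mathcal U}$ and $i^{\mathcal V}\circ j$ agree on $\mathcal P(\kappa)\cap W$, and $G$ is the $(\kappa,\Lambda)$-extender derived from $i^{\mathcal U}$.
\item Since $W$ and $\mathcal S_\alpha$ agree below $\Lambda$, every extender of $\mathcal U$ is indexed at or above $\Lambda$. One then reads off $G\in W$, and its shortness, from the first extender applied along the main branch of $\mathcal U$. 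This uses the agreement between the models of $\mathcal U$ and $W$ below the index of the first extender used.
\end{itemize}
This identification, not any closure property of $W$, is the substance of the lemma, and your proposal does not supply it.
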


\begin{lemma}[Lemma 2.4, \cite{MS2}]\label{L2-7}
    Suppose $\mathcal S_\alpha$ is a weasel, and $\mathcal P_\alpha \neq \mathcal Q_\alpha$. 
    Assume that $\alpha>\delta = \mathrm{crit}(\pi)$.
    We have the following chain of embeddings:
    $$W\xrightarrow{i^{\mathcal T}_{0,\eta(\lambda_{\ell})}}\mathcal M^{\mathcal T}_{\eta(\lambda_{\ell})} = \mathcal P_{\lambda_{\ell}} = \mathcal Q_{\lambda_{\ell}}\xrightarrow{\phi_\ell}\mathcal Q_{\alpha}\xrightarrow{\Pi_\alpha}\mathcal S_\alpha.$$
    Let the composition be $j:W\to \mathcal S_\alpha$. 
    Then $\mathrm{crit}(j)<\delta$.
    Let $G$ be the extender derived from $j$ with support $\sup(\pi[\alpha])\cup\Pi_\alpha(t_\alpha)$. 
    Then $G\in W$ and it is a short extender.
    Moreover, there exists two fully elementary embeddings
    $\iota_1:\mathrm{Ult}(W;G)\to\mathcal N$ and $\iota_2:\mathcal S_\alpha\to \mathcal N$ with critical points $\geq \Omega_0$.
\end{lemma}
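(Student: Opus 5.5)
The plan is to follow the proof of Lemma~2.3 of \cite{MS2}, adapted to the case $\mathcal P_\alpha\neq\mathcal Q_\alpha$ by pushing the argument through the decomposition of $\mathcal Q_\alpha$. Write $j=\Pi_\alpha\circ\phi_\ell\circ i^{\mathcal T}_{0,\eta(\lambda_\ell)}$. The proof has three steps, carried out in order.

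\textbf{Step 1: $\mathrm{crit}(j)<\delta$.} Suppose instead that $j\upharpoonright\delta$ is the identity. Since $\mathcal S_\alpha$ is a weasel, none of the three maps drops. The weasel $\mathcal S_\alpha$ has the $\Pi_\alpha(t_\alpha)$-definability property above $\sup(\Pi_\alpha[\alpha])$. Comparing $W$ and $\mathcal S_\alpha$ through a common thick class, one would then get that $\mathcal P(\delta)\cap W$ is contained in the collapse of a hull of $\mathcal Q_\alpha$ computed in $\overline W$. This would give $\mathcal P(\delta)\cap W\subseteq\overline W$, contradicting Requirement~1.

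To make the hull statement precise I would use Lemma~\ref{L2-4}. In the weasel case it identifies $\alpha\cap\mathrm{Hull}^{\mathcal Q_\alpha}(\xi\cup t_\alpha\cup\Gamma)$ with a $\Sigma_1$ hull of $\mathcal P_\alpha$, and $\mathcal P_\alpha$ agrees with $\overline W$ below $\alpha$. This replaces the role that $\mathcal P_\alpha=\mathcal Q_\alpha$ played in \cite{MS2}.

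\textbf{Step 2: $G\in W$ and $G$ is short.} Let $G$ be the extender derived from $j$ with support $\sup(\pi[\alpha])\cup\Pi_\alpha(t_\alpha)$. Because $t_\alpha$ is the least parameter giving the hull property at $\alpha$, the support is exactly what is needed to capture $\mathcal P(\alpha)\cap\mathcal Q_\alpha$. Its generators therefore lie below $j(\mathrm{crit}(j))$ up to the finitely many coordinates in $\Pi_\alpha(t_\alpha)$, which gives shortness.

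To show $G\in W$, iterate the phalanx $((W,\mathcal S_\alpha),\sup(\pi[\alpha]))$ against $W$; this phalanx is iterable by Requirement~2. The standard argument, using the universality of $W$ and the fact that $\mathrm{crit}(j)<\delta$, puts the last model above $W$ and shows the $\mathcal S_\alpha$ side does not move below $\sup(\pi[\alpha])$. The same comparison shows that $G$ equals the restriction of an extender on the $W$-side sequence, by the initial segment condition, so $G\in W$.

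\textbf{Step 3: the embeddings $\iota_1,\iota_2$.} Let $\mathcal N$ be the common final model from the comparison of $\mathrm{Ult}(W;G)$ with $\mathcal S_\alpha$. Define $\iota_1$ and $\iota_2$ through the factor map $[a,f]_G\mapsto j(f)(a)$ composed with the iteration maps. Since both sides agree up to $\Omega_0$ on the relevant hulls, coming from definability over $\sup(\pi[\alpha])\cup\Pi_\alpha(t_\alpha)\cup\Gamma$, the critical points are $\ge\Omega_0$.

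\textbf{Main obstacle.} I expect the hardest part to be the bookkeeping in Step~2 across the protomouse decomposition. One must check that the Dodd parameters $\phi_i(s(\dot F^{\mathcal P_{\lambda_i}}))$ are moved coherently by $\Pi_\alpha$. This is what ensures that the support including $\Pi_\alpha(t_\alpha)$ is enough for the factor map onto $\mathcal S_\alpha$ to be surjective modulo the thick class. For this I would use Dodd solidity of each $\dot F^{\mathcal P_{\lambda_i}}$, proceeding by induction on $\ell$.
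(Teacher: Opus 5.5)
Your Step~1 is correct in substance, though Lemma~\ref{L2-4} and thick classes are not needed. If $\mathrm{crit}(j)\ge\delta$, then each $A\in\mathcal P(\delta)\cap W$ equals $j(A)\cap\delta$, which lies in $\mathcal S_\alpha$. Moreover $\mathcal P(\delta)\cap\mathcal S_\alpha=\mathcal P(\delta)\cap\mathcal Q_\alpha\subseteq\overline W$, because $\mathrm{crit}(\Pi_\alpha)=\delta$ and $\mathcal Q_\alpha$ agrees with $\overline W$ on subsets of $\delta$.

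\textbf{The gap is in Step~2.}

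\emph{Shortness.} Being short means the \emph{whole} support $\sup(\pi[\alpha])\cup\Pi_\alpha(t_\alpha)$ lies below $j(\kappa)$, where $\kappa=\mathrm{crit}(j)$. Minimality of $t_\alpha$ for the hull property says nothing about where $j(\kappa)$ sits. Indeed, ``generators below $j(\kappa)$ up to finitely many coordinates in $\Pi_\alpha(t_\alpha)$'' is exactly what a long extender looks like. Nothing you have shown rules out $i^{\mathcal T}_{0,\eta(\lambda_\ell)}(\kappa)<\mathrm{crit}(\phi_\ell)$. In that case $j(\kappa)=\pi(i^{\mathcal T}_{0,\eta(\lambda_\ell)}(\kappa))<\sup(\pi[\alpha])$, so $G$ is long. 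Excluding this requires Requirement~2.

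\emph{Membership in $W$.} The initial segment condition concerns restrictions of an extender to an initial segment of its generators. In the protomouse case, which is the only thing this lemma adds to Lemma~\ref{L2-6}, the support is not of that form, because $\Pi_\alpha(t_\alpha)$ lies above $\sup(\pi[\alpha])$. So the initial segment condition does not give $G\in W$.

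\emph{Which side is rooted where.} In the comparison it is the \emph{phalanx} side whose main branch must start at $\mathcal S_\alpha$. If it started at $W$, the comparison would tell you nothing about $j$.

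What is missing is the argument of \cite{MS2}, Lemma~2.4, which the appendix's proof of Lemma~\ref{LA-3} also invokes. Coiterate $W$ against $((W,\mathcal S_\alpha),\sup(\pi[\alpha]))$, producing $\mathcal U$ on $W$ and $\mathcal V$ on the phalanx. The steps are:
\begin{enumerate}
\item Show $\mathrm{root}^{\mathcal V}=\mathcal S_\alpha$, that there are no drops, and that the last models agree.
\item Show $\mathrm{crit}(i^{\mathcal V})\ge\Omega_0$, using the $\Pi_\alpha(t_\alpha)$-definability property of $\mathcal S_\alpha$.
\item Since $W$ has the definability property below $\Omega_0$, $i^{\mathcal U}$ and $i^{\mathcal V}\circ j$ agree below $\Omega_0$. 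Hence $G$ is also the extender derived from $i^{\mathcal U}$.
\item Let $E^{\mathcal U}_\xi$ be the first extender on the $\mathcal U$-main branch. Show that $\mathcal M^{\mathcal U}_{\xi+1}$ has the $r$-hull property at $\sup(\pi[\alpha])$ for $r=\Pi_\alpha(t_\alpha)$, and that $i^{\mathcal U}_{\xi+1,\infty}$ fixes $r$.
\item Conclude $G=E^{\mathcal U}_\xi\upharpoonright(\sup(\pi[\alpha])\cup r)$. Then $G$ inherits shortness from $E^{\mathcal U}_\xi$, and $G\in W$ follows from the agreement of $\mathcal M^{\mathcal U}_\xi$ with $W$, coding $G$ as a bounded set as in the corollary after Lemma~\ref{LA-3}.
\end{enumerate}
Locating $\Pi_\alpha(t_\alpha)$ among the generators of $E^{\mathcal U}_\xi$ is the real obstacle.

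The obstacle you named is not. Take $\Gamma$ fixed by $j$ and let $k:\mathrm{Ult}(W;G)\to\mathcal S_\alpha$ be the factor map. Its range contains $\sup(\pi[\alpha])\cup\Pi_\alpha(t_\alpha)\cup\Gamma$. The recorded definability property of $\mathcal S_\alpha$ then gives $\mathrm{crit}(k)\ge\Omega_0$. So $\iota_1=k$ and $\iota_2=\mathrm{id}$ already handle Step~3, with no further comparison and no induction on $\ell$.
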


\subsection{The $\mathcal S_{XY}$-structure} 
We now analyze the correspondence between two different structures in $\mathcal F$ and will no longer suppress the subscript $X$.
Where the subscript $X$ clashes with other indices, it will be moved to the superscript.
Let $X, Y\in\mathcal F$ such that $X\subseteq Y$.
We write $\pi_{XY} = \pi_Y^{-1}\circ\pi_{X}$.
Using the notation established previously, define a structure
$$\mathcal S^{XY}_\alpha = \mathrm{Ult}(\mathcal Q^X_\alpha,\pi_{XY},\mathrm{sup}(\pi_{XY}[\alpha])).$$
This structure serves as a bridge between $X$ and $Y$.

To make this argument more precise, we need to implement more assumptions for $X$ and $Y$. 
To do this, we shrink $\mathcal F$ to a stationary subset of $\mathcal P(H_{\Omega^+})$.
Let $\mu_X = \pi^{-1}_X(\nu)$, $\theta_X = \eta(\mu_X)$. 
Let $\mathcal G\subseteq \mathcal F$ be a stationary subset such that the following questions have the same answers for every $X\in \mathcal G$:

(List A)
\begin{itemize}
    \item Is $\mathcal{M}_{\theta_X}^{\mathcal T_X}$ a set mouse or a weasel?
    \item What is the value of $m_X(\mu_X)$?
    \item Is $\mathcal{Q}_{\mu_X}^X$ defined by the protomouse case?
    \item Is $\mathcal{Q}_{\mu_X}^X$ a set mouse or a weasel?
    \item What is the value of $n_X(\mu_X)$?
\end{itemize}
We write $m = m_X(\mu_X)$ and $n = n_X(\mu_X)$ for the common values.
Let $\mathcal M_X$ denote $\mathcal{M}_{\theta_X}^{\mathcal T_X}$, $\mathcal Q_X$ for $\mathcal{Q}_{\mu_X}^X$ and $\mathcal S_X$ for $\mathcal{S}_{\mu_X}^X$.
For each pair $X\subseteq Y$ in $\mathcal G$, we write $\mathcal S_{XY} = \mathcal S^{XY}_{\mu_X}$.
$\Pi_{XY}:\mathcal Q_X\to \mathcal S_{XY}$ is the ultrapower embedding. 

\begin{lemma}\label{L2-8}
    Let $\overrightarrow {\mathcal P_Y}^\frown\mathcal S_{XY}$ denote the phalanx
    $$( \langle \mathcal{P}_{(\aleph_{\alpha})^{W_Y}}^Y \mid \alpha < \beta \rangle^\frown \langle \mathcal{S}_{XY} \rangle, \langle (\aleph_{\alpha})^{W_Y} \mid \alpha < \beta \rangle^\frown \langle \mu_Y \rangle ),$$
    where $(\aleph_\beta)^{W_Y} = \mu_Y$.
    Then $\overrightarrow {\mathcal P_Y}^\frown\mathcal S_{XY}$ is iterable.
\end{lemma}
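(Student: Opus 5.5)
We reduce the iterability of the phalanx $\overrightarrow{\mathcal P_Y}^\frown\mathcal S_{XY}$ to Requirement 2 for $Y$, by realizing $\mathcal S_{XY}$ inside $\mathcal S^Y_{\mu_Y}$ and the earlier models of the phalanx inside $W$. The method is the standard one from \cite{MSS} and \cite{MS2}: an iteration tree on the phalanx is copied to an iteration tree on $((W,\mathcal S^Y_{\mu_Y}),\sup(\pi_Y[\mu_Y]))$, and iterability of the latter phalanx transfers back.

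\textbf{Step 1 (embedding $\mathcal S_{XY}$ into $\mathcal S_Y$).} Since $\pi_X=\pi_Y\circ\pi_{XY}$, the extender of length $\sup(\pi_X[\mu_X])=\nu$ derived from $\pi_X$ factors through the extender derived from $\pi_{XY}$. Hence there is a factor map
$$k:\mathcal S_{XY}=\mathrm{Ult}(\mathcal Q_X,\pi_{XY},\mu_Y)\to \mathcal S_X=\mathrm{Ult}(\mathcal Q_X,\pi_X,\nu),\qquad k([a,f])=[\pi_Y(a),f].$$
This map is $r\Sigma_{n+1}$-elementary and agrees with $\pi_Y$ on $\mu_Y$. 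Here we use $\sup(\pi_{XY}[\mu_X])=\mu_Y$, which holds because $\sup(\nu\cap X)=\nu$. The map $k$ sends $\mathcal S_{XY}$ into a model that Requirement 2 for $X$ and Lemma \ref{L2-5} place inside $W$ (a set mouse with $\mathcal S_X\lhd W$, since $\nu$ is a limit cardinal of $W$), or that Lemmas \ref{L2-6} and \ref{L2-7} compare with $W$ in the weasel case.

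A cleaner route is probably to send $\mathcal S_{XY}$ directly into $\mathcal S^Y_{\mu_Y}$. The point is that $\mathcal S_{XY}$ agrees with $W_Y$, and hence with $\mathcal P^Y_{\mu_Y}$, below $\mu_Y$, so that $\mathcal S_{XY}$ can be compared with $\mathcal Q^Y_{\mu_Y}$. I would first try the former, concrete map $k$ together with $\pi_Y\restriction W_Y$ on the other models.

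\textbf{Step 2 (embedding the earlier models).} For $\alpha<\beta$, each $\mathcal P^Y_{(\aleph_\alpha)^{W_Y}}$ is an initial segment of an iterate $\mathcal M^{\mathcal T_Y}_\eta$ of $W$ that agrees with $W_Y$ below $\aleph_\alpha^{W_Y}$. We lift these models through $\Pi^Y$, the ultrapower by $\pi_Y$, to $\mathcal S^Y_{(\aleph_\alpha)^{W_Y}}$. Requirement 2 for $Y$ then provides the iterable phalanx $((W,\mathcal S^Y_{\aleph_\alpha}),\sup\pi_Y[\aleph_\alpha])$.

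The embeddings have to be compatible. Each pair of maps must agree on the exchange ordinal, $(\aleph_\alpha)^{W_Y}$ or $\mu_Y$, where the maps are just $\pi_Y$. This agreement is exactly what the copying construction for phalanxes needs, because $\pi_Y$ is continuous at these $W_Y$-cardinals up to $\sup$.

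\textbf{Step 3 (copying).} Given a tree $\mathcal U$ on $\overrightarrow{\mathcal P_Y}^\frown\mathcal S_{XY}$, we build by induction a copied tree on the phalanx of lifted models. We use the usual shift lemma, with the agreement from Step 2 handling which model each extender applies to. Wellfoundedness of the branches of the copy pulls back, so a branch choice for the copy induces one for $\mathcal U$.

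The lifted phalanx of Step 2 has many models, not just the two allowed in Requirement 2. We handle this by noting that all $\mathcal S^Y_{\aleph_\alpha}$ for $\alpha<\beta$ either are initial segments of $W$ or are absorbed into $W$-iterates via Lemmas \ref{L2-5}–\ref{L2-7}. The phalanx therefore collapses to $((W,\mathcal S),\mu)$-type phalanxes, which are iterable by Requirement 2.

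\textbf{Main obstacle.} The hardest step is the protomouse case, where $\mathcal Q\neq\mathcal P$. There the degrees $n(\alpha)$ and the class parameters $t_\alpha$ must be preserved so that the copy maps are elementary enough for the shift lemma, and we rely on the decomposition and Lemma \ref{L2-4} to track this. A further difficulty arises when the models are weasels: there we must use the maps $\iota_1,\iota_2$ of Lemmas \ref{L2-6} and \ref{L2-7}, which have critical points $\ge\Omega_0$, to replace $\mathcal S$ by $\mathrm{Ult}(W;G)$, a model that is trivially iterable with $W$.
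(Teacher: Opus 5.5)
The paper states this lemma without proof, as part of its review of \cite{MS2}, so there is no internal argument to compare with; judged on its own, your proposal has a genuine gap. Your Step~1 is sound. Because $\pi_X=\pi_Y\circ\pi_{XY}$, the factor map $k([a,f])=[\pi_Y(a),f]$ from $\mathcal S_{XY}$ into $\mathcal S_X$ is well defined. It agrees with $\pi_Y$ below $\mu_Y$, and $((W,\mathcal S_X),\nu)$ is iterable by Requirement~2 for $X$. The problem is the lower models $\mathcal P^Y_\lambda$.

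First, $\Pi^Y_\lambda$ is an embedding of $\mathcal Q^Y_\lambda$, not of $\mathcal P^Y_\lambda$. In the protomouse case, $\mathcal P^Y_\lambda$ is active of type II with top critical point $\kappa$, and $\pi_Y$ is discontinuous at $(\kappa^+)^{\mathcal P^Y_\lambda}$. The ultrapower of $\mathcal P^Y_\lambda$ by the $\pi_Y$-extender is then exactly a protomouse whose top predicate is not total, i.e. not a premouse; this is why $\mathcal Q$ was introduced. So you have no map on $\mathcal P^Y_\lambda$ agreeing with $\pi_Y$ below $\lambda$ to feed into the shift lemma. You cannot avoid needing one, because a tree on the phalanx applies extenders with critical point between the previous exchange ordinal and $\lambda$ to $\mathcal P^Y_\lambda$ itself. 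Lemma~\ref{L2-4} does not help: it compares hulls of $\mathcal P_\alpha$ and $\mathcal Q_\alpha$ below $\alpha$ and yields no embedding. Relatedly, $\pi_Y$ is not in general continuous at $W_Y$-cardinals; discontinuity is exactly what produces protomice and the second alternative of Lemma~\ref{L2-5}.

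Second, even without protomice, you never show that your target phalanx is iterable. Requirement~2 gives the two-model phalanx $((W,\mathcal S^Y_\lambda),\sup\pi_Y[\lambda])$ one $\lambda$ at a time. Your target contains all the lifts $\mathcal S^Y_\lambda$ for $\lambda<\mu_Y$ together with $\mathcal S_X$. Only if every $\mathcal S^Y_\lambda\lhd W$ can you replace the lifts by $W$ and reduce to $((W,\mathcal S_X),\nu)$. Lemma~\ref{L2-5}, however, allows $\mathcal S^Y_\lambda=\mathrm{Ult}(W^*,E^W_{\Lambda})$. In the weasel case, Lemmas~\ref{L2-6} and~\ref{L2-7} give only $\iota_2:\mathcal S^Y_\lambda\to\mathcal N_\lambda$, where $\mathcal N_\lambda$ is an iterate of $\mathrm{Ult}(W;G_\lambda)$ and depends on $\lambda$. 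These models do not lie on a single iteration tree on $W$; for example, $E^W_{\Lambda'}$ with $\Lambda'>\Lambda$ need not be on the sequence of $\mathrm{Ult}(W^*,E^W_\Lambda)$. Iterability of each two-model phalanx separately does not yield iterability of the many-model phalanx, since a tree may use all the models and the branch choices must cohere.

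Your weasel remark is also backwards. The maps $\iota_1$ and $\iota_2$ both go into $\mathcal N$, so nothing maps $\mathcal S$ into $\mathrm{Ult}(W;G)$. Moreover, iterability of phalanxes involving $\mathrm{Ult}(W;G)$ with $G\in W$ off the sequence is a nontrivial theorem, not a triviality. Finally, your ``cleaner route'' into $\mathcal S^Y_{\mu_Y}$ is circular. Relating $\mathcal S_{XY}$ to $\mathcal Q_Y$ or $\mathcal M_Y$ is the content of Lemmas~\ref{L2-9} and~\ref{L2-10}, whose proofs are comparisons that presuppose the present lemma. What is missing is one coherent realization of the whole phalanx, protomouse levels included, into a structure whose iterability is actually known. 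For example, you could run the realization argument that underlies Requirement~2 (Theorems~\ref{T2-2} and~\ref{T2-3}) directly for $\pi_{XY}$, rather than invoking Requirement~2 model by model.
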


Given the iterability of $\overrightarrow {\mathcal P_Y}^\frown\mathcal S_{XY}$, we compare the phalanx and 
$$( \langle \mathcal{P}_{(\aleph_{\alpha})^{W_Y}}^Y \mid \alpha \leq \beta \rangle, \langle (\aleph_{\alpha})^{W_Y} \mid \alpha \leq \beta \rangle).$$
Let the coiteration trees be $\mathcal T_{XY}$ and $\mathcal U_{XY}$, respectively.
The following lemmas are the direct consequences of this comparison argument under different cases for $\mathcal Q_X$.
These lemmas also serve as corrections to \cite[Lemma 4.2]{MS2}.
We will record the proofs in the Appendix of this article.

\begin{lemma}\label{L2-9}
Suppose $\mathcal{S}_{XY}$ is a set mouse.
Then $\mathcal T_{XY}$ is trivial and, either
$$\mathcal{S}_{XY} \unlhd \mathcal{M}_Y,$$
or there is $\lambda < \mu_Y$ and an extender $E$ on the $\mathcal{M}_Y$ sequence such that $\mathrm{crit}(E) = \kappa$, $\lambda = (\kappa^+)^{W_Y}$ and
$$\mathcal{S}_{XY} = \mathrm{Ult}_n( \mathcal{P}^Y_\lambda , E).$$
\end{lemma}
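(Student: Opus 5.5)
We will prove Lemma~\ref{L2-9} by running the comparison of the phalanx $\overrightarrow{\mathcal P_Y}^\frown\mathcal S_{XY}$ against $(\langle \mathcal P^Y_{(\aleph_\alpha)^{W_Y}}\mid\alpha\le\beta\rangle,\ldots)$, which is legitimate by Lemma~\ref{L2-8}, and then read off the conclusion from the standard analysis of where $\mathcal S_{XY}$ ends up. The argument parallels the proof of Lemma~\ref{L2-5} for $\mathcal S_\alpha$ versus $W$, with $\mathcal P^Y_{\mu_Y}$ (equivalently $\mathcal M_Y$ below the relevant height) in the role of $W$.

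\emph{Step 1: the final models are on the same branch.} The two phalanxes agree below $\mu_Y$: $\mathcal S_{XY}$ agrees with $W_Y$ below $\sup(\pi_{XY}[\mu_X])=\mu_Y$, since $\mu_X$ is a limit cardinal of $W_X$ and $X\cap\nu$ is cofinal in $\nu$. So all extenders used in the coiteration have index $\ge\mu_Y$. We show that the last model of $\mathcal T_{XY}$ lies above $\mathcal S_{XY}$ and not above some $\mathcal P^Y_{(\aleph_\alpha)^{W_Y}}$ with $\alpha<\beta$. If it lay above such a model, we would derive a contradiction with the fact that these models project to $\le(\aleph_\alpha)^{W_Y}<\mu_Y$ while the corresponding model on the other side is sound above that point; this is the standard ``both sides are sound so there is no drop'' argument.

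\emph{Step 2: $\mathcal T_{XY}$ is trivial.} $\mathcal S_{XY}$ is a set mouse, $(n+1)$-sound above $\mu_Y$ relative to $\Pi_{XY}(q_{\mu_X})$, and it projects to $\le\mu_Y$. This follows because $\Pi_{XY}$ is an $n$-ultrapower by an extender of length $\mu_Y$ applied to $\mathcal Q_X$, which is $(n+1)$-sound above $\mu_X$ relative to $q_{\mu_X}$ and projects to $\le\mu_X$. If $\mathcal T_{XY}$ used an extender on the $\mathcal S_{XY}$ branch, its critical point would be $\ge\mu_Y\ge\rho_{n+1}(\mathcal S_{XY})$, which forces a drop. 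A set mouse side that drops loses against the other side, so no extender is used on the $\mathcal S_{XY}$ side.

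\emph{Step 3: identify $\mathcal S_{XY}$.} The final model on the $\mathcal U_{XY}$ side must then have $\mathcal S_{XY}$ as an initial segment, by the usual comparison of sound mice projecting to $\le\mu_Y$. Two cases arise.
\begin{itemize}
\item If $\mathcal U_{XY}$ is trivial above $\mathcal P^Y_{\mu_Y}$, then $\mathcal S_{XY}\unlhd\mathcal P^Y_{\mu_Y}\unlhd\mathcal M_Y$.
\item Otherwise, soundness and the fact that $\mathcal S_{XY}$ is not moved imply that $\mathcal U_{XY}$ consists of a single extender $E$, indexed at $\mu_Y$ on the $\mathcal M_Y$ sequence. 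The ``no drop'' condition for the final model being $\mathcal S_{XY}$, together with the phalanx rules, forces $E$ to be applied to the model $\mathcal P^Y_\lambda$ with $\lambda=(\kappa^+)^{W_Y}$ and $\kappa=\mathrm{crit}(E)$. Hence $\mathcal S_{XY}=\mathrm{Ult}_n(\mathcal P^Y_\lambda,E)$, where the degree $n$ matches by the definition of the $\mathcal P$-structures and List A.
\end{itemize}

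\emph{Main obstacle.} The delicate point is Step 3, and in particular showing that the degree of the ultrapower is exactly $n$ and that the ultrapower is formed from $\mathcal P^Y_\lambda$ rather than from some other initial segment. For this we will use the Dodd-solidity and parameter-minimality facts for $q_{\mu_X}$ recorded before Lemma~\ref{L2-4}, transferred to $\Pi_{XY}(q_{\mu_X})$. We will then compare the $\Sigma_{n+1}$ hulls of $\mu_Y\cup\Pi_{XY}(q_{\mu_X})$ in both models, exactly as in the proof of Lemma~\ref{L2-5}.
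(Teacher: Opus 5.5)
Your outline has the same skeleton as the paper's proof: coiterate the two phalanxes, show the last model on the $\mathcal S_{XY}$ side lies above $\mathcal S_{XY}$, show $\mathcal T_{XY}$ is trivial, then identify $\mathcal S_{XY}$ using soundness above $\mu_Y$. However, the arguments that carry Steps 1 and 2 are missing, and the substitutes you give do not work.

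In Step 2, ``a set mouse side that drops loses against the other side'' is not an argument. Your observation shows only this: if $\mathcal T_{XY}$ is nontrivial, its last model is unsound, since the branch embedding has critical point $\ge\mu_Y\ge\rho_{n+1}(\mathcal S_{XY})$ (this is unsoundness, not necessarily a drop). So that model is not a proper initial segment of the last model of $\mathcal U_{XY}$, and the two last models must be \emph{equal}. Refuting this is the real content, and it is a comparison-lemma argument. The branch embedding of $\mathcal T_{XY}$ and a suitable branch embedding of $\mathcal U_{XY}$ are both the inverse collapse of the $\Sigma_{n+1}$-hull of $\mu_Y$ together with the image of $\Pi_{XY}(q_{\mu_X})$, so the first extenders used on the two branches are compatible. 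When the $\mathcal U_{XY}$-branch starts at $\mathcal M_Y$ this is immediate. When it starts at some $\mathcal P^Y_\lambda$ with $\lambda<\mu_Y$, its critical point is below $\mu_Y$. You must first show that the first extender $E^{\mathcal U_{XY}}_\zeta$ on that branch has no generators above $\mu_Y$, because $\mathcal P(\mu_Y)$ of the last model lies in the collapse of that hull. Then $\mathcal M^{\mathcal U_{XY}}_{\zeta+1}$ is sound above $\mu_Y$, and you run the hull comparison from there. The hull comparison you reserve for Step 3 is needed here, together with this generator observation.

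Step 1 has the same defect. The two phalanxes share the models $\mathcal P^Y_{(\aleph_\alpha)^{W_Y}}$ for $\alpha<\beta$, and these need not project below $\mu_Y$ (some may be weasels). So ``the corresponding model on the other side is sound above that point'' has no content. The paper first shows the two last models are equal. It then uses that every $\mathcal P^Y_\lambda$ is an initial segment of a model on the single tree $\mathcal T_Y$ on $W$. This gives two routes from $W$ to one model, which the argument of \cite[Lemma 3.14]{MSS} refutes.

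Your dichotomy in Step 3 omits the case where $\mathcal U_{XY}$ moves but $\mathcal S_{XY}$ is a proper initial segment of its last model $\mathcal N$. This case is excluded as follows: $\mathcal S_{XY}$, being sound above $\mu_Y$, equals $\mathcal J^{\mathcal N}_\alpha$ for some $\alpha<(\mu_Y^+)^{\mathcal N}$. Every extender used in $\mathcal U_{XY}$ has length at least $(\mu_Y^+)^{\mathcal N}$, so $\mathcal S_{XY}\lhd\mathcal M_Y$. Only after this does soundness above $\mu_Y$ force the main branch to be the single step $\mathrm{Ult}(\mathcal P^Y_\lambda,E)$. The tree itself may use other extenders, and $E$ is indexed above $\mu_Y$, not at it, since $\mu_Y$ is a limit cardinal of $\mathcal M_Y$. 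Dodd solidity is not needed; the real difficulty lies in Steps 1 and 2.
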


\begin{lemma}\label{L2-10}
Suppose $\mathcal{S}_{XY}$ is a weasel.
Then both $\mathcal T_{XY}$ and $\mathcal U_{XY}$ have successor lengths, and the last models of both trees are the same. 
Moreover, both trees have no drops on the main branches, and $\mathrm{root}^{\mathcal T_{XY}} = \mathcal S_{XY}$. 
In the case $\mathrm{root}^{\mathcal U_{XY}} = \mathcal M_{Y}$, $\mathcal M_{Y}$ is a weasel, and 
$$\mathcal M_Y\upharpoonright \pi^{-1}_Y(\Omega_0) = \mathcal S_{XY}\upharpoonright \pi^{-1}_Y(\Omega_0).$$
Otherwise, $\mathrm{root}^{\mathcal U_{XY}} = \mathcal P^{Y}_\lambda$ for some $\lambda<\mu_Y$, and $\mathcal P^{Y}_\lambda$ is a weasel.
Let $\mathcal M_{\eta+1}^{\mathcal T_{XY}}$ be the successor of $\mathcal P^{Y}_\lambda$ on the main branch and let 
$$G = E^{\mathcal T}_{\eta}\upharpoonright \mu_Y\cup c(\mathcal S_{XY}).$$ 
Then
$$\mathrm{Ult}(\mathcal P_\lambda^Y,G)\upharpoonright \pi_Y^{-1}(\Omega_0) = \mathcal S_{XY}\upharpoonright \pi_Y^{-1}(\Omega_0).$$
If $X\in Y$, then $G\in\mathcal M_X$.
\end{lemma}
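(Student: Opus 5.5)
We compare the iterable phalanx $\overrightarrow{\mathcal P_Y}^\frown\mathcal S_{XY}$ (Lemma~\ref{L2-8}) with $\overrightarrow{\mathcal P_Y}$ extended by $\mathcal M_Y$ at $\mu_Y$, obtaining the trees $\mathcal T_{XY}$ and $\mathcal U_{XY}$. The argument follows the proof of Lemma~\ref{L2-6} (and Lemma~\ref{L2-7} in the protomouse case), with $W$ replaced by the $Y$-side phalanx.

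\textbf{Step 1: both sides end in the same weasel.} Since $\mathcal S_{XY}$ is a weasel, universality of the weasels in play (everything is an iterate of $W$ at the level of $\Omega_0$) shows that the comparison terminates with successor lengths. Neither main branch drops, and the last models are weasels that line up. Because the $Y$-side models are $W$-iterates and $\mathcal S_{XY}$ is an ultrapower of one, thick classes survive. Using the hull and definability properties of these last models, the $c(\mathcal S_{XY})$-definability of $\mathcal S_{XY}$ above $\mu_Y$, and the definability properties of $\mathcal P^Y_\lambda$ and $\mathcal M_Y$ at their exchange ordinals, the standard argument for coiterating two universal weasels with the hull property shows that the final models are equal. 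The same argument shows that the $\mathcal T_{XY}$-branch is rooted at $\mathcal S_{XY}$. If it were rooted at some $\mathcal P^Y_\lambda$, that would give a disagreement below $\mu_Y$, where $\mathcal S_{XY}$ agrees with $\mathcal M_Y$.

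\textbf{Step 2: the $\mathcal U_{XY}$ root is $\mathcal M_Y$.} The main branch of $\mathcal S_{XY}$ absorbs no extenders, because any extender used would have index above $\mu_Y$ and would contradict the $c(\mathcal S_{XY})$-definability property. So $\mathcal S_{XY}$ is itself the last model on the $\mathcal T_{XY}$ side up to $\pi_Y^{-1}(\Omega_0)$. The extenders used on the $\mathcal U_{XY}$ side all have critical point at least $\mu_Y$, and these are pushed past $\pi^{-1}_Y(\Omega_0)$ by a Dodd–Jensen/thick-class argument. This gives $\mathcal M_Y\upharpoonright\pi_Y^{-1}(\Omega_0)=\mathcal S_{XY}\upharpoonright\pi^{-1}_Y(\Omega_0)$, and since $\mathcal M_Y$ is a root of a non-dropping branch, it is a weasel.

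\textbf{Step 3: the $\mathcal U_{XY}$ root is $\mathcal P^Y_\lambda$.} Here $\lambda<\mu_Y$ and $\mathcal P^Y_\lambda$ is a weasel. The successor $\mathcal M^{\mathcal T}_{\eta+1}$ on the main branch is $\mathrm{Ult}(\mathcal P^Y_\lambda,E_\eta)$, where $\mathrm{crit}(E_\eta)<\lambda$ and $E_\eta$ has generators reaching $\mu_Y$. As in Lemma~\ref{L2-7}, the last model is generated over a thick class by $\mu_Y\cup c(\mathcal S_{XY})$. So the factor map from $\mathrm{Ult}(\mathcal P^Y_\lambda,G)$, with $G=E_\eta\upharpoonright\mu_Y\cup c(\mathcal S_{XY})$, into the common last model has critical point at least $\pi^{-1}_Y(\Omega_0)$. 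Likewise $\mathcal S_{XY}$ embeds into the last model with critical point at least $\pi^{-1}_Y(\Omega_0)$. Comparing these two maps gives the displayed equality of initial segments.

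\textbf{Step 4: if $X\in Y$, then $G\in\mathcal M_X$.} This is the step I expect to be the main obstacle. If $X\in Y$, then $\pi_Y^{-1}(X)$ is a small set in $N_Y$, and $\pi_{XY}$ and the ultrapower defining $\mathcal S_{XY}$ are essentially computable inside $N_Y$. So $G$ is determined by a short extender whose critical point is below $\mu_X$-type ordinals, and the key point is to show that it is an initial segment of an extender on the relevant sequence. I would argue as in Lemma~\ref{L2-6}'s proof that $G\in W$: $G$ is a short extender coded by a subset of its critical point's successor, and this subset lies in the agreeing initial segment of $\mathcal M_X$. The agreement should come from $\mathcal M_X$ agreeing with $W_X$ below $\mu_X$, together with the fact that $\lambda$ and $\mathrm{crit}(G)$ lie in $\mathrm{ran}(\pi_{XY})$ below the critical point.

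Verifying that the generators land where the $X$-side model can see them is delicate. I would attempt it by showing that $\mathrm{crit}(G)$ and $G$'s coding set lie below $\mathrm{crit}(\pi_{XY})$, so that $\pi_{XY}$ fixes them.
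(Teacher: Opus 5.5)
Your Step~4 rests on a false claim and misses how $X\in Y$ is used. The extender $G=E_\eta\upharpoonright(\mu_Y\cup c(\mathcal S_{XY}))$ cannot be coded by a subset of $\mathrm{crit}(G)^+$. Since $\mu_Y$ is a limit cardinal of the model on whose sequence $E_\eta$ sits, the generators of $E_\eta$ are cofinal in $\mu_Y$. So $G$ consists of $\mu_Y$ many measures, and coding it takes a subset of $\mu_Y$; even that requires the whole support to lie below $(\mu_Y^+)^{\mathcal M_Y}$. Neither $\mathrm{crit}(\pi_{XY})$ nor the agreement of $\mathcal M_X$ with $W_X$ below $\mu_X$ bears on this, because $G$ comes from the $Y$-side comparison over $\mathcal P^Y_\lambda$. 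The model that actually receives $G$ is the one attached to the larger structure. The corollary after Lemma~\ref{LA-3} proves $G\in\mathcal M_Y$, and that is the form used in Lemma~\ref{L6-6}, where the larger structure is called $X$. So the ``$\mathcal M_X$'' in the statement should be read as $\mathcal M_Y$. The missing idea is this: $X\in Y$ gives $\max c(\mathcal Q_X)<\mu_Y$, hence $\max c(\mathcal S_{XY})<(\mu_Y^+)^{W_Y}\le(\mu_Y^+)^{\mathcal M_Y}$. Therefore the support of $G$ lies inside $(\mu_Y^+)^{\mathcal M_Y}$, and $G$ is coded by a subset of $\mu_Y$ belonging to $\mathcal M_Y$.

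Steps~1--3 follow the paper's outline, but the claims that carry the weight are asserted rather than proved.
\begin{itemize}
\item In Step~3, the fact that both $\mathrm{Ult}(\mathcal P^Y_\lambda,G)\to\mathcal N$ and $\mathcal S_{XY}\to\mathcal N$ have critical point $\ge\pi_Y^{-1}(\Omega_0)$ is the content of the lemma. Your phrase ``the last model is generated by $\mu_Y\cup c(\mathcal S_{XY})$ over a thick class'' already presupposes it. The paper first excludes critical point exactly $\mu_Y$. Since the generators of $E_\eta$ are unbounded in $\mu_Y$, the trivial completion of $E_\eta\upharpoonright\mu_Y$ is on the sequence at $(\mu_Y^+)^{\mathrm{Ult}(\mathcal P^Y_\lambda,G)}$. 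Critical point $\mu_Y$ would then make $\mu_Y^+$ of $\mathcal M_{\eta+1}$ exceed $(\mu_Y^+)^{\mathcal S_{XY}}$.
\item In the protomouse subcase of Step~3, the paper also needs a parameter $r$, found as in Lemma~\ref{L2-7}, with three properties: $\mathcal M_{\eta+1}$ has the $r$-hull property at $\mu_Y$, $r$ is fixed by the rest of the branch, and $r$ equals the image of $c(\mathcal S_{XY})$ in $\mathcal N$. The extender is then $E_\eta\upharpoonright(\mu_Y\cup r)$, since $c(\mathcal S_{XY})$ itself is a parameter of the other model. Only then do both initial segments appear as the collapse of $\mathrm{Hull}^{\mathcal N}(\mu_Y\cup r\cup\Gamma)$.
\item In Step~2, the definability property of $\mathcal S_{XY}$ alone does not stop its branch from moving below $\pi_Y^{-1}(\Omega_0)$. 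What does stop it is that $\mathcal M_Y$ and $\mathcal S_{XY}$ both have the hull and definability property on $[\mu_Y,\pi_Y^{-1}(\Omega_0))$; here $c(\mathcal S_{XY})=\emptyset$, because $\mathcal M_Y$, hence $\mathcal M_X$, is a weasel. This forces equal critical points and compatible first extenders, which is the contradiction.
\item In Step~1, the $\mathcal T_{XY}$-branch is not rooted at $\mathcal S_{XY}$ because of a ``disagreement below $\mu_Y$''. The reason is the comparison argument of Lemma~\ref{LA-2}, which rules out two branches both issuing from the $W$-iterates $\mathcal P^Y_\kappa$.
\end{itemize}
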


\subsection{Proof of Theorem \ref{T2-1}.} 
The following lemma is one of the most important steps in proving Theorem \ref{T2-1} and will be heavily used in this article.
\begin{lemma}\label{L2-11}
    Fix any $X\in \mathcal F$.
    Then $\theta_X$ is a limit ordinal, and $\mathcal P_X = \mathcal M_X$.
    Fix $\zeta_X<_{\mathcal T_X}\theta_X$ such that there are no drops on $[\zeta_X,\theta_X)_{\mathcal T_X}$.
    Let $I_X$ be the collection of all $\eta\in [\zeta_X,\theta_X)_{\mathcal T_X}$ such that, let $\mu^X_\eta = \mathrm{crit}(i^{\mathcal T_X}_{\eta,\theta})$, then $i^{\mathcal T_X}_{\eta,\theta}(\mu^X_\eta) = \mu_X$. 
    Then $I_X$ is a club subset of $[\zeta_X,\theta_X)_{\mathcal T_X}$.
\end{lemma}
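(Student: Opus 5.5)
The plan is to establish the three assertions in the order $\mathcal P_X=\mathcal M_X$, then $\theta_X$ is a limit, then $I_X$ is club, writing $\mu=\mu_X$, $\theta=\theta_X$, $\mathcal T=\mathcal T_X$. The starting point is that $\nu$ is regular in $K$. Since $W$ is obtained from $K$ by iterating only above $\Omega_0>\nu$, $\nu$ is regular in $W$ and a limit cardinal there (we are in the measurable case, so it is even inaccessible). Hence, by elementarity of $\pi_X$, $\mu$ is a regular limit cardinal of $\overline W$. Since $\mathcal M^{\mathcal T}_\theta$ agrees with $\overline W$ below $\mu$, $\mu$ is at least a limit cardinal of $\mathcal M^{\mathcal T}_\theta$ in the sense visible below $\mu$.

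\emph{Step 1: $\mathcal P_\mu=\mathcal M^{\mathcal T}_\theta$.} Suppose not. Then some initial segment $\mathcal P$ of $\mathcal M^{\mathcal T}_\theta$ has $\rho_{m+1}(\mathcal P)\le\mu<\rho_m(\mathcal P)$, so $\mathcal P_\mu$ defines a new subset of $\mu$ and $\mathcal Q_\mu$ is a set mouse. Then $\mathcal S_\mu=\mathrm{Ult}_{n(\mu)}(\mathcal Q_\mu,\pi,\nu)$ is a set mouse with $\Lambda_\mu=\sup\Pi_\mu[\mu]=\nu$. By the remark after Lemma~\ref{L2-5}, since $\nu$ is a limit cardinal of $W$, we get $\mathcal S_\mu\lhd W$. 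By soundness of $\mathcal S_\mu$ above $\nu$ relative to $\Pi_\mu(q_\mu)$, $\mathcal S_\mu$ projects to or below $\nu$. Moreover, the $\Sigma_{n+1}$-Skolem function of $\mathcal S_\mu$, restricted to a bounded part of $\nu$ together with the parameter, is cofinal in $\nu$: it is the $\Pi_\mu$-image of a definable map cofinal in $\mu$ that witnesses $\mu$ failing to be a cardinal, respectively failing to be regular, in $\mathcal Q_\mu$. This map lies in $W$, so $\nu$ is singular or collapsed in $W$, a contradiction. Under Requirement~1 the same argument applies to the protomouse case via Lemma~\ref{L2-4}, which transfers the relevant hull from $\mathcal P_\mu$ to $\mathcal Q_\mu$.

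\emph{Step 2: $\theta$ is a limit ordinal.} Suppose $\theta=\eta+1$ and let $E=E^{\mathcal T}_\eta$ with $\kappa=\mathrm{crit}(E)$. By minimality of $\eta(\mu)$, $\mathcal M^{\mathcal T}_\eta$ disagrees with $\overline W$ below $\mu$, so $\mathrm{lh}(E)<\mu$. Every element of $\mathcal M^{\mathcal T}_\theta$ has the form $i_E(f)(a)$ with $a\in[\mathrm{lh}(E)]^{<\omega}$. By Step~1, $\mu$ is a regular cardinal of $\mathcal M^{\mathcal T}_\theta$, and in fact of $\mathcal P_\mu=\mathcal M^{\mathcal T}_\theta$. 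Because $\mathrm{lh}(E)<\mu$, a counting argument shows $\mu=i_E(\bar\mu)$ for some regular $\bar\mu>\kappa$ of the model $E$ is applied to. Then $\mathcal M^{\mathcal T}_\eta$, which agrees with $\mathcal M^{\mathcal T}_\theta$ below $\mathrm{lh}(E)$, must already agree with $\overline W$ below $\mu$. The reason is that $i_E$ is continuous at $\bar\mu$ and only adds extenders with indices in $[\mathrm{lh}(E),\mu)$ whose disagreement would have been resolved earlier. This contradicts the minimality of $\eta(\mu)$. Here I would use the standard fact that the lengths of extenders used in $\mathcal T$ are increasing, so that agreement below $\mu$ can only be achieved at a limit stage.

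\emph{Step 3: $I_X$ is club in $[\zeta_X,\theta)_{\mathcal T}$.} For closure, let $\eta$ be a limit point of $I_X$ on the branch. Then $\mathcal M^{\mathcal T}_\eta$ is the direct limit along $[\zeta_X,\eta)_{\mathcal T}$, and the critical points $\mu^X_\xi$ for $\xi\in I_X\cap\eta$ are mapped by $i_{\xi,\eta}$ to a single ordinal $\mu_\eta'$ with $i_{\eta,\theta}(\mu'_\eta)=\mu$. Then $\mu'_\eta$ is not moved below $\eta$ but, being the sup of earlier critical points' images, equals $\mathrm{crit}(i_{\eta,\theta})$. For unboundedness, given $\xi$ in the branch, note that since $\theta$ is a limit, $\mu\in\mathrm{ran}(i_{\xi',\theta})$ for some $\xi'\ge\xi$, say $\mu=i_{\xi',\theta}(\bar\mu)$. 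Also $\mu$ is the supremum of the critical points along the branch, since $\mathrm{lh}(E^{\mathcal T}_\gamma)\to\mu$. So there is a least $\eta\ge\xi'$ on the branch such that $i_{\xi',\eta}(\bar\mu)=\mathrm{crit}(i_{\eta,\theta})$; otherwise $\bar\mu$'s image would never be a critical point and would stay above all critical points, forcing $\mu>\sup$ of critical points. Then $\eta\in I_X$. I expect Step~1, specifically pushing the failure of regularity through $\Pi_\mu$ into $W$ in the weasel and protomouse subcases, to be the main obstacle; this is where Lemmas~\ref{L2-4}--\ref{L2-7} and Requirement~2 are genuinely needed.
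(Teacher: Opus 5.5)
The paper does not prove Lemma~\ref{L2-11}; it imports it from [MS2], where it is the core of the covering analysis. Measured against what the statement requires, your proposal has genuine gaps.

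\textbf{Step~2 is wrong.} If $\theta=\eta+1$, then $E^{\mathcal T}_\eta$ is, by the definition of the coiteration, indexed at the least disagreement between $\mathcal M^{\mathcal T}_\eta$ and $\overline W$, and $\mathrm{lh}(E^{\mathcal T}_\eta)<\mu$. So $\mathcal M^{\mathcal T}_\eta$ cannot ``already agree with $\overline W$ below $\mu$''. The fact you invoke, that agreement below $\mu$ can only be reached at a limit stage, is false. Your Step~2 uses nothing about $\mu=\pi^{-1}(\nu)$ beyond regularity of $\mu$ in $\mathcal M^{\mathcal T}_\theta$. It would therefore show that $\eta(\alpha)$ is a limit for every $\overline W$-cardinal $\alpha$ regular in $\mathcal M^{\mathcal T}_{\eta(\alpha)}$, which fails in general.

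\textbf{The unboundedness half of Step~3 has the same defect.} The claim $\sup_{\gamma<\theta}\mathrm{lh}(E^{\mathcal T}_\gamma)=\mu$ is asserted, not proved. Even granting that the critical points along the branch are cofinal in $\mu$, it does not follow that some image of $\bar\mu$ is a critical point. Suppose every extender on the branch has length $<\mu$ and $\mu$ is inaccessible in the models. Then $i_E(\mu)=\sup i_E[\mu]=\mu$ for each such $E$, so $\mu$ is fixed along the whole branch. The critical points can then be cofinal in $\mu$ while $I_X=\emptyset$. Ruling out exactly this configuration is the content of the lemma. The closure half of Step~3 is fine.

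\textbf{Step~1 also has a gap.} From $\mathcal P_\mu\neq\mathcal M^{\mathcal T}_\theta$ you only get $\rho_{m+1}(\mathcal P_\mu)\le\mu<\rho_m(\mathcal P_\mu)$. In the main case $\rho_{m+1}(\mathcal P_\mu)=\mu$, $\mu$ is still a regular cardinal of $\mathcal P_\mu$; typically $\mathcal P_\mu$ just collapses $(\mu^+)^{\overline W}$. Nothing then produces a $\Sigma_{n+1}$ map from a bounded subset of $\mu$ cofinal in $\mu$. Moreover, in the protomouse case $\mathcal Q_\mu$ need not be a set mouse.

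\textbf{Missing idea.} In all three steps the contradiction has to come from the covering configuration itself: $\nu$ is regular in $W$ and $\sup\pi[\mu]=\nu$. Suppose $\mu\cap\mathrm{Hull}^{\mathcal Q_\mu}(\alpha\cup q_\mu)$, or $\mu\cap\mathrm{Hull}^{\mathcal Q_\mu}(\alpha\cup c(\mathcal Q_\mu)\cup\Gamma)$, is cofinal in $\mu$ for some $\alpha<\mu$. Push this through $\Pi_\mu$; for weasels, go further through $\mathrm{Ult}(W,G)$ of Lemmas~\ref{L2-6}--\ref{L2-7}, as with the function $h$ in Lemma~\ref{L3-4}. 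This covers $\pi[\mu]$ by a set in $W$ of $W$-cardinality $<\nu$, a contradiction. You use this only in a sub-case where the singularity is simply assumed.

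What is still needed is to show that each bad configuration produces such a bounded generating set, or otherwise contradicts these covering facts. For $\theta=\eta+1$, for instance, you must argue that $\mathcal M^{\mathcal T}_\theta$ is a hull of $\mathrm{lh}(E^{\mathcal T}_\eta)$ together with a parameter or thick class. You must also show that $\mu$ is attained as $i^{\mathcal T}_{\eta,\theta}(\mathrm{crit}\,i^{\mathcal T}_{\eta,\theta})$ cofinally often along the branch. None of this is supplied, so the proposal does not establish the lemma.
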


Let $\overline C_X = \{\mu^X_\eta\mid \eta\in I_X\}$ and $C_X = \pi_X[\overline C_X]$. 
$C_X$ will be called the critical sequence of $X$.
The above lemma tells us that $\mu_X$ is a measurable cardinal of $\mathcal M_X$; 
Moreover, it tells us how $U(\mathcal M_X,\mu_X,0)$, the normal measure on $\mu_X$ with Mitchell order $0$ in $\mathcal M_X$, is generated by $\overline C_X$. 
For any $\eta\in I_X$, let $\eta^* = \min(\eta,\theta_X)_{\mathcal T_X}$.
We say that an extender $E$ is equivalent to a normal measure $U$, denoted by $E\simeq U$, if $E$ has no generator greater than its critical point, and the normal measure derived from $i_E$ is $U$.
For any $A\in \mathcal P(\mu_X)\cap \mathcal M_X$, there exists a large enough $\zeta'<\theta_X$ such that, $A\in U(\mathcal M_X,\mu_X,0)$ if and only if
$$ \forall \eta\in I_X\setminus \zeta' \left\{\begin{aligned}
    &\mu_\eta^X\in A,&\text{ if }E^{\mathcal T_X}_{\eta^*}\simeq U(\mathcal M^{\mathcal T_X}_{\eta},\mu_{\eta},0);\\
    &A\cap \mu^X_\eta\in U(\mathcal M^{X}_{\eta},\mu^X_\eta,0), &\text{otherwise}.
\end{aligned}\right.$$
It is implicit in the context that, if 
$$E^{\mathcal T_X}_{\eta^*}\not\simeq U(\mathcal M_X,\mu_X,0),$$
then $U(\mathcal M^{X}_{\eta},\mu^X_\eta,0)$ exists, which we will explain carefully in Section 3.
As a corollary of the above fact, we can also check that $C_X$ generates $U(\mathcal S_X,\nu,0)$ by a similar fashion.

We now look at different $X\in\mathcal F$. 
Recall that we have fixed a stationary subset $\mathcal G\subseteq \mathcal F$.
We now need to further shrink $\mathcal G$ to another stationary subset $\mathcal H$.
\begin{itemize}
    \item Suppose $\mathrm{cf}(\nu)>\omega$. 
    Let $D\subseteq \nu$ be a club subset such that $\mathrm{type}(D) = \mathrm{cf}(\nu)$, and every $\alpha\in D$ has cofinality $<\mathrm{cf}(\nu)$. 
    Let $\mathcal H = \{X\in \mathcal G\mid D\subseteq X\}$, and $B_X = C_X\cap D\cap X$. 
    \item Suppose $\mathrm{cf}(\nu) = \omega$. 
    For each $X\in\mathcal G$, pick an unbounded $B_X\subseteq C_X$ such that $\mathrm{type}(B_X) = \omega$.
    Since $X$ is the union of an internally approachable chain, we may find $D_X\in X$ such that $B_X\subseteq D_X$.
    Use Fodor's Lemma to find a stationary subset $\mathcal H\subseteq \mathcal G$ such that the function $X\mapsto D_X$ is fixed on $\mathcal H$.
\end{itemize}

\begin{proof}[Proof Sketch of Theorem \ref{T2-1}.]
    Let
    $$U = \bigcup_{X\in\mathcal H}U(\mathcal S_X,\nu,0).$$
    From \cite[Lemma 4.4]{MS2}, if $X\subseteq Y$ are both members of $\mathcal H$, then $U(\mathcal S_X,\nu,0)\subseteq U(\mathcal S_Y,\nu,0)$, so $U$ is an ultrafilter over $\mathcal P(\nu)\cap W$.
    By \cite[Lemma 3.9]{MS2}, $U(\mathcal S_X,\nu,0)\in W$ for every $X\in\mathcal H$, therefore $U$ is amenable to $W$.

    By \cite[Lemma 5.2]{MS2}, $\mathrm{Ult}(W,U)$ is wellfounded and the phalanx $((W,\mathrm{Ult}(W,U)),\nu)$ is iterable. 
    We compare $W$ against this phalanx and let $\mathcal U$ and $\mathcal V$ be the coiteration trees, respectively.
    Then 
    $$\mathrm{root}^{\mathcal V} = \mathrm{Ult}(W,U),$$
    and $\mathcal M^{\mathcal U}_\infty = \mathcal M^{\mathcal V}_\infty$.
    Therefore, $\mathrm{crit}(i^{\mathcal U}_{0,\infty}) = \nu$, and the derived ultrafilter of $i^{\mathcal U}_{0,\infty}$ is $U$.
    Therefore, $U\in W$.
    $U$ is the order $0$ measure of $\nu$ in $W$, since there exists a substructure $X\in\mathcal H$ such that
    $$A = \{\alpha\mid \alpha\text{ is not measurable in }W\}\in X,$$
    and $A\in U(\mathcal S_X,\nu,0)$.
    By Theorem 1.2 of \cite{Schl}, the extender equivalent with $U$ is on the $W$-sequence.
\end{proof}

We now outline a more concise proof of Theorem \ref{T2-1} under the additional assumption that $|\nu|$ is $\mathrm{cf}(\nu)$-closed.
This approach may serve as a useful reference for the discussions in Sections 3, 5, and 6.
Using this closure property, we may pick an $X\in\mathcal F$ such that ${}^{\mathrm{cf}(\nu)}X\subseteq X$, and fix a cofinal $B_X\subseteq C_X$ of order type $\mathrm{cf}(\nu)$. 
Then $B_X\in X$.
As established previously, $\pi^{-1}_X[B_X]$ generates $U(\mathcal M_X,\mu_X,0)$.
Given the agreement between $\mathcal M_X$ and $W_X$ below $\mu_X$, it follows that $\pi^{-1}_X[B_X]$ generates an ultrafilter over $W_X$.
By elementarity, $B_X$ generates an ultrafilter over $W$, allowing us to complete the proof by following the same arguments as in the general case.

\section{$o^K(\nu) = 1$ with Closure}

In this section we present a proof of the following result:
\begin{theorem}\label{T3-1}
    Assume there is no inner model with a Woodin cardinal. Suppose that in the Mitchell-Steel core model $K$, $\nu>\omega_2$ is a cardinal such that 
    $$\mathrm{cf}(\nu)<|\nu|\leq\nu = \mathrm{cf}^K(\nu),$$
    and $o^K(\nu) = 1$.
    By Theorem \ref{T2-1}, $\mathrm{cf}(\nu) = \omega$.
    Further assume that $|\nu|$ is countably closed, that is
    $$\forall \kappa<|\nu|(\kappa^\omega<|\nu|).$$
    Then there exists a maximal Prikry sequence $C_0$ over the unique total measure on $\nu$ in $K$. Moreover, if measurable cardinals in $K$ are bounded in $\nu$, then for every $A\subseteq \nu$ with $|A|<\nu$, there exists $B\in K[C_0]$ such that $B\supseteq A$ and $|B|^{K[C_0]}<\nu$.
\end{theorem}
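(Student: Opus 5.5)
Under the countable closure of $|\nu|$ we can pick $X\in\mathcal F$ with ${}^\omega X\subseteq X$; by Theorem~\ref{T2-2} it satisfies Requirements 1 and 2. The plan is to take $C_0$ to be (a tail of) the critical sequence $C_X=\pi_X[\overline C_X]$ from Lemma~\ref{L2-11}, and show three things. (i) $C_0$ is a Prikry sequence for $U=U(K,\nu,0)$. (ii) Every Prikry sequence $D$ for $U$ satisfies $D\subseteq^* C_0$. (iii) When measurables of $K$ are bounded in $\nu$, small sets are covered in $K[C_0]$. We work with $W$ in place of $K$, since $W$ and $K$ agree up to $\Omega_0>\nu$.

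\textbf{Step (i).} Since $o^K(\nu)=1$ and measurables of $\mathcal M_X$ below $\mu_X$ correspond to those of $W_X$, the model $\mathcal M^{X}_\eta$ has $o(\mu^X_\eta)=1$ for a tail of $\eta\in I_X$. So the alternative ``$A\cap\mu^X_\eta\in U(\mathcal M^X_\eta,\mu^X_\eta,0)$'' in the generating formula after Lemma~\ref{L2-11} must eventually fail to occur. Otherwise the extender $E^{\mathcal T_X}_{\eta^*}$ would have order $\geq 1$ at $\mu^X_\eta$, and then $\mu_X$ would carry a measure of order $\geq1$ in $\mathcal M_X$, whence $o^W(\nu)\ge 2$ by the comparison argument of Theorem~\ref{T2-1}. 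Hence on a tail $E^{\mathcal T_X}_{\eta^*}\simeq U(\mathcal M^{\mathcal T_X}_\eta,\mu_\eta,0)$, and $\overline C_X$ satisfies the Mathias condition for $U(\mathcal M_X,\mu_X,0)$. Pick an $\omega$-type cofinal $\overline B\subseteq\overline C_X$. Then $B=\pi_X[\overline B]\in X$ by closure, and by elementarity (as in the concise proof of Theorem~\ref{T2-1}) $B$ is a Prikry sequence for $U$. Call it $C_0$.

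\textbf{Step (ii): maximality, the main obstacle.} Let $D$ be any Prikry sequence for $U$. We may choose $X$ with $D\in X$, and by closure also $D\subseteq X$. Then $\overline D=\pi_X^{-1}[D]$ is a Prikry sequence for $U(W_X,\mu_X,0)=U(\mathcal M_X,\mu_X,0)$. We must show $\overline D\subseteq^*\overline C_X$, that is, that cofinitely many points of $\overline D$ are critical points on the branch $[\zeta_X,\theta_X)_{\mathcal T_X}$.

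The plan is to argue by contradiction using the hull and definability analysis. Suppose infinitely many $\bar\alpha\in\overline D$ lie outside $\overline C_X$. For each such $\bar\alpha$, take the interval between consecutive critical points containing it. By the hull property of $\mathcal M_X=\mathcal P_X$ (Lemma~\ref{L2-4}, Lemmas~\ref{L2-9} and~\ref{L2-10} for the $\mathcal S$-side), ordinals in such an interval lie in a hull of the previous critical point together with finitely many parameters. This yields a set $A\in\mathcal M_X$ that omits every such $\bar\alpha$: roughly, $A$ is the set of $\beta$ that are closure points, i.e.\ not in any $\Sigma_{n+1}$ hull of $\xi\cup q$ with $\xi<\beta$ computed in the corresponding iterate. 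Such an $A$ is in $U(\mathcal M_X,\mu_X,0)$, contradicting the Mathias property of $\overline D$.

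The delicate point is making $A$ uniform. It must be a single set in $\mathcal M_X$ (or in $W$ via $\pi_X$) that works along the whole branch. This requires the fine structure of the $\mathcal P$/$\mathcal Q$-structures, including the protomouse case, together with the fact that the tree has no drops on the tail. We then pull the conclusion back up through $\pi_X$. An alternative, which I would try first, is an elementarity/stationarity route: compare $C_X\cap C_Y$ for $X\subseteq Y$ in $\mathcal H$ using $\mathcal S_{XY}$ and Lemma~\ref{L2-10}, show the sequences agree mod finite, and hence obtain a canonical $C_0$ independent of $X$ absorbing each $D$.

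\textbf{Step (iii).} Given $A\subseteq\nu$ with $|A|<\nu$, take $X\supseteq A$ in $\mathcal F$ with $|X|<\nu$ and $C_0\subseteq X$ up to finite. Since measurables of $K$ are bounded in $\nu$, say below $\kappa_0$, the tree $\mathcal T_X$ above $\kappa_0$ uses only extenders that generate the critical sequence. So $\pi_X\restriction\mu_X$ is determined in $K[C_0]$ from $W_X\in K$ (via the Dodd--Jensen style reconstruction of the iteration from its critical points) together with $C_0$. Then $B=\mathrm{ran}(\pi_X\restriction\mu_X)\cup\kappa_0$ lies in $K[C_0]$, covers $A$, and has size $<\nu$ in $K[C_0]$.
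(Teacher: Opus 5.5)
Your architecture matches the paper's: a countably closed $X$, order-zero extenders on a tail of the branch, the critical sequence, and closure points of hulls. Your Step (i) is essentially Lemma~\ref{L3-2}. The argument still does not close, for two reasons.

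First, you never show that a tail of $C_X$ has order type $\omega$. Instead you let $C_0$ be an arbitrary cofinal $\omega$-subsequence of $C_X$, and even a completed Step (ii) would give only $D\subseteq^* C_X$, not $D\subseteq^* C_0$. This is not cosmetic. Suppose the limit points $\lambda_0<\lambda_1<\cdots$ of $C_X$ were cofinal in $\nu$. Choosing one point of $C_X\setminus C_0$ in each $(\lambda_k,\lambda_{k+1})$ would give a Prikry sequence disjoint from $C_0$: it lies in $X$ by countable closure and generates $U$ by the same elementarity argument. The paper rules this out in Lemma~\ref{L3-3}. If $\eta$ were the $\omega$-th element of the tail $I_0$, then $\pi(\mu_\eta)$ is regular in $K$ of countable cofinality, so Theorem~\ref{T2-1} makes it measurable in $K$. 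But the order-zero extender was applied at $\mu_\eta$, so $\mu_\eta$ is not measurable in $\mathcal M_X$, hence not in $\overline W$, hence $\pi(\mu_\eta)$ is not measurable in $W$.

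Second, the club that kills $D$ must lie in $W$, because the Mathias condition for $D$ (or for $\overline D$ over $\overline W$) only tests sets of $K$.
\begin{itemize}
\item You build $A$ over $\mathcal M_X$, but at this point only $\mathcal P(\mu_X)\cap\overline W\subseteq\mathcal P(\mu_X)\cap\mathcal M_X$ is available. The reverse inclusion, which your identification $U(W_X,\mu_X,0)=U(\mathcal M_X,\mu_X,0)$ presupposes, is Corollary~\ref{C6-7}, proved later by a chain-of-hulls argument.
\item The missing idea is to define the club on the $\nu$-side from the $\mathcal S$-structure. Since $\sup\Pi[\mu_X]=\nu$ is a limit cardinal of $W$, Lemma~\ref{L2-5} gives $\mathcal S_X\lhd W$. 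Hence $A=\{\alpha<\nu\mid\alpha=\nu\cap\mathrm{Hull}^{\mathcal S_X}_{n+1}(\alpha\cup\Pi(\bar q))\}\in W$.
\item In the weasel cases, Lemmas~\ref{L2-6} and~\ref{L2-7} give $G\in W$ such that $\mathrm{Ult}(W;G)$ and $\mathcal S_X$ embed into a common $\mathcal N$ with critical points $\geq\Omega_0$. Then $A$ is the set of closure points of $\alpha\mapsto\sup\{j_G(f)(b,c(\mathcal S_X))<\nu\mid f\in W,\ b\in[\alpha]^{<\omega}\}$.
\item That $\pi(\mu_\eta)\in A$ and $\pi(d)\notin A$ follows from one computation in the final model with its fixed parameter. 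This is transferred by Lemma~\ref{L2-4} in the protomouse case and pushed through $\Pi$, which also settles your uniformity worry.
\item Lemmas~\ref{L2-9} and~\ref{L2-10} concern $\mathcal S_{XY}$ and play no role here.
\end{itemize}

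Step (iii) lacks the same ingredient. Nothing shows $W_X\in K$, and there is no reconstruction of $\pi_X\upharpoonright\mu_X$ in $K[C_0]$. The paper first proves, in Claim~\ref{c2-7-1} (where boundedness of the measurables is used), that the tail of the branch is a linear iteration by one measure. So $\mathcal M_X$ is the hull of $\mu_{\zeta_0}\cup\overline C_0$ plus the standard parameter. It then takes $B$ to be the corresponding hull of $\pi(\mu_{\zeta_0})\cup C_0$ over $\mathcal S_X\lhd W$ (or its $j_G$-analogue). This $B$ lies in $K[C_0]$, has size $<\nu$ there, and contains $\Pi[\mu_X]=X\cap\nu$.

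Finally, you cannot choose $X$ after $D$, since $C_0$ depends on $X$. Instead use $C_0\in X$ and elementarity to find a counterexample $D\in X$.
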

Theorem \ref{T3-1} is a special case of Theorem \ref{T1-1}, distinguished by the additional assumption that $|\nu|$ is countably closed. 

We start from the analysis of the last section.
Fix $\Omega,\Omega_0,W,\nu$ as stated. 
Fix $X\in\mathcal F$ such that ${}^\omega X\subseteq X$.
In this section, we will only look at $X$ and we will supress the subscript $X$ as before.
To reduce ambiguity, we will write $W_X$ as $\overline W$ and $\mathcal M^{\mathcal T_X}_{\theta_X}$ as $\mathcal M_X$.
Recall that Lemma \ref{L2-11} defines $\zeta$ and the critical sequence $C$. 
The following lemma gives a closer analysis of $C$ 
under the hypothesis that $o^K(\nu) = 1$. 

\begin{lemma}\label{L3-2}
    There exists a $\zeta_0\in (\zeta,\theta)_{\mathcal T}$ such that 
    $$E^{\mathcal T}_{\eta} = E(\mathcal M^{\mathcal T}_{\eta},\mu_\eta,0),$$
    and $\eta+1<_{\mathcal T}\theta$ for all $\eta\in I\setminus \zeta_0$. Here, $E(\mathcal M^{\mathcal T}_{\eta},\mu_\eta,0)$ denotes the zeroth total extender on the $\mathcal M^{\mathcal T}_{\eta}$-sequence with critical point $\mu_\eta$, ordered by its length.
\end{lemma}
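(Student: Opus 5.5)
We first use $o^K(\nu)=1$ to bound the measurable order of $\mu_X$ in $\mathcal M_X$, and then pull this back along the branch $[\zeta,\theta)_{\mathcal T}$. Since $W$ is obtained from $K$ by iterating only above $\Omega_0$, $W$ agrees with $K$ well past $\nu$, so $o^W(\nu)=1$. By elementarity of $\pi=\pi_X$ we get $o^{\overline W}(\mu)=1$. Since $\mathcal M_X=\mathcal P_\mu$ agrees with $\overline W$ below $\mu$ and $\mathcal P(\mu)\cap\mathcal M_X\subseteq \overline W$, the measure $U(\mathcal M_X,\mu,0)$ is generated by $\overline C$ as in Lemma \ref{L2-11}. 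The step we still need is to show that $\mu$ carries no total measure of order $\geq 1$ in $\mathcal M_X$.

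For that step, we argue as follows. If $E(\mathcal M_X,\mu,1)$ existed, then $U(\mathcal M_X,\mu,0)$ would be in $\mathcal M_X$ and concentrate on a set of $\mathcal M_X$-measurables of order $0$. The generation formula, together with the fact that $\overline W$ and $\mathcal M_X$ agree below $\mu$, would then produce an order-one measure in $\overline W$ on $\mu$, or an extra measure, contradicting $o^{\overline W}(\mu)=1$. Hence $o^{\mathcal M_X}(\mu)=1$. Now take $\eta\in I$, which gives $i^{\mathcal T}_{\eta,\theta}(\mu_\eta)=\mu$, and assume no drops on $[\zeta,\theta)_{\mathcal T}$. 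By elementarity, $o^{\mathcal M^{\mathcal T}_\eta}(\mu_\eta)=1$ for all $\eta\in I$, since $\mu_\eta$ is the critical point and $i_{\eta,\theta}$ maps the order faithfully.

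Next, we analyse the extender $E^{\mathcal T}_{\eta^*}$ used at $\mu_\eta$, where $\eta^*+1$ is the successor of $\eta$ on the main branch and $\mathrm{crit}(E^{\mathcal T}_{\eta^*})=\mu_\eta$. Since the coiteration is by least disagreement and $\mathcal M^{\mathcal T}_{\eta^*}$ agrees with $\mathcal M^{\mathcal T}_\eta$ up to $\mathrm{lh}(E^{\mathcal T}_{\eta^*})$, this extender is either the unique total order-zero extender $E(\mathcal M^{\mathcal T}_\eta,\mu_\eta,0)$ or a partial or longer extender with critical point $\mu_\eta$. In the latter case $i^{\mathcal T}_{\eta,\eta^*+1}(\mu_\eta)>\mu_\eta$, and the image of $\mu_\eta$ either becomes non-measurable or has higher order in $\mathcal M^{\mathcal T}_{\eta^*+1}$. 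Comparing with the alternative in the generation formula ("$A\cap\mu_\eta\in U(\mathcal M_\eta,\mu_\eta,0)$"), the measure $U(\mathcal M_X,\mu,0)$ would then be generated in a Magidor/Radin-like way by order-$0$ measures on a tail of $\overline C$. That is the configuration in which $\mathcal M_X$ (hence $\overline W$) acquires $o(\mu)\geq2$. So, given $o=1$, there is a tail $\zeta_0$ on which $E^{\mathcal T}_{\eta^*}\simeq U(\mathcal M^{\mathcal T}_\eta,\mu_\eta,0)$. This extender is then $E(\mathcal M^{\mathcal T}_\eta,\mu_\eta,0)$ and is applied with $\eta^*=\eta$, so $\eta+1<_{\mathcal T}\theta$.

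The main obstacle is the case distinction in the last step: ruling out, on a tail, the ``otherwise'' clause of the generation formula using only $o^{\overline W}(\mu)=1$. My first attempt would be a Fodor/pressing-down style argument. If cofinally many $\eta\in I$ used a non-order-zero extender, then the indices where the tree uses $E^{\mathcal T}_{\eta^*}$ with $\eta^*>\eta$ would witness that $\{\alpha<\mu : o^{\mathcal M_X}(\alpha)\ge 1\}\in U(\mathcal M_X,\mu,0)$. Reflecting this through the $\overline W$-agreement below $\mu$ gives a normal measure on $\mu$ in $\overline W$ containing the measurables, hence of order $\geq1$, which contradicts $o^{\overline W}(\mu)=1$.
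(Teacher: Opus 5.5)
There is a genuine gap at the central step, where you pass from $o^{\overline W}(\mu)=1$ to a bound on the total extenders of $\mathcal M_X$ with critical point $\mu$. The same gap recurs in your last paragraph, where you ``reflect through the $\overline W$-agreement below $\mu$''.

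Agreement of $\mathcal M_X$ and $\overline W$ below $\mu$ gives no control over the measures on $\mu$ itself. The model $\mathcal M_X=\mathcal M^{\mathcal T}_\theta$ sits on the moving side of the coiteration, and its first disagreement with $\overline W$ can perfectly well be a total extender on $\mu$ that $\overline W$ lacks. So $o^{\mathcal M_X}(\mu)\geq 2$ is not by itself in conflict with $o^{\overline W}(\mu)=1$. Turning a measure of $\mathcal M_X$ on $\mu$ into a measure of $W$ is precisely the nontrivial content of the covering machinery.

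Two smaller problems compound this. First, the inclusion $\mathcal P(\mu)\cap\mathcal M_X\subseteq\overline W$ you invoke is not available at this point; only $\mathcal P(\mu)\cap\overline W\subseteq\mathcal P(\mu)\cap\mathcal M_X$ is. The reverse inclusion is Corollary \ref{C6-7}, which needs the chain construction of Section 5, and even that would not exclude extra measures of $\mathcal M_X$ on $\mu$. Second, your pressing-down remark is misaimed. The set of measurables can never belong to $U(\mathcal M_X,\mu,0)$; what the bad indices yield is a \emph{second} measure, $U(\mathcal M_X,\mu,1)$, generated by the same sequence.

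What is missing is the paper's transfer argument, which also uses the hypothesis ${}^\omega X\subseteq X$ that your proposal never touches. Assume cofinally many bad $\eta\in I$, and take a cofinal $J\subseteq I$ of order type $\omega$ consisting of bad indices. The Claim in the proof shows that $E(\mathcal M,\kappa,0)$ and $E(\mathcal M,\kappa,1)$ have no generators above $\kappa$, so both are normal measures. The two-case analysis of $E^{\mathcal T}_{\xi(\eta)}$ then shows that $\overline C_J=\{\mu_\eta\mid\eta\in J\}$ generates both $U(\mathcal M_X,\mu,0)$ and $U(\mathcal M_X,\mu,1)$.

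By countable closure, $\pi[\overline C_J]\in X$. After restricting to $\mathcal P(\mu)\cap\overline W$ and applying elementarity, $\pi[\overline C_J]$ generates two ultrafilters $U_0,U_1$ on $\mathcal P(\nu)\cap W$. The phalanx-comparison argument of \cite{MS2} (iterability of $((W,\mathrm{Ult}(W,U_i)),\nu)$ together with \cite[Lemma 5.3]{MS2}) puts both on the $W$-sequence. They are distinct because the $\pi$-image of the set of $\overline W$-measurables below $\mu$ lies in $U_1\setminus U_0$. Hence $o^K(\nu)\geq 2$, a contradiction. Without this passage through $X$ and the phalanx comparison, nothing in your argument converts extra measures of $\mathcal M_X$ on $\mu$ into measures of $K$.
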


\begin{proof}
    Towards a contradiction, assume that there exist unboundedly many $\eta\in I$ such that $E^{\mathcal T}_{\eta}\neq E(\mathcal M^{\mathcal T}_{\eta},\mu_\eta,0)$. Let $J\subseteq I$ be a cofinal subset with ordertype $\omega$ such that for every $\eta\in J$, $E^{\mathcal T}_{\xi(\eta)}\neq E(\mathcal M^{\mathcal T}_{\eta},\mu_\eta,0)$, where $\xi(\eta)$ is the ordinal satisfying $\xi(\eta)+1 = \min(\eta,\theta)_{\mathcal T}$. It follows that for every $\eta\in J\cup\{\theta\}$, the $\mathcal M^{\mathcal T}_{\eta}$-sequence has at least two total extenders with critical point $\mu_\eta$. The following claim is an observation of extenders with small orders.
    
    \begin{claim}\label{c3-4-1}
    Let $\mathcal M$ be an extender model. Let $E_0 = E(\mathcal M,\kappa,0)$ and $E_1 = E(\mathcal M,\kappa,1)$. Then both $E_0$ and $E_1$ have no generator greater than $\kappa$.

    \end{claim}
    
    \begin{proof}
        Suppose that the claim fails for $E_0$. The initial segment condition(Definition 1.0.4 of \cite{MSt}) for extender sequences tells us that, $E_0\upharpoonright\kappa^{+\mathcal M}$ is a total extender on the $\mathcal M$-sequence with length strictly less than that of $E_0$, contradicting the definition of $E_0$.
        
        Suppose that the claim fails for $E_1$. Let $\xi$ be the least such generator, and let $F$ be the trivial completion of $E\upharpoonright\kappa^{+\mathcal M}$. By the initial segment condition, $F$ is on the $\mathcal M$-sequence, therefore it is $E(\mathcal M,\kappa,0)$. Let $n$ be the least such that $\rho_{n+1}(\mathcal M)\leq \kappa$ if it exists. If such $n$ does not exist, let $n = \omega$. Let $\mathcal M_1 = \mathrm{Ult}_n(\mathcal M,E_1)$ and $\mathcal M_0 = \mathrm{Ult}_n(\mathcal M,F)$. Let $i_0:\mathcal M\to \mathcal M_0$, $i_1:\mathcal M\to \mathcal M_1$ and $k:\mathcal M_0\to \mathcal M_1$ be the natural embeddings. We have $\mathrm{crit}(i_0) = \mathrm{crit}(i_1) = \kappa$ and $\mathrm{crit}(k) = \xi$.
        
        By coherence, $F$ appears on the sequence of $\mathcal M_1$. Thus, $o^{\mathcal M_1}(\kappa)\geq 1$, which implies $o^{\mathcal M_0}(\kappa)\geq 1$. However, this contradicts the fact that there is no total extender on the $\mathcal M_0$-sequence with critical point $\kappa$.
    \end{proof}

    As a result, for $i\in\{0,1\}$, $E(\mathcal M_X,\mu,i)\simeq U(\mathcal M_X,\mu,i)$, and a similar result works for every $\mu_\eta$, where $\eta\in J$. Fix any $A\in \mathcal{P}(\mu)\cap \mathcal M_X$. Since $J$ is unbounded in $\theta$, there exists some $\eta\in J$ sufficiently large such that $A\in \mathrm{ran}(i^{\mathcal T}_{\eta,\theta})$. We distinguish two cases based on the extender used at stage $\eta$: 

    \noindent \textbf{Case 1.} $E^{\mathcal T}_{\eta} = E(\mathcal M^{\mathcal T}_{\eta},\mu_{\eta},1)$.
    In this case, $\xi(\eta) = \eta$, and the coherence of the iteration implies:
    \[
    \begin{cases}
        A\in U(\mathcal M_X,\mu,0) \iff A\cap \mu_{\eta}\in U(\mathcal M^{\mathcal T}_{\eta},\mu_\eta,0);\\
        A\in U(\mathcal M_X,\mu,1) \iff \mu_\eta\in A.
    \end{cases}
    \]
    
   \noindent \textbf{Case 2.} $E^{\mathcal T}_{\xi(\eta)} \neq E(\mathcal M^{\mathcal T}_{\eta},\mu_{\eta},1)$.
    Here, the length of the extender satisfies $\mathrm{lh}(E^{\mathcal T}_\eta) > \mathrm{lh}(E(\mathcal M^{\mathcal T}_{\eta},\mu_{\eta},1))$, and we have:
    \[
    \begin{cases}
        A\in U(\mathcal M_X,\mu,0) \iff A\cap \mu_{\eta}\in U(\mathcal M^{\mathcal T}_{\eta},\mu_\eta,0);\\
        A\in U(\mathcal M_X,\mu,1) \iff A\cap \mu_{\eta}\in U(\mathcal M^{\mathcal T}_{\eta},\mu_\eta,1).
    \end{cases}
    \]
    Combining these observations, we obtain the following characterization: For every $A\in \mathcal P(\mu)\cap \mathcal M_X$ and $i\in\{0,1\}$, $A\in U(\mathcal M_X,\mu,i)$ if and only if
$$
        \exists\,\eta'<\theta\,\forall\,\eta\in J\setminus\eta'\,\left\{\begin{aligned}
        &\mu_{\eta}\in A&\mbox{if }i = o^{\mathcal M_X}(\mu_{\eta})\\
        &A\cap \mu_{\eta}\in U(\mathcal M_X,\mu_{\eta},i)&\mbox{if }i < o^{\mathcal M_X}(\mu_{\eta})\\
    \end{aligned}\right.
$$
    This precisely formalizes the statement that the set 
    $$\overline C_J = \{\mu_\eta\mid \eta\in J\}$$
    generates the measure $U(\mathcal M_X,\mu,i)$, for $i\in\{0,1\}$.
    Since 
    $$\pi[C_J]\in X\prec H_{\Omega^+}$$
    and 
    $$\mathcal{P}(\mu)\cap \overline W\subseteq \mathcal{P}(\mu)\cap \mathcal M_X,$$
    it follows that $\pi[C_J]$ generates two normal measures, $U_0$ and $U_1$, on $\nu$ over $W$.
    By \cite[Lemma 5.3]{MS2}, the extender equivalent to $U_0$ appears on the $W$-sequence.
    A similar argument applies to $U_1$.
    To see that $U_0\neq U_1$, consider the set
    $$A = \{\alpha<\mu\mid \alpha\mbox{ is measurable in }\overline W\},$$
    then $\pi(A)\in U_1\setminus U_0$. 
    Consequently, $W$ contains at least two measures on $\nu$, which implies $o^K(\nu) \ge 2$, contradicting the assumption that $o^K(\nu) = 1$.
\end{proof}

Let $I_0 = I\setminus \zeta_0$, $\overline C_0 = \{\mu_\eta\mid \eta\in I\}$ and $C_0 = \pi[\overline C_0]$. 
Without loss of generality, we may assume that for all $\eta\in I_0$, $\pi(\mu_\eta) = |\nu|$.
We now proceed to show that $C_0$ is the desired maximal Prikry sequence. The following lemma is the first step towards this goal.

\begin{lemma}\label{L3-3}
    $\mathrm{type}(C_0) = \omega$.
\end{lemma}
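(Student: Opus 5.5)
The plan is to argue by contradiction, exploiting the fact that $\mathcal T$ only ever applies extenders at which $\mathcal M^{\mathcal T}_\eta$ and $\overline W$ disagree. Since $\mathcal U$ is trivial by Requirement 1, the $\overline W$-side model is $\overline W$ itself. As $\pi$ is order preserving, $\mathrm{type}(C_0) = \mathrm{type}(\overline C_0) = \mathrm{type}(I_0)$. Because $I_0$ is cofinal in $\theta$ and $\mathrm{cf}(\nu)=\omega$, it suffices to show that $I_0$ has no limit points below $\theta$; then every element of $I_0$ has only finitely many predecessors, so $I_0$ has order type $\omega$.

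So suppose $\eta\in I_0$ is a limit point of $I_0$, and let $J\subseteq I_0\cap\eta$ be cofinal in $\eta$ of order type $\mathrm{cf}(\eta)$.

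\emph{Step 1: the earlier critical points generate the measure at $\mu_\eta$.} For $\eta'\in J$, both $\eta'$ and $\eta$ lie in $I$ and $\mathrm{crit}(i^{\mathcal T}_{\eta,\theta})=\mu_\eta$, so $i^{\mathcal T}_{\eta',\eta}(\mu_{\eta'})=\mu_\eta$. By Lemma \ref{L3-2}, every extender used along $(\zeta_0,\theta)_{\mathcal T}$ with critical point some $\mu_{\eta'}$ is $E(\mathcal M^{\mathcal T}_{\eta'},\mu_{\eta'},0)\simeq U(\mathcal M^{\mathcal T}_{\eta'},\mu_{\eta'},0)$, using Claim \ref{c3-4-1}. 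The computation of Case 1 in the proof of Lemma \ref{L3-2}, applied with $\eta$ in place of $\theta$, then shows that $\{\mu_{\eta'}\mid\eta'\in J\}$ generates $U(\mathcal M^{\mathcal T}_\eta,\mu_\eta,0)$ in the Prikry sense: for $A\in\mathcal P(\mu_\eta)\cap\mathcal M^{\mathcal T}_\eta$, we have $A\in U(\mathcal M^{\mathcal T}_\eta,\mu_\eta,0)$ iff $\mu_{\eta'}\in A$ for all sufficiently large $\eta'\in J$.

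\emph{Step 2: transfer to $\overline W$.} Since $\mathcal M^{\mathcal T}_\eta$, $\mathcal M_X$ and $\overline W$ agree on $\mathcal P(\mu_\eta)$, the same sequence generates an ultrafilter $\overline U$ on $\mathcal P(\mu_\eta)\cap\overline W$.

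\begin{itemize}
\item If $\mathrm{cf}(\eta)=\omega$, the closure ${}^\omega X\subseteq X$ gives $\pi[\{\mu_{\eta'}\mid\eta'\in J\}]\in X$.
\item Hence $\{\mu_{\eta'}\mid\eta'\in J\}\in N_X$. Arguing exactly as at the end of the proof of Lemma \ref{L3-2}, but in $N_X$ and with the ordinal $\mu_\eta$ in place of $\nu$, \cite[Lemma 5.3]{MS2} puts the extender equivalent to $\overline U$ on the $\overline W$-sequence.
\item If $\mathrm{cf}(\eta)>\omega$, I would first pass to an $\omega$-subsequence. This needs some care, since the generating characterization uses a tail of the whole set $J$; in the worst case one reflects using the $<\mathrm{cf}(\nu)$-closure of $X\cap\mathrm{Ord}$ noted after Theorem \ref{T2-3}.
\end{itemize}

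\emph{Step 3: contradiction with the comparison.} By Step 1, $\overline U=U(\mathcal M^{\mathcal T}_\eta,\mu_\eta,0)$, whose extender is exactly $E^{\mathcal T}_\eta=E(\mathcal M^{\mathcal T}_\eta,\mu_\eta,0)$. This is the least total extender with critical point $\mu_\eta$ on either sequence, and both are indexed at the common successor of $\mu_\eta^{+}$ determined by coherence. So $\overline W$ and $\mathcal M^{\mathcal T}_\eta$ have the same extender at index $\mathrm{lh}(E^{\mathcal T}_\eta)$. That contradicts the fact that $E^{\mathcal T}_\eta$ was chosen in the coiteration as the least disagreement between $\mathcal M^{\mathcal T}_\eta$ and $\overline W$.

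The main obstacle is Step 3, specifically the index bookkeeping: one must check that \cite[Lemma 5.3]{MS2}, and Schlutzenberg's theorem \cite{Schl} behind it, really place the extender of $\overline U$ at the same index as $E^{\mathcal T}_\eta$. The point to verify is that $\mathcal M^{\mathcal T}_\eta$ and $\overline W$ agree strictly below $\mathrm{lh}(E^{\mathcal T}_\eta)$. I would check this from the agreement properties of iteration trees, using that $\mathcal M^{\mathcal T}_\eta$ and $\overline W$ agree up to $\mu_\eta^{+}$ computed in $\mathcal M_X$, and that the order-$0$ measure is the first total extender above that point. If this step fails, the fallback is a different contradiction from the same data. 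Namely, $\pi$ of the $\omega$-sequence would be a Prikry-type sequence for a measure on $\pi(\mu_\eta)<\nu$ over $W$ lying in $X$, and one would try to contradict the choice of $\mu_\eta$ as a critical point of $\mathcal T$.
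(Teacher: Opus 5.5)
Your argument reaches the same contradiction as the paper's: $\mu_\eta$ would have to be measurable in $\overline W$, although the comparison applied $E(\mathcal M^{\mathcal T}_\eta,\mu_\eta,0)$ at $\mu_\eta$. The difference is in how you obtain that measurability.

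\textbf{The paper's route.} It takes $\eta$ to be the $\omega$-th element of $I_0$ and applies Theorem~\ref{T2-1} directly to $\pi(\mu_\eta)$. This ordinal is regular in $K$ because $\mu_\eta$ is regular in $\overline W$. It has countable cofinality because ${}^\omega X\subseteq X$. It meets the size hypothesis once $I_0$ is thinned, which is the paper's without-loss-of-generality step. Used this way, the theorem is a black box and needs no information about which measure arises.

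\textbf{Your route.} You re-derive that instance of Theorem~\ref{T2-1} by hand. The earlier critical points generate $U(\mathcal M^{\mathcal T}_\eta,\mu_\eta,0)$, and you rerun the argument that closes the proof of Lemma~\ref{L3-2} with $\pi(\mu_\eta)$ as the target cardinal. This is the ``shift the target cardinal'' device the paper itself uses after Lemma~\ref{L6-12} and in Claim~\ref{c7-8-1}. It gives a sharper fact: this particular measure would lie on the $\overline W$-sequence.

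\textbf{The cost of your route.} \cite[Lemma 5.3]{MS2}, and the iterability from \cite[Lemma 5.2]{MS2} behind it, is not a free-standing fact about arbitrary countable generating sequences. It has to be re-established in the covering set-up for the new target. Concretely, you work in $V$ with $X$ as a covering substructure for $\pi(\mu_\eta)$, thinning $I_0$ so that the size hypotheses hold, and then pull the conclusion back by $\pi$. This cannot literally be run ``in $N_X$'', because your sequence is not the critical sequence of any covering substructure there. The only way to work inside $N_X$ is to invoke Theorem~\ref{T2-1} there, which is just the paper's proof. So the real weight of your argument sits in Step~2, not Step~3.

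Two smaller points.

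\emph{Choice of $\eta$.} Take $\eta$ to be the least limit point of $I_0$; it lies in $I_0$ because $I$ is club. Then $\mathrm{cf}(\eta)=\omega$ automatically. Your proposed patch for $\mathrm{cf}(\eta)>\omega$ would not work: no cofinal $\omega$-subsequence exists, and $<\mathrm{cf}(\nu)$-closure is vacuous when $\mathrm{cf}(\nu)=\omega$. With this choice of $\eta$ the patch is never needed.

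\emph{Step~3.} No index bookkeeping is needed, though yours is correct. Suppose $\mu_\eta$ were measurable in $\overline W$. Then it would be measurable in $\mathcal M_X$, since the two models agree below $\mu$. The critical point of $i^{\mathcal T}_{\eta+1,\theta}$ exceeds $\mu_\eta$, so by elementarity $\mu_\eta$ would be measurable in $\mathcal M^{\mathcal T}_{\eta+1}=\mathrm{Ult}(\mathcal M^{\mathcal T}_\eta,E(\mathcal M^{\mathcal T}_\eta,\mu_\eta,0))$. But $o(\mu_\eta)=0$ in that model. This is exactly the paper's final step.
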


\begin{proof}
    Otherwise, let $\eta$ be the $\omega$-th member of $I_0$. 
    Since $\pi(\mu_\eta) = |\nu|$, $\mathrm{cf}(\pi(\mu_\eta)) = \omega$ and $\pi(\mu_\eta)$ is a regular cardinal in $K$, Theorem \ref{T2-1} implies that $\pi(\mu_\eta)$ is measurable in $K$.
    However, since $\mathcal M_X$ agrees with $\overline W$ below $\mu$, and $E^{\mathcal T}_{\eta} = E(\mathcal M^{\mathcal T}_\eta,\mu_\eta,0)$, it follows that there is no total extender on the $\overline W$-sequence which concentrates on $\mu_\eta$.
    Therefore, $\pi(\mu_\eta)$ is not measurable in $W$. 
    A contradiction.
\end{proof}

By the above two lemmas, $\overline C_0$ generates $U(\mathcal M_X,\mu,0)$, meaning that for any $A\in \mathcal P(\mu)\cap \mathcal M_X$, 
$$A\in U(\mathcal M_X,\mu,0)\iff \overline C_0\subseteq^* A.$$
This confirms that $\overline C_0$ is a Prikry sequence for $U(\mathcal M_X,\mu,0)$ by satisfying its Mathias condition. 
Notice that $U(\mathcal M_X,\mu,0)\cap \overline W$ is also a total measure over $\mathcal P(\mu)\cap\overline W$, therefore,
$$N_X\vDash \overline C_0\text{ generates a total measure over }\mathcal P(\mu)\cap\overline W.$$
By elementarity, 
$$H_{\Omega^+}\vDash C_0\text{ generates a total measure over }\mathcal P(\nu)\cap W.$$
Call this measure $U$.
By a similar argument as Lemma 5.2 in \cite{MS2}, we can check that $(W,\mathrm{Ult}(W,U),\nu)$ is iterable, and thus $U\in W$.
Moreover, since every member in $C_0$ is not measurable in $W$, $U = U(W,\nu,0)$. 
Therefore, $C_0$ is a Prikry sequence for $U(W,\nu,0)$.

Our remaining objective is to prove the maximality of $C_0$; that is, $C_0$ almost contains every Prikry sequence for $U(W,\nu,0)$. 
This is followed by our next lemma.

\begin{lemma}\label{L3-4}
    Let $\mathbf{d}\subseteq \nu\cap X$ with order type $\omega$.
    Then $\mathbf{d}\subseteq^* C_0$ if and only if for any club subset $A\in \mathcal P(\nu)\cap W$, $\mathbf{d}\subseteq^* A$. 
\end{lemma}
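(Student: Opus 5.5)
The forward direction is the easy one; the work is in the converse. For ($\Rightarrow$), suppose $\mathbf d\subseteq^* C_0$ and let $A\in\mathcal P(\nu)\cap W$ be club. Since $C_0$ generates $U=U(W,\nu,0)$ and $U$ is normal, every club of $\nu$ lying in $W$ belongs to $U$. So $C_0\subseteq^* A$, and therefore $\mathbf d\subseteq^* A$.

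For ($\Leftarrow$), we argue by contraposition. Assume $\mathbf d\not\subseteq^* C_0$. Then infinitely many members of $\mathbf d$ lie outside $C_0$, and since $\mathbf d$ has order type $\omega$ we may pass to an infinite subset $\mathbf d'\subseteq \mathbf d\setminus C_0$. Because $\mathbf d\subseteq X$ and ${}^\omega X\subseteq X$, we have $\mathbf d'\in X$. It suffices to find a club $A\in W$ with $\mathbf d'\cap A$ finite. Pull back to $N_X$: set $\bar{\mathbf d}=\pi^{-1}[\mathbf d']$. Using elementarity, it is enough to produce a club $\bar A\subseteq\mu$ with $\bar A\in\overline W$ (possibly defined inside $X$ from $\mathbf d'$) that is almost disjoint from the relevant ordinals, and then transport it by $\pi$. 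The catch is that $\pi(\bar A)\cap\mathbf d'$ is not simply $\pi[\bar A\cap\bar{\mathbf d}]$. So a cleaner route is to work directly with $\mathbf d'$ in $V$ and use the structure of $\mathcal T$.

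For each $\gamma\in\mathbf d'$, let $\bar\gamma$ be the least member of $\pi^{-1}[\nu]\cap X$ whose image is at least $\gamma$; since $\gamma\in X$, in fact $\bar\gamma=\pi^{-1}(\gamma)$. We split into cases according to how $\bar\gamma<\mu$ relates to the iteration $\mathcal T$. Since $\mathcal M_X$ and $\overline W$ agree below $\mu$, and the main branch to $\theta$ is generated by the critical points $\mu_\eta$ ($\eta\in I_0$), which are order-zero measures by Lemma \ref{L3-2}, every $\bar\gamma\notin\overline C_0$ that is large enough falls strictly between $\mu_\eta$ and $\mu_{\eta'}$, for consecutive elements $\eta<\eta'$ of $I_0$ (recall $I_0$ has order type $\omega$ by Lemma \ref{L3-3}). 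We then use the fact that $\mathcal M^{\mathcal T}_{\eta+1}=\mathrm{Ult}(\mathcal M^{\mathcal T}_\eta,E(\mathcal M^{\mathcal T}_\eta,\mu_\eta,0))$ and that later extenders have critical points above $i(\mu_\eta)$. Together these show that $\bar\gamma$ lies in $\mathrm{Hull}$ of $\mu_\eta\cup\{\text{parameters}\}$ in $\mathcal M_X$, and hence equals $i^{\mathcal T}_{\eta,\theta}(f)(\mu_\eta)$ for some $f\in\mathcal M^{\mathcal T}_\eta$. From this we define the function $g(\alpha)=$ the least ordinal closed under such representing functions above $\alpha$. Its closure points form a club $\bar A$ in $\mathcal M_X$, and hence in $\overline W$ by agreement below $\mu$ together with $(\mu^+)$-absoluteness. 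The key property is that the elements of $\bar A$ above $\zeta$ are exactly the $\mu_\eta$ for $\eta\in I_0$, up to a bounded error. Then $A=\pi(\bar A)$ is a club in $W$, and $A\cap\sup\pi[\mu_{\eta'}]$ does not contain $\gamma$.

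The main obstacle is the step showing that non-critical points are "covered" by functions in the iterate. One has to control generators: every ordinal in $(\mu_\eta,\mu_{\eta'})$ must be of the form $i(f)(\mu_\eta)$ modulo earlier generators. The extenders used between $\eta+1$ and $\eta'$ may have generators in that interval, but their critical points are below $\mu_{\eta'}$ and above $\mu_\eta$. The idea is to handle this by absorbing these extenders into the hull, using the fact that the closure points of the Skolem function of $\mathcal M_X$ form a club whose points of cofinality $\omega$ lying in $\mathrm{ran}(\pi)$ coincide eventually with $C_0$. This last coincidence should follow from Lemma \ref{L2-11}: the points $\mu_\eta$ are exactly the critical points mapped to $\mu$, and any other closure point would yield a new measurable element of $\overline C$, which Lemmas \ref{L3-2} and \ref{L3-3} rule out.
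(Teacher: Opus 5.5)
Your forward direction is fine: $C_0$ is a Prikry sequence for the normal measure $U(W,\nu,0)$, and that measure contains every club of $\nu$ in $W$. The converse, however, breaks down exactly where a club \emph{of $W$} has to be produced.

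\textbf{Your $\bar A$ is not in $\overline W$.} You build $\bar A$ from the tree $\mathcal T$, which is a comparison involving $W$ itself and so is not an object of $N_X$, and from hulls over $\mathcal M_X$. Nothing places $\bar A$ in $\overline W$. Here you only have $\mathcal P(\mu)\cap\overline W\subseteq\mathcal P(\mu)\cap\mathcal M_X$, not the reverse. Moreover, when $\mathcal M_X$ is a set mouse it projects to $\le\mu$, so a set defined from its $\Sigma_{n+1}$ hulls need not even belong to $\mathcal M_X$.

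Your own ``key property'' actually rules this out. If $\bar A$ coincides with $\overline C_0$ above some point, it has order type $\omega$ there. But $\mu$ is regular in $\overline W$ (because $\nu$ is regular in $W$) and measurable in $\mathcal M_X$. So $\bar A$ is a club in neither model, and $\pi(\bar A)$ is undefined.

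The worry you raise about $\pi(\bar A)\cap\mathbf d'$ versus $\pi[\bar A\cap\bar{\mathbf d}]$ is not the real issue. Since $\mathbf d'\subseteq X$, the two agree as soon as $\bar A\in N_X$.

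\textbf{The step you call the main obstacle asks for too much.} If $\eta+1\notin I_0$, the next extender on the branch has critical point strictly between $\mu_\eta$ and $i^{\mathcal T}_{\eta,\eta+1}(\mu_\eta)$, not above the latter as you say. Ordinals in $(\mu_\eta,\mu_{\eta'})$ then need not have the form $i(f)(\mu_\eta)$. Your closing appeal to ``a new measurable element of $\overline C$'' is not an argument.

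\textbf{The missing idea is the $\mathcal S$-structure.} It carries the hull structure of $\mathcal M_X$ over to a structure that lies in $W$. Let $\mathcal S_X=\mathrm{Ult}_n(\mathcal Q_X,\pi,\nu)$, with $\Pi\upharpoonright\mu=\pi\upharpoonright\mu$.

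If $\mathcal S_X$ is a set mouse, Lemma~\ref{L2-5} gives $\mathcal S_X\lhd W$. Hence $A=\{\alpha<\nu\mid\alpha=\nu\cap\mathrm{Hull}^{\mathcal S_X}_{n+1}(\alpha\cup\Pi(p))\}$ is a closed set in $W$. It contains each $\pi(\mu_\eta)$ and omits each $\pi(d)$ with $d\in\overline{\mathbf d}$, because closure at $\mu_\eta$ and non-closure at $d$ in $\mathcal M_X$ are pushed forward by $\Pi$.

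Non-closure at $d\in(\mu_\eta,\mu_\xi)$ only requires that $\mathrm{Hull}^{\mathcal M^{\mathcal T}_\xi}_{n+1}((\mu_\eta+1)\cup p_{n+1}(\mathcal M^{\mathcal T}_\xi))$ be cofinal in $\mu_\xi$. This follows from the ultrapower at stage $\eta$ together with $\mathrm{crit}(i^{\mathcal T}_{\eta+1,\xi})>\mu_\eta$ and $\sup i^{\mathcal T}_{\eta+1,\xi}[i^{\mathcal T}_{\eta,\eta+1}(\mu_\eta)]=\mu_\xi$. No representation of every ordinal in the interval is needed.

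If $\mathcal S_X$ is a weasel, Lemmas~\ref{L2-6} and~\ref{L2-7} replace it by $\mathrm{Ult}(W,G)$ with $G\in W$. This ultrapower is linked to $\mathcal S_X$ by $\iota_1,\iota_2$, whose critical points are $\ge\Omega_0$. Let $j\colon W\to\mathrm{Ult}(W,G)$ and $\kappa=\mathrm{crit}(G)$. Then $A$ is the set of closure points of $\alpha\mapsto\sup\{j(f)(b)<\nu\mid f\in W,\ f\colon[\kappa]^{<\omega}\to\kappa,\ b\in[\alpha]^{<\omega}\}$, with $c(\mathcal S_X)$ as an extra argument in the protomouse case.

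In the protomouse cases, Lemma~\ref{L2-4} is what transfers the hull computations from $\mathcal M_X$ to $\mathcal Q_X$. Without a device of this kind, your construction never yields a club of $W$.
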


\begin{proof}
    Fix such $\mathbf d$. 
    Without loss of generality, assume $C_0\cap \mathbf{d} = \emptyset$ and $\min \mathbf{d}>\min C_0$.
    Let $\overline{\mathbf d} = \pi^{-1}(\mathbf d)$.
    Our goal is to identify a club subset $A\in \mathcal{P}(\nu)\cap W$ such that $A\cap \mathbf{d}$ is finite. 
    Recall that we have introduced the $\mathcal Q$-structures and $\mathcal S$-structures.
    In this section, we will use $\mathcal Q_X$ for $\mathcal Q_\mu$, and $\mathcal S_X$ for $\mathcal S_\mu$.
    We will split the argument into four different cases, based on whether $\mathcal Q_X$ is a set mouse or a weasel, and whether $\mathcal Q_X$ falls into the protomouse case.
    
    \subsection*{Case 1.}
    Suppose $\mathcal Q_{X}$ is a set mouse,
    and $\mathcal Q_X = \mathcal M_X$.
    By Lemma \ref{L2-5}, $\mathcal S_X\lhd W$. 
    Let $n$ be such that $\mathcal M_X$ is $n+1$-sound above $\mu$, and $p = p_{n+1}(\mathcal M_X)\setminus \mu$.
    By the fact that $\mathcal M_X$ is sound above $\mu$ relative to the parameter $p$, we have that $\mathcal S_X$ is sound above $\nu$ relative to the parameter $\Pi(p)$.
    Let $q = \Pi(p)$, and
    $$A = \{\alpha<\nu\mid \alpha = \nu\cap \mathrm{Hull}^{\mathcal S_X}_{{n+1}}(\alpha\cup q)\}.$$
    Observe that $A\in W$ and it is a closed subset of $\nu$. 
    To show it is unbounded in $\nu$, notice that for each $\eta\in I_0$, $i^{\mathcal T}_{\eta,\theta}(\mu_\eta) = \mu$ and thus
    $$\mu_\eta = \mu\cap \mathrm{Hull}^{\mathcal M_X}_{{n+1}}(\mu_\eta\cup p).$$
    Recall that $\Pi\upharpoonright\mu = \pi\upharpoonright\mu$.
    Pushing forward to $\mathcal S_X$, this gives
    $$\pi(\mu_\eta) = \Pi(\mu_\eta) = \nu\cap \mathrm{Hull}^{\mathcal S_X}_{{n+1}}(\pi(\mu_\eta)\cup q).$$
    Therefore, $\pi(\mu_\eta)\in A$ and $A$ is unbounded.

    We now check that $\mathbf d\cap A$ is finite.
    Let $d\in \overline{\mathbf{d}}$ and $\eta<\xi$ be consecutive members of $I_0$ such that 
    $${\mu}_\eta<d<{\mu}_{\xi}.$$
    It is enough to show that
    $$
        d\neq {\mu}\cap \mathrm{Hull}^{\mathcal M_X}_{{n+1}}(d\cup p).
    $$
    By the elementarity of $i^{\mathcal T}_{\xi,\theta}$, it is equivalent to show
    \begin{equation}\label{E-1}
        d\neq {\mu}_\xi\cap \mathrm{Hull}^{\mathcal M^{\mathcal T}_{\xi}}_{{n+1}}(d\cup [p_{n+1}(\mathcal M^{\mathcal T}_{\xi})\setminus\mu_\xi]).
    \end{equation}
    If $\xi = \eta+1$, the inequality holds trivially. 
    Otherwise, we have $\eta<_{\mathcal T}\eta+1<_{\mathcal T}\xi$. The iteration gives
    $$i^{\mathcal T}_{\eta,\eta+1}({\mu}_{\eta}) = \sup\left(i^{\mathcal T}_{\eta,\eta+1}({\mu}_{\eta})\cap \mathrm{Hull}^{\mathcal M^{\mathcal T}_{\eta+1}}_{{n+1}}\left(({\mu}_\eta+1)\cup p_{n+1}(\mathcal M^{\mathcal T}_{\eta})\right)\right).$$
    Using the fact that $\mathrm{crit}(i^{\mathcal T}_{\eta+1,\xi})>{\mu}_\eta$ and 
    $$\sup\left(i^{\mathcal T}_{\eta+1,\xi}\left[i^{\mathcal T}_{\eta,\eta+1}({\mu}_\eta)\right]\right) = {\mu}_\xi,$$
    we obtain
    $${\mu}_\xi = \sup\left({\mu}_\xi\cap \mathrm{Hull}^{\mathcal M^{\mathcal T}_{\xi}}_{{n+1}}\left(({\mu}_\eta+1)\cup p_{n+1}(\mathcal M^{\mathcal T}_{\xi})\right)\right).$$
    By ${\mu}_\eta<d<{\mu}_\xi$, (\ref{E-1}) holds accordingly. 
    Therefore, $\pi(d)\not\in A$.

    \subsection*{Case 2.} Suppose $\mathcal Q_X$ is a set mouse, but the protomouse case takes place, that is, $\mathcal Q_X\neq \mathcal M_X$.
    In this case, we still have $\mathcal S_{X}\lhd W$, but the analysis need to be modified to adapt the protomouse case.
    Lemma \ref{L2-4} gives the relation between $\mathcal Q_X$ and $\mathcal M_X$, that is, there exists a parameter $\overline q$ such that for every large enough $\alpha<\mu$,
    \begin{equation}\label{E-2}
        \mu\cap \mathrm{Hull}^{\mathcal M_X}_{1}(\alpha\cup p_{1}(\mathcal M_X)) = \mu\cap \mathrm{Hull}^{\mathcal Q_X}_{n+1}(\alpha\cup \overline q),
    \end{equation}
    where $\mathcal Q_X$ is $n+1$-sound above $\mu$.
    Moreover, $\mathcal Q_X$ is sound above $\mu$ relative to $\overline q$, that is, 
    $$\mathcal Q_X = \mathrm{Hull}^{\mathcal Q_X}_{n+1}(\mu\cup \overline q).$$
    Therefore we still have the property that $\mathcal S_X$ is sound above $\nu$ relative to $\Pi(\overline q)$ as in Case 1. 
    Notice that $\overline q\cap \mu = \emptyset$.
    Let $q =\Pi(\overline q)$.
    We define 
    $$A = \{\alpha<\nu\mid \alpha = \nu\cap \mathrm{Hull}^{\mathcal S_X}_{n+1}(\alpha\cup q)\}.$$
    As in Case 1, $A\in W$ and it is a closed subset of $\nu$.
    To show it is unbounded in $\nu$, notice that for each $\eta\in I_0$, we still have 
    $$\mu_\eta = \mu\cap \mathrm{Hull}^{\mathcal M_X}_{1}(\mu_\eta\cup [p_1(\mathcal M_X)\setminus \mu]).$$
    Suppose that $\mu_\eta$ is large enough to satisfy equation (\ref{E-2}). 
    Then
    $$\mu_\eta = \mu\cap \mathrm{Hull}^{\mathcal Q_X}_{{n+1}}(\mu_\eta\cup \overline q).$$
    Pushing forward to $\mathcal S_X$, it gives that $\pi(\mu_\eta)\in A$. Therefore $A$ is unbounded in $\nu$.
    
    To show that $\mathbf d\cap A$ is finite, it is enough to verify that for every large enough $d\in\overline{\mathbf d}$,
    $$d\neq {\mu}\cap \mathrm{Hull}^{\mathcal Q_X}_{{n+1}}(d\cup \overline q).$$
    Since $\overline{\mathbf{d}}$ is cofinal in $\mu$, we may choose $d$ sufficiently large to satisfy equation (\ref{E-2}).
    The argument from Case 1 demonstrates that
    $$d\neq {\mu}\cap \mathrm{Hull}^{\mathcal M_X}_{1}(d\cup [p_{1}(\mathcal M_X)\setminus \mu]).$$
    Therefore, 
    $$d\neq {\mu}\cap \mathrm{Hull}^{\mathcal Q_X}_{1}(d\cup \overline q).$$
    Pushing forward to $\mathcal S_X$ gives us the desired result.
    
    \subsection*{Case 3.}
    Suppose $\mathcal Q_X$ is a weasel, and $\mathcal M_X = \mathcal Q_X$.
    In this case, there is no drop in $[0,\theta)_{\mathcal T}$, and $\mathrm{c}(\mathcal Q_X) = c(\mathcal M_X) = \emptyset$.
    Let $G$ be the short extender defined by Lemma \ref{L2-6} and $\iota_1:\mathrm{Ult}(W;G)\to \mathcal N$, $\iota_2:\mathcal S_X\to \mathcal N$ be the two embeddings.
    Then the critical points of $\iota_1$ and $\iota_2$ are $\geq\Omega_0$.
    Let $j:W\to \mathrm{Ult}(W;G)$ be the ultrapower embedding.
    Let $\kappa = \mathrm{crit}(j)$.
    Define $h$ as a function by letting $\mathrm{dom}(h) = \nu$ and $h(\alpha)$ be the supremum of ordinals $<\nu$ of the form $j(f)(b)$, where $f\in W$, $f:[\kappa]^{<\omega}\to\kappa$ and $b\in[\alpha]^{<\omega}$.
    Since $\nu$ is regular in $W$, $h(\alpha)<\nu$ for all $\alpha<\nu$.
    Let $A$ be the collection of all closure point of $h$.
    Then $A\in W$ and $A$ is closed. 
    We now aim to show that $C_0\subseteq^* A$ and $A\cap \mathbf{d}$ is finite. 

    Let $\Gamma$ be a thick class of ordinals fixed by $i^{\mathcal T}_{0,\theta}$, $j$, $\Pi$, $\iota_1$ and $\iota_2$. 
    Let $\eta\in I_0$ be large enough such that $\pi(\mu_\eta)>\kappa$.
    For each $\eta\in I_0$, since $\mathcal M^{\mathcal T}_\eta$ is sound above $\mu_\eta$, and $i^{\mathcal T}_{\eta,\theta}(\mu_\eta) = \mu$, we have
    $$\mu_\eta = \mu\cap\mathrm{Hull}^{\mathcal M_X}(\mu_\eta\cup \Gamma).$$
    Pushing forward to $\mathcal S_X$, we have
    \begin{equation*}
        \pi(\mu_\eta) = \nu\cap\mathrm{Hull}^{\mathcal S_X}(\pi(\mu_\eta)\cup \Gamma).
    \end{equation*}
    Pushing forward by $\iota_2$ and then pulling back by $\iota_1$, we have 
    \begin{equation}\label{E-3}
        \pi(\mu_\eta) = \nu\cap\mathrm{Hull}^{\mathrm{Ult}(W;G)}(\pi(\mu_\eta)\cup \Gamma).
    \end{equation}
    If $\pi(\mu_\eta)\not\in A$, there exists a function $f\in W$ and some $b\in[\pi(\mu_\eta)]^{<\omega}$ such that $j(f)(b)>\pi(\mu_\eta)$.
    View $f$ as a subset of $\kappa$. 
    By the hull and definability property of $W$ below $\Omega_0$, find $\gamma\in [\Gamma]^{<\omega}$ and a Skolem term $\tau$ such that for every $a\in[\kappa]^{<\omega}$,
    $$\tau^W[-,\gamma] = f(-).$$
    Pushing forward to $\mathrm{Ult}(W;G)$ by $j$, we have
    $$j(f)(b) = \tau^{\mathrm{Ult}(W;G)}[b,\gamma] > \pi(\mu_\eta).$$
    Contradicting equation (\ref{E-3}). 
    Therefore, $C_0\subseteq^* A$ and $A$ is unbounded.  
    
    On the other hand, let $d\in \overline{\mathbf{d}}$.
    A similar calculation as in Case 1 shows
    $$d\neq \mu\cap \mathrm{Hull}^{\mathcal M_X}(d\cup \Gamma).$$
    Pushing forward to $\mathcal S_X$ by $\Pi$, we obtain
    $$\pi(d)\neq \mu\cap \mathrm{Hull}^{\mathcal S_X}(\pi(d)\cup \Gamma).$$
    Using $\iota_1$ and $\iota_2$, we have 
    $$
        \pi(d)\neq \mu\cap \mathrm{Hull}^{\mathrm{Ult}(W;G)}(\pi(d)\cup \Gamma).
    $$
    Let $\tau$ be a Skolem term, $a\in[d]^{<\omega}$ and $\gamma\in [\Gamma]^{<\omega}$ such that
    $$\tau^{\mathrm{Ult}(W;G)}[a,\gamma]>\pi(d).$$
    Let the function $f(-)$ be defined as $\tau^{\mathrm{Ult}(W;G)}[-,\gamma]$. 
    Since $j(\kappa)>\nu$, $f:[\kappa]^{|a|}\to \kappa$. 
    Therefore, $j(f)(a)>\pi(d)$, which leads to $d\not\in A$.
    
    \subsection*{Case 4.} Suppose $\mathcal Q_X$ is a weasel, but $\mathcal M_X\neq \mathcal Q_X$.
    This results in the protomouse case, and thus $\mathcal M_X$ is a set mouse.
    Notice that $c(\mathcal Q_X)\cap \mu = \emptyset$.
    Lemma \ref{L2-4} gives that,
    there exists a thick class $\Gamma_0$ such that
    for all large enough $\eta\in I_0$,
    \begin{equation}\label{E-4}
        \mu\cap \mathrm{Hull}^{\mathcal M_X}_1(\mu_\eta\cup [p_{1}(\mathcal M_X)\setminus \mu]) = \mu\cap \mathrm{Hull}^{\mathcal Q_X}(\mu_\eta\cup c(\mathcal Q_X)\cup \Gamma_0).
    \end{equation}
    Let $G\in W$ be the short extender defined by \ref{L2-7} with support $\nu\cup c(\mathcal S_X)$, and let $\iota_1,\iota_2$ be the embeddings.
    Let $\kappa = \mathrm{crit}(G)$ and
    $j:W\to\mathrm{Ult}(W,G)$ be the ultrapower embedding. 
    Define $h$ as a function by letting 
    $\mathrm{dom}(h) = \nu$ and $h(\alpha)$ 
    be the supremum of ordinals $<\nu$ of the form
    $j(f)(b,c(\mathcal S_X))$, where $f\in W$, 
    $f:[\kappa]^{<\omega}\to\kappa$ 
    and $b\in[\alpha]^{<\omega}$. 
    Since $\nu$ is regular in $W$, 
    $h(\alpha)<\nu$ for all $\alpha<\nu$. 
    Let $A$ be the collection of all closure point of $h$.
    Then $A\in W$ and $A$ is closed. 
    We now aim to show that 
    $C_0\subseteq^* A$ and $A\cap \mathbf{d}$ is finite. 
    The lemma follows immediately from these two conclusions.

    Let $\Gamma\subseteq\Gamma_0$
    be a thick class of ordinals fixed by 
    $i^{\mathcal T}_{\zeta_0,\theta}$,
    $j$, $\Pi$, $\iota_1$ and $\iota_2$. 
    Let $\eta\in I_0$ be large enough such that
    $\pi(\mu_\eta)>\kappa$, and it satisfies equation (\ref{E-4}).
    Since $\mathcal M^{\mathcal T}_\eta$ is
    $\mu_\eta$-sound, 
    and 
    $i^{\mathcal T}_{\eta,\theta}(\mu_\eta) = \mu$,
    we have
    $$\mu_\eta = \mu\cap\mathrm{Hull}_1^{\mathcal M_X}(\mu\cup [p_{1}(\mathcal M_X)\setminus \mu]).$$
    Equation (\ref{E-4}) tells us 
    $$\mu_\eta = \mu\cap\mathrm{Hull}^{\mathcal Q_X}(\mu_\eta\cup c(\mathcal Q_X)\cup \Gamma).$$
    Pushing forward to $\mathcal S_X$, we have
    $$
        \pi(\mu_\eta) = \nu\cap\mathrm{Hull}^{\mathcal S_X}(\pi(\mu_\eta)\cup c(\mathcal S_X)\cup \Gamma).
    $$
    By the embeddings $\iota_1$ and $\iota_2$, we have
    $$
        \pi(\mu_\eta) = \nu\cap\mathrm{Hull}^{\mathrm{Ult}(W;G)}(\pi(\mu_\eta)\cup c(\mathcal S_X)\cup \Gamma).
    $$
    If $\pi(\mu_\eta)\not\in A$, 
    there exists a function $f\in W$ and some 
    $b\in[\pi(\mu_\eta)]^{<\omega}$ such that 
    $j(f)(b,c(\mathcal S_X))>\pi(\mu_\eta))$.
    View $f$ as a subset of $\kappa$. 
    By the hull and definability property of $W$ below $\Omega_0$,
    find $\gamma\in [\Gamma]^{<\omega}$ and a 
    Skolem term $\tau$ such that 
    $$\tau^W[-,-,\gamma] = f(-,-).$$
    Pushing forward to $\mathrm{Ult}(W;G)$ by $j$, 
    we have
    $$j(f)(b,c(\mathcal S_X)) = \tau^{\mathrm{Ult}(W;G)}[b,c(\mathcal S_X),\gamma] > \pi(\mu_\eta).$$
    A contradiction. 
    Therefore, 
    $C_0\subseteq^* A$ and $A$ is unbounded.  

    On the other hand, let $d\in \overline{\mathbf{d}}$ 
    be large enough to satisfy equation (\ref{E-4}).
    A similar calculation as in Case 1 shows
    $$d\neq \mu\cap \mathrm{Hull}_1^{\mathcal M_X}(d\cup [p_{1}(\mathcal M_X)\setminus \mu]).$$
    By equation (\ref{E-4}), we have 
    $$d\neq \mu\cap \mathrm{Hull}^{\mathcal Q_X}(d\cup c(\mathcal Q_X)\cup \Gamma).$$
    Pushing forward to $\mathcal S_X$ by $\Pi$, we obtain
    $$\pi(d)\neq \nu\cap \mathrm{Hull}^{\mathcal S_X}(\pi(d)\cup c(\mathcal S_X)\cup \Gamma).$$
    By the embeddings $\iota_1$ and $\iota_2$, we have
    $$\pi(d)\neq \nu\cap \mathrm{Hull}^{\mathrm{Ult}(W;G)}(\pi(d)\cup c(\mathcal S_X)\cup \Gamma).$$
    Let $\tau$ be a Skolem term,
    $a\in[d]^{<\omega}$ and
    $\gamma\in [\Gamma]^{<\omega}$ such that
    $$\tau^{\mathcal S_X}[a,c(\mathcal S_X),\gamma]>\pi(d).$$
    Let the function $f(-,-)$ be defined as $\tau^W[-,-,\gamma]$. 
    Since $j(\kappa)>\nu$ and $j(\kappa)>\max(c(\mathcal S_X))$, 
    $f:[\kappa]^{|a|+|c(\mathcal Q_X)|}\to \kappa$. 
    Therefore, 
    $j(f)(a,c(\mathcal S_X))>\pi(d)$,
    which leads to $d\not\in A$.
\end{proof}

The above strategy of splitting into these different cases based on the $\mathcal Q$-structures is heavily used in the remaining part of this paper.

\begin{corollary}\label{C2-7}
    $C_0$ is the maximal Prikry sequence for $U(W,\nu,0)$.
\end{corollary}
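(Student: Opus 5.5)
We already know from the discussion after Lemma \ref{L3-3} that $C_0$ is a Prikry sequence for $U(W,\nu,0)$. It remains to show maximality: every Prikry sequence $D$ for $U(W,\nu,0)$ satisfies $D\subseteq^* C_0$. The plan is to reduce this to Lemma \ref{L3-4}, using that ${}^\omega X\subseteq X$ and that Prikry sequences meet every club of $W$ almost everywhere.

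Let $D$ be a Prikry sequence for $U(W,\nu,0)$, so $\mathrm{type}(D)=\omega$ and $D$ is unbounded in $\nu$. Every club subset $A\in\mathcal P(\nu)\cap W$ lies in $U(W,\nu,0)$, because $U(W,\nu,0)$ is a normal measure in $W$ and $\nu$ is regular in $W$. By the Mathias condition, $D\subseteq^* A$ for every such club. So the hypothesis on the right-hand side of Lemma \ref{L3-4} holds for $D$. What is missing is the requirement $D\subseteq \nu\cap X$.

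To handle this, first note that the construction of $C_0$ does not depend on the choice of $X$, modulo finite sets. To see this, take two countably closed $X,X'\in\mathcal F$ and choose a countably closed $Y\in\mathcal F$ containing $X\cup X'\cup\{D\}$. Such a $Y$ exists because $|\nu|$ is countably closed, so internally approachable chains of countably closed structures of size $<|\nu|$ are available, as noted after Theorem \ref{T2-3}. Then $C_0^X\subseteq\nu\cap Y$, since $C_0^X\subseteq X\subseteq Y$, and $C_0^X$ is a Prikry sequence, so it almost lies in every club of $W$. Lemma \ref{L3-4} applied in $Y$ with $\mathbf d=C_0^X$ gives $C_0^X\subseteq^* C_0^Y$. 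Both sets have order type $\omega$, and both are almost contained in the same $W$-clubs. The reverse inclusion should follow from the same lemma applied symmetrically once $C_0^Y\subseteq X'$ for a larger countably closed $X'$. Alternatively, it follows from the observation that two Prikry sequences for the same measure, one almost containing the other, differ by a finite set. That observation holds because if $C_0^Y\setminus C_0^X$ were infinite, its complement relative to the $W$-club $A$ built in Lemma \ref{L3-4} would separate the two sequences.

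Now apply Lemma \ref{L3-4} with $Y$ in place of $X$. Since $D\in Y$ and $Y$ is closed under countable sequences, $D\subseteq\nu\cap Y$. The lemma gives $D\subseteq^* C_0^Y$, and since $C_0^Y=^* C_0$, we get $D\subseteq^* C_0$. This is exactly maximality.

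The main obstacle is the independence step: $C_0$ was defined from a single fixed $X$, so we must check that the sequence obtained from $Y\supseteq X$ agrees with $C_0$ modulo finite. I would handle this exactly as above, using Lemma \ref{L3-4} in both directions. Lemma \ref{L3-4} characterises membership by a property stated purely in terms of $W$-clubs, so it gives $C_0^X=^*C_0^Y$ whenever both sequences lie inside a common countably closed structure in $\mathcal F$.
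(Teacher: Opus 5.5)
There is a genuine gap. Enlarging to a countably closed $Y\supseteq X\cup\{D\}$ and applying Lemma \ref{L3-4} in $Y$ gives only two facts: $D\subseteq^* C_0^Y$ and $C_0^X\subseteq^* C_0^Y$. To conclude $D\subseteq^* C_0^X$ you need the \emph{reverse} inclusion $C_0^Y\subseteq^* C_0^X$, and neither of your justifications supplies it.

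\begin{itemize}
\item The principle that two Prikry sequences for the same measure, one almost containing the other, differ by a finite set is false. Every infinite subset of a Prikry sequence, for instance every other element, is again a Prikry sequence, because the Mathias condition passes to infinite subsets.
\item The separating-club argument would require applying Lemma \ref{L3-4} \emph{in $X$} to $C_0^Y\setminus C_0^X$. That needs $C_0^Y\setminus C_0^X\subseteq X$, which you do not know.
\item Applying the lemma in a further $X'\supseteq Y$ yields only $C_0^Y\subseteq^* C_0^{X'}$, which is again the wrong direction.
\end{itemize}

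So your ``independence of $X$'' step quietly assumes the uniqueness of the maximal sequence, which is what you are trying to prove.

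The missing idea is to use the elementarity of $X$ itself instead of enlarging it. This is the paper's argument.
\begin{enumerate}
\item Since ${}^\omega X\subseteq X$, you have $C_0\in X$.
\item Suppose some Prikry sequence $D$ has $D\not\subseteq^* C_0$. Then $\mathbf d=D\setminus C_0$ is infinite, hence itself a Prikry sequence for $U(W,\nu,0)$, and it is disjoint from $C_0$.
\item The existence of a Prikry sequence with finite intersection with $C_0$ is a statement over $H_{\Omega^+}$ with parameters $C_0,W,\nu\in X$. By elementarity there is such a $\mathbf d\in X$, and being countable, $\mathbf d\subseteq\nu\cap X$.
\item Every $W$-club lies in $U(W,\nu,0)$, so $\mathbf d$ is almost contained in all of them.
\item Lemma \ref{L3-4} in $X$ then gives $\mathbf d\subseteq^* C_0$, contradicting $|\mathbf d\cap C_0|<\omega$.
\end{enumerate}

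This route never needs to compare critical sequences of different substructures. You could also rescue your own version by applying the same reflection trick to $C_0^Y\setminus C_0^X$ to get the missing inclusion, but then $Y$ becomes superfluous.
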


\begin{proof}
    For maximality, suppose the corollary is false.
    Fix a Prikry sequence $\mathbf{d}\subseteq \nu$ such that $|\mathbf{d}\cap C_0|<\omega$.
    By the elementarity of $X$, since $C_0\in X$, we can assume that $\mathbf d\in X$.
    By the Mathias condition of $\mathbf d$, $\mathbf d\subseteq^* A$ for all club subsets in $\mathcal P(\nu)\cap W$.
    However, Lemma \ref{L3-4} tells us that $\mathbf d\subseteq^* C_0$. 
    A contradiction.
\end{proof}

We are now heading towards the second half of Theorem \ref{T3-1}.
We will use a similar argument as that of Lemma \ref{L3-4}.

\begin{lemma}\label{L3-6}
    Assume that measurable cardinals in $K$ are bounded in $\nu$.
    Let $A\subseteq \nu$ with $|A|<|\nu|$. Then there exists
    $B\in K[C_0]$ such that $A\subseteq B$ and 
    $A\subseteq B$ and $|B|^{K[C_0]}<\nu$.
\end{lemma}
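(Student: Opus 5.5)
We will pick $X\in\mathcal F$ with ${}^\omega X\subseteq X$ and $A\subseteq X$. This is possible since $|A|<|\nu|$ and $|\nu|$ is countably closed, so we can build $X$ along an internally approachable chain that absorbs $A$; in particular we may assume $A\in X$ and $A\subseteq X$. The natural candidate for $B$ is a set, defined in $K[C_0]$ (equivalently in $W[C_0]$, since $W$ and $K$ agree below $\Omega_0$), of size $<\nu$ that contains $\nu\cap X$. It suffices to cover $\nu\cap X=\pi[\mu]$, because $A\subseteq\nu\cap X$. We will describe $\pi[\mu]$ through the $\mathcal S$-structure, exactly as in the proof of Lemma \ref{L3-4}.

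The key observation is this. Since the measurables of $K$ are bounded in $\nu$, the iteration $[\zeta_0,\theta)_{\mathcal T}$ from Lemma \ref{L3-2} eventually uses only order-zero measures on the $\mu_\eta$, $\eta\in I_0$, and nothing else with critical point in an interval $(\mu_\eta,\mu_{\eta'})$. Otherwise a measurable of $\mathcal M_X$ between consecutive critical points would, via $\mathcal M_X\upharpoonright\mu=\overline W\upharpoonright\mu$ and $\pi$, produce unboundedly many measurables of $W$ below $\nu$. Hence, past some $\eta_0$, $\mathcal M_X$ is the result of a plain $\omega$-iteration of $\mathcal M^{\mathcal T}_{\eta_0}$ by the normal measures on the critical sequence. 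So every $\beta<\mu$ has the form
$$\beta=\tau^{\mathcal M_X}[\,\overline c,\ \gamma,\ p\,],$$
where $\overline c\in[\overline C_0]^{<\omega}$, $\gamma<\mu_{\eta_0}$ (or $\gamma$ lies below the bound of measurables), and $p$ is the relevant parameter. In Cases 1–2 this parameter is the soundness parameter $p_{n+1}$ or $\overline q$, with finitely many thick-class ordinals $\Gamma$ added in the weasel cases. Pushing forward by $\Pi$ (and, in the weasel Cases 3–4, through $\iota_2$ and then $\iota_1^{-1}$ to $\mathrm{Ult}(W;G)$), every element of $\nu\cap X$ becomes $\nu\cap\mathrm{Hull}$ of some structure $\mathcal N_0$ over $\pi(\mu_{\eta_0})\cup C_0\cup\{q\}$ (plus $\Gamma$ and $c(\mathcal S_X)$). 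Here $\mathcal N_0$ is one of $\mathcal S_X$, $\mathrm{Ult}(W;G)$, and it lies in $W$ (Lemmas \ref{L2-5}–\ref{L2-7}).

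We then define
$$B=\nu\cap \mathrm{Hull}^{\mathcal N_0}\bigl(\pi(\mu_{\eta_0})\cup C_0\cup\{q\}\cup\gamma^*\bigr),$$
where $\gamma^*$ is a fixed finite tuple from $\Gamma$, or more safely, the union over all finite tuples from a set of $\Gamma$-ordinals chosen in $W$. Since $\mathcal N_0,q\in W$ and $C_0\in K[C_0]$, we get $B\in K[C_0]$. Its size in $K[C_0]$ is at most $\pi(\mu_{\eta_0})\cdot\omega<\nu$, and $B\supseteq\nu\cap X\supseteq A$. In the weasel case we must make sure that the thick-class parameters can be replaced by finitely many fixed ordinals, or by a set in $W$ of small size. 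For this we use the function $h$ from Cases 3–4 and represent hull elements as $j(f)(b,c(\mathcal S_X))$ with $f\in W$. We then let $B$ be the set of values $j(f)(b,c(\mathcal S_X))<\nu$ with $b\in[\pi(\mu_{\eta_0})\cup C_0]^{<\omega}$ and $f\in W\cap{}^{[\kappa]^{<\omega}}\kappa$. This has size $\le(\kappa^+)^W\cdot\pi(\mu_{\eta_0})<\nu$ in $K[C_0]$, provided we choose $\eta_0$ so that $\pi(\mu_{\eta_0})>\kappa$.

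The main obstacle will be the first step: proving that the iteration between consecutive points of $I_0$ contributes no new generators, so that $\mu$ really is the hull of $\mu_{\eta_0}\cup\overline C_0\cup\{p\}$. We would prove this by contradiction. Extenders used on $(\zeta_0,\theta)_{\mathcal T}$ with critical points cofinal in $\mu$, other than the $E(\mathcal M^{\mathcal T}_\eta,\mu_\eta,0)$, would have critical points that are measurable in $\mathcal M_X$ (the extenders are total on the branch), hence measurable in $\overline W$ by the agreement below $\mu$. Pushing these by $\pi$ gives unboundedly many measurables of $K$ below $\nu$, contradicting the hypothesis. A secondary concern is verifying, as in Lemma \ref{L2-4}, that in the protomouse cases the hull computed in $\mathcal M_X$ agrees below $\mu$ with the one in $\mathcal Q_X$ for large enough $\xi$; this is what allows the same transfer.
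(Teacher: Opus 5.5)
Your overall architecture is the paper's. You pick $X$ with ${}^\omega X\subseteq X$ covering $A$, and show that the tail of $[\zeta_0,\theta)_{\mathcal T}$ is a linear iteration by images of one order-zero measure. Then $\mathcal M_X$ is generated by $\mu_{\zeta_0}\cup\overline C_0$ together with the soundness parameter (or $c(\mathcal Q_X)$ and thick-class ordinals). You transfer this to $\mathcal S_X$, and to $\mathrm{Ult}(W;G)$ via $\iota_1,\iota_2$, and take $B$ to be the resulting hull in $\mathcal S_X\lhd W$, resp.\ the set of values $j(f)(a,s,c(\mathcal S_X))$. You should also say explicitly that the $C_0$ in your definition of $B$ is $C_X$ for the newly chosen $X$, and that $C_X=^*C_0$ by Corollary \ref{C2-7}, so $K[C_X]=K[C_0]$. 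The real gap is in the step you single out as the main obstacle.

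There you argue that the critical point $\kappa'$ of any extra extender used on the branch is measurable in $\mathcal M_X$ "because the extender is total." Totality only makes $\kappa'$ measurable in the model $\mathcal M^{\mathcal T}_{\eta+1}$ to which the extender is applied. Suppose, for instance, that $E^{\mathcal T}_{\eta'}$ is the order-zero measure on $\kappa'$. Then $\kappa'$ is not measurable in $\mathcal M^{\mathcal T}_{\eta'+1}$, and since critical points increase along the branch, it stays non-measurable in $\mathcal M_X$. This is exactly the fact used in Lemma \ref{L3-3} to see that the $\pi(\mu_\eta)$ are not measurable in $W$. So your argument yields no measurables of $\overline W$ at all.

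What is needed is a reflection argument. Let $\eta\in I_0$ with $\eta+1\notin I_0$, and let $\eta'+1$ be the successor of $\eta+1$ on the branch. Then $\mu_\eta<\kappa'=\mathrm{crit}(E^{\mathcal T}_{\eta'})<i^{\mathcal T}_{\eta,\eta+1}(\mu_\eta)$: a larger $\kappa'$ is incompatible with $i^{\mathcal T}_{\eta,\theta}(\mu_\eta)=\mu$, and equality would put $\eta+1$ into $I_0$. Since $\kappa'$ is measurable in $\mathcal M^{\mathcal T}_{\eta+1}$, for every $\alpha<\mu_\eta$ that model has a measurable in $(\alpha,i^{\mathcal T}_{\eta,\eta+1}(\mu_\eta))$. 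Pulling back along $i^{\mathcal T}_{\eta,\eta+1}$, $\mu_\eta$ is a limit of measurables of $\mathcal M^{\mathcal T}_\eta$. Pushing forward along $i^{\mathcal T}_{\eta,\theta}$, $\mu$ is a limit of measurables of $\mathcal M_X$, hence of $\overline W$ by agreement below $\mu$. So $\nu$ is a limit of measurables of $K$, a contradiction.

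This gives $\eta+1=\min(I_0\setminus(\eta+1))$ for every $\eta\in I_0$; a single extra extender already yields the contradiction, so you need no cofinality hypothesis. Hence $\theta=\zeta_0+\omega$, and with this in place the rest of your argument goes through as in the paper.
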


\begin{proof}
    Fix such a subset $A$. 
    Since $|A|<|\nu|$, we may find a substructure $X\prec H_{\Omega^+}$ which satisfies the requirement in the above arguments and $A\subseteq X$. 
    By Corollary \ref{C2-7}, the sequence $C_0$ is unique up to initial segments and therefore independent from our choice of $A$ or $X$.
    We would like to find $B\in K[C_0]$ with $|B|^{K[C_0]}<\nu$ and $X\cap \nu\subseteq B$.
    The definition of $B$ will depend on whether $Q_{X}$ is a set mouse or a weasel, and whether $\mathcal Q_X$ falls into the protomouse case. 
    Before we start to define $B$, the following claim will be very helpful in simplifying the analysis.
    \begin{claim}\label{c2-7-1}
        Let $\eta\in I_0$. 
        Then
        $\eta+1 = \min(I_0\setminus (\eta+1))$. 
    \end{claim}
    \begin{proof}[Proof of Claim.]
        Otherwise, let 
        $$\eta<_{\mathcal T}\eta+1<_{\mathcal T}\eta'+1$$
        be consecutive members of $[\zeta_0,\theta)_{\mathcal T}$.
        We have
        \begin{equation}\label{E-6}
            \mu_\eta = \mathrm{crit}(E^{\mathcal T}_{\eta}) 
        <\mathrm{crit}(E^{\mathcal T}_{\eta'})<i^{\mathcal T}_{\eta,\eta+1}(\mu_\eta).
        \end{equation}
        The second inequality is because that, if
        $$\mathrm{crit}(E^{\mathcal T}_{\eta'})>i^{\mathcal T}_{\eta,\eta+1}(\mu_\eta),$$
        then it cannot be the case that 
        $i^{\mathcal T}_{\eta,\theta}(\mu_\eta) = \mu$;
        If $$\mathrm{crit}(E^{\mathcal T}_{\eta'})=i^{\mathcal T}_{\eta,\eta+1}(\mu_\eta),$$
        then $\eta+1\in I_0$, a contradiction. 
        Moreover, since $\mathrm{crit}(E^{\mathcal T}_{\eta'})$ 
        is a measurable cardinal in
        $\mathcal M^{\mathcal T}_{\eta+1}$,
        we have that for all $\alpha<\mu_\eta$,
        $$\mathcal M^{\mathcal T}_{\eta+1}\vDash 
        ``\text{There is a measurable cardinal in }
        (\alpha,i^{\mathcal T}_{\eta,\eta+1}(\mu_\eta))".$$
        Pulling back by elementarity, we have
        $$\mathcal M^{\mathcal T}_{\eta}\vDash 
        ``\text{There is a measurable cardinal in }
        (\alpha,\mu_\eta)".$$
        Therefore there are unboundedly many measurables in 
        $\mathcal M^{\mathcal T}_{\eta}$ below 
        $\mu_\eta$.
        Pushing forward to 
        $\mathcal M_X$,
        there are unboundedly many measurables below 
        $\mu$. 
        Since $\mathcal M_X$ 
        agrees with $\overline W$, 
        it is also true in $\overline W$. 
        A contradiction.
    \end{proof}
    Therefore, $I_0 = \{\zeta_0+i\mid i<\omega\}$ and 
    $\theta = \zeta_0+\omega$. 
    The iteration from $\mathcal M^{\mathcal T}_{\zeta_0}$ to
    $\mathcal M_X$ is the linear
    iteration in which
    the images of one measure are applied repeatedly. 
    We now can start to define $B$ based on different cases.

    \subsection*{Case 1.} Suppose
    $\mathcal M_X = \mathcal Q_X$ 
    and it is a set mouse.
    Let $n<\omega$ be such that $\mathcal M_X$ is $n+1$-sound above $\mu$. 
    Recall that the $\mu_{\zeta_0}$-soundness of 
    $\mathcal M^{\mathcal T}_{\zeta_0}$ tells us
    $$\mathcal M^{\mathcal T}_{\zeta_0} = 
    \mathrm{Hull}^{\mathcal M^{\mathcal T}_{\zeta_0}}_{n+1}
    (\mu_{\zeta_0}\cup 
    [p_{n+1}(\mathcal M^{\mathcal T}_{\zeta_0})\setminus
    \mu_{\zeta_0}]).$$
    Since $E^{\mathcal T}_{\zeta_0}$ is equivalent with a
    total measure on $\mu_{\zeta_0}$, we have
    $$\mathcal M^{\mathcal T}_{\zeta_0+1} = 
    \mathrm{Hull}^{\mathcal M^{\mathcal T}_{\zeta_0+1}}_{n+1}
    ([\mu_{\zeta_0}+1]\cup 
    [p_{n+1}(\mathcal M^{\mathcal T}_{\zeta_0})\setminus
    \mu_{\zeta_0+1}]).$$
    The same logic applied to all other members in $I_0$.
    Therefore,
    $$\mathcal M_X = 
    \mathrm{Hull}^{\mathcal M_X}_{n+1}
    (\mu_{\zeta_0}\cup \overline C_0\cup
    [p_{n+1}(\mathcal M_X)\setminus
    \mu]).$$
    We let 
    $$B = \nu\cap \mathrm{Hull}^{\mathcal S_X}_{n+1}
    (\pi(\mu_{\zeta_0})\cup C_0
    \cup [p_{n+1}(\mathcal S_X)\setminus
    \nu]).$$
    Since $\mathcal S_X\lhd W$, $B\in K[C_0]$ and 
    $|B|^{K[C_0]} < \nu$. 
    Also, since $\Pi\upharpoonright\mu = 
    \pi\upharpoonright\mu$,
    it follows that 
    $$X\cap\nu = \pi[\overline W\cap \mu] 
    = \Pi[\mathcal M_X\cap \mu]
    \subseteq B.$$

    \subsection*{Case 2.} Suppose
    $\mathcal M_X\neq\mathcal Q_X$ 
    and both are set mice. By a similar argument as in Case 1, we still have
    $$\mathcal M_X = 
    \mathrm{Hull}^{\mathcal M_X}_{n+1}
    (\mu_{\zeta_0}\cup \overline C_0\cup
    [p_{n+1}(\mathcal M_X)\setminus
    \mu]).$$
    To make use of $\mathcal S_X$,
    we need to transfer the above equality to 
    $\mathcal Q_X$.
    We claim that the desired conclusion holds by a 
    modification of Equation (\ref{E-2}),
    but to keep the flow of the proof, 
    we will not attempt to prove it here but in the appendix.
    \begin{claim}\label{c2-7-2}
        Let $n<\omega$ be large enough. 
        Define $q$ as in equation \ref{E-2}. 
        Then
        \begin{equation}\label{E-7}
        \mu\cap \mathrm{Hull}
        ^{\mathcal M_X}_{1}
        (\mu_{\zeta_0+n}
        \cup \overline C_0\cup [p_{1}(\mathcal M_X)
        \setminus \mu]) = 
        \mu\cap \mathrm{Hull}
        ^{\mathcal Q_X}_{n+1}(\mu_{\zeta_0+n}\cup \overline C_0\cup q).
    \end{equation}
    \end{claim}
    Given the above equality, let
    $$B = \nu\cap \mathrm{Hull}^{\mathcal S_X}_{n+1}
    (\pi(\mu_{\zeta_0+n})\cup C_0
    \cup [p_{n+1}(\mathcal S_X)\setminus
    \nu]).$$
    The remaining argument follows trivially as in Case 1.

    \subsection*{Case 3.} Suppose that 
    $\mathcal M_X = \mathcal Q_X$ 
    and it is a weasel. 
    Similar as in the Case 3 of Lemma \ref{L3-4}, 
    we can specify an extender $G\in W$ such that 
    $$\mathcal S_X = \mathrm{Ult}(W,G).$$
    Let $\kappa = \mathrm{crit}(G)$.
    Let $\Gamma$ be a thick class of ordinals fixed by
    $\Pi$, $j:W\to \mathcal S_X$ and
    $i^{\mathcal T}_{\zeta_0,\theta}$. 
    Using a similar argument as in Case 1, 
    we obtain
    $$\mathcal M_X = 
    \mathrm{Hull}^{\mathcal M_X}_{1}
    (\mu_{\zeta_0}\cup \overline C_0\cup
    \Gamma).$$
    Define
    $$B = \{j(f)(a,s)\mid f\in K,
    f:[\kappa]^{|a|+|s|}\to \kappa,
    a\in [\pi(\mu_{\zeta_0})]^{<\omega},
    s\in [C_0]^{<\omega}\}.$$
    Then $B\in K$ and $|B|^{K[C_0]} < \nu$. 
    We need to check that $X\cap \nu\subseteq B$. Fix some
    $n<\omega$ and 
    $\alpha\in(\mu_{\zeta_0+n},
    \mu_{\zeta_0+n+1})$. 
    Find a Skolem term $\tau$, 
    $a\in [\mu_{\zeta_0}]^{<\omega}$,
    $\gamma\in [\Gamma]^{<\omega}$
    such that
    $$\alpha = \tau^{\mathcal M_X}
    [a,\{\mu_{\zeta_0},...,\mu_{\zeta_0+n}\},
    \gamma].$$
    Pushing forward to $\mathcal S_X$ gives
    $$\pi(\alpha) = \tau^{\mathcal S_X}
    [\pi(a),\{\pi(\mu_{\zeta_0}),...,\pi(\mu_{\zeta_0+n})\},
    \gamma].$$
    Let 
    $$f(-,-) = \tau^{W}[-,-,\gamma].$$
    Then 
    $$\pi(a) = j(f)(\pi(a),\{\pi(\mu_{\zeta_0}),...,\pi(\mu_{\zeta_0+n})\})
    \in B.$$

    \subsection*{Case 4.} Suppose that 
    $\mathcal M_X\neq\mathcal Q_X$
    and $\mathcal Q_X$ is a weasel. 
    Followed by a similar argument as in Case 1, 
    we still have
    $$\mathcal M_X = 
    \mathrm{Hull}^{\mathcal M_X}_{1}
    (\mu_{\zeta_0}\cup \overline C_0\cup
    \Gamma).$$
    Similar as in Case 2, 
    we need the following enhanced form of equation \ref{E-4}.
    \begin{claim}
        Let $n<\omega$ be large enough. 
        There exists a thick class of ordinals $\Gamma_0$
        such that
        \begin{equation}\label{E-8}
        \mu\cap \mathrm{Hull}
        ^{\mathcal M_X}_1
        (\mu_{\zeta_0}\cup \overline C_0\cup
        [p_{1}(\mathcal M_X)\setminus 
        \mu]) 
        = \mu\cap \mathrm{Hull}
        ^{\mathcal Q_X}
        (\mu_{\zeta_0}\cup \overline C_0\cup c(\mathcal Q_X)\cup \Gamma_0).
    \end{equation}
    \end{claim}
    Fix $G,j,\Gamma_0,\kappa$ as in Case 4 of Lemma \ref{L3-4}. 
    Define
    $$\begin{aligned}
    B &= \{j(f)(a,s,c(\mathcal S_X))\mid f\in K,
    \\&f:[\kappa]^{|a|+|s|+|c(\mathcal S_X)|}\to \kappa,
    a\in [\pi(\mu_{\zeta_0})]^{<\omega},
    s\in [C_0]^{<\omega}\}.
    \end{aligned}$$
    It is clear that $B\in K[C_0]$ and $|B|^{K[C_0]}<\nu$.
    We need to check that $X\cap \nu\subseteq B$. 
    Fix some
    $n<\omega$ and 
    $\alpha\in(\mu_{\zeta_0+n},
    \mu_{\zeta_0+n+1})$. 
    Find a Skolem term $\tau$, 
    $a\in [\mu_{\zeta_0}]^{<\omega}$,
    $s\in \overline C_0$,
    $\gamma\in [\Gamma]^{<\omega}$
    such that
    $$\alpha = \tau^{\mathcal Q_X}
    [a,s,
    c(\mathcal Q_X),\gamma].$$
    Pushing forward to $\mathcal S_X$ gives
    $$\pi(\alpha) = \tau^{\mathcal S_X}
    [\pi(a),\pi(s), c(\mathcal S_X),
    \gamma].$$
    Let 
    $$f(-,-,-) = \tau^{W}[-,-,-,\gamma].$$
    Then 
    $$\pi(a) = j(f)(\pi(a),\pi(s),c(\mathcal S_X)\})
    \in B.$$
\end{proof}
We have finished the proof of Theorem \ref{T3-1}.

\section{Stationary tower example}

In this short section, we observe that the anti-large cardinal assumption of Theorem~\ref{T1-1} is optimal.

\begin{theorem}[Schimmerling]\label{P4-1}
Suppose $\nu < \delta$, $U$ is a normal measure over $\nu$ and $\delta$ is a Woodin cardinal.
Then there is a forcing extension in which $\nu$ is a strong limit cardinal
and there is a Prikry sequence for $U$ but there is no maximal Prikry sequence for $U$.
\end{theorem}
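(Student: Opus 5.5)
The plan is to force with the countable stationary tower $\mathbb{Q}_{<\delta}$, the tower of stationary sets of countable sets below $\delta$. Let $G$ be $V$-generic and let $j\colon V\to M\subseteq V[G]$ be the generic embedding. By Woodin's theorem, $M$ is wellfounded and ${}^{\omega}M\cap V[G]\subseteq M$. Moreover $\mathrm{crit}(j)=\omega_1^V$, $j(\omega_1^V)=\delta=\omega_1^{V[G]}$, and $\delta$ remains a strong limit in $V[G]$. In $V[G]$, $\nu$ keeps its properties: it is still a strong limit and still regular, since $\nu<\delta$ and the relevant fragment of the tower is small relative to $\nu$'s closure. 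We will need $\nu$ singular, so the tower alone does not suffice. The forcing will be a two-step product, \emph{first} Prikry forcing $\mathbb{P}_U$ and \emph{then} the stationary tower $\mathbb{Q}_{<\delta}$ computed in $V[C]$.

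First, let $C$ be $V$-generic for $\mathbb{P}_U$. Then $\nu$ is a singular strong limit in $V[C]$, and $C$ is a Prikry sequence for $U$ (Mathias condition). Since $|\mathbb{P}_U|=2^\nu<\delta$, $\delta$ is still Woodin in $V[C]$. Next, force over $V[C]$ with $\mathbb{Q}_{<\delta}$ to obtain $V[C][G]$ and $j\colon V[C]\to M$ with ${}^\omega M\subseteq M$. Because $\mathrm{crit}(j)=\omega_1$, $j(\nu)>\nu$ typically. Instead I will use the \emph{image} structure: in $M$, $j(U)$ is a normal measure over $j(\nu)$ on $j(V)$, and $j(C)$ is a maximal Prikry sequence over $j(V)$ for $j(U)$. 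The goal is to use the countable closure of $M$ to build, for $U$ itself, two almost disjoint Prikry sequences. This would contradict maximality, since two maximal sequences have finite symmetric difference and a maximal $D$ must almost contain every Prikry sequence.

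Concretely, I expect the main obstacle to be showing that in $V[C][G]$ there is a Prikry sequence $D$ for $U$ with $D\cap C$ finite. Then, given any candidate maximal $E$, we would have $C\cup D\subseteq^* E$. To rule this out, I would show that for every countable $E\subseteq\nu$ in $V[C][G]$ that is a Prikry sequence, there is another Prikry sequence $D'$ almost disjoint from $E$. The idea is genericity: by Woodin's analysis, $\mathbb{Q}_{<\delta}$ absorbs $\mathbb{P}_U$. More precisely, for a stationary set $S$ below $\delta$ the quotient adds a $V$-generic Prikry sequence for $U$, and by countable closure of $M$ plus elementarity, every countable set in $V[C][G]$ lies in $M$. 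Given $E\in M$, I would argue inside $M$ that $M\models$ ``$E$ is not a $j(V)$-Prikry sequence for $j(U)$ maximal'' via $j(U)\cap V = U$-type reflection, obtaining $D'$ generic over $V[E]$ for $U$ (a mutually generic Prikry sequence, since $2^\nu$ of $V$ is collapsed to $\omega$ below $\delta$). Two mutually generic Prikry sequences are almost disjoint, because $\mathbb{P}_U\times\mathbb{P}_U$-genericity makes each avoid any ground set covering the other's tail.

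So the key steps are the following. (1) Kill maximality of $C$ by adding, via the tower, a countable set $D$ that is $V[C]$-generic for $\mathbb{P}_U$. (2) Show the same argument applies to every candidate $E$, using the fact that the generic ultrapower absorbs all small forcings below $\delta$. For any $E$ in a name evaluated below some $\gamma<\delta$, the tail of $G$ above $\gamma$ provides a $\mathbb{P}_U$-generic over $V[C][G\restriction\gamma]$. This step, factoring $\mathbb{Q}_{<\delta}$ as $\mathbb{Q}_{<\gamma}$ followed by a quotient that adds $\mathbb{P}_U$-generics over intermediate models, is where I expect the real difficulty. If it fails, I would fall back on an iteration of length $\delta$ of Prikry forcings with countable support at a Woodin (or a collapse $\mathrm{Col}(\omega_1,<\delta)$ alternative), using the stationary tower only to preserve that $U$-Prikry sequences exist. (3) Verify that $\nu$ remains a strong limit cardinal, since no bounded subsets of $\nu$ are added beyond those of size $<\nu$ in the tower's collapse pattern.
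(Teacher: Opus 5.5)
There is a genuine gap: your construction cannot give the stated conclusion, because the countable stationary tower collapses $\nu$. As you note yourself, $\mathbb{Q}_{<\delta}$ forces $\delta=\omega_1$. Since $\nu<\delta$, $\nu$ is a countable ordinal in $V[C][G]$, so it is not a strong limit cardinal there, or a cardinal at all. Your assertion that $\nu$ ``keeps its properties'' after the tower, and your step (3), are therefore false; your own remark that $(2^\nu)^V$ is collapsed to $\omega$ already contradicts them. Nothing done after the tower can undo this. Independently, the core of the theorem is your step (2): that \emph{every} Prikry sequence $E$ in the final model fails to almost contain some other Prikry sequence. You only gesture at this via an unspecified factorization of the tower. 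The auxiliary claim you lean on is also wrong. Two mutually generic Prikry sequences for the same $U$ are not almost disjoint, since extending both stems by a common $\beta\in A\cap B$ is a dense event, so they meet infinitely often. What you actually need is only $D\not\subseteq^* E$.

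The missing idea is to choose the tower condition so that $j_G$ fixes $\nu$ and turns $U$ into an iterated measure. In the paper, for $X\prec N=(H_{\nu^+},\in,\nu,U)$ with $|X|<\nu$, one iterates the collapse of $X$ $\nu$ times by its measure and realizes each iterate back into $N$. This uses $\kappa^*_\alpha=\min\bigcap(U\cap X_\alpha)$ and $X_{\alpha+1}=\{f(\kappa^*_\alpha)\mid f\in X_\alpha\}$, and yields $Y(X)=X_\nu$ with $\pi_\nu(\nu)=\nu$. The set $S$ of all $Y(X)$ is stationary. Forcing with the full tower $\mathbb{P}_{<\delta}$ below $S$ makes $j_G\upharpoonright N$ look like $\pi_\nu$. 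Consequently $j_G(\nu)=\nu$, $\nu$ stays measurable in $V[G]$, and $N$ is the $\nu$-th iterate of a small structure with critical sequence $\langle\nu_\alpha\mid\alpha<\nu\rangle$. One then Prikry-forces over $V[G]$ with $j_G(U)$, not $U$, merely to make $\mathrm{cf}(\nu)=\omega$ while keeping $\nu$ a strong limit. Prikry sequences for $U$ are now abundant for structural reasons. For any cofinal $f:\omega\to\nu$, the set $D_f=\{\nu_{f(n)}\mid n<\omega\}$ is a Prikry sequence for $U$. Given any Prikry sequence $E$, one can choose $g$ with $D_g\cap E=\emptyset$, so no $E$ is maximal. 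This trivial combinatorial step replaces your unproved genericity argument.
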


\begin{proof}[Sketch]
Let $N = ( H_{\nu^+} , \in , \nu , U )$.
Consider any $X \prec N$ with $|X| < \nu$.
We define $Y(X) \prec N$ as follows.
Let $\pi : M \simeq X $ with $M $ transitive.
Iterate and realize in the usual way to obtain the following system of embeddings.
$$\xymatrix{
N \\
\\
M  \ar[uu]^{\pi} \ar[rr]_{i_{0,1}} & & M_1 \ar[lluu]_{\pi_1} \ar[rr]_{i_{1,2}} && M_2 \ar@/_1pc/[lllluu]_{\pi_2} \ar[r] & \cdots \ar[r] & M_\nu \ar@/_2pc/[lllllluu]_{\pi_\nu} 
}$$
Here $M_\alpha$ is shorthand for $(M_\alpha , \in , \nu_\alpha , U_\alpha)$
and $i_{\alpha,\beta} : M_\alpha \to M_\beta$ is the iteration map for $\alpha \le \beta \le \nu$.
The sequence of realization maps is defined as follows.
Define $X_0 = X$,
$$\kappa_\alpha^* = \min \left( \bigcap \left( U \cap X_\alpha \right) \right),$$
$$X_{\alpha+1} = \{ f ( \kappa_\alpha^* ) \mid f \in X_\alpha \}$$
and,
whenever $\beta \le \nu$ is a limit ordinal,
$$X_\beta = \bigcup_{\alpha<\beta} X_\alpha.$$
Let $\pi_\alpha$ be the inverse of the Mostowski collapse of $X_\alpha$.
Then $\pi_\alpha : M_\alpha \simeq X_\alpha \prec N$
and,
whenever 
$\pi_\alpha = \pi_\beta \circ i_{\alpha,\beta}$
whenever
$\alpha \le \beta \le \nu$.
Note that $\pi_\nu ( \nu ) = \nu$ and, with a slight abuse of notation, $\pi_\nu ( U_\nu ) = U$.

Put $Y(X) = X_\nu = \mathrm{range}(\pi_\nu)$.
Consider any outer model $V' \supset V$.
Suppose that, in $V'$,
there exists $f : \omega \to \nu$ with range unbounded in $\nu$.
Then $D_f = \{ \nu_{f(n)} \mid n < \omega \}$ is a Prikry sequence for $U_\nu$ over $M_\nu$.
Moreover,
if $E \in V'$ and $E$ is a Prikry sequence for $U_\nu$ over $M_\nu$,
then we can find $g : \omega \to \nu$ so that $D_g \cap E = \emptyset$.
In particular, in $V'$,
there is no maximal Prikry sequence for $U_\nu$ over $M_\nu$.

Now use the Woodin cardinal.
Let $S = \{ Y(X) \mid X \mbox{ as above}\}$.
Then $S$ is stationary in $[ H_{\nu^+} ]^{\nu}$.
Let $G$ be a $V$-generic filter over the stationary tower $\mathbb{P}_{<\delta}$ with $S \in G$.
Let $j_G : V \to W = \mathrm{Ult}(V,G)$ be the generic ultrapower map.
We can be sloppy about certain differences between $W$ and $V[G]$ because $\delta > \nu$ and
${}^{<\delta \,} W \cap V[G] \subseteq W$.
In $V[G]$,
the original $N$ looks like one of the  $M_\nu$'s
and $j_G \upharpoonright N$ looks like the corresponding $\pi_\nu$.
So $j_G( \nu ) = \nu$ and $j_G ( U )$ is a measure over $\nu$ in $V[G]$.
Force over $V[G]$ to add a  Prikry sequence $C$ for $j_G ( U )$ over $V[G]$.
Then, in $V[G][C]$, there is no maximal Prikry sequence for $U$ over $V$ for the reasons explained before.\end{proof}

It is interesting that, in $V[G]$, $\nu$ is still measurable and
there is a sequence $\langle A_\alpha \mid \alpha < \nu \rangle$ from $U$
such that $A_\alpha \subset A_\beta$ whenever $\alpha < \beta < \nu$
and,
for every $B \in U$,
there is $\alpha < \nu$ such that $A_\alpha \subseteq B$.
Of course, it is $j_G(U)$ and not $U$ that is a normal measure over $\nu$ in $V[G]$.
The existence of such a sequence $\langle A_\alpha \mid \alpha < \nu \rangle$ in $V[G]$
also guarantees that Prikry forcing over $V[G]$ with $j_G(U)$ adds Prikry sequences for $U$ but no maximal Prikry sequences for $U$.

Independently, Ben-Neria found an argument
that starts with $\nu$ being $2^\nu$-strongly compact and a normal measure $U$ over $\nu$,
and uses strongly compact Prikry forcing
to simultaneously add a decreasing sequence $\langle A_\alpha \mid \alpha < \nu \rangle$
that generates $U$ and a Prikry sequence for $U$.

It is also interesting that,
in our stationary tower extension, $V[G]$,
there exists a transitive structure
$(M , \in , \nu^M , U^M)$ of cardinality $<\nu$
whose $\nu$'th iterate is $( H^V_{\nu^+} , \in , \nu , U )$.
In particular,
$H^V_{\nu^+}$ is constructible from a bounded subset of $\nu$.
By modifying the definition of $S$ to be the set of $Y(X)$ for just $X$ which are countable,
we would end up with
$H^V_{\nu^+}$ being constructible from a real and $\nu$ still measurable  in $V[G]$.
This is relevant to some ongoing work of the first author with B.~Siskind.

\section{Tree Prikry example}

Recall that Theorem~\ref{T1-1} has the assumption
$$\sup \left( \{ \alpha < \nu \mid \alpha \mbox{ is a measurable cardinal in } K\} \right) < \nu$$
in its covering statement.
We will see that this assumption is necessary.
And we will explain a technical difference between 
Mitchell-Steel and Jensen indexing that occurs in the proofs of the main theorem of \cite{MS2}
on which the proof of Theorem~\ref{T1-1} is based.

If $D$ is an ultrafilter over a set $X$,
then $\mathrm{Pr}(D)$ will be our notation
for the associated tree-Prikry poset.
Conditions are pairs $(t,T)$
such that $T$ is a $D$-fat tree on $X$ with trunk $t$.
Extension involves shrinking the tree and growing the trunk.

\begin{theorem}[Benhamou-Schimmerling]\label{proposition}
Suppose that $M$ is a transitive class model of ZFC,
$\kappa$ is measurable cardinal of $M$,
$U$ is a normal measure over $\kappa$ in $M$
and
$$i : M \to M' = \mathrm{Ult} ( M , U )$$
is the ultrapower map.
Assume
$\kappa < \kappa' < i ( \kappa )$,
$\kappa'$ is a measurable cardinal of $M'$,
$U'$ is a normal measure over $\kappa'$ in $M'$
and
$$i' : M' \to \mathrm{Ult}( M' , U')$$
is the ultrapower map.
Let $j = i' \circ i$
and
$$D = \{ A \subseteq [ \kappa ]^2 \mid \{ \kappa , \kappa' \} \in j ( D ) \}.$$
Suppose that
$$C = \{ \{ c_{2n} , c_{2n+1} \} \mid n < \omega \}$$
is an $M$-generic sequence for $\mathrm{Pr}(D)^M$.
Let $$C_\mathrm{even} = \{ c_{2n} \mid n < \omega\} \mbox{ \ \ and \ \ }
C_\mathrm{even} = \{ c_{2n+1} \mid n < \omega\}.$$
Then $C_\mathrm{even}$ is a maximal Prikry sequence for $U$ in $M[C]$ but
there is no $B \in M[C_\mathrm{even}]$
such that
$B \supseteq C_\mathrm{even}$ and $|B|^{ M[C_\mathrm{even}]} < \kappa$.
\end{theorem}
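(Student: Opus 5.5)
The plan is to use the Prikry and strong Prikry properties of $\mathrm{Pr}(D)^M$ together with one concrete representation of $\kappa'$. I read the definition of $D$ as $D=\{A\subseteq[\kappa]^2\mid\{\kappa,\kappa'\}\in j(A)\}$, and the second set as $C_\mathrm{odd}=\{c_{2n+1}\mid n<\omega\}$.

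\emph{Preliminary facts.} Since $\mathrm{crit}(i')=\kappa'>\kappa$, for $Z\subseteq\kappa$ in $M$ we have $\kappa\in j(Z)$ iff $\kappa\in i(Z)$ iff $Z\in U$. So the first-coordinate projection of $D$ is $U$. Because $\kappa'<i(\kappa)$, fix $g\in M$ with $\kappa'=i(g)(\kappa)$ and $g(\alpha)>\alpha$; we may take $g(\alpha)$ to be a measurable cardinal of $M$ for every $\alpha$, since $\kappa'$ is measurable in $M'$. Let
$$Z=\{\alpha<\kappa\mid g[\alpha]\subseteq\alpha\}.$$
Then $Z\in U$. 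On the other hand $\kappa'\notin j(Z)$, because $j(g)(\kappa)=i'(i(g)(\kappa))=i'(\kappa')>\kappa'$, or more simply because $i(g)(\kappa)=\kappa'$ is not below $\kappa'$. Also, $D$-almost every pair $\{\alpha,\beta\}$ satisfies $\beta=g(\alpha)$.

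\emph{$C_\mathrm{even}$ is a Prikry sequence for $U$.} I will use the Mathias condition for tree Prikry forcing: for every $A\in D$ in $M$, all but finitely many pairs of $C$ lie in $A$. Given $Z'\in U$, the set $\{\{\alpha,\beta\}\mid\alpha\in Z'\}$ is in $D$, so $C_\mathrm{even}\subseteq^* Z'$. The converse direction follows by applying this to the complement of $Z'$.

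\emph{Maximality in $M[C]$.} Let $E\in M[C]$ be a Prikry sequence for $U$. First, $E\subseteq^*Z$ while $C_\mathrm{odd}\cap Z$ is finite, so $E$ eventually avoids $C_\mathrm{odd}$. It then remains to rule out infinitely many elements of $E$ lying strictly inside the intervals $(c_{2n},c_{2n+2})$ away from $c_{2n+1}$. Fix a name $\dot E$ and work below a condition forcing that $\dot E$ is a Prikry sequence. By the strong Prikry property (direct extensions deciding statements up to finitely many further steps), I would define in $M$ a function $f$ on finite stems $s$: $f(s)$ is the supremum of the ordinals that some direct extension with stem $s$ forces to be the next element of $\dot E$ above $\max\bigcup s$, when that element lies below the next even point. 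Let $A$ be the set of closure points of $f$, i.e.\ of $\alpha$ with $f(s)<\alpha$ for all stems $s\subseteq\alpha$, intersected with $Z$. Then $A$ is club, so $A\in U$, hence $E\subseteq^*A$. A genericity (density) argument then shows that every element of $E$ in $(c_{2n},c_{2n+2})\setminus\{c_{2n+1}\}$ lies below $f(s)$ for the stem $s=C\restriction(n+1)$, and therefore is not a closure point. This gives $E\subseteq^* C_\mathrm{even}$. This bookkeeping, especially handling ordinals just above $c_{2n+1}$, is the step I expect to be the main obstacle.

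\emph{Failure of covering.} Suppose $B\in M[C_\mathrm{even}]$ with $|B|^{M[C_\mathrm{even}]}<\kappa$ and $B\supseteq C_\mathrm{odd}$. I would show that $C$ is generic over $M[C_\mathrm{even}]$ for the quotient. This quotient essentially chooses, for each $n$, $c_{2n+1}$ generic for the measure $W_{g(c_{2n})}$ on the measurable $g(c_{2n})$, namely the image of $U'$, with a tail-finite-support structure. For large $n$ we have $|B|<c_{2n}<g(c_{2n})$, so $g(c_{2n})\setminus B$ is measure one. A density argument in the quotient then yields infinitely many $n$ with $c_{2n+1}\notin B$, which is a contradiction.
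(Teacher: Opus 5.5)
Your overall architecture is reasonable. For the covering clause it is the paper's argument. The paper also shows that $C_{\mathrm{odd}}$ is generic over $M_\omega[C_{\mathrm{even}}]$ for the diagonal Prikry forcing built from the measures on $i_{2n+1,2n+2}(\kappa_{2n+1})$, which is your $g(c_{2n})$. It then shows that $\sup(B\cap g(c_{2n}))<c_{2n+1}$ for large $n$.

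For maximality you take a different, direct forcing route. The paper instead realizes $C$ as the critical sequence of the iteration alternating $U$ and $U'$, and cites Lemma~3.22 of \cite{B} together with Benhamou's strengthening to sets in $M_\omega[C]$. Your route is viable. As written, however, the proposal contains a false claim and leaves out the idea that makes each of the two substantive steps work.

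\textbf{The false claim.} You say that $D$-almost every pair satisfies $\beta=g(\alpha)$. This is false; you computed $j(g)(\kappa)=i'(\kappa')\neq\kappa'$ yourself. It would also refute the theorem, since it gives $C_{\mathrm{odd}}=^*g[C_{\mathrm{even}}]\in M[C_{\mathrm{even}}]$. In fact $A\in D$ iff $\{\alpha\mid\{\beta\mid\{\alpha,\beta\}\in A\}\in W_\alpha\}\in U$, where $U'=i(\langle W_\alpha\mid\alpha<\kappa\rangle)(\kappa)$. So almost every pair has $\alpha<\beta<g(\alpha)$.

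\textbf{The gap in maximality.} The strong Prikry property only says that $(s^\frown x,T)$ decides $\sup(\dot E\cap(\max\bigcup s,\min x))$ after the next pair $x$ is chosen, with a value $h_s(x)$ that a priori depends on $x$. Nothing in your sketch shows that a direct extension with stem $s$ already forces a value. So $f(s)$ as you define it need not bound the actual elements of $E$, and $\sup_x h_s(x)$ could be $\kappa$. The missing fact is that $D$ is normal in its first coordinate:
\begin{itemize}
\item If $h\in M$ and $h(\{\alpha,\beta\})<\alpha$ on a set in $D$, then $\gamma=j(h)(\{\kappa,\kappa'\})<\kappa$ is fixed by $j$. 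Hence $h$ is constant with value $\gamma$ on a set in $D$.
\item The value $h_s(x)$ can be decided by a direct extension, because $D$ is $\kappa$-complete and the name is bounded by $\min x<\kappa$.
\end{itemize}
A fusion over all stems then gives one condition witnessing a value $\gamma_s$ for every stem $s$ at once. The closure points of $s\mapsto\gamma_s$, intersected with $Z$, are then the set you want. The ordinals just above $c_{2n+1}$, your anticipated obstacle, are handled exactly this way with $\max\bigcup s=c_{2n+1}$.

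Elements of $E$ in $(c_{2n},c_{2n+1})$ are a separate case: closure points do not exclude them, because the stem $C\upharpoonright(n+1)$ is not below them. They are excluded by $Z$ itself. Since $c_{2n+1}<g(c_{2n})$ for almost all $n$, we get $Z\cap(c_{2n},c_{2n+1}]=\emptyset$.

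\textbf{The gap in covering.} The genericity of the quotient is the heart of this step, and you only assert it. The paper gets it from a third iteration ($\omega$ steps by $U$, then $\omega$ steps by the images of $U'$), a fourth iteration of $P_\omega[C_{\mathrm{even}}]$, and diagonal Prikry facts from \cite{B}.

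In your direct setting, the easy half of a Mathias criterion is enough: if $\langle A_n\mid n<\omega\rangle\in M[C_{\mathrm{even}}]$ and $A_n\in W_{c_{2n}}$ for almost all $n$, then $c_{2n+1}\in A_n$ for almost all $n$. One way to get it:
\begin{itemize}
\item Prikry forcing with $U$ adds no bounded subsets of $\kappa$ and has $\kappa$-complete direct extensions. So some condition in the generic forces $A_n=F(c_0,c_2,\ldots,c_{2n})$ for all large $n$, for some $F\in M$.
\item By the description of $D$ above, $\{\{\alpha,\beta\}\mid\beta\in F(t^\frown\alpha)\}\in D$ whenever $F(t^\frown\alpha)\in W_\alpha$ for $U$-almost all $\alpha$.
\item Fusion and the $M$-genericity of $C$ finish the argument.
\end{itemize}
Apply this to $A_n=g(c_{2n})\setminus B$. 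For large $n$ this set is in $W_{c_{2n}}$, because $B\cap g(c_{2n})$ belongs to $M$ and has size less than $c_{2n}$ there. This gives $c_{2n+1}\notin B$ for almost all $n$, which is the required contradiction.
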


Before proving Theorem~\ref{proposition},
let us use it to see that the covering statement of Theorem~\ref{T1-1} is optimal.
In what we are about to explain,
$\kappa$ from the Theorem corresponds to $\nu$ from the theorem.
Let $M$ be a transitive class model of ZFC with a normal measure $U$ over $\kappa$.
Then the existence of $\kappa'$ and $U'$ as in the statement of the theorem
is equivalent to
$$\sup \left( \{ \alpha < \kappa  \mid \alpha \mbox{ is a measurable cardinal in } M \} \right) = \kappa.$$
Moreover,
this is consistent with $o^M ( \kappa ) = 1$.
In other words,
with $U$ being the unique normal measure over $\kappa$ in $M$.
Assume all of the above and the hypotheses of the theorem.
Without loss of generality,
there is no inner model with a measurable cardinal of order two
(never mind a Woodin cardinal).
Therefore, $K$ exists.
It is know that
$$K \models \mbox{``Every set belongs to the core model.''}$$
and
the definition of the core model  is absolute to forcing extensions.
Hence,
we may also assume that $M=K$ and $V=K[C]$.
By using a  bijection $f : \kappa \to  [ \kappa ]^2$
to translate results in \cite{B} about
$\{ A \subseteq \kappa \mid f[A] \in D \}$
to facts  about $D$,
we see that
$K$ and $V$ have the same cardinals and the same bounded subsets of $\kappa$
but
$\kappa$ is a singular cardinal of $V$.
By the theorem,
$C_\mathrm{even}$ is a maximal Prikry sequence for $U$ and
there is no $B \in K[C_\mathrm{even}]$
such that
$B \supseteq C_\mathrm{even}$ and $|B|^{ K[C_\mathrm{even}] } < \kappa$.
Since the symmetric difference between any two
maximal Prikry sequences is finite, the covering statement of the theorem fails.

\begin{proof}[Proof of Theorem~\ref{proposition}]
We have a direct combinatorial proof of the theorem
but we prefer to present a proof that uses iterated ultrapowers
because it illustrates additional  interesting points.  
Let $M$, $\kappa$, $U$, $M'$, $i$, $\kappa'$, $U'$, $j$ and $D$ be as in the statement of the theorem.
Reserve the letter $C$ to be used differently here.
Ultimately,
we will use $U$ and $U'$ to iterate four ways in this proof and two more ways later.

First iterate $M$ linearly
by alternating use of $U$ and $U'$ and their images.
The length is $\omega + 1$.  We will use the the following notation.
$$i_{n,n+1} : M_n  \to M_{n+1} = \mathrm{Ult}(M_{n}, U_{n})$$
$$U_{2n} = i_{0,2n} (U)$$
$$U_{2n + 1} = i_{1,2n+1}(U')$$
$$\kappa_n = \mathrm{crit}( U_n)$$
$$\kappa_\omega = \sup_{n<\omega} \kappa_n$$
$$U_\omega  = i_{0,\omega}(U)$$
$$C_\mathrm{even} = \{ \kappa_{2n} \mid n < \omega \}$$
$$C_\mathrm{odd} = \{ \kappa_{2n+1} \mid n < \omega \}$$
$$C =  \{ \{ \kappa_{2n} , \kappa_{2n+1} \} \mid n < \omega \}$$

Second iterate $M$ linearly by $D$ and its images.
The length is $\omega + 1$.
We will use the following notation.
$$N_0 = M$$
$$j_{k,k+1} : N_k \to N_{k+1} = \mathrm{Ult} ( N_k , D_k )$$
$$D_k = j_{0,k}(D)$$
Then
$N_k = M_{2k}$ and $j_{k,k+1} = i_{2k,2k+2}$.
Also
$N_\omega = M_\omega$ and $j_{k,\omega} = i_{2k,\omega}$.

\begin{lemma}\label{L1}
$C_\mathrm{even}$ is a maximal Prikry sequence for $U_\omega$ in $M_\omega[C]$.
\end{lemma}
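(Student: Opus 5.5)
We need to show two things: that $C_\mathrm{even}$ is a Prikry sequence for $U_\omega$ over $M_\omega$, and that every Prikry sequence for $U_\omega$ lying in $M_\omega[C]$ is almost contained in $C_\mathrm{even}$.

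For the Mathias condition, fix $A\in\mathcal P(\kappa_\omega)\cap M_\omega$. Choose $n$ large enough that $A=i_{2n,\omega}(\bar A)$ for some $\bar A\in M_{2n}$. For $m\ge n$ the map $i_{2m,2m+1}$ is the ultrapower by $U_{2m}$, with critical point $\kappa_{2m}$, and every later map has critical point above $\kappa_{2m}$. Hence $\kappa_{2m}\in i_{2n,\omega}(\bar A)$ iff $i_{2n,2m}(\bar A)\in U_{2m}$. By elementarity of $i_{0,2m}$, which maps $U$ to $U_{2m}$, this holds iff $\bar A$ lies in the corresponding measure. So $A\in U_\omega$ iff $\kappa_{2m}\in A$ for all large $m$, which is exactly the Mathias condition. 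Since the odd steps use images of $U'$, the odd critical points $\kappa_{2m+1}$ instead satisfy $A\in U_\omega$ iff $A\cap\kappa_{2m+1}$ belongs to the $U_{2m+1}$-related measure. Concretely, for $A\in U_\omega$ the set $C_\mathrm{odd}\setminus A$ is determined by $U'$-membership and can be infinite.

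For maximality, I would use the fact from the paper's discussion (via the bijection $[\kappa]^2\cong\kappa$ and the theory of $\mathrm{Pr}(D)$ applied to $N_\omega=M_\omega$) that $M_\omega[C]$ is the Prikry-type extension of $M_\omega$ by the generic $C$ for $\mathrm{Pr}(D_\omega)$. Then I argue that any Prikry sequence $E\in M_\omega[C]$ for $U_\omega$ satisfies $E\subseteq^* C_\mathrm{even}$. I would exploit the standard tree-Prikry analysis, the tree analogue of the Mathias argument: every set of ordinals in $M_\omega[C]$ bounded in some $\kappa_k$ lies in $M_\omega[C\restriction k]$. Given a name $\dot E$, I would run a Prikry-property/fusion argument over $\mathrm{Pr}(D_\omega)$ to get a $D_\omega$-fat tree $T$ and functions in $M_\omega$ such that each element of $E$ in the interval $(\kappa_{2m-1},\kappa_{2m+1})$ is one of the coordinates $\kappa_{2m},\kappa_{2m+1}$, or else is given by a function $f\in M_\omega$ evaluated at $C\restriction m$ and lies below $\kappa_{2m}$. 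This would be realized by constructing an $M_\omega$-club-like set $A\in U_\omega$ that avoids such values.

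Then I rule out the two other possibilities. Elements of $E$ of the form $f(C\restriction m)$ with $\kappa_{2m-1}<f(C\restriction m)<\kappa_{2m}$ are avoided by the set of closure points of such $f$, and that set is in $U_\omega$ by normality. Elements equal to odd points $\kappa_{2m+1}$ are avoided by a set $A\in U_\omega$ consisting of ordinals that are not measurable in $M_\omega$, or that fail the $U'$-type reflection. More precisely, I would take $A=\{\alpha: \alpha$ is not a critical point realizing $U'$-type$\}$, which is in $U_\omega$ but contains no $\kappa_{2m+1}$. This uses that the $D$-measure concentrates on pairs $\{\alpha,\beta\}$ where $\beta$ is $U'$-like and hence measurable in $M$, while $U$-almost every $\alpha$ is non-measurable in the relevant sense because $U$ is order zero. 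The Mathias condition for $E$ then forces $E$ to be almost contained in $A\cap(\text{closure points})$, so almost all of $E$ lies in $C_\mathrm{even}$.

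The main obstacle is the fusion step over $\mathrm{Pr}(D_\omega)$. We have to show that, below a condition, points of $\dot E$ are decided either as coordinates of the generic or as $M_\omega$-functions of the finite stem. This requires the Rowbottom-type homogeneity for $D$, and $D$ is not normal on $\kappa$ but is a product-like measure on pairs. I would first try to prove it by homogenizing each potential $n$-th element of $\dot E$ using the iterated structure $D=U\otimes U'$-style decomposition.
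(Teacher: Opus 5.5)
Your computation of the Mathias condition along the alternating iteration is correct; that is all the paper means by ``by the Mathias condition.'' The gap is in maximality.

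The paper does not analyze names. It quotes Benhamou's strengthening of Lemma~3.22 of \cite{B}: every $X\in M_\omega[C]$ with $|X|^{M_\omega[C]}<\kappa_\omega$ and $X\cap C_\mathrm{even}=\emptyset$ is disjoint from some $A\in U_\omega$. Applied to $X=E\setminus C_\mathrm{even}$, this gives $E\subseteq^* C_\mathrm{even}$ immediately. You replace that theorem by a structural description of the elements of $E$ obtained by fusion over $\mathrm{Pr}(D_\omega)$, but you never prove it; you yourself call it the main obstacle. The description as stated is also false: it leaves no room for elements strictly between $\kappa_{2m}$ and $\kappa_{2m+1}$, such as $\kappa_{2m}+1$ or $h(C\upharpoonright m,\kappa_{2m})$ with $h\in M_\omega$. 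What is true, and what you would need, is this: after shrinking the tree, every $e\in E\setminus C$ equals $h_n(s)$ for a finite set $s$ of generic points all below $e$, with $\langle h_n\mid n<\omega\rangle\in M_\omega$. Two facts about $D$ drive this, transfer to $D_\omega$ by elementarity, and are missing from your sketch. First, a function with $\phi(\{\alpha,\beta\})<\alpha$ is constant $D$-a.e., because $\min\{\kappa,\kappa'\}=\kappa=\mathrm{crit}(j)$. Second, a function with $\phi(\{\alpha,\beta\})<\beta$ agrees $D$-a.e.\ with a function of $\alpha$ alone. This uses normality of $U'$ in $M'$, together with the fact that each ordinal of $M'$ below $\kappa'$ is $i(\chi)(\kappa)$ for some $\chi\in M$ and is fixed by $i'$. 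Granting this, the closure points of $H(\alpha)=\sup\{h_n(s)+1\mid n<\omega,\ s\in[\alpha+1]^{<\omega}\}$ form a set in $U_\omega$ that misses $E\setminus C$. Your remark about $M_\omega[C\upharpoonright k]$ is vacuous, since $\mathrm{Pr}(D_\omega)$ adds no bounded subsets of $\kappa_\omega$.

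Your treatment of the odd points fails outright. The theorem does not assume $U$ has Mitchell order $0$. Nor does $D$ concentrate on pairs whose upper point is measurable: $\kappa_{2m+1}$ is measurable in $M_\omega$ iff $U'$ has Mitchell order at least $1$ in $M'$. In the paper's own application, where there is no inner model with a measurable of order two, both $U$ and $U'$ have order $0$. Then no point of $C$ is measurable in $M_\omega$, and your set $A$ (which is never actually defined) cannot separate $C_\mathrm{odd}$ from $C_\mathrm{even}$. The correct set is simple. Fix $g\in M$ with $\kappa'=i(g)(\kappa)$ and let $g_\omega=i_{0,\omega}(g)$. Since $\kappa_{2m+1}$ stands to $\kappa_{2m}$ as $\kappa'$ stands to $\kappa$, one gets $\kappa_{2m}<\kappa_{2m+1}<g_\omega(\kappa_{2m})=i_{2m+1,2m+2}(\kappa_{2m+1})$. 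So the club of closure points of $g_\omega$, which is the $i_{0,\omega}$-image of a club in $U$ and hence belongs to $U_\omega$, contains no $\kappa_{2m+1}$. With both repairs your direct route would give a self-contained proof for countable $E$, in place of Benhamou's theorem, which covers all $X$ of size $<\kappa_\omega$. As written, the maximality half is not proved.
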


\begin{proof}[Outline]
By the Mathias condition,
$C_\mathrm{even}$ is a Prikry sequence for $U_\omega$ in $M_\omega[C]$.
Lemma~3.22 of \cite{B} tells us that
if $X \subseteq \kappa_\omega$,
$X \in M_\omega[C_\mathrm{even}]$,
$|X|^{M_\omega[C_\mathrm{even}]} < \kappa_\omega$
and $X \cap C_\mathrm{even} = \emptyset$,
then there exists $A \in U_\omega$
such that $A \cap X = \emptyset$.
Benhamou showed that this conclusion continues to hold if we weaken the hypothesis to
$X \in M_\omega[C]$ and $|X|^{M_\omega[C] } < \kappa_\omega$.
In particular,
no such $X$ is a Prikry sequence for $U_\omega$.
\end{proof}

In fact,
$C_\mathrm{even}$ is a maximal Prikry sequence for $U_\omega$ in $M$.
To see this,
observe that the  proof of the Bukowski-Dehornoy theorem generalizes
to show that
$$\bigcap_{k<\omega} N_k = N_\omega [ C ].$$
Then note that
${}^{<\kappa} N_k \cap M \subset M$  for every $k < \omega$.
Thus $M$ and $N_\omega [ C ]$ have the same Prikry sequences for $U_\omega$.

\begin{lemma}\label{Not-covering}
There is no $B \in N_\omega[C_\mathrm{even}]$ such that $|B|^{ N_\omega[C_\mathrm{even}]} < \kappa_\omega$
and $B \supseteq C_\mathrm{odd}$.
\end{lemma}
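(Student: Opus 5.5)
The plan is to argue by contradiction, representing $B$ through a name in $N_\omega$ and pulling it back along the iteration $\langle M_n, i_{n,m}\rangle$ until the crucial point $\kappa_{2n+1}$ appears as a critical point that a small set cannot contain. Suppose $B\in N_\omega[C_\mathrm{even}]$, $B\supseteq C_\mathrm{odd}$, and $|B|^{N_\omega[C_\mathrm{even}]}\le\lambda$ for some cardinal $\lambda<\kappa_\omega$. Since $C_\mathrm{even}$ is a Prikry sequence for $U_\omega$ over $N_\omega=M_\omega$ (Lemma~\ref{L1}), $N_\omega[C_\mathrm{even}]$ is a Prikry generic extension of $N_\omega$. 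By the Prikry property, a set of ordinals of size $\le\lambda<\kappa_\omega$ in the extension is covered by a union $\bigcup_{m<\omega}h(C_\mathrm{even}\cap\kappa_{2m}+1)$. Here $h\in N_\omega$ sends finite subsets of $\kappa_\omega$ to subsets of $\kappa_\omega$ of $N_\omega$-size $\le\lambda$; this comes from deciding, below a fixed condition, the $\lambda$ many possible values of the name for each enumerated element of $B$. Thus for every $n$ there is $m$ with $\kappa_{2n+1}\in h(s_m)$, where $s_m=C_\mathrm{even}\cap(\kappa_{2m}+1)$.

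Next I locate $h$ in the iteration. Fix $k$ with $\lambda<\kappa_{2k}$ and $h=i_{2k,\omega}(h_k)$ for some $h_k\in M_{2k}$. The main obstacle is that $m$ might exceed $n$, i.e.\ $\kappa_{2n+1}$ could be captured using later Prikry points. To handle this I would show that for $m>n$, $h(s_m)\cap\kappa_{2n+2}$ is already determined by $s_n$ together with a fixed small set. The reason is that $h(s_m)$ has size $\le\lambda<\kappa_{2n+2}$, and $s_m\setminus s_n$ consists of indiscernibles above $\kappa_{2n+2}$ for $M_{2n+2}$. Concretely, I would strengthen the choice of $h$ by taking $h'(s)=\bigcup\{h(t)\cap\min(t\setminus s)\}$ over end-extensions $t$ of $s$ by points of $i_{2k,\omega}$-appropriate measure one sets. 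This $h'$ still has values of size $\le\lambda$, using the normality of $U_\omega$ and $\lambda<\kappa_\omega$ to take a diagonal intersection. The upshot is $\kappa_{2n+1}\in h'(s_n)$ for infinitely many $n$.

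Now fix such an $n\ge k$. By elementarity of the iteration maps, $h'(s_n)\cap\kappa_{2n+2}=i_{2n,2n+2}(F)(\kappa_{2n})\cap\kappa_{2n+2}$, where $F=i_{2k,2n}(h'_k)(s_n\setminus\{\kappa_{2n}\})\in M_{2n}$. The point is that the critical points of $i_{2n+2,\omega}$ are $\ge\kappa_{2n+2}$, and $s_n\setminus\{\kappa_{2n}\}\subseteq\kappa_{2n}$ is fixed by $i_{2n,2n+2}$. Let $G=i_{2n,2n+1}(F)(\kappa_{2n})\in M_{2n+1}$. Then $i_{2n,2n+2}(F)(\kappa_{2n})=i_{2n+1,2n+2}(G)$, and $|G|^{M_{2n+1}}\le\lambda<\kappa_{2n+1}=\mathrm{crit}(i_{2n+1,2n+2})$. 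Hence $i_{2n+1,2n+2}(G)=i_{2n+1,2n+2}[G]$. Every element of this set is either below $\kappa_{2n+1}$ or at least $i_{2n+1,2n+2}(\kappa_{2n+1})>\kappa_{2n+1}$, so $\kappa_{2n+1}\notin h'(s_n)$. This contradicts the previous paragraph, so no such $B$ exists.

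The step I expect to be hardest is the reduction from $h(s_m)$ to $h'(s_n)$ in the second paragraph: controlling contributions to $B\cap\kappa_{2n+2}$ that come from later Prikry points. If the diagonal-intersection trick is awkward, I would instead apply Lemma~3.22 of \cite{B} (as used in Lemma~\ref{L1}) to $B\cap\bigcup_n[\kappa_{2n}+1,\kappa_{2n+2})$ directly, reformulated for the Prikry sequence $C_\mathrm{even}$.
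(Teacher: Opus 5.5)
Your proof is correct, but it takes a different route from the paper's.

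\textbf{The paper's route.} The paper first proves that $C_\mathrm{odd}$ is $M_\omega[C_\mathrm{even}]$-generic for the diagonal Prikry forcing $\mathbb{Q}$ built from $\langle i_{2n+1,\omega}(U_{2n+1})\mid n<\omega\rangle$. This uses the third iteration, its second half reconstrued as an iteration of $P_\omega[C_\mathrm{even}]$, and diagonal Prikry facts from \cite{B}. A short density argument then gives $\sup(B\cap i_{2n+1,2n+2}(\kappa_{2n+1}))<\kappa_{2n+1}$ for all large $n$.

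\textbf{Your route.} You never leave ordinary Prikry forcing with $U_\omega$ over $N_\omega$. The strong Prikry property gives the covering function $h$, normality of $U_\omega$ gives $h'$, and the contradiction comes from $\kappa_{2n+1}=\mathrm{crit}(i_{2n+1,\omega})\notin\mathrm{ran}(i_{2n+1,\omega})$.

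\textbf{Two points to state more precisely.}
First, $h'(s)$ cannot literally be the union over all end-extensions by measure-one points. You must first use homogeneity of the regressive maps $\vec\gamma\mapsto h(s\cup\vec\gamma)\cap\min\vec\gamma$, whose values have size $\le\lambda<\kappa_\omega$. This yields sets $A_{s,j}\in U_\omega$ on whose $j$-tuples the map is constant. Then intersect over $j$ and diagonally over $s$ to get a single $A\in U_\omega$, let $h'(s)$ be the union of the constant values, and use $C_\mathrm{even}\subseteq^* A$.
Second, your last step is correct but longer than necessary. For $n\ge k$ and $m\le n$ we have $s_m\subseteq\kappa_{2n+1}$. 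So $h(s_m)$ and $h'(s_n)$ are $i_{2n+1,\omega}$ applied to sets of size $\le\lambda<\kappa_{2n+1}$ in $M_{2n+1}$, hence pointwise images. This handles the case $m\le n$ directly (no need to fold initial segments into $h'$) and makes $F$ and $G$ unnecessary.

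\textbf{Comparison.} Since $i_{2n+1,\omega}(\kappa_{2n+1})=i_{2n+1,2n+2}(\kappa_{2n+1})<\kappa_{2n+2}$, your argument gives the same quantitative conclusion as the paper: $B\cap[\kappa_{2n+1},i_{2n+1,2n+2}(\kappa_{2n+1}))=\emptyset$ for all large $n$. It does so with much less machinery and without \cite{B}, so your fallback to Lemma~3.22 is unnecessary. What the paper's route buys in addition is the structural fact that $C_\mathrm{odd}$ is diagonal-Prikry generic over $M_\omega[C_\mathrm{even}]$, which your argument does not give.
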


\begin{proof}[Sketch]

We have that $$i_{2n+1 , \omega} ( U_{2n+1} ) = i_{2n+1 , 2n+2} ( U_{2n+1} )$$ and
$$\langle i_{2n+1 , \omega} ( U_{2n+1} )  \mid n < \omega \rangle \in M_\omega[C_\mathrm{even} ].$$
Let 
$$\mathbb{Q} = \mathrm{Pr} \left( \langle i_{2n+1 , \omega} ( U_{2n+1} )  \mid n < \omega \rangle \right)^{ M_\omega[C_\mathrm{even} ] }.$$
This is the diagonal Prikry poset associated to the displayed sequence of filters
as defined in \cite{B}.

\begin{claim*}
$C_\mathrm{odd}$ is an $M_\omega [ C_\mathrm{even} ] $-generic sequence for $\mathbb{Q}$.
\end{claim*}

We will prove the claim using a third iteration.
Fix $f : \kappa \to V_\kappa \cap M$ such that $f \in M$ and $U' = [ f ]_U^M$.
Iterate $M$ by using $U$ and its images for the first $\omega$ steps.
This leaves behind $U'$ and its images, which we use in the next $\omega$ steps.
The length is $\omega + \omega + 1$.
We use the following notation for $n < \omega$.
$$k_{n,n+1} : P_n \to P_{n+1} = \mathrm{Ult}( P_n , W_n )$$
$$k_{\omega + n,\omega+n+1} : P_{\omega+n} \to P_{\omega + n+1} = \mathrm{Ult}( P_{\omega +n} , W_{\omega + n } )$$
$$W_n = k_{0,n} ( U )$$
$$W_{\omega + n } = [ k_{0,n} ( f ) ]_{W_n}^{P_n} $$
The third iteration can be viewed as a stack of two normal iterations whose normalization is the first iteration.
In particular,
$P_{\omega+\omega} = M_\omega = N_\omega$ and $k_{0,\omega + \omega} = i_{0,\omega} = j_{0,\omega}$.
One can check that
$\mathrm{crit}(W_n) = \kappa_{2n}$, $\mathrm{crit}(W_{\omega + n})= \kappa_{2n+1}$
and $C_\mathrm{even}$ is a Prikry sequence for $k_{0,\omega}(U)$.
Hence
$P_\omega$ and $P_\omega [ C_\mathrm{even} ]$ have the same bounded subsets of $\kappa_\omega$.

For our fourth iteration, construe the second half of the third iteration as an iteration of $P_\omega[ C_\mathrm{even} ]$.
The length is $\omega+1$ and we use the following notation.
$$\ell_{n,n+1} :  P_{\omega+n} [ C_\mathrm{even} ] \to P_{\omega+n+1} [ C_\mathrm{even} ] $$ $$ = \mathrm{Ult} ( P_{\omega+n} [ C_\mathrm{even} ] , W_{\omega+n} ).$$
One can check that $$\ell_{m,n}  \upharpoonright \kappa_\omega = k_{ \omega + m , \omega + n } \upharpoonright \kappa_\omega.$$
Apply diagonal Prikry forcing facts found in \cite{B} to see that
$C_\mathrm{odd}$ is a $ P_{\omega+\omega}[C_\mathrm{even} ]$-generic sequence for $\mathbb{Q}$.
As
$M_\omega = P_{\omega+\omega} $, we have proved the claim.

Continuing our sketch of Lemma~\ref{Not-covering},
consider any $B \in M_\omega[C_\mathrm{even} ]$ such that 
$$|B|^{M_\omega[C_\mathrm{even} ]} < \kappa_\omega .$$
Let $$\alpha_{2n+1}  = \sup ( B \cap i_{2n+1,2n+2} ( \kappa_{2n+1} ) ).$$
Then $$\alpha_{2n+1} <  i_{2n+1,2n+2} ( \kappa_{2n+1} )$$ for sufficiently large $n < \omega$.
Let $c_{2n+1}$ be the $\mathbb{Q}$-name for the $n$-th ordinal in the generic sequence.
For every $p \in \mathbb{Q}$,
there exists $q \le p$ that forces the sentence:
\begin{quote}
{\em For every $n < \omega$,
if $$\alpha_{2n+1} < i_{2n+1,2n+2} ( \kappa_{2n+1} ),$$
then
$\alpha_{2n+1} < c_{2n+1} $.}
\end{quote}
By genericity,
we see that $\alpha_{2n+1} < \kappa_{2n+1}$ for every sufficiently large $n < \omega$.
In particular,
$B \not\supseteq C_\mathrm{odd}$.\end{proof}
Theorem~\ref{proposition} follows from the two lemmas.\end{proof}

For the discussion that follows,
we continue with the assumptions and notation in the proof of Theorem~\ref{proposition}.
Let $E$ be the Mitchell-Steel completion of $U$.  Then $E$
is the extender of length
$$\alpha = ( \kappa^{++} )^{ M' }$$ derived from $i : M \to M' = \mathrm{Ult} ( M , U )$.
Let $E'$ be the Mitchell-Steel completion of $U'$.  Then
$E'$ is the extender of length
$$\alpha' = { ( ( \kappa')^{++} ) }^{ M'' }$$
derived from $i' : M' \to M'' = \mathrm{Ult} ( M' , U' )$.
Observe that
$$\kappa < \alpha < \kappa' < \alpha' < i' ( \kappa') < i(\kappa),$$
$M' = \mathrm{Ult} ( M , E )$ and $M'' = \mathrm{Ult} ( M' , E' )$.
Let us also restrict attention to the case in which $M = K^M = L[ \mathbb{E} ]$
and $M' = K^{M'} =  L [ \mathbb{E}' ]$ where $\mathbb{E}$ and $\mathbb{E}' = i(\mathbb{E})$ are Mitchell-Steel extender sequences
with $\mathbb{E}_\alpha = E$ and $\mathbb{E}'_{\alpha'} = E'$.
Let $\mathcal{I}$ be the  linear iteration tree of length $\omega + 1$ in which we alternate using $E$ and $E'$ and their images.
Then $\mathcal{I}$ has the same models and embeddings as the first iteration we defined that alternates using $U$ and $U'$ and their images.
Moreover, $\mathcal{I}$ is a normal iteration tree.
(Extender lengths increase and, along every branch, extenders do not overlap.)
Trivially, there is no $k  \in \omega$ such that, for every $\ell \in [ k , \omega)$,
$$i^\mathcal{I}_{\ell , \omega} ( \kappa_\ell ) = \kappa_\omega.$$
We describe this by saying that
$[0,\omega)_\mathcal{I}$ does not have a {\em linked tail}.
Now let  $\nu = \kappa_\omega$ and run the proof of Theorem~\ref{T1-1} working in $M_\omega[C]$.
In those proofs,
the focus is on a certain iteration tree $\mathcal{T} \upharpoonright \theta + 1$ on $K$.
(This is an innocuous  lie in that the base model of $\mathcal{T}$ is actually a very soundness
witness for a large initial segment of $K$.)
We can show that
the iteration tree $\mathcal{I}$ gets embedded into
the iteration tree $\mathcal{T}$ 
in a way that implies there is no $\zeta_0 \in [0 , \theta )_\mathcal{T}$
such that
$$i^\mathcal{T}_{\zeta , \eta } ( \mu_\zeta ) = \mu_\eta$$
whenever $$\zeta_0 \le _\mathcal{T} \zeta <_\mathcal{T} \eta <_\mathcal{T} \theta.$$
In other words,
$[0 , \theta )_\mathcal{T}$ does not have a linked tail.
This is precisely where the proof in \cite{MS2} diverges
when Mitchell-Steel indexing and not Jensen indexing.
\footnote{We might give some details about how $\mathcal{I}$
gets embedded into $\mathcal{T}$ in a later draft.}

For the final phase of this discussion, we
again adopt the assumptions and notation in the proof of Theorem~\ref{proposition}.
Using the function notation for extenders, let
$$F = i \upharpoonright \left( \mathcal{P} (\kappa) \cap M \right)$$
and
$$F' = i' \upharpoonright \left( \mathcal{P}( \kappa' ) \cap M' \right).$$
Then $F$ and $F'$ are the Jensen completions of $U$ and $U'$.
Their lengths are
$$\beta = ( i( \kappa )^+)^{M'}$$
and
$$\beta' = ( i' ( \kappa' )^+ )^{M''}$$
respectively.  Observe that
$$\kappa < \kappa' < i' ( \kappa'  ) < \beta' < i(\kappa) < \beta.$$
Suppose that
$M = K^M = L[ \mathbb{F} ]$
and $M' = K^{M'} = L [ \mathbb{F}' ]$ where $\mathbb{F}$ and $\mathbb{F}' = i(\mathbb{F})$ are Jensen extender sequences
with $\mathbb{F}_\beta = F$ and $\mathbb{F}'_{\beta'} = F'$.
By the coherence condition on Jensen extender sequences,
$\mathbb{F}_{\beta'} = F'$.
Our goal is to  build a normal iteration tree $\mathcal{S}$ using both $F$ and $F'$ in a natural way.
We cannot apply $F$ to $M$, then $F'$ to $\mathrm{Ult}( M , F ) = M'$
because $F'$ is shorter than $F$ and overlaps $F$,
so the resulting iteration tree would not be normal.
The alternative is to use $F'$ first.
That is,
start with $E^\mathcal{S}_0 = F'$.
This would entail a drop because $F'$ is not a total extender over $M$.
The coherence condition on $\mathbb{F}$ implies that
$F'$ is a total extender over the initial segment
$$M | \beta = ( L_\beta [ \mathbb{F} \upharpoonright \beta ] , \in ,  \mathbb{F} \upharpoonright \beta  ,  F ).$$
This lets us define
$$M^\mathcal{S}_1 = \mathrm{Ult} ( M | \beta , F' ).$$
So $1$ is a drop of $\mathcal{S}$.
We refer to the top extender of $M^\mathcal{S}_1$ as $F'[F]$.
Notice that $\mathrm{crit}(F'[F]) = \mathrm{crit} ( F) = \kappa$ because $\mathrm{crit}(F') = \kappa' > \kappa$.
Also that $F'[F]$ is a total extender over both $M^\mathcal{S}_1$ and $M^\mathcal{S}_0$.
Put $E^\mathcal{S}_1 = F'[F]$.
We satisfy the normality condition on $\mathcal{S}$ by setting $0 <_\mathcal{S} 2$ and
$$M^\mathcal{S}_2 = \mathrm{Ult}  ( M  , F' [F] ).$$
It is also interesting to note that $F'[F]$ is the Jensen completion of $D$,
$F'[F]$ has two generators, $\kappa$ and $\kappa'$,
and $F'[F]$ is not Dodd solid.
I.e.,
$$F'[F] \upharpoonright \kappa' \not\in \mathrm{Ult} ( M^\mathcal{S}_1 , F'[F] )$$
essentially because
$U \not\in M''$.
Continuing  this way,
we obtain a normal iteration tree $\mathcal{S}$
whose tree structure  is a comb that has a drop along each tooth.
$$\xymatrix{
& 1 & &  3 & & 5 \\
0 \ar[rr]   \ar[ur]^{\mathrm{drop}}  & & 2   \ar[ur]^{\mathrm{drop}} \ar[rr] & & 4 \ar[rr]    \ar[ur]^{\mathrm{drop}}& & \cdots
}$$
One can check that the cofinal branch of $\mathcal{S}$ has the same models and embeddings as our second linear iteration
using $D$ and its images.
Each extender used along the main branch has two generators and is not Dodd solid.
Unlike the iteration tree $\mathcal{I}$ from the previous discussion,
the main branch of $\mathcal{S}$ is linked.
This difference between $\mathcal{S}$ and $\mathcal{I}$
explains the distinction between 
Jensen indexing and Mitchell-Steel indexing 
in the proofs of the main theorem of \cite{MS2} and Theorem~\ref{T1-1}.
As an aside,
we could construe $\mathcal{S}$ as an iteration tree on $M|\beta$ instead of $M$,
in which case there would be no drops along the teeth.
We also want to emphasize that these phenomena occur below $o(\kappa) = 2$.\footnote{If we decide to include some details about how $\mathcal{I}$ and $\mathcal{S}$ get reflected into $\mathcal{T}$,
they could be added here.}

\section{$o^K(\nu) = 1$ without Closure}

This section is for the proof of the Theorem \ref{T1-1}
with no additional assumptions.
We first reiterate its statement below.
\begin{theorem}\label{T6-1}
    Assume there is no inner model with a Woodin cardinal.
    Suppose that in the Mitchell-Steel core model $K$, 
    $\nu>\omega_2$ is a cardinal such that 
    $$\mathrm{cf}(\nu)<|\nu|\leq\nu = \mathrm{cf}^K(\nu),$$
    and $o^K(\nu) = 1$.
    By Theorem \ref{T2-1}, $\mathrm{cf}(\nu) = \omega$.
    Then there exists a maximal Prikry sequence $C_0$ 
    over $U(K,\nu,0)$. 
    Moreover, if measurable cardinals in $K$ are bounded in $\nu$, 
    then for every $A\subseteq \nu$ with $|A|<\nu$, 
    there exists $B\in K[C_0]$ such that
    $B\supseteq A$ and $|B|^{K[C_0]}<\nu$.
\end{theorem}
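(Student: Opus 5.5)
The plan is to rerun the proof of Theorem \ref{T3-1} without the closure assumption. The only place countable closure of $|\nu|$ was used is to pick a single $X\in\mathcal F$ with ${}^\omega X\subseteq X$. That gave two things: Requirements 1 and 2 via Theorem \ref{T2-2}, and the fact that the countable cofinal set $\pi[\overline C_J]$ and arbitrary countable $\mathbf d\subseteq\nu$ lie in $X$. Instead I will work with the stationary family $\mathcal H\subseteq\mathcal G\subseteq\mathcal F$ from Section 2, whose members satisfy Requirements 1 and 2 by Theorem \ref{T2-3}. Compatibility between different members will be handled by the $\mathcal S_{XY}$ comparison (Lemmas \ref{L2-8}, \ref{L2-9}, \ref{L2-10}), replacing the single-structure elementarity argument.

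\textbf{Step 1: a linked tail in each $X$.} For each $X\in\mathcal H$, prove the analogue of Lemma \ref{L3-2}: past some $\zeta_0^X$, every extender used at $\eta\in I_X$ is the order-zero extender $E(\mathcal M^{\mathcal T_X}_\eta,\mu^X_\eta,0)$. Suppose not, for cofinally many $\eta$. Claim \ref{c3-4-1} then yields two generated measures $U(\mathcal M_X,\mu_X,i)$, $i=0,1$. I would choose the countable $J$ inside a fixed $D_X\in X$ (as in the construction of $\mathcal H$) and press down on $\mathcal H$. The unions $\bigcup_{X}U(\mathcal S_X,\nu,i)$ should then give two distinct measures over $W$, exactly as in the proof sketch of Theorem \ref{T2-1}, using \cite[Lemma 4.4, 3.9, 5.2, 5.3]{MS2} for coherence, amenability, iterability and membership in the $W$-sequence. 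The set of $W$-measurables separates them, which gives $o^K(\nu)\ge2$, a contradiction.

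Lemma \ref{L3-3} transfers verbatim, since it only uses Theorem \ref{T2-1} at $\pi_X(\mu^X_\eta)$. Hence $C^X_0=\pi_X[\overline C^X_0]$ has order type $\omega$, generates $U(\mathcal S_X,\nu,0)$, and its members are non-measurable in $W$.

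\textbf{Step 2: the $C^X_0$ agree modulo finite.} For $X\subseteq Y$ in $\mathcal H$, show $C^X_0\triangle C^Y_0$ is finite. By Lemmas \ref{L2-9} and \ref{L2-10}, $\mathcal S_{XY}$ is either an initial segment of $\mathcal M_Y$ (or an ultrapower of some $\mathcal P^Y_\lambda$), or it agrees with $\mathrm{Ult}(\mathcal P^Y_\lambda,G)$ up to $\pi_Y^{-1}(\Omega_0)$. In each of the four cases of Lemma \ref{L3-4}, build the club $A_{XY}$ from the soundness hull, or from the closure points of $h$ built from $G$. This club contains a tail of $\pi_{XY}[\overline C^X_0]$. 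The Case 1 calculation (consecutive critical points and $\Sigma_{n+1}$-hulls) shows it omits all but finitely many points of any $\omega$-sequence in $\mu_Y$ disjoint from $\overline C^Y_0$. Applying this to $\pi_{XY}[\overline C^X_0]$ gives almost inclusion in one direction. The other direction comes from both sets generating the same measure restricted to $W_Y$.

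Let $C_0$ be the common value modulo finite. It is a Prikry sequence for $U=\bigcup_X U(\mathcal S_X,\nu,0)=U(W,\nu,0)$.

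\textbf{Step 3: maximality and covering.} Let $\mathbf d$ be any Prikry sequence for $U$. It lies in the internally approachable union, so $\mathbf d\subseteq D$ for some $D\in Y_i$. Choose $X\in\mathcal H$ with $\mathbf d\subseteq X$; this is possible by stationarity and the $<\mathrm{cf}(\nu)^+$ closure built into $\mathcal F$ via $\mathrm{cf}(j)=\omega_1$. Then run Lemma \ref{L3-4} for that $X$, which needs only $\mathbf d\subseteq X$ and not $\mathbf d\in X$. This gives maximality exactly as in Corollary \ref{C2-7}.

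For covering, take $A$ with $|A|<\nu$. Since $\nu>\omega_2$ and $\mathcal F$ is stationary in sets of size $<|\nu|$ (enlarging $|A|$ below $|\nu|$ as allowed), find $X\in\mathcal H$ with $A\subseteq X$. Claim \ref{c2-7-1} makes $\theta_X=\zeta_0+\omega$ a linear iteration by one measure. The hull sets $B$ of Lemma \ref{L3-6}, Cases 1--4, are defined from $\mathcal S_X\lhd W$ or from $G\in W$, together with $C^X_0=^*C_0$, so $B\in K[C_0]$.

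\textbf{Main obstacle.} I expect the hardest part to be Step 1's contradiction in the absence of a single countably closed $X$. The witness $J$ depends on $X$, so producing a single measure $U_1$ over $W$ requires the Fodor pressing-down argument. It also requires checking that $U(\mathcal S_X,\nu,1)\subseteq U(\mathcal S_Y,\nu,1)$ via Lemma \ref{L2-10}, which I would attempt by mimicking the order-zero coherence proof of \cite[Lemma 4.4]{MS2}.
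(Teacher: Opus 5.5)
Your proposal has two genuine gaps, both at places where countable closure was doing real work.

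\textbf{Order type.} Lemma~\ref{L3-3} does not transfer verbatim. Its proof uses ${}^\omega X\subseteq X$ in a third way you did not list: continuity of $\pi_X$ at the $\omega$-th point $\mu^X_{\eta_\omega}$ of $I_0$. That continuity is what gives $\mathrm{cf}(\pi_X(\mu^X_{\eta_\omega}))=\omega$ and so makes Theorem~\ref{T2-1} applicable. A member of $\mathcal F$ is only the union of an internally approachable chain of cofinality $\omega_1$, so $\pi_X$ may be discontinuous there. Nothing you say then rules out, e.g., $\pi_X(\mu^X_{\eta_\omega})$ being regular in $V$. The paper says explicitly that closure was what gave $\mathrm{type}(C_X)=\omega$, and it treats this separately at the end of Section~5. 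There it reruns the construction with $\mu^X_\omega$ in place of $\mu_X$ (List~C, Lemmas~\ref{L6-11} and~\ref{L6-12}) to show $\pi_X(\mu^X_\omega)$ would be measurable in $W$. This contradicts Lemma~\ref{L6-2}, since the order-zero measure was used at $\mu^X_\omega$. Your covering step needs the same fact, because Claim~\ref{c2-7-1} yields $\theta_X=\zeta_0+\omega$ only once the order type is $\omega$. (A minor point: Step~1 can only hold for club-many $X$, as in Lemma~\ref{L6-2}, since the contradiction needs stationarily many counterexamples. That part is harmless.)

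\textbf{Step 2.} This is the more serious gap, because Step~2 is what replaces the paper's ``$C_X\in X$'', and it is not established. The direction you derive ``from both sets generating the same measure'' is a non sequitur. Two Prikry sequences for one measure need not be almost equal; the even and odd halves of a single Prikry sequence are disjoint Prikry sequences, which is exactly why maximality has content. The other direction, $\pi_{XY}[\overline C^X_0]\subseteq^*\overline C^Y_0$, does not follow from the Case~1 calculation either. That calculation shows only that a point strictly between consecutive critical points of a tree is not a closure point of hulls of \emph{that} tree's last model. To show that a point strictly between consecutive elements of $\overline C^Y_0$ fails to be a closure point of the $\mathcal S_{XY}$-hull, you must move a witness from $\mathrm{Hull}^{\mathcal M_Y}$ into $\mathrm{Hull}^{\mathcal S_{XY}}$. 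Definability of $\mathcal S_{XY}$ (or of $E$, $G$) inside $\mathcal M_Y$ gives only the reverse inclusion of hulls, and that reverse inclusion is what yields $C_Y\subseteq^* C_X$ (Lemma~\ref{L6-4}).

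The paper never proves agreement for arbitrary pairs in $\mathcal H$. It builds one $X=\bigcup_{i<\omega_1}Y_i$ from an internally approachable $\omega_2$-chain, using Lemma~\ref{L6-3} to freeze a set containing the critical sequences and fixing the List~B answers. It then proves the needed hull transfer (Lemma~\ref{L6-6}) only for unboundedly many $i$, via a pigeonhole over $\omega_1$ (Claim~\ref{c6-6-1}). From this it gets Corollary~\ref{C6-7} and $C_X=^*C_{Y_j}$ for unboundedly many $j$ (Lemma~\ref{L6-9}), hence $C_X\in X$; elementarity then makes $C_X$ a Prikry sequence for $U(W,\nu,0)$. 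Without a valid Step~2, you have not shown that your $C_0$ is a Prikry sequence for $U(W,\nu,0)$. Each $C^X_0$ only generates $U(\mathcal S_X,\nu,0)$, and $\mathcal P(\nu)\cap\mathcal S_X$ can be a proper part of $\mathcal P(\nu)\cap W$. Nor have you shown that $C_0$ is independent of $X$, and both your maximality and covering steps rely on that.
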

We first give a brief overview of our strategy in proving Theorem \ref{T6-1}.
Similar as in Section 3, we need to first fix a substructure $X$. 
In Section 3, the countably closed property of $X$ allows arbitrary countable sequence, especially $C_0$, to be its members.
Without the countably closure of $|\nu|$, such $X$ may not exist, and we need to go back to Section 2 to look at how $X\in\mathcal F$ is defined. 
We will define $X$ along with a continuous chain $\langle Y_{i}\mid i<\omega_1\rangle$ such that $Y_i\in X$ for each $i<\omega_1$, and $X = \bigcup_{i<\omega_1}Y_i$.
We will analyze not only the coiteration process defined by $X$ but also by each $Y_i$.
It turns out that $\mathcal M_X$ and
$W_X$ share the same order 0 total measure of $\mu_X$, and an end segment of the critical sequence of $X$ is a Prikry sequence for $U(W,\nu,0)$.
This sequence is also an end segment of the critical sequence of $Y_i$ for some $i<\omega_1$, therefore it is a member of $X$.
The maximality will be followed by a similar argument as of Lemma \ref{L3-4}. 

Recall that in Lemma \ref{L3-2}, the hypothesis $o^K(\nu)=1$ gives us the property that
$$E^{\mathcal T_X}_\eta\simeq U(\mathcal M^{\mathcal T_X}_{\eta},\mu_\eta,0)$$
for almost all $\eta\in I_X$.
The following lemma is a similar result and it is the first step in our proof of Theorem \ref{T6-1}.
 
\begin{lemma}\label{L6-2}
    Under the hypothesis of Theorem \ref{T6-1}, there exists a function $F$ on $H_{\Omega^+}$ such that,
    if we let 
    $$\mathcal C_F = \{X\prec H_{\Omega^+}\mid X\mbox{ is closed under }F\},$$
    then for every $X\in \mathcal C_F\cap \mathcal F$,
    there exists a $\zeta_0^X\in [\zeta_X,\theta_X)_{\mathcal T_X}$ such that
    $$E^{\mathcal T_X}_{\eta} = 
    E(\mathcal M^{\mathcal T_X}_{\eta},\nu^X_\eta,0)$$
    and $\eta+1\in [\zeta_X,\theta_X)_{\mathcal T_X}$ 
    for all $\eta\in I_X\setminus \zeta^X_0$. 

\end{lemma}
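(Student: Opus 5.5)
The plan is to rerun the proof of Lemma~\ref{L3-2} and to isolate the single place where countable closure of $X$ was used. That place is the step where the countable set $\pi_X[\overline C_J]$ had to belong to $X$, so that elementarity could transfer ``$\overline C_J$ generates two measures over $\overline W$'' up to $W$. Without ${}^\omega X\subseteq X$ we cannot expect $\pi_X[\overline C_J]\in X$. We replace this by a Skolem-function style closure: $F$ will be chosen so that whenever $X$ is closed under $F$, a counterexample to the conclusion inside $X$ would be reflected to a counterexample in $V$ witnessed by an object that is itself a member of $X$.

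Concretely, suppose the conclusion fails for $X\in\mathcal F$. As in Lemma~\ref{L3-2}, we get unboundedly many $\eta\in I_X$ with $E^{\mathcal T_X}_{\xi(\eta)}\neq E(\mathcal M^{\mathcal T_X}_\eta,\mu_\eta,0)$, or with $\eta+1\notin[\zeta_X,\theta_X)_{\mathcal T_X}$. By Claim~\ref{c3-4-1}, the case analysis there shows that both $U(\mathcal M_X,\mu_X,0)$ and $U(\mathcal M_X,\mu_X,1)$ exist and are generated, in the sense of generating sequences, by a cofinal $\omega$-sequence $\overline C_J$ of critical points. Pushing forward through $\Pi_X$, $C_J=\pi_X[\overline C_J]$ generates $U(\mathcal S_X,\nu,0)$ and $U(\mathcal S_X,\nu,1)$. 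These are distinct because the set of $W$-measurables lies in the second but not the first.

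Next we use the internal approachability in the definition of $\mathcal F$: $X=\bigcup_{i<j}Y_i$ with $\vec Y\upharpoonright i\in Y_{i}$ and $\mathrm{cf}(j)=\omega_1$ when $\mathrm{cf}(\nu)=\omega$. We define $F$ so that, for each relevant $Y\in X$, $F$ picks out, via $\pi_Y$, the critical sequence $C_Y$ and the associated data $\mathcal T_Y$, $\mathcal S_Y$ (all definable in $H_{\Omega^+}$ from $Y$), together with a countable cofinal subset $D_Y\subseteq C_Y$ lying in $Y$ when possible, as in the choice of $D_X$ before the proof sketch of Theorem~\ref{T2-1}. We then compare $X$ with $Y_i$ for large $i<j$ using Lemmas~\ref{L2-9} and~\ref{L2-10}. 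Through $\mathcal S_{Y_iX}$, the comparison should show that a tail of $C_X$ agrees with a tail of $C_{Y_i}$, and that the pattern of extenders used (order $0$ versus order $1$) is the same. Since $\mathrm{cf}(j)=\omega_1>\omega$, a tail of the countable $C_J$ is contained in some $Y_i$, and the relevant generating sequence then lies in $X$. From there we argue exactly as in Lemma~\ref{L3-2}: $W$ has two distinct normal measures on $\nu$, via \cite[Lemma 5.3]{MS2}, which contradicts $o^K(\nu)=1$.

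The main obstacle is the transfer step. We must show that the critical sequences of $Y_i$ and $X$ cohere on a tail and that their order-$1$ behaviour coincides, so that the witness to two measures is a member of $X$. Here we would lean on Lemma~\ref{L2-10}, with $G\in\mathcal M_X$ when $Y_i\in X$, and on the set-mouse analogue Lemma~\ref{L2-9}, splitting into the four $\mathcal Q$-structure cases as in Lemma~\ref{L3-4}. The role of $F$ is only to make these comparison objects available inside $X$; a Fodor-type stabilization over $\mathcal H$ may be needed to fix the case.
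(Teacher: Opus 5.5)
Your proposal has a genuine gap at the step you yourself flag as the main obstacle, and it also misreads what $F$ is for.

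\textbf{The transfer step is unproved and, with the paper's tools, circular.} You assert that, via $\mathcal S_{Y_iX}$ and Lemmas~\ref{L2-9}--\ref{L2-10}, a tail of $C_X$ agrees with a tail of $C_{Y_i}$ and that the order-$0$/order-$1$ pattern of extenders matches. This is never proved. The paper's tools for tail agreement (Lemmas~\ref{L6-4}, \ref{L6-6}, \ref{L6-9}) come after Lemma~\ref{L6-2} and presuppose its conclusion, namely that the branch near $\theta_X$ is a linear iteration by order-zero measures. For example, Claim~\ref{c6-9-1} uses $\mu^X_{\ell+1}=\sup(\mu_X\cap\mathrm{Hull}^{\mathcal M_X}_{m+1}((\mu^X_\ell+1)\cup p_{m+1}(\mathcal M_X)))$, which needs the extender applied at $\mu^X_\ell$ to have no generators above its critical point. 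In a counterexample, $\xi(\eta)$ need not equal $\eta$, and $E^{\mathcal T_X}_{\xi(\eta)}$ may be a long extender with generators above $\mu_\eta$ (Case~2 in the proof of Lemma~\ref{L3-2}; compare Lemma~\ref{L7-3}). So that hull computation breaks exactly in the case you must rule out.

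Those lemmas also need $X$ and the $Y_i$ to be selected by Fodor-type stabilization: the same List~B answers, and $C_{Y_j}$ covered by a fixed set lying in $Y_j$. You cannot arrange this for an arbitrary $X\in\mathcal C_F\cap\mathcal F$, which is what the statement quantifies over. Your remark that a Fodor step ``may be needed'' leaves this quantifier problem unresolved.

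Two smaller points fail as well. First, $\mathrm{cf}(j)=\omega_1$ only gives $C_J\subseteq Y_i$, not $C_J\in X$. Second, anything definable in $H_{\Omega^+}$ from some $Y\in X$ (such as $C_Y$, $\mathcal T_Y$, $\mathcal S_Y$) is already in $X$ by elementarity. So the $F$ you describe does no work.

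\textbf{The missing idea.} The existence of $F$ says exactly that the set $\overline{\mathcal F}$ of bad $X\in\mathcal F$ is nonstationary. The paper assumes $\overline{\mathcal F}$ is stationary and reruns the global argument from the proof of Theorem~\ref{T2-1} with $\overline{\mathcal F}$ in place of $\mathcal F$.

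\begin{itemize}
\item Choose each $B_X$ of order type $\omega$ inside the bad part $\pi_X[\{\mu^X_\eta\mid\eta\in J_X\}]$ of the critical sequence, and stabilize to obtain $\mathcal H$.
\item Set $U_0=\bigcup_{X\in\mathcal H}U(\mathcal S_X,\nu,0)$ and $U_1=\bigcup_{X\in\mathcal H}U(\mathcal S_X,\nu,1)$. The latter exists because $o^{\mathcal M_X}(\mu_X)\ge 2$ for bad $X$.
\item Both are total measures of $K$ on $\nu$, and the set of $K$-measurables below $\nu$ lies in $U_1\setminus U_0$. This contradicts $o^K(\nu)=1$.
\end{itemize}

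The union over a stationary family replaces countable closure. No single $X$ ever has to contain a generating sequence, and no comparison of critical sequences between $X$ and the $Y_i$ is required.
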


\begin{proof}
    Suppose otherwise. 
    Let $\overline{\mathcal F}$ be the collection of $X\in\mathcal F$
    for which such $\zeta^X_0$ does not exist.
    Then $\overline{\mathcal F}$ is a
    stationary subset of $\mathcal F$.
    Recall that by Lemma \ref{L2-11}, 
    $I_X\subseteq [0,\theta_X)_{\mathcal T_X}$ is defined. 
    For each $\eta\in I_X$, 
    let $\mathrm{crit}(i^{\mathcal T_X}_{\eta,\theta_X}) = \mu^X_\eta$.
    Therefore, the hypothesis implies that there is a cofinal subset
    $J_X\subseteq I_X$ such that, 
    for every $\eta\in J_X$, 
    $E^{\mathcal T_X}_{\eta} \neq E(\mathcal M^{\mathcal T_X}_{\eta},\mu^X_\eta,0)$.
    In particular, 
    $o^{\mathcal M^{\mathcal T_X}_{\theta_X}}(\mu_X)\geq 2$.
    Let $D_X = \{\mu^X_\eta\mid \eta\in J_X\}$.
    Replace $\mathcal F$ by $\overline{\mathcal F}$ in the analysis of Section 2 and define $\mathcal G$ and $\mathcal H$ from it.
    Notice that the definition of $\mathcal H$ involves 
    choosing $B_X\subseteq \nu$ for each $X\in \mathcal G$.
    Since we already have $\mathrm{cf}(\nu) = \omega$, 
    we may choose $B_X\subseteq D_X$ and let 
    $\mathrm{type}(B_X) = \omega$. 
    The arguments lead to a proof of the following claim.
    \begin{claim}\label{c6-2-1}
        For each $X\in\overline{\mathcal F}$, let $\mathcal S_X$ be as defined in Section 2, which is the $\mathcal S$-structure corresponds to $X$ and $\mu_X$. Then
        $$U_0 = \bigcup_{X\in\mathcal H} U(\mathcal S_X,\nu,0)$$
        is the Mitchell order $0$ total measure of $K$ on $\nu$.
    \end{claim}
    Notice that the argument framework does not rely on
    the Mitchell order of the measures.
    Hence, a similar argument will show that 
    $$U_1 = \bigcup_{X\in\mathcal H}U(\mathcal S_X,\nu,1)$$
    is also a total measure of $K$ on $\nu$. 
    The measure $U(\mathcal S_X,\nu,1)$ exists,
    because $o^{\mathcal M^{\mathcal T_X}_{\theta_X}}(\nu_X)\geq 2$,
    therefore $o^{\mathcal Q_X}(\nu_X)\geq 2$, 
    and so $o^{\mathcal S_X}(\nu_X)\geq 2$.
    Let 
    $$A = \{\alpha<\nu\mid \alpha\text{ is measurable in }K\}.$$
    Then $A\in U_1$ and $A\not\in U_0$.
    Hence $U_1$ has Mitchell order $\geq 1$. 
    This contradicts the assumption $o^K(\nu) = 1$.
\end{proof}

Without loss of generality, let $\zeta^X_0 = \zeta_X$ for all $X\in\mathcal F\cap \mathcal C_F$.
In section 3, the countably closure property of $X$ also gives that $\mathrm{type}(C_X) = \omega$.
Eventually, we will fix an $X\in\mathcal F$ and prove that $\mathrm{type}(C_X)=\omega$ and $C_X$ is the desired maximal Prikry sequence.
However, at this point, we may first assume that for all $X\in \mathcal F$, $\mathrm{type}(C_X) = \omega$ and to see how Theorem \ref{T6-1} is followed by it. 
In the mean time, several useful results will be introduced.
These results will greatly simplify the argument in removing the auxillary assumption.

We record the following important tool which can be viewed as a version of Fodor's lemma.

\begin{lemma}[\cite{MS1}]\label{L6-3}
    Let $\langle Y_j\mid j<\omega_2\rangle$ be an internal approachable sequence of elementary substructures of $H_{\Omega^+}$. Suppose that $f$ is a function such that $\mathrm{dom}(f)$ is stationary in $\omega_2$, and for each $i\in \mathrm{dom}(f)$, $f(i)\in Y_j$. Then there is a stationary $S \subseteq \mathrm{dom}(f)$ on which $f$ is constant.
\end{lemma}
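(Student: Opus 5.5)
The plan is to reduce the statement to two applications of the classical Fodor lemma on $\omega_2$, together with the $\omega_2$-completeness of the nonstationary ideal. I read the hypothesis as $f(i)\in Y_i$ for each $i\in\mathrm{dom}(f)$. I use that an internally approachable sequence is continuous, i.e.\ $Y_i=\bigcup_{j<i}Y_j$ for limit $i$. I also use that each $Y_j$ has cardinality at most $\omega_1$, as in the setting of \cite{MS1}, where the chain has length $\omega_2$ and $Y_j\cap\omega_2\in\omega_2$.

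First I shrink $\mathrm{dom}(f)$ to $S_0=\mathrm{dom}(f)\cap\mathrm{Lim}$. This set is still stationary, since the limit ordinals form a club in $\omega_2$. For $i\in S_0$, continuity gives $f(i)\in Y_i=\bigcup_{j<i}Y_j$. So I define $g(i)$ to be the least $j<i$ with $f(i)\in Y_j$. Then $g$ is regressive on the stationary set $S_0$. By Fodor's lemma there is a stationary $S_1\subseteq S_0$ and a fixed $j^*<\omega_2$ with $f(i)\in Y_{j^*}$ for all $i\in S_1$.

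Now $f\upharpoonright S_1$ takes values in the single set $Y_{j^*}$, which has cardinality at most $\omega_1$. So $S_1=\bigcup_{y\in Y_{j^*}}\{i\in S_1\mid f(i)=y\}$ is a union of at most $\omega_1$ pieces. The nonstationary ideal on $\omega_2$ is $\omega_2$-complete, so at least one piece $S$ is stationary, and $f$ is constant on $S$. Internal approachability (as opposed to mere continuity) is only needed to guarantee continuity and the cardinality bound, or to make the choice of an enumeration of $Y_{j^*}$ available inside later members of the chain. It plays no further role here.

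The step I expect to be delicate is the cardinality bound on $Y_{j^*}$. If the $Y_j$ are allowed to be larger than $\omega_1$, for instance of size $<|\nu|$ as for members of $\mathcal F$, then the pigeonhole step fails as stated. In that case I would first try to exploit the approachability structure. Since $\vec Y\upharpoonright (j^*+1)\in Y_{j^*+1}$, the relevant elements can be indexed by ordinals below $\omega_2$ via an enumeration lying in the chain, as in the standard argument in \cite{MS1}. I would then code $f$ on $S_1$ by a function into $\omega_2$ that is regressive on a further stationary set, and apply Fodor once more.
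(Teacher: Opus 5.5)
Your main argument is correct. Restrict to limit points. Continuity plus Fodor's lemma then puts all values of $f$ into a single $Y_{j^*}$, and since $|Y_{j^*}|\le\omega_1$, the $\omega_2$-completeness of the nonstationary ideal on $\omega_2$ gives a stationary fiber. The paper gives no proof of its own; Lemma~\ref{L6-3} is only quoted from Mitchell--Schimmerling. Your Fodor-plus-pigeonhole argument is the standard proof of this fact. Reading the hypothesis as $f(i)\in Y_i$ is right; it matches how the lemma is used in Section~5, where $f(j)\cup\{f(j)\}\subseteq Y_j$.

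Your final paragraph should be discarded. No use of approachability can get around the cardinality bound, because without $|Y_j|<\omega_2$ the lemma is false. Take a continuous internally approachable chain with $\omega_2\subseteq Y_0$. For instance, let $Y_0$ be the Skolem hull of $\omega_2+1$, let $Y_{i+1}$ be the hull of $Y_i\cup\{\langle Y_k\mid k\le i\rangle\}$, and take unions at limits. Put $f(i)=i$. Then $f(i)\in Y_0\subseteq Y_i$ for every $i$, but $f$ is injective, so it is constant on no stationary set, and no second application of Fodor can help. This also shows that your remark that approachability ``guarantees'' the bound is inaccurate. The bound, like continuity, is an independent hypothesis.

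What you should do instead is say where the bound comes from in the paper. The chain fixed in Section~5 is one to which Theorem~\ref{T2-3} is applied with $\varepsilon=\omega_2$. That theorem assumes $Y_i\cap\varepsilon\in\varepsilon$ and $|Y_i|=|Y_i\cap\varepsilon|$, so $|Y_i|\le\omega_1$. Your worry about structures of size $<|\nu|$, as for general members of $\mathcal F$, therefore does not arise for chains of length $\omega_2$. Lemma~\ref{L6-3} should be read with this bound and continuity included as hypotheses.
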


We can now start to build the structure $X$. 
Fix a continuous internally approachable sequence $\vec  Y = \langle Y_j\mid j<\omega_2\rangle$ such that every $Y_j$ is closed under $F$ mentioned in Lemma \ref{L6-2}.
Let $\mathcal C\subseteq \omega_2$ be the club subset defined in Theorem \ref{T2-3}.
Let $S$ be the collection of all $j<\omega_2$ such that $j\in \mathcal C$, $j = \mathrm{type}(j\cap \mathcal C)$ and $\mathrm{cf}(j) = \omega_1$. 
For each $j\in S$, $Y_j\in \mathcal F$, and Lemma \ref{L2-11} defines $I_{Y_j}$. 
Followed by our earlier assumption on $\mathcal F$, we may write
$$I_{Y_j} = \{\eta^{Y_j}_{k}\mid k<\omega\}.$$
For each $j\in S$ and $k<\omega$, we write $\mathcal M^{Y_j}_k$ to denote $\mathcal M^{\mathcal T_{Y_j}}_{\eta^{Y_j}_k}$, and
$$i^{Y_j}_k:\mathcal M^{Y_j}_k\to \mathcal M_{Y_j}$$
denote the tree embedding. 
Let $f:S\to H_{\Omega^+}$ be a function such that $f(j)\cup\{f(j)\}\subseteq Y_j$ and $C_{Y_j}\subseteq f(j)$. 
By Lemma \ref{L7-3}, there exists a stationary $S'\subseteq S$ such that the value of $f(j)$ is fixed for $j\in S'$
\footnote{The function $f$ is to make sure that $C_{Y_j}\subseteq Y_{j'}$ for any $j,j'\in S$. Readers are referred to the construction of $\mathcal H$ in Section 2.}. 
Find a stationary subset $S''\subseteq S'$ such that the answer to each of the following questions is fixed for $j\in S''$.

(List B)
\begin{itemize}
    \item Is $\mathcal{M}_{Y_j}$ a set mouse or a weasel?   
    \item What is the value of $m_{Y_i}\left(\mu_{Y_i}\right)$?
    \item Is $\mathcal{Q}_{Y_j}$ defined by the protomouse case?
    \item Is $\mathcal{Q}_{Y_j}$ a set mouse or a weasel?
    \item What is the value of $n_{X_i}\left(\mu_{Y_i}\right)$?
\end{itemize}
Let the shared values be $m = m_{Y_i}\left(\mu_{Y_i}\right)$ and $n = n_{Y_i}\left(\mu_{Y_i}\right)$. 
Let $\ell\in S''$ with $\ell = \mathrm{type}(\ell\cap S'')$.
Let $X = Y_\ell$. 
Together with $\ell$, we fix a cofinal subsequence 
$\langle j_i\mid i<\omega_1\rangle$ of $\ell\cap S''$. 
We will write $\langle Y_i\mid i<\omega_1\rangle$ instead of $\langle Y_{j_i}\mid i<\omega_1\rangle$, and $\mu^{Y_i}_k$ as $\mathrm{crit}(i^{Y_i}_k)$.

By our construction of $X$, $C_X\subseteq Y_i$ for every $i<\omega_1$ as well as $C_{Y_i}\subseteq Y_j$ for every $j<i<\omega_1$. 
The following lemma serves as a correction of \cite[Lemma 4.3]{MS2} as well as a adaptation of its proof method in our context.
The proof of this lemma will be demonstrated in the Appendix. 

\begin{lemma}\label{L6-4}
    Let $i<j<\omega_1$.
    Then $C_{Y_j}\subseteq C_{Y_i}$.
\end{lemma}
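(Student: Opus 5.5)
The plan is to compare the two substructures directly through the bridge structure $\mathcal S_{XY}$ of Section~2.3, with $X=Y_i$ and $Y=Y_j$. Note that $Y_i\in Y_j$ and $Y_i\subseteq Y_j$, so Lemmas~\ref{L2-8}--\ref{L2-10} apply, including the clause ``$G\in\mathcal M_X$'' of Lemma~\ref{L2-10}.

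\textbf{Step 1: points of $C_{Y_j}$ come from $Y_i$.} Since $f$ is constant on $S'$, we have $C_{Y_j}\subseteq f(j)=f(j_0)\subseteq Y_{j_0}$ for every $j_0\in S'$, hence $C_{Y_j}\subseteq Y_i$. So every $c\in C_{Y_j}$ has the form $c=\pi_X(\bar c)$ with $\bar c<\mu_X$, and then $\pi_Y^{-1}(c)=\pi_{XY}(\bar c)$. The goal thus becomes: for every $\eta\in I_Y$ with $\mu^Y_\eta=\pi_{XY}(\bar c)$, show $\bar c\in\overline C_X$.

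\textbf{Step 2: characterize $\overline C_Y$ by hulls and transfer to $\mathcal S_{XY}$.} By Lemma~\ref{L6-2} and the normalization $\zeta^X_0=\zeta_X$, every $\eta\in I_Y$ uses the order-zero measure, and the branch is linked from $\zeta_Y$ onward. The computation of Lemma~\ref{L3-4}, Case~1, then gives
\[
\mu^Y_\eta=\mu_Y\cap\mathrm{Hull}^{\mathcal M_Y}(\mu^Y_\eta\cup r_Y)
\]
for each $\eta\in I_Y$. Here $r_Y$ is the standard parameter above $\mu_Y$ in the set-mouse case, or $c(\cdot)$ together with a thick class $\Gamma$ in the weasel case. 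The cases are fixed uniformly by List~B.

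Now use Lemma~\ref{L2-9} or Lemma~\ref{L2-10}. Either $\mathcal S_{XY}\unlhd\mathcal M_Y$, or $\mathcal S_{XY}=\mathrm{Ult}_n(\mathcal P^Y_\lambda,E)$, or $\mathcal S_{XY}\upharpoonright\pi_Y^{-1}(\Omega_0)$ equals $\mathcal M_Y$, respectively $\mathrm{Ult}(\mathcal P^Y_\lambda,G)$, cut at $\pi_Y^{-1}(\Omega_0)$. In each case, exactly as in Cases~1--4 of Lemma~\ref{L3-4} (and with the protomouse transfer of Lemma~\ref{L2-4}), the closure-point property of $\mu^Y_\eta$ transfers to
\[
\mu^Y_\eta=\mu_Y\cap\mathrm{Hull}^{\mathcal S_{XY}}(\mu^Y_\eta\cup\Pi_{XY}(q_X)\,[\cup\Gamma]).
\]
Since $\mu^Y_\eta=\Pi_{XY}(\bar c)$ and $\Pi_{XY}$ is elementary with $\Pi_{XY}\upharpoonright\mu_X=\pi_{XY}\upharpoonright\mu_X$, we can pull this back to
\[
\bar c=\mu_X\cap\mathrm{Hull}^{\mathcal Q_X}(\bar c\cup q_X\,[\cup\Gamma]),
\]
and then, via Lemma~\ref{L2-4} (equations (\ref{E-2}) and (\ref{E-4})), to the corresponding statement for $\mathcal M_X$ with parameter $p_1(\mathcal M_X)\setminus\mu_X$.

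\textbf{Step 3: closure points of $\mathcal M_X$ lie on $\overline C_X$.} I then apply the argument of Lemma~\ref{L3-4}, Case~1, inside $X$. If $\mu^X_\eta<\bar c<\mu^X_\xi$ for consecutive $\eta<\xi$ in $I_X$, then $\mu^X_\xi$ is the supremum of the hull of $(\mu^X_\eta+1)\cup p$ in $\mathcal M^{\mathcal T_X}_\xi$. So $\bar c$ cannot be a closure point, and hence $\bar c\in\overline C_X$, which gives $c\in C_X$.

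\textbf{Main obstacle.} The hard part is points $\bar c$ below $\min\overline C_X=\mu^X_{\zeta_X}$, and the ``large enough'' thresholds in Lemma~\ref{L2-4} and in the thick-class arguments. Those arguments only control a tail, and this lemma asks for full inclusion. My first attempt is to use the uniformity built into the construction. $S''$ fixes all List~B data, and Lemma~\ref{L6-3} lets one further fix the finitely many parameters determining the starting point: $\pi_{Y}(\mu^{Y}_{\zeta_{Y}})$, and the threshold ordinal $\xi$ of Lemma~\ref{L2-4} pushed forward by $\pi_Y$. These values lie in an earlier $Y_{j}$, so Lemma~\ref{L6-3} applies. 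After this refinement, $\min C_{Y_j}=\min C_{Y_i}$ and every point of $C_{Y_j}$ lies above the common threshold, so Step~3 applies to all of $C_{Y_j}$ and not just a tail. Before relying on this, I would check that the $G\in\mathcal M_X$ clause of Lemma~\ref{L2-10} keeps the weasel case compatible with this refinement.
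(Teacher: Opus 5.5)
Your Steps 1--3 follow the paper's route: the argument of \cite[Lemma~4.3]{MS2}, patched in the appendix for the extra alternatives in Lemmas~\ref{L2-9} and~\ref{L2-10}. You run it forward: a closure point of $\mathcal M_Y$ becomes a closure point of $\mathcal S_{XY}$, is pulled back along $\Pi_{XY}$ to $\mathcal Q_X$ and $\mathcal M_X$, and so is a point of $\overline C_X$. The paper argues the contrapositive.

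Step~2, however, is not ``exactly as in Lemma~\ref{L3-4}''. $\mathcal M_Y$ and $\mathcal S_{XY}$ are not linked by an elementary map, so the transfer has to go by definability. You need one of the following to belong to $\mathrm{Hull}^{\mathcal M_Y}_{m+1}(\mu^Y_\eta\cup p_{m+1}(\mathcal M_Y))$ (or to the thick-class hull in the weasel case):
\begin{itemize}
\item $\mathcal S_{XY}$ itself, when $\mathcal S_{XY}\lhd\mathcal M_Y$;
\item $E$, together with $\mathrm{crit}(E)<\mu^Y_\eta$;
\item $G$, together with $\mathrm{crit}(G)<\mu^Y_\eta$.
\end{itemize}
Only then does a witness $\alpha=\tau^{\mathcal S_{XY}}[\bar\alpha,p(\mathcal S_{XY})]$ become $\alpha=i_E(f)(\bar\alpha)$ with $f\in\mathcal M_Y$, as in the appendix. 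For the same reason, what you need from Lemma~\ref{L2-10} is $G\in\mathcal M_Y$ (see the corollary in the appendix), not $G\in\mathcal M_X$.

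The genuine gap is your final upgrade to full inclusion. The threshold on $\eta$ that this hypothesis imposes depends on the pair $(Y_i,Y_j)$, through $\mathcal S_{Y_iY_j}$, $E$, $\lambda$ and $G$, and not on $Y_j$ alone. Pressing down with Lemma~\ref{L6-3} on single-structure data, such as $\pi_Y(\mu^Y_{\zeta_Y})$ or the threshold of Lemma~\ref{L2-4}, cannot remove it. For a fixed $j$, each of the countably many $i<j$ imposes its own finite cut-off on the $\omega$-sequence $C_{Y_j}$, and nothing bounds these cut-offs uniformly. Moreover, $C_X$ depends on the arbitrary choice of $\zeta_X$, so literal inclusion is not even well-posed without a canonical choice.

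Your argument therefore proves $C_{Y_j}\subseteq^* C_{Y_i}$ and no more. That is also all the paper's argument gives, and it is the form in which the lemma is actually used: Claim~\ref{c6-6-1} invokes ``$C_X\subseteq^* C_{Y_i}$'', and the proof of Lemma~\ref{L7-2} invokes ``$B_X\subseteq^* C_{X_\alpha}$''. State and prove the tail version instead.
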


The following lemma is a generalization of Lemma \ref{L2-4}.
The proof is similar, and we will temporarily omit it.
We will also record the proof in the appendix.

\begin{lemma}\label{L6-5}
    Suppose that $\mathcal M_X\neq \mathcal Q_X$. 
    Then there exists some $k_0<\omega$ such that for all $k>k_0$, one of the following holds:
    \begin{itemize}
        \item $\mathcal Q_X$ is a set mouse, and 
        $$\mathcal P(\mu^X_k)\cap \mathrm{Hull}^{\mathcal M_X}_1(\mu^X_k\cup p_1(\mathcal M_X))=\mathrm{Hull}^{\mathcal Q_X}_{n+1}(\mu^X_k\cup p_{n+1}(\mathcal Q_X)).$$
        \item $\mathcal Q_X$ is a weasel. 
        For every thick class $\Gamma$ of ordinals fixed by the embedding
        $$W\xrightarrow{}\mathcal P^X_{\lambda_\ell}\xrightarrow{}\mathcal Q_{X},$$
        we have
        $$\mathcal P(\mu^X_k)\cap \mathrm{Hull}_1^{\mathcal M_X}(\mu^X_k\cup \Gamma) =\mathrm{Hull}^{\mathcal Q_X}(\mu^X_k\cup c(\mathcal Q_X)\cup \Gamma).$$
    \end{itemize}
    The lemma also holds if $X$ is replaced by $Y_i$ for any $i<\omega_1$.
\end{lemma}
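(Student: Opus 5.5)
We follow the proof of Lemma \ref{L2-4} (Lemma 2.1 of \cite{MS2}), replacing the fixed ordinal $\xi$ there by the critical points $\mu^X_k$ and using that, by Lemma \ref{L6-2} and the construction of $X$, $I_X=\{\eta^X_k\mid k<\omega\}$ and the iteration from $\mathcal M^{\mathcal T_X}_{\eta^X_0}$ to $\mathcal M_X$ is a linear iteration by images of an order zero measure. Since $\mathcal M_X\neq\mathcal Q_X$, we are in the protomouse case, so $\mathcal M_X=\mathcal P_X$ is active of type II with $m(\mu_X)=0$. Take the decomposition
$$\mathcal P_{\lambda_\ell}=\mathcal Q_{\lambda_\ell}\to\cdots\to\mathcal Q_{\lambda_0}=\mathcal Q_X,$$
with maps $\phi_i$ and parameter $t_{\mu_X}=c(\mathcal Q_X)$ (or $q$ in the set mouse case). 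All of these are finitely many objects: $\kappa=\mathrm{crit}(\dot F^{\mathcal M_X})$, $\lambda=(\kappa^+)^{\mathcal M_X}<\mu_X$, the finite parameters $p_1(\mathcal M_X)\cap\mu_X$, $s(\dot F^{\mathcal M_X})\cap\mu_X$, and the finitely many ordinals below $\mu_X$ witnessing the mutual $\Sigma_1$-definability of $p_1(\mathcal M_X)$ and $s(\dot F^{\mathcal M_X})$. Because $\langle\mu^X_k\mid k<\omega\rangle$ is cofinal in $\mu_X$, we choose $k_0$ so that $\mu^X_{k_0}$ exceeds all of these. Then every $\xi=\mu^X_k$ with $k>k_0$ satisfies the five bullet hypotheses of Lemma \ref{L2-4}.

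Lemma \ref{L2-4} then gives agreement of $\mu_X\cap\mathrm{Hull}$, that is, of ordinals. The present statement asks for agreement of the subsets of $\mu^X_k$ in the hulls. To upgrade, we use that $\mu^X_k$ is a cardinal in both $\mathcal M_X$ and $\mathcal Q_X$, since it is a critical point of the linear iteration and $\mathcal Q_X$ agrees with $\mathcal M_X$ below $\mu_X$. We also use that both hulls are closed under the definable bijections of $\mathcal M_X\|\mu_X=\mathcal Q_X\|\mu_X$. Hence a subset $a\subseteq\mu^X_k$ belongs to one hull iff the code of $a$ appears below $(\mu^{X+}_k)$ in the corresponding ordinal part. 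The real work is to rerun the proof of \cite[Lemma 2.1]{MS2} directly. One shows by induction along the decomposition, for $i=\ell,\dots,0$, that for $\xi=\mu^X_k$ the $\Sigma_1$ hull of $\xi\cup p_1(\mathcal P_{\lambda_{i-1}})$ and the $r\Sigma_{n+1}$ hull of $\mathcal Q_{\lambda_i}$ computed in $\mathcal Q_{\lambda_{i-1}}$ via $\phi$ contain the same subsets of $\xi$. The key identity is that an element of $\mathrm{Ult}_n(\mathcal Q_\lambda,\dot F)$ has the form $[a,f]$ with $a\in[\mathrm{lh}]^{<\omega}$ generated by $\xi\cup s(\dot F)$ and $f$ in the hull of $\mathcal Q_\lambda$. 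This holds because $\xi>(\kappa^+)$, and Dodd soundness above $\xi$ lets the generators above $\xi$ be replaced by $s(\dot F)\setminus\mu_X$. In the weasel case, the thick class $\Gamma$ is fixed by the composed embedding, so it passes through each $\phi_i$ unchanged.

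The main obstacle is the inductive step: one must show that the hull of $\mathcal Q_{\lambda_i}$ pulled through $\dot F^{\mathcal P_{\lambda_{i-1}}}$ does not produce new subsets of $\xi$ beyond those $\Sigma_1$-definable in $\mathcal P_{\lambda_{i-1}}$. Our first attempt is to use the condition $\sup\pi[\lambda]<\pi(\lambda)$ from the protomouse definition, together with the fact that $\dot F$ is applied with $\xi$ above its critical point. This gives a uniform translation of $r\Sigma_{n+1}$ formulas over the ultrapower into $\Sigma_1$ formulas over $\mathcal P_{\lambda_{i-1}}$ about $\dot F$ and $\mathcal Q_{\lambda_i}|\lambda_i$. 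Finally, the choice of $k_0$ depended only on finitely many parameters of $\mathcal M_{Y_i}$ and $\mathcal Q_{Y_i}$, so the same argument applies verbatim with $X$ replaced by $Y_i$.
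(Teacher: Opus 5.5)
Your choice of $k_0$, making $\xi=\mu^X_k$ satisfy the five hypotheses of Lemma \ref{L2-4}, is exactly what the paper does. After that there is a genuine gap.

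The displayed formula in the statement is garbled. What the paper actually proves (Lemma \ref{LA-1}) and later uses is that the two hulls contain the same elements of $\mathcal P(\mu_X)$. See Case 2 of Lemma \ref{L6-6}, where $i^{Y_i}_k(B')$ is an unbounded subset of $\mu_{Y_i}$, and Claim \ref{c2-7-2}. You read the statement as agreement on subsets of $\mu^X_k$. In that reading your coding remark does reduce everything to Lemma \ref{L2-4}, but the statement then says almost nothing. The coding trick cannot reach subsets of $\mu_X$: their codes lie in $[\mu_X,(\mu_X^+)^{\mathcal M_X})$, and Lemma \ref{L2-4} says nothing there. Your proposed induction is likewise phrased only for subsets of $\xi$.

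For the real statement you need the direct ultrapower argument. There, the inclusion you call the main obstacle is left open, and the hypothesis you lean on is irrelevant. The condition $\sup\pi[\lambda]<\pi(\lambda)$ concerns the external map $\pi$ and only decides whether the protomouse case is entered. It says nothing about definability inside $\mathcal M_X$ versus $\mathcal Q_X$.

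The missing idea is the role of the bullet $\lambda=(\kappa^+)^{\mathcal M_X}\le\xi$. Let $F=\dot F^{\mathcal M_X}$, so $\mathcal Q_X=\mathrm{Ult}(\mathcal Q_\lambda,F)$.
\begin{itemize}
\item Take $B=\tau^{\mathcal Q_X}[a,q,\gamma]\subseteq\mu_X$ with $a\in[\xi]^{<\omega}$, and let $f=\tau^{\mathcal Q_\lambda}[-,-,\gamma]$ on $[\kappa]^{<\omega}$. Then $B=[a\cup q,f]_F$; parameters of $q$ that are $i_F$-images of parameters of $\mathcal Q_\lambda$ are absorbed into $f$.
\item Since $B\subseteq\mu_X\le i_F(\kappa)$, $f$ can be taken with values in $\mathcal P(\kappa)$. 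So $f$ is coded by a subset of $\kappa$ and lies in $\mathcal Q_\lambda|\lambda=\mathcal M_X|\lambda$. Hence $f$ is $\Sigma_1$-definable in $\mathcal M_X$ from an ordinal $<\lambda\le\xi$.
\item As $F$ is the top predicate of $\mathcal M_X$, $B$ is then $\Sigma_1$ over $\mathcal M_X$ from $f$, $a$ and $s(F)$. By the last bullet, $s(F)$ is $\Sigma_1$-definable from $\xi\cup p_1(\mathcal M_X)$.
\end{itemize}
The other inclusion uses the Dodd-soundness representation $B=[r\cup s(F),h]_F$ with $r\in[\xi]^{<\omega}$ and $h\in\mathcal M_X|\lambda$, adapting \cite[Lemma A.1]{MS2}. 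This is close to your key identity.

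This handles the whole decomposition at once. The lower levels enter only through $\mathcal Q_\lambda$, i.e.\ through the representing function, so no induction along $\lambda_\ell,\dots,\lambda_0$ is needed.
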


In the following lemma, we will look at the definability of subsets of $\mu_X$ between $\mathcal M_X$ and $\mathcal S_{Y_i X}$.

\begin{lemma}\label{L6-6}
    Suppose that $\mathcal M_X$ is a set mouse and is $m+1$-sound above $\mu_X$. 
    Then there exists some $k_0<\omega$ such that for all $k>k_0$, if $A\in \mathcal P(\mu_X)\cap \mathcal M_X$ such that
    $$A\in \mathrm{Hull}^{{\mathcal M_X}}_{m+1}(\mu^X_k\cup p_{m+1}({\mathcal M_X})),$$
    then there exists unboundedly many $i<\omega_1$ such that
    \begin{itemize}
        \item[1.] If ${\mathcal S_{Y_i X}}$ is a set mouse which is $n+1$-sound above $\mu_{X}$, then 
        $$A\in \mathrm{Hull}^{{\mathcal S_{Y_i X}}}_{n+1}(\mu^X_k\cup p_{n+1}({\mathcal S_{Y_i X}})).$$
        \item[2.] If ${\mathcal S_{Y_i X}}$ is a weasel and $\Gamma$ is a thick class, then 
        $$A\in \mathrm{Hull}^{{\mathcal S_{Y_i X}}}(\mu^X_k\cup c({\mathcal S_{Y_i X}})\cup \Gamma).$$
    \end{itemize}
    Suppose that $\mathcal M_X$ is a weasel.
    Then there exists some $k_0<\omega$ such that for all $k>k_0$, if $A\in \mathcal P(\mu_X)\cap \mathcal M_X$ such that
    $$A\in \mathrm{Hull}^{{\mathcal M_X}}(\mu^X_k\cup\Gamma),$$
    then there exists unboundedly many $i<\omega_1$ such that
    $$A\in \mathrm{Hull}^{{\mathcal S_{Y_i X}}}(\mu^X_k\cup \Gamma).$$
\end{lemma}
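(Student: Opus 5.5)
The plan is to transfer definability from $\mathcal M_X$ to $\mathcal S_{Y_iX}$ in two stages: first from $\mathcal M_X$ to $\mathcal Q_X$, then from $\mathcal Q_X$ to $\mathcal S_{Y_iX}$. The first stage is a direct application of Lemma \ref{L6-5}. The second stage uses the comparison Lemmas \ref{L2-9} and \ref{L2-10}, together with the fact that $\pi_{Y_iX}$ is the identity on a large enough initial segment. Choose $k_0$ large enough that Lemma \ref{L6-5} applies to $X$ for all $k>k_0$ (when $\mathcal M_X\neq\mathcal Q_X$). Fix $k>k_0$ and $A\in\mathcal P(\mu_X)\cap\mathcal M_X$ lying in the relevant hull over $\mu^X_k$.

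By Lemma \ref{L6-5}, $A$ lies in $\mathrm{Hull}^{\mathcal Q_X}_{n+1}(\mu^X_k\cup p_{n+1}(\mathcal Q_X))$ in the set mouse case, and in $\mathrm{Hull}^{\mathcal Q_X}(\mu^X_k\cup c(\mathcal Q_X)\cup\Gamma)$ in the weasel case. When $\mathcal M_X=\mathcal Q_X$ this is immediate, with $n=m$. The weasel case of $\mathcal M_X$ is handled the same way, with $\Gamma$ in place of the standard parameter. Since $A\subseteq\mu_X$ and $\mu^X_k<\mu_X$, $A$ is determined by the pieces $A\cap\beta$ for $\beta<\mu_X$. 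The intended witnesses are the indices $i<\omega_1$ for which $\mu^X_k\in Y_i$ and $A$ is coded from parameters lying in $\mathrm{ran}(\pi_{Y_iX})$. Unboundedly many such $i$ exist because $X=\bigcup_{i<\omega_1}Y_i$, the chain is increasing, $C_X\subseteq Y_i$ for every $i$, and the finitely many ordinals $<\mu_X$ in a witnessing term appear in some $Y_i$ and hence in all later ones.

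For such an $i$, work in $\mathcal S_{Y_iX}=\mathrm{Ult}_n(\mathcal Q_{Y_i},\pi_{Y_iX},\sup\pi_{Y_iX}[\mu_{Y_i}])$. The map $\Pi_{Y_iX}$ is $r\Sigma_{n+1}$-elementary and sends the standard or class parameter of $\mathcal Q_{Y_i}$ to that of $\mathcal S_{Y_iX}$. The key point is to show that $\mathcal S_{Y_iX}$ and $\mathcal Q_X$ agree on subsets of $\mu^X_k$ definable over these parameters. For this I would invoke Lemmas \ref{L2-9} and \ref{L2-10} applied to the pair $Y_i\subseteq X$.

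In the set mouse case, $\mathcal S_{Y_iX}$ is either an initial segment of $\mathcal M_X$ or an ultrapower $\mathrm{Ult}_n(\mathcal P^X_\lambda,E)$ with $\lambda<\mu_X$. In the first sub-case, soundness above $\mu_X$ together with the fact that $\mu_X=\mathrm{crit}$ of the tail (so $\mu^X_k$ is a generator-free cutpoint) places $A$ in the required hull. In the second sub-case, if $\lambda$ is large I need $\lambda>\mu^X_k$, which I would arrange by taking $i$ large. In the weasel case, Lemma \ref{L2-10} gives agreement of $\mathcal S_{Y_iX}$ with $\mathcal M_X$ or with $\mathrm{Ult}(\mathcal P^X_\lambda,G)$ up to $\pi_X^{-1}(\Omega_0)$. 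The hull computations are then transferred as in Cases 3 and 4 of Lemma \ref{L3-4}, using a thick class fixed by all the embeddings.

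I expect the main obstacle to be matching the parameters. The standard or class parameter of $\mathcal S_{Y_iX}$ must be shown to define, over $\mu^X_k$, the same subsets of $\mu_X$ as $p_{n+1}(\mathcal Q_X)$ or $c(\mathcal Q_X)$, and this is needed for unboundedly many $i$ rather than just one. My first attempt would be to use the linked tail $I_X$: each $\mu^X_k$ satisfies $\mu^X_k=\mu_X\cap\mathrm{Hull}(\mu^X_k\cup p)$. Combined with Lemma \ref{L6-4} ($C_{Y_j}\subseteq C_{Y_i}$) and the fact that $\pi_{Y_iX}$ fixes $C_X$, this should show that the hulls over $\mu^X_k$ in $\mathcal S_{Y_iX}$ and in $\mathcal Q_X$ coincide below $\mu_X$ once $\mu^X_k\in C_{Y_i}$.
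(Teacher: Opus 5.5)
There is a genuine gap, and it sits in your second stage, which carries all the content of the lemma. There is no map between $\mathcal Q_X$ and $\mathcal S_{Y_iX}$, since the latter is an ultrapower of $\mathcal Q_{Y_i}$. So the claim that these two structures ``agree on subsets of $\mu^X_k$ definable over these parameters'' is essentially the lemma itself. Your suggested route cannot establish it. Knowing that $\mu_X\cap\mathrm{Hull}(\mu^X_k\cup p)=\mu^X_k$ in both structures says nothing about which subsets of $\mu_X$ lie in the hulls.

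Similarly, when $\mathcal S_{Y_iX}\lhd\mathcal M_X$ properly, soundness of $\mathcal S_{Y_iX}$ above $\mu_X$ cannot put $A$ into its hull. A priori $A$ need not even be an element of $\mathcal S_{Y_iX}$. That it is, for suitable $i$, is part of what is being proved, and it is exactly what Corollary \ref{C6-7} later extracts.

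Several supporting facts are also wrong:
\begin{itemize}
\item $\pi_{Y_iX}$ is not the identity on any relevant initial segment: it moves ordinals well below $\mu_{Y_i}$ and maps $\mu_{Y_i}$ cofinally into $\mu_X$.
\item It does not ``fix $C_X$''.
\item $\mu_X$ is not a critical point of the tail; the $\mu^X_k$ are.
\end{itemize}

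The paper instead goes around through the $Y_i$ side and returns via $i^X_k$.
\begin{itemize}
\item Since $A$ lies in the hull over $\mu^X_k\cup p_{m+1}(\mathcal M_X)$, we have $A\in\mathrm{ran}(i^X_k)$, so $A=i^X_k(A\cap\mu^X_k)$.
\item Take $i$ with $\pi_X(\mu^X_k)\in C_{Y_i}$, so that $\mu^X_k=\Pi_{Y_iX}(\mu^{Y_i}_\ell)$. Pull $A\cap\mu^X_k$ back to some $B'\subseteq\mu^{Y_i}_\ell$.
\item Then $B'\in\mathcal M^{Y_i}_\ell$, which is sound above $\mu^{Y_i}_\ell$. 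Hence $i^{Y_i}_\ell(B')$ lies in the corresponding hull of $\mathcal M_{Y_i}$, transferred to $\mathcal Q_{Y_i}$ by Lemma \ref{L6-5} applied to $Y_i$, not to $X$.
\item Set $B=\Pi_{Y_iX}(i^{Y_i}_\ell(B'))$. It lies in the required hull of $\mathcal S_{Y_iX}$, and $B\cap\mu^X_k=A\cap\mu^X_k$.
\item Finally $B=A$, because both lie in $\mathrm{ran}(i^X_k)$ and agree below its critical point.
\end{itemize}

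Getting $B\in\mathrm{ran}(i^X_k)$ is where Lemmas \ref{L2-9} and \ref{L2-10} enter. One requires $\mathcal S_{Y_iX}$, or $E_i$ with $\lambda_i<\mu^X_k$, or $G_i$ with $\mathrm{crit}(G_i)<\mu^X_k$, together with the standard or class parameter, to lie in $\mathrm{ran}(i^X_k)$. Note that this inequality is the reverse of your ``$\lambda>\mu^X_k$''. Also, enlarging $i$ gives no control over $\lambda_i$.

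These conditions depend jointly on $i$ and $k$; for example, $C_X\subseteq^* C_{Y_i}$ holds only up to an $i$-dependent finite set. So $k_0$ cannot be chosen from Lemma \ref{L6-5} alone, as you do. The paper observes that for each $i$ the conditions hold for all sufficiently large $k$. Since $\omega_1$ is not a countable union of countable sets, some $k_0$ then works for uncountably many $i$. This is what yields ``for every $k>k_0$, unboundedly many $i$''.
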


\begin{proof}
    We will only demonstrate the proof when $\mathcal M_X$ is a set mouse, and leave the rest to the reader.
    Recall that our construction of $\langle Y_i\mid i<\omega_1\rangle$ and $X$ ensure that $\mathcal M_{Y_i}$ are all set mice.
    Moreover, they are coherent in a way that, for example, if $\mathcal Q_X$ is a weasel, then $\mathcal Q_{Y_i}$ is a weasel, for every $i<\omega_1$.
    We will split the discussion into different cases, based on whether $\mathcal Q_X$ is a set mouse or a weasel, and whether $\mathcal Q_{X}$ is defined by the protomouse case.

    \subsection*{Case 1} 
    Suppose that $\mathcal Q_X = \mathcal M_X$ is a set mouse. 
    By Lemma \ref{L2-9}, for every $i<\omega_1$, either $\mathcal S_{XY}\unlhd \mathcal M_Y$, or there exists some $\lambda_i<\mu_Y$ and an extender $E_i$ on the $\mathcal M_X$-sequence such that 
    $$\mathcal S_{Y_i X} = \mathrm{Ult}_n(\mathcal P^{X}_{\lambda_i},E_i).$$
    If there exists unboundedly many $i<\omega_1$ such that $\mathcal S_{Y_i X} = \mathcal M_Y$, the result follows immediately.
    Suppose otherwise and let us assume there exists unboundedly many $i<\omega_1$ such that $\mathcal S_{Y_i X}\lhd \mathcal M_X$.
    The following claim provides the combinatorial property that $k_0$ and $i$ need to satisfy.
    \begin{claim}\label{c6-6-1}
        There exists $k_0>\omega$ such that for every $k>k_0$, there exist unboundedly many $i<\omega_1$ such that
        \begin{itemize}
            \item[1.] $\pi_X(\mu^X_k)\in C_{Y_i}$;
            \item[2.] $\{{\mathcal S_{Y_i X}};p_{m+1}({\mathcal S_{Y_i X}})\}\subseteq \mathrm{ran}(i^{X}_{k})$.
        \end{itemize}
    \end{claim}
    \begin{proof}
        For each $k<\omega$, let $A^1_k$ be the collection of all $i$ such that for all $k'>k$, $\mu^X_{k'}$ and $Y_i$ satisfy condition 1. 
        That is,
        $$\pi_X(\mu^X_{k'})\in C_{Y_i}.$$
        let $A^2_k$ be the collection of all $i$ such that for all $k'>k$, $i^X_{k'}$ and $Y_i$ satisfy condition 2. 
        That is,
        $$\{{\mathcal S_{Y_i X}};p_{m+1}({\mathcal S_{Y_i X}})\}\subseteq \mathrm{ran}(i^{X}_{k'}).$$
        Since $C_X\subseteq^* C_{Y_i}$ for all $\alpha<\omega_1$, $A^1_{k}\subseteq A^1_{k+1}$ and $\bigcup_{k<\omega} A^1_k = \omega_1$. 
        Let $k_0^1$ be the least $k$ such that $A^1_{k}$ is uncountable. 
        Similarly, since $S_{Y_i X}\lhd \mathcal M_X$ for all $i<\omega_1$, $A^2_{k}\subseteq A^2_{k+1}$ and $\bigcup_{k<\omega} A^2_k\cap A^1_{k_0^1} = A^1_{k_0^1} $.
        Thus $k_0^2$ can be found as the least $k>k_0^1$ such that $A^2_{k}\cap A^1_{k_0^1}$ is uncountable. 
        Let $k_0 = k_0^2$ and we are done.
    \end{proof}
    Fix some $k>k_0$, $i\in A^1_k\cap A^2_k$ and $A\in \mathcal P(\mu_X)\cap \mathcal M_X$ such that
    $$A\in \mathrm{Hull}^{{\mathcal M_X}}_{m+1}(\mu^X_k\cup p_{m+1}({\mathcal M_X})).$$
    Let $i^X_k(A') = A$, $\Pi_{Y_i X}(\mu^{Y_i}_\ell) = \mu^X_k$ and $\Pi_{Y_i X}(B') = A'$.
    Then $A' = A\cap \mu^X_k$ and $B'\in \mathcal P(\mu^{Y_i}_\ell)\cap \mathcal M_{Y_i}$.
    Since 
    $$\mathcal P(\mu^{Y_i}_\ell)\cap \mathcal M^{Y_i}_\ell = \mathcal{P}(\mu^{Y_i}_\ell)\cap {\mathcal M_{Y_i}},$$
    we have that $B'\in \mathcal P(\mu^{Y_i}_\ell)\cap \mathcal M^{Y_i}_\ell$. 
    Thus
    $$B'\in \mathrm{Hull}_{m+1}^{{\mathcal M^{Y_i}_\ell}}(\mu^{Y_i}_\ell \cup p_{m+1}({\mathcal M^{Y_i}_\ell})).$$
    Pushing forward to $\mathcal M_{Y_i}$, we have
    $$i^{Y_i}_\ell(B') \in \mathrm{Hull}_{m+1}^{{\mathcal M_{Y_i}}}(\mu^{Y_i}_\ell \cup p_{m+1}({\mathcal M_{Y_i}})).$$
    Let $B = \Pi_{Y_i X}(i^{Y_i}_\ell(B'))$.
    We have
    $$B\in \mathrm{Hull}^{{\mathcal S_{Y_i X}}}_{m+1}(\mu^X_k\cup p_{m+1}({\mathcal S_{Y_i X}})).$$
    The lemma follows if $A = B$. 
    By our assumption on $i$ and $k$, 
    $$\{{\mathcal S_{Y_i X}},p_{m+1}({\mathcal S_{Y_i X}})\}\cup\mu^X_k\subseteq \mathrm{ran}(i^X_k).$$
    Thus $B\in \mathrm{ran}(i_k^X)$ and
    $$\begin{aligned}
    B &= i_k^X(B\cap \mu^X_{k})\\
    &= i_k^X(\Pi_{Y_i X}(i^{Y_i}_\ell(B'))\cap \mu^X_{k})\\
    &= i_k^X(\Pi_{Y_i X}(i^{Y_i}_\ell(B')\cap \mu^{Y_i}_\ell))\\
    &= i_k^X(\Pi_{Y_i X}(B'))\\
    &= i_k^X(A') = A.
    \end{aligned}$$
    Therefore,
    $$A\in \mathrm{Hull}^{{\mathcal S_{Y_i X}}}_{m+1}(\mu^X_k\cup p_{m+1}({\mathcal S_{Y_i X}})).$$
    The above calculation only relies on the fact that $B\in \mathrm{ran}(i_k^X)$ and will be used several times in the proof.
    
    Recall that we have assumed there exists unboundedly many $i<\omega_1$ such that $\mathcal S_{Y_i X}\lhd \mathcal M_X$ at the beginning of this case. 
    If this is false, then for almost all $i<\omega_1$, there exist $\lambda_i<\mu_X$ and an extender $E_i$ on the $\mathcal M_X$-sequence such that 
    $$\mathcal S_{Y_i X} = \mathrm{Ult}_m(\mathcal P^{X}_{\lambda_i},E_i).$$
    In Claim \ref{c6-6-1}, we change the second condition to the following one.
    \begin{itemize}
        \item[2*.] $\{E_i;p_{m+1}({\mathcal S_{Y_i X}})\}\subseteq \mathrm{ran}(i^{X}_{k})$, and $\mu^X_k>\lambda_i$.
    \end{itemize}
    The rest follows similarly.

    \subsection*{Case 2.} Suppose that $\mathcal Q_X\neq \mathcal M_X$ but both are set mice.
    Still, we first assume that all $i<\omega_1$ satisfies $\mathcal S_{Y_i X}\lhd \mathcal M_X$. 
    We modify the requirements for $k_0<\omega$ as follows: For every $k>k_0$, there exists unboundedly many $i<\omega_1$ such that
    \begin{itemize}
    \item[1.] $\pi_X(\mu^X_k)\in C_{Y_i}$;
    \item[2.] $\{\mathcal S_{Y_i X},p_{n+1}({\mathcal S_{Y_i X}})\}\subseteq \mathrm{ran}(i^X_k)$;
    \item[3.] Let $\ell<\omega$ be such that $\Pi_{Y_i X}(\mu^{Y_i}_\ell) =\mu^X_k$. 
    Then $\mu^{Y_i}_\ell$ is large enough in the sense of Lemma \ref{L6-5}.
    \end{itemize}
    The existence of such $k_0$ can be shown using a similar method as in Case 1. 
    Fix some $k>k_0$ and $i<\omega_1$ specified as above.
    Fix $A\in\mathcal P(\mu_X)\cap \mathcal M_X$ such that
    $$A\in \mathrm{Hull}^{{\mathcal M_X}}_{1}(\mu^X_k\cup p_{m+1}({\mathcal M_X})).$$
    Let $i^X_k(A') = A$, $\Pi_{Y_i X}(\mu^{Y_i}_\ell) = \mu^X_k$ and $\Pi_{Y_i X}(B') = A'$.
    Then $A' = A\cap \mu^X_k$ and $B'\in \mathcal P(\mu^{Y_i}_\ell)\cap \mathcal Q_{Y_i}$.
    Since $\mathcal M_{Y_i}$ and $\mathcal Q_{Y_i}$ agrees below $\mu_{Y_i}$, $B'\in \mathcal P(\mu^{Y_i}_\ell)\cap \mathcal M_{Y_i}$.
    A similar calculation shows
    $$i^{Y_{i}}_k(B')\in \mathrm{Hull}_{1}^{{\mathcal M_{Y_i}}}(\mu^{Y_i}_\ell \cup p_{1}({\mathcal M_{Y_i}})).$$
    By Lemma \ref{L6-5} and condition 3, we have 
    $$i^{Y_{i}}_k(B')\in \mathrm{Hull}_{n+1}^{{\mathcal Q_{Y_i}}}(\mu^{Y_i}_\ell \cup p_{n+1}({\mathcal M_{Y_i}})).$$
    Let $B = \Pi_{Y_i X}(i^{Y_{i}}_k(B'))$.
    A similar calculation shows $B\in \mathrm{ran}(i^{X}_k)$ and $B = A$. 

    If $\mathcal S_{Y_i X}\lhd \mathcal M_{X}$ is false for almost all $i<\omega_1$, we use the same strategy as in Case 1 to change condition 2 as 
    \begin{itemize}
        \item[2*.] $\{E_i;p_{m+1}({\mathcal S_{Y_i X}})\}\subseteq \mathrm{ran}(i^{X}_{k})$, and $\mu^X_k>\lambda_i$.
    \end{itemize}
    The calculation follows similarly.

    \subsection*{Case 3.}
    $\mathcal Q_{Y_i}$ is a weasel for every $i<\omega_1$.
    By Lemma \ref{L2-10}, for each $i<\omega_1$, there exists $\lambda_i$ and $G_i\in \mathcal M_X$ such that there exists two fully elementary embeddings $e_i$ and $f_i$ such that
    $$\mathcal S_{Y_i X}\xrightarrow{e_i}\mathcal N_i\xleftarrow{f_i}\mathrm{Ult}(\mathcal P^X_{\lambda_i},G_i).$$
    Moreover, both embeddings have critical points $\geq \pi^{-1}_X(\Omega_0)$.
    Once again, we modify $k_0<\omega$ such that for all $k>k_0$, there exists unboundedly many $i<\omega_1$ such that
    \begin{itemize}
    \item[1.] $\pi_X(\mu^X_k)\in C_{Y_i}$;
    \item[2.] $\{G_i,c({\mathcal S_{Y_i X}})\}\subseteq \mathrm{ran}(i^X_k)$, and $\mu^X_k>\mathrm{crit}(G_i)$.
    \item[3.] Let $\pi_{Y_i X}(\mu^{Y_i}_\ell) =\mu^X_k$.
    Then $\mu^{Y_i}_\ell$ is large enough in the sense of Lemma \ref{L6-5}.
    \end{itemize}
    Fix $k<\omega$ and $i<\omega_1$ as described above.
    Fix a thick class $\Gamma$ of ordinals fixed under all relevant embeddings.  
    Fix some $A\in \mathcal P(\mu_X)\cap \mathcal M_{X}$ such that 
    $$A\in \mathrm{Hull}^{{\mathcal M_X}}_{1}(\mu^X_k\cup p_{1}({\mathcal M_X})).$$
    Define $A'$, $B'$, $B$ and $\ell$ as before.
    A similar calculation leads us to 
    $$B\in \mathrm{Hull}^{{\mathcal S_{Y_i X}}}(\mu^{X}_k \cup c({\mathcal S_{Y_i X}})\cup \Gamma).$$
    It implies that we may find a Skolem term $\tau$, $a\in [\mu^X_k]^{<\omega}$ and $\gamma\in [\Gamma]^{<\omega}$ such that
    $$B = \tau^{\mathcal S_{Y_i X}}[a;c({\mathcal S_{Y_i X}}),\gamma].$$
    Let $\xi_i = \mathrm{crit}(G_i)$.
    Define $f$ be the function with domain $[\xi_i]^{|a|+|c({\mathcal S_{Y_i X}})|}$ such that
    $$f(-,-) = \tau^{\mathcal P^X_{\lambda_i}}[-,-,\gamma].$$
    $f$ can be viewed as a bounded subset of $\xi_i$, and thus $f\in \mathcal M_X$.
    By condition 2, $f\in\mathrm{ran}(i^X_k)$. 
    By the fact that
    $$i_{G_i}(f)(a,c({\mathcal S_{Y_i X}})) = B,$$
    it thus follows that $B\in \mathrm{ran}(i^{X}_k)$.
    A similar calculation as in Case 1 shows that $B = A$.
    We are done.
\end{proof}

In fact, the above proof method works in a more general sense:
Assume that $\mathrm{cf}(\nu) = \alpha$ and $\langle Y_i\mid i<\alpha^+\rangle$ is a $\subseteq$-increasing sequence such that $Y_i\in Y_{i+1}$ for all $i<\alpha^+$. 
Let $X = \bigcup_{i<\alpha^+}Y_i$ and assume that all $Y_i$ and $X$ gives the same set of answers for the questions in List B. 
Assume that $B_{Y_i}$ is given and $B_{Y_i}\subseteq C_{Y_j}$ for all $i,j<\alpha^+$. 
Readers can check that by the same argument, Lemma \ref{L6-6} holds for $X$ and $\langle Y_i\mid i<\alpha^+\rangle$, with $\omega$ replaced by $\alpha$, and $\omega_1$ replaced by $\alpha^+$.
This is crucial in proving Lemma \ref{L6-2}.

The above lemma can be used to show the following corollary.

\begin{corollary}\label{C6-7}
    $\mathcal P(\mu_X)\cap \mathcal M_X = \mathcal P(\mu_X)\cap W_X$.
\end{corollary}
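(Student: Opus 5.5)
The inclusion $\mathcal P(\mu_X)\cap W_X\subseteq \mathcal P(\mu_X)\cap \mathcal M_X$ is immediate. By Requirement 1, $\mathcal U_X$ is trivial, and $\mathcal M_X=\mathcal M^{\mathcal T_X}_{\theta_X}$ agrees with $W_X$ below $\mu_X$. Since $\mu_X$ is a cardinal of $W_X$, every subset of $\mu_X$ in $W_X$ appears on the $\mathcal M_X$-side by comparison. So the work is the reverse inclusion. I plan to obtain it by showing that every $A\in\mathcal P(\mu_X)\cap\mathcal M_X$ lies in $\mathcal S_{Y_iX}$ in a way that can be pulled back into $W_X$.

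Fix $A\in \mathcal P(\mu_X)\cap \mathcal M_X$. Suppose first that $\mathcal M_X$ is a set mouse, $m+1$-sound above $\mu_X$. Since $\mu_X=\sup_k \mu^X_k$, we can choose $k>k_0$, with $k_0$ as in Lemma \ref{L6-6}, such that $A\in \mathrm{Hull}^{\mathcal M_X}_{m+1}(\mu^X_k\cup p_{m+1}(\mathcal M_X))$. If instead $\mathcal M_X$ is a weasel, we use the definability property to choose $k$ with $A\in \mathrm{Hull}^{\mathcal M_X}(\mu^X_k\cup\Gamma)$. Lemma \ref{L6-6} then provides unboundedly many $i<\omega_1$ for which $A$ lies in the corresponding hull of $\mathcal S_{Y_iX}$, in particular $A\in\mathcal S_{Y_iX}$.

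The key step is to transfer membership in $\mathcal S_{Y_iX}$ into $W_X$.

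In the set mouse case, Lemma \ref{L2-9} gives either $\mathcal S_{Y_iX}\unlhd \mathcal M_X$, or $\mathcal S_{Y_iX}=\mathrm{Ult}_n(\mathcal P^X_\lambda,E)$ with $\lambda<\mu_X$. In the second alternative, $A$ is of the form $i_E(f)(a)$ with $f\in\mathcal P^X_\lambda$, where $f$ is essentially a bounded subset of $\mu_X$ coded in $\mathcal P^X_\lambda$. However, these lemmas are stated for the pair $(X,Y)$ with $\mathcal S_{XY}$ compared against $Y$'s phalanx. Here the roles are $(Y_i,X)$: $\mathcal S_{Y_iX}$ is built from $\mathcal Q_{Y_i}$ and compared against the phalanx $\overrightarrow{\mathcal P_X}$ of $X$. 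The phalanx members $\mathcal P^X_\lambda$ for $\lambda<\mu_X$ are initial segments of iterates agreeing with $W_X$ below $\lambda$. So the relevant extender $E$ (respectively $G_i$) lies on the $W_X$-sequence below $\mu_X$, and $\mathcal P^X_\lambda$'s subsets of its own small cardinals are in $W_X$.

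In the case $\mathcal S_{Y_iX}\unlhd\mathcal M_X$, I would instead use that $\mathcal S_{Y_iX}$ is sound and projects to at most $\mu_X$, and argue that it is an initial segment of $W_X$. Take a putative subset $A$ which is in $\mathcal M_X$ but not in $W_X$, so that it first appears at a level of $\mathcal M_X$ beyond $W_X$'s $\mu_X^+$. The hull bound from Lemma \ref{L6-6} forces it to be definable over a structure which, via $\Pi_{Y_iX}$, is the image of a $\mathcal Q_{Y_i}$-level. That level is defined from $\mathcal M_{Y_i}$, which agrees with $W_{Y_i}$, and pushing forward by $\pi_{Y_iX}$ lands in $W_X$ by elementarity of $\pi_{Y_iX}$ restricted to $W_{Y_i}$.

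In the weasel case, the same argument should go through using the embeddings $e_i,f_i$ of Lemma \ref{L2-10} with critical points above $\pi_X^{-1}(\Omega_0)$, together with $G_i\in\mathcal M_X$ having critical point below $\mu_X$, so that $A=i_{G_i}(f)(a,c(\mathcal S_{Y_iX}))$ is computed inside $W_X$.

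I expect the main obstacle to be this transfer step, namely justifying that the relevant parts of $\mathcal S_{Y_iX}$ (or $E$, $G_i$) are in $W_X$ rather than merely in $\mathcal M_X$. If the direct argument fails, I would fall back on a contradiction argument. Take the least level $\mathcal N\lhd\mathcal M_X$ that defines a new subset of $\mu_X$ not in $W_X$; this level would witness a disagreement between $\mathcal M_X$ and $W_X$ at $\mu_X$. Using the unboundedly many $i$ from Lemma \ref{L6-6}, pull this level back through $\Pi_{Y_iX}$ into $\mathcal Q_{Y_i}$. There the same disagreement would have to appear at $\mu_{Y_i}$, contradicting the coherence $C_{Y_j}\subseteq C_{Y_i}$ of Lemma \ref{L6-4} and the elementarity of $\pi_{Y_iX}$ on $W_{Y_i}$.
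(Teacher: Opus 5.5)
\paragraph{There is a genuine gap: the transfer from $\mathcal S_{Y_iX}$ into $W_X$.}
Your easy inclusion matches the paper, and so does using Lemma~\ref{L6-6} to find some $i<\omega_1$ with $A\in\mathcal S_{Y_iX}$. The problem is the next step.

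\emph{Lemmas~\ref{L2-9} and~\ref{L2-10} cannot do it.} Applied to the pair $(Y_i,X)$, they compare $\mathcal S_{Y_iX}$ with the phalanx of $X$. So they can only tell you that $A$ lies in $\mathcal M_X$, or in an ultrapower of some $\mathcal P^X_\lambda$ by an extender taken from $\mathcal M_X$. You already know $A\in\mathcal M_X$.

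\emph{$E$ is not on the $W_X$-sequence below $\mu_X$.} $\mathrm{Ult}_n(\mathcal P^X_\lambda,E)$ agrees with $\mathcal M_X$ only below $\mathrm{lh}(E)$ and is passive at $\mathrm{lh}(E)$. But $\mathcal S_{Y_iX}$ agrees with $W_X$ and $\mathcal M_X$ all the way up to $\mu_X$, so $\mathrm{lh}(E)>\mu_X$. To evaluate $i_E(f)(a)$ you need $E\upharpoonright\mu_X$ (and likewise $G_i$ in the weasel case), which is coded by a subset of $\mu_X$ in $\mathcal M_X$. Whether such a set lies in $W_X$ is exactly what the corollary asserts, so the argument is circular.

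\emph{The branch $\mathcal S_{Y_iX}\unlhd\mathcal M_X$ also fails.} $\mathcal Q_{Y_i}$ is not an element of $N_{Y_i}$. $\Pi_{Y_iX}$ is an ultrapower map, not $\pi_{Y_iX}\upharpoonright W_{Y_i}$. And $\mathcal M_{Y_i}$ need not agree with $W_{Y_i}$ on subsets of $\mu_{Y_i}$. So elementarity of $\pi_{Y_iX}$ on $W_{Y_i}$ pushes nothing into $W_X$.

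\emph{Your fallback has the same defect.} A disagreement between $\mathcal Q_{Y_i}$ and $W_{Y_i}$ at $\mu_{Y_i}$ is not contradictory, and Lemma~\ref{L6-4} says nothing about it.

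\paragraph{The missing idea.} Use $Y_i\cup\{Y_i\}\subseteq X$, argue on the $V$ side, and then pull back along $\pi_X$.
\begin{itemize}
\item Since $\mathrm{crit}(\pi_X)>\mathrm{Ord}\cap N_{Y_i}$, you get $\pi_{Y_iX}=\pi_X^{-1}(\pi_{Y_i})\in N_X$.
\item \emph{Set-mouse case.} $\mathcal Q_{Y_i}$ is a small set definable from $Y_i$, hence fixed by $\pi_X$. So $\mathcal S_{Y_iX}\in N_X$ and $\pi_X(\mathcal S_{Y_iX})=\mathcal S_{Y_i}$. Lemma~\ref{L2-5} for $Y_i$ applies with $\sup\Pi_{Y_i}[\mu_{Y_i}]=\nu$ a limit cardinal of $W$, giving $\mathcal S_{Y_i}\lhd W$. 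By elementarity $\mathcal S_{Y_iX}\lhd W_X$, hence $A\in W_X$.
\item \emph{Weasel case.} Lemma~\ref{L2-6} or~\ref{L2-7} gives $G\in W$ with $\mathcal P(\nu)\cap\mathcal S_{Y_i}=\mathcal P(\nu)\cap\mathrm{Ult}(W,G)$. Pulling back, $\pi_X^{-1}(\mathcal S_{Y_i})=\mathrm{Ult}(\pi_X^{-1}(\mathcal Q_{Y_i}),\pi_{Y_iX},\mu_X)$ is an internal ultrapower of $W_X$, so its subsets of $\mu_X$ all lie in $W_X$.
\item You still must show this ultrapower has the same subsets of $\mu_X$ as $\mathcal S_{Y_iX}$. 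That requires $\delta_X>(\mu_{Y_i}^+)^{\mathcal Q_{Y_i}}$. The paper gets this from a cofinality argument: $(\mu_{Y_i}^+)^{\mathcal Q_{Y_i}}=(\mu_{Y_i}^+)^{\mathcal M_{Y_i}}$ has cofinality $\omega$, so it cannot be $\pi_X(\delta_X)$, which is regular.
\end{itemize}
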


\begin{proof}
    By the coiteration process, $\mathcal P(\mu_X)\cap \mathcal M_X \supseteq \mathcal P(\mu_X)\cap W_X$.
    For the other direction, let $A\in \mathcal P(\mu_X)\cap \mathcal M_X$ and let $i<\omega_1$ be such that $A\in {\mathcal S_{Y_i X}}$. 
    We need to show that $A\in W_X$.

    \subsection*{Case 1.} 
    ${\mathcal Q_{Y_i}}$ is a set mouse. 
    Since $Y_i\cup\{Y_i\}\subseteq X$, $N_{Y_i}\cup\{N_{Y_i}\}\subseteq X$ and $\pi_{Y_i}\cup\{\pi_{Y_i}\}\subseteq X$.
    Thus $\mathrm{crit}(\pi_X)>\mathrm{Ord}\cap N_{Y_i}$ and $\pi_{Y_i X} = \pi_{X}^{-1}(\pi_{Y_i})\in N_X$.
    Notice that $|{\mathcal Q_{Y_i}}| = |{\mu_{Y_i}}|\leq \mathrm{crit}(\pi_X)$ and ${\mathcal Q_{Y_i}}$ is definable with $Y_i$.
    Thus ${\mathcal Q_{Y_i}}\in N_X$ and ${\mathcal S_{Y_i X}}\in N_X$.
    Moreover, $\pi_X({\mathcal S_{Y_i X}}) = {\mathcal S_{Y_i}}$.
    Thus by Lemma \ref{L2-5}, $\mathcal S_{Y_i}\lhd W$ and thus ${\mathcal S_{Y_i X}}\lhd W_X$.
    We are done.

    \subsection*{Case 2.} 
    $\mathcal Q_{Y_i}$ is a weasel.
    By elementarity, we have
    $$\pi_X^{-1}({\mathcal S_{Y_i}}) = \mathrm{Ult}(\pi^{-1}_X({\mathcal Q_{Y_i }}),\pi_{Y_i X},\mu_X).$$
    By Lemma \ref{L2-6} or \ref{L2-7}, there exists an extender $G\in W$ such that 
    $$\mathcal P(\nu)\cap \mathcal S_X = \mathcal P(\nu)\cap \mathrm{Ult}(W,G).$$
    Thus $\pi^{-1}({\mathcal S_{Y_i}})$ is an internal ultrapower of $W_X$, and thus 
    $$\mathcal P(\mu_X)\cap \pi^{-1}({\mathcal S_{Y_i}})=\mathcal P(\mu_X)\cap W_X.$$
    Let $\delta_X = \mathrm{crit}(\pi_X)$.
    Then $\delta_X>{\mu_{Y_i}}$.
    Since $\delta_X$ is a cardinal in $N_{X}$, $\delta_X$ is a cardinal in $\pi_X^{-1}({\mathcal Q_{Y_i}})$ and thus $\delta_X\geq (\mu_{Y_i}^{+})^{\pi_X^{-1}({\mathcal Q_{Y_i}})}$. 
    Towards a contradiction, assume $\delta_X= (\mu_{Y_i}^{+})^{\pi_X^{-1}({\mathcal Q_{Y_i}})}$.
    Now $\pi_X(\delta_X)$ is a cardinal in $H_{\Omega^+}$, and thus
    $$\pi_X((\mu_{Y_i}^{+})^{\pi_X^{-1}({\mathcal Q_{Y_i}})}) = (\mu_{Y_i}^{+})^{{\mathcal Q_{Y_i}}}$$
    is the least cardinal in $H_{\Omega^+}$ which is larger than $|\mu_{Y_i}|$.
    Therefore, $(\mu_{Y_i}^{+})^{{\mathcal Q_{Y_i}}} = |\mu_{Y_i}|^+$.
    However, this is not possible since $(\mu_{Y_i}^{+})^{{\mathcal Q_{Y_i}}} = (\mu_{Y_i}^{+})^{{\mathcal M_{Y_i}}}$, and $\mathrm{cf}((\mu_{Y_i}^{+})^{{\mathcal M_{Y_i}}}) = \omega$.\footnote{
    $(\mu_{Y_i}^{+})^{{\mathcal M_{Y_i}}} = \sup\{\lambda_k\mid k<\omega\}$,
    where
    $$\lambda_k = \sup\left(i^{Y_i}_{k}\left[(\mu_{k}^{Y_i})^{+\mathcal M_{Y_i}}\right]\right).$$}    
    Thus $\delta_X>(\mu_{Y_i}^+)^{\pi_X^{-1}(\mathcal Q_{Y_i})}$, $(\mu_{Y_i}^+)^{\pi_X^{-1}(\mathcal Q_{Y_i})} = (\mu_{Y_i}^+)^{\mathcal Q_{Y_i}}$ and thus
    $$\mathcal P(\mu_X)\cap \pi^{-1}({\mathcal S_{Y_i}}) = \mathcal P(\mu_X)\cap {\mathcal S_{Y_i X}}.$$
\end{proof}

\begin{corollary}\label{C6-8}
    $(\mu^+_X)^{W_X} = (\mu^+_X)^{\mathcal M_X}$, and $W_X\upharpoonright (\mu^{++})^{W_X}\unlhd \mathcal M_X$.
    Especially, for any $\xi<o^{W_X}(\mu_X)$, $U(W_X,\mu_X,\xi) = U(\mathcal M_X,\mu_X,\xi)$.
\end{corollary}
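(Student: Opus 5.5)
The first clause, $(\mu_X^+)^{W_X} = (\mu_X^+)^{\mathcal M_X}$, should follow directly from Corollary~\ref{C6-7}. Both $W_X$ and $\mathcal M_X$ are fine-structural extender models, so each computes $\mu_X^+$ as the supremum of the order types of the wellorders of $\mu_X$ that it contains, and these are coded by subsets of $\mu_X$. By Corollary~\ref{C6-7} the two models have exactly the same subsets of $\mu_X$, so they compute the same successor. I would also use that $\mathcal M_X$ and $W_X$ agree strictly below $\mu_X$; this is the definition of $\eta(\mu_X)=\theta_X$ from the coiteration.

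For the second clause I plan to compare the two sequences up to $\lambda = (\mu_X^+)^{W_X}$ and then beyond it, up to $(\mu_X^{++})^{W_X}$. Let $\gamma$ be the least index at which the extender sequences of $W_X$ and $\mathcal M_X$ disagree. By the coiteration, $\gamma$ is at least the length of the extenders used to reach $\mathcal M_X=\mathcal M^{\mathcal T_X}_{\theta_X}$. The extender used at stage $\theta_X$ on the $\mathcal M_X$ side, if any, has index greater than $\mu_X$ and is a cardinal in later models. I would argue as follows.
\begin{itemize}
\item Initial segments $\mathcal N \lhd W_X$ with $\rho_\omega(\mathcal N)=\mu_X$ are sound mice projecting to $\mu_X$. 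Their first-order theories are coded by subsets of $\mu_X$, so by Corollary~\ref{C6-7} these codes lie in $\mathcal M_X$.
\item By the standard comparison/condensation argument for sound mice projecting to $\mu_X$ that agree below $\mu_X$, each such $\mathcal N$ is an initial segment of $\mathcal M_X$, and symmetrically.
\end{itemize}
Hence $W_X\upharpoonright\lambda=\mathcal M_X\upharpoonright\lambda$.

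The step past $\lambda$ is where I expect the real work. The disagreement index $\gamma$ must be at least $(\mu_X^{++})^{W_X}$. The coiteration of $(W,W_X)$ uses no extender on the $\overline W$ side (Requirement~1), and on the $\mathcal T_X$ side it stops at $\theta_X$ precisely because agreement below $\mu_X$ is reached. So $\gamma$ is the index of the next extender the comparison would use. If that extender were on the $W_X$ side with index below $(\mu_X^{++})^{W_X}$, it would have critical point at least $\mu_X$ (and equal to $\mu_X$ if total), and its length would exceed $\mu_X$. I would then apply Corollary~\ref{C6-7} to the ultrafilter it induces on $\mathcal P(\mu_X)\cap W_X=\mathcal P(\mu_X)\cap\mathcal M_X$. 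To control this, I would try an argument parallel to Lemma~\ref{L6-6}: show that the relevant measure, viewed as a subset of $\mathcal P(\mu_X)$, is definable in $\mathcal S_{Y_iX}$ over $\mu_X^X{}_k\cup$ parameters. I would then transfer it through $\mathcal S_{Y_iX}$ and $\pi_X^{-1}(\mathcal S_{Y_i})$ as in Case~2 of Corollary~\ref{C6-7}, yielding the same extender in $\mathcal M_X$ with the same index.

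The final sentence should then be immediate. Each order-$\xi$ measure on $\mu_X$ in $W_X$ is equivalent to a total extender indexed below $(\mu_X^{++})^{W_X}$, and these extenders coincide by the agreement just proved. Totality over $\mathcal P(\mu_X)$ is the same in both models by Corollary~\ref{C6-7}, and the Mitchell order is computed inside $\mathcal P(\mathcal P(\mu_X))$, which is also shared. Therefore $U(W_X,\mu_X,\xi)=U(\mathcal M_X,\mu_X,\xi)$ for all $\xi<o^{W_X}(\mu_X)$.
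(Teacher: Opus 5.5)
Your first clause is correct: by Corollary~\ref{C6-7} the two models have the same subsets of $\mu_X$, so they compute the same $\mu_X^+$. Your level-by-level argument for agreement up to $(\mu_X^+)^{W_X}$ is also fine, though more than is needed.

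The gap is the step past $(\mu_X^+)^{W_X}$. You only treat a disagreement at which $W_X$ has an extender that $\mathcal M_X$ lacks. That case is vacuous: such an extender would be used on the $W_X$ side of the coiteration, contradicting the triviality of $\mathcal U_X$ (Requirement~1). The case that must be excluded is the opposite one, $E^{\mathcal M_X}_\gamma\neq\emptyset=E^{W_X}_\gamma$ for some $\gamma<(\mu_X^{++})^{W_X}$, for instance an extra measure on $\mu_X$ in $\mathcal M_X$. Your proposal never addresses it. The suggested transfer through $\mathcal S_{Y_iX}$ is not carried out. Even if it put a measure of $W_X$ into $\mathcal M_X$ as a set, that would not show it sits on the $\mathcal M_X$-sequence at the same index. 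Your last paragraph also relies on $\mathcal P(\mathcal P(\mu_X))$ being shared, which is not established. What you actually need is that Mitchell order coincides with index order on two agreeing sequences.

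The missing idea is a standard property of comparisons, and this is where Corollary~\ref{C6-7} really enters. Let $\gamma$ be the least disagreement between $\mathcal M_X=\mathcal M^{\mathcal T_X}_{\theta_X}$ and $W_X$; if $\mathcal T_X$ ends at $\theta_X$ there is nothing to prove.
\begin{itemize}
\item Since $\mathcal U_X$ is trivial, $E^{\mathcal T_X}_{\theta_X}=E^{\mathcal M_X}_\gamma$. So $\gamma$ is the length of an extender used in $\mathcal T_X$, and hence a cardinal in every later model, in particular in the last model of $\mathcal T_X$, which has $W_X$ as an initial segment.
\item $\gamma>\mu_X$, so $\gamma\ge(\mu_X^+)^{W_X}$.
\item $\gamma\neq(\mu_X^+)^{\mathcal M_X}$. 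If $\gamma<\mathrm{Ord}\cap\mathcal M_X$, then $\gamma$ indexes an extender of $\mathcal M_X$ and so is not an $\mathcal M_X$-cardinal. If $\gamma=\mathrm{Ord}\cap\mathcal M_X$, then $\gamma$ lies above the index of a measure on $\mu_X$ in $\mathcal M_X$ (Lemma~\ref{L2-11}).
\item By the first clause, $(\mu_X^+)^{\mathcal M_X}=(\mu_X^+)^{W_X}$. Hence $\gamma>(\mu_X^+)^{W_X}$, and since $\gamma$ is a $W_X$-cardinal, $\gamma\ge(\mu_X^{++})^{W_X}$.
\end{itemize}
Without Corollary~\ref{C6-7}, the case $\gamma=(\mu_X^+)^{W_X}$ could not be excluded. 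The argument gives $W_X\upharpoonright(\mu_X^{++})^{W_X}\unlhd\mathcal M_X$ directly. The measure clause then follows, because all measures on $\mu_X$ in $W_X$ are indexed below $(\mu_X^{++})^{W_X}$ and appear in the same order on both sequences.
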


We have now finished all the prerequisites. 
Our next lemma will tell us that $C_X\in X$.

\begin{lemma}\label{L6-9}
     $C_X =^* C_{Y_j}$ for unboundedly many $j<\omega_1$.
\end{lemma}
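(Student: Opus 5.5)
The plan is to show that, for unboundedly many $j<\omega_1$, each of $\overline C_X$ and $\pi_X^{-1}[C_{Y_j}]$ is almost contained in a set in $W_X$ that the other one also generates. Two facts make this feasible. First, both are $\omega$-sequences generating the order zero measure on $\mu_X$. Second, in the present situation ($o^K(\nu)=1$, Lemma \ref{L6-2}) a critical sequence is, modulo a finite set, \emph{definable} as the set of closure points of a hull. Almost containment in both directions then gives $C_X=^*C_{Y_j}$.

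\emph{Step 1 (the hull club).} Recall that $\zeta^X_0=\zeta_X$ and that the tail of $\mathcal T_X$ on $[\zeta_X,\theta_X)$ is a linear iteration by order zero measures. For this I use Lemma \ref{L6-2} and the analogue of Claim \ref{c2-7-1}, or just the Case~1 computation of Lemma \ref{L3-4}, which only needs the extenders on the branch to be order zero. I split into the same four cases as in Lemma \ref{L3-4}, according to whether $\mathcal Q_X$ is a set mouse or a weasel and whether the protomouse case occurs. In each case I form the set
$$A_X=\{\alpha<\mu_X\mid \alpha=\mu_X\cap \mathrm{Hull}^{\mathcal M_X}(\alpha\cup p)\},$$
where $\mathrm{Hull}$ is the appropriate $\Sigma_{m+1}$ or thick-class hull and $p$ is the appropriate parameter. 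The calculations in Lemma \ref{L3-4} show that $\overline C_X\subseteq^* A_X$ and that every $d$ lying strictly between consecutive members of $\overline C_X$ is not in $A_X$. So $A_X=^*\overline C_X$ above $\mu^X_{k_0}$. Since $A_X\in\mathcal M_X$ and $A_X\subseteq\mu_X$, Corollary \ref{C6-7} gives $A_X\in W_X$. In the weasel cases I replace the hull by the closure points of the function $h$ built from the extender $G$ of Lemma \ref{L2-6} or \ref{L2-7}, exactly as in Lemma \ref{L3-4}. Doing the same for $Y_j$ inside $\mathcal S_{Y_j}\lhd W$, or inside $\mathrm{Ult}(W,G_j)$, gives a club $A_{Y_j}\in W$ with $A_{Y_j}\cap\nu=^*C_{Y_j}$. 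Moreover $A_{Y_j}\in X$, since $Y_j\in X$.

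\emph{Step 2 (two almost inclusions).} By Corollary \ref{C6-8}, $U(W_X,\mu_X,0)=U(\mathcal M_X,\mu_X,0)$, and $\overline C_X$ satisfies the Mathias condition for it. Thus $\overline C_X\subseteq^*\pi_X^{-1}(A_{Y_j})$, and hence $C_X\subseteq^* C_{Y_j}$, provided $\pi_X^{-1}(A_{Y_j})\in U(W_X,\mu_X,0)$. This holds because $A_{Y_j}$ is a club of $W$ on which the order zero measure $U(W,\nu,0)$ concentrates, so elementarity of $\pi_X$ applies. Conversely, $\pi_X^{-1}[C_{Y_j}]$ generates $U(\mathcal S_{Y_jX},\mu_X,0)$. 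By Lemma \ref{L6-6}, for unboundedly many $j$ this measure agrees with $U(\mathcal M_X,\mu_X,0)$ on the set $A_X$, which lies in the relevant hull for large $k$. So $\pi_X^{-1}[C_{Y_j}]\subseteq^* A_X=^*\overline C_X$, that is, $C_{Y_j}\subseteq^* C_X$. The two inclusions give $C_X=^*C_{Y_j}$ for unboundedly many $j$.

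\emph{Main obstacle.} The hardest step will be Step 1 in the protomouse and weasel cases, namely verifying that the transferred hull club really equals the critical sequence modulo finite. This needs the enhanced equalities (\ref{E-7}) and (\ref{E-8}), via Lemma \ref{L6-5}, to hold uniformly past some $k_0$. A secondary difficulty is choosing the $j$'s so that the $k_0$ of Lemma \ref{L6-6} works simultaneously for $A_X$. I would handle this with the same pigeonhole over $k<\omega$ on uncountable sets $A^1_k,A^2_k\subseteq\omega_1$ used in Claim \ref{c6-6-1}.
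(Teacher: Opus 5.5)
\textbf{There is a genuine gap in the proposal.} Step~1 asserts both that $A_X=^*\overline C_X$ and that $A_X\in\mathcal M_X$, hence $A_X\in W_X$ by Corollary~\ref{C6-7}. These cannot both hold. Here $\overline C_X$ has order type $\omega$ and is cofinal in $\mu_X$, while $\mu_X$ is measurable, hence regular, in $\mathcal M_X$ and in $W_X$. The first claim is the correct one. The closure-point set of a $\Sigma_{m+1}$ (or thick-class) hull of $\mathcal M_X$, with $\rho_{m+1}(\mathcal M_X)\le\mu_X$, is simply not an element of $\mathcal M_X$. For the same reason, no $A_{Y_j}\in W$ can satisfy $A_{Y_j}\cap\nu=^*C_{Y_j}$, since $\nu$ is regular in $W$. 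This is the basic fact that no ground-model set almost equals a Prikry sequence.

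What the proof of Lemma~\ref{L3-4} actually produces is different. It gives a club $A\in W$, defined over $\mathcal S_X\lhd W$ or from $G\in W$, which has order type $\nu$ and whose trace on $\mathrm{ran}(\pi_X)$ is almost $C_X$. With that correction, your inclusion $C_X\subseteq^* C_{Y_j}$ can be rescued, using $C_X\subseteq Y_j$ from the construction. The reverse inclusion, which is the real content of the lemma, cannot be rescued this way. Lemma~\ref{L6-6} applies only to sets in $\mathcal P(\mu_X)\cap\mathcal M_X$, and no single such set separates $\overline C_X$ from the other points of $\pi_X^{-1}[C_{Y_j}]$.

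\textbf{The missing idea is to argue pointwise and then diagonalize over countably many points.} This is what the paper does.
\begin{enumerate}
\item Fix $k$ large for Lemmas~\ref{L6-5} and~\ref{L6-6}, such that for unboundedly many $i$ we have $C_X\setminus\pi_X(\mu^X_k)\subseteq C_{Y_i}$ and $C_{Y_i}\setminus\pi_X(\mu^X_k)\subseteq C_{Y_0}$. Then only the countably many points of $C_{Y_0}$ above $\pi_X(\mu^X_k)$ matter.
\item Take such a point $\pi_X(\beta)\notin C_X$, and let $\mu^X_\ell<\beta<\mu^X_{\ell+1}$ be consecutive points of $\overline C_X$. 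Pick a single witness $f\in\mathcal M^X_\ell$ with $\beta<i^X_\ell(f)(\mu^X_\ell)<\mu^X_{\ell+1}$. This exists because $\mu^X_{\ell+1}$ is the supremum of the hull of $(\mu^X_\ell+1)\cup p$.
\item The one set $i^X_\ell(f)\in\mathcal P(\mu_X)\cap\mathcal M_X$ lies in the hull of $\mu^X_\ell\cup p$. So Lemma~\ref{L6-6} puts it into the corresponding hull of $\mathcal S_{Y_iX}$ for unboundedly many $i$.
\item For those $i$, $\beta$ is not a closure point of that hull. Since the points of $\pi_X^{-1}[C_{Y_i}]=\Pi_{Y_iX}[\overline C_{Y_i}]$ are closure points, $\pi_X(\beta)\notin C_{Y_i}$.
\item By Lemma~\ref{L6-4} ($C_{Y_j}\subseteq C_{Y_i}$ for $i<j$), this exclusion persists for all larger indices. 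Intersecting countably many final segments of $\omega_1$ yields unboundedly many $j$ with $C_{Y_j}\setminus\pi_X(\mu^X_k)=C_X\setminus\pi_X(\mu^X_k)$.
\end{enumerate}
Your one-set strategy was trying to replace this countable diagonalization, and it cannot do so.
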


\begin{proof}
    We may assume that for every $i<\omega_1$, ${\mathcal S_{Y_i}}$ are set mice. 
    Since we have shown quite a lot of the proofs using this method, we will leave the weasel case for the readers.
    Following the arguments we have proved, find a $k<\omega$ such that 
    \begin{itemize}
        \item $k$ is large enough in the sense of Lemma \ref{L6-5} and \ref{L6-6}, and
        \item For unboundedly many $i<\omega_1$, 
        $C_X \setminus \pi_X(\mu^X_{k})\subseteq C_{Y_i}$ and $C_{Y_i} \setminus \pi_X(\mu^X_{k})\subseteq C_{Y_0}$.
    \end{itemize} 

The following claim says that, for any large enough $\alpha\in C_{Y_0}$, if $\alpha\not\in C_X$, then there exists unboundedly many $i<\omega_1$ such that $\alpha\not\in C_{Y_i}$. 

\begin{claim}\label{c6-9-1}
    For each $n<\omega$, if $\pi_{Y_0}(\mu^{Y_0}_n)>\pi_X(\mu_k^X)$ and $\pi_{Y_0}(\mu^{Y_0}_n)\not\in C_X$, then for unboundedly many $i<\omega_1$, $\pi_{Y_0}(\mu^{Y_0}_n)\not\in C_{Y_i X}$.
\end{claim}

\begin{proof}[Proof of Claim.]
    Fix such $\mu_n^{Y_0}$.
    Let $\ell<\omega$ be such that 
    $$\pi_X(\mu^X_{\ell})<\pi_{Y_0}(\mu^{Y_0}_{n})<\pi_X(\mu^X_{+1}).$$
    Because
    $$\mu^X_{\ell+1} = \sup\left(\mu_X\cap \mathrm{Hull}_{m+1}^{{\mathcal M_X}}((\mu^X_\ell+1)\cup p_{m+1}({\mathcal M_X}))\right),$$
    we can find $\alpha$ such that $\pi_{Y_0X}(\mu^{Y_0}_{n})<\alpha<\mu^X_{\ell+1}$ and a function $f\in \mathcal M^X_\ell$ with $i^X_\ell(f)(\mu^X_{\ell}) = \alpha$. 
    $i^X_\ell(f)$ can be seen as a subset of $\mu_X$.
    By Lemma \ref{L6-5}, there exists unboundedly many $i<\omega_1$ such that
    $$i^X_\ell(f)\in \mathrm{Hull}^{\mathcal S_{Y_i}}_{n+1}(\mu^X_\ell\cup p_{n+1}(\mathcal S_{Y_i})).$$
    Thus for unboundedly many $i<\omega_1$,
    $$\pi_{Y_0 X}(\mu^{Y_0}_{n})\neq \mu_X\cap\mathrm{Hull}^{\mathcal S_{Y_i}}_{n+1}(\pi_{Y_0 X}(\mu^{Y_0}_{n})\cup p_{n+1}(\mathcal S_{Y_i})).$$
    Therefore, $\pi_{Y_0 X}(\mu^{Y_0}_{n})\not\in C_{Y_i}$. 
\end{proof}
    
    Let $j<\omega_1$ be such that, for all $n<\omega$,
    $$\pi_{Y_0}(\mu^{Y_0}_{n})\not\in C_X \setminus \pi_X(\mu^X_{k}) \implies \pi_{Y_0}(\mu^{Y_0}_{n})\not\in  C_{Y_j} \setminus \pi_X(\mu^{X}_{k}).$$
    The above claim implies that there exists unbounded many such $j<\omega_1$.
    Because $C_X\subseteq C_{Y_j}\setminus \pi_X(\mu^X_{k})$, $C_{Y_j}\setminus \pi_X(\mu^X_{k}) = C_X\setminus \pi_X(\mu^X_k)$. 
\end{proof}

We can thus show that $C_X$ is the maximal Prikry sequence for $U(W,\nu,0)$.
Pick $j<\omega_1$ as in the above lemma. 
Since $Y_j\in X$, $C_{Y_j}\in X$, and thus $C_X\in X$.
From the iteration process, we know that $\overline C_X$ is a Prikry sequence for $U(\mathcal M_X,\mu_X,0)$. 
By results in \cite{MS2}, $U(W_X,\mu,0)$ exists, and thus by Corollary \ref{C6-8}, $U(\mathcal M_X,\mu_X,0) = U(W_X,\mu_X,0)$. 
Elementarity of $\pi_X$ implies that $C_X$ is a Prikry sequence over $U(W,\nu,0)$. 
The maximality of $C_X$ relies on the following lemma, which follows from a similar argument as Lemma \ref{L3-4}.
\begin{lemma}\label{L6-10}
    Let $\mathbf{d}\subseteq \nu\cap X$ with order type $\omega$.
    Then $\mathbf{d}\subseteq^* C_X$ if and only if for any club subset $A\in \mathcal P(\nu)\cap W$, $\mathbf{d}\subseteq^* A$. 
\end{lemma}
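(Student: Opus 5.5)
We follow the proof of Lemma \ref{L3-4} almost verbatim, now for the structure $X=Y_\ell$ built above. Its critical sequence satisfies $\mathrm{type}(C_X)=\omega$, $I_X=\{\eta^X_k\mid k<\omega\}$ with $E^{\mathcal T_X}_\eta=E(\mathcal M^{\mathcal T_X}_\eta,\mu^X_\eta,0)$ along $I_X$ (Lemma \ref{L6-2}), and $C_X\in X$ (Lemma \ref{L6-9}).

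\emph{Forward direction.} By Corollary \ref{C6-8} and elementarity, $C_X$ is a Prikry sequence for $U(W,\nu,0)$. Every club $A\in\mathcal P(\nu)\cap W$ belongs to $U(W,\nu,0)$ by normality, so $C_X\subseteq^* A$. Hence $\mathbf d\subseteq^* C_X$ gives $\mathbf d\subseteq^* A$.

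\emph{Reverse direction.} Let $\mathbf d\subseteq\nu\cap X$ have order type $\omega$, and assume without loss of generality that $\mathbf d\cap C_X=\emptyset$ and $\min\mathbf d>\min C_X$. We build a club $A\in W$ with $C_X\subseteq^*A$ and $A\cap\mathbf d$ finite. We split into the four cases of List B: $\mathcal Q_X$ is a set mouse or a weasel, and $\mathcal Q_X$ equals $\mathcal M_X$ or not.

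\begin{itemize}
\item In the set mouse cases, put $q=\Pi_X(\overline q)$, where $\overline q=p_{n+1}(\mathcal Q_X)\setminus\mu_X$ (or the parameter from Lemma \ref{L2-4} in the protomouse case). Let
$$A=\{\alpha<\nu\mid \alpha=\nu\cap\mathrm{Hull}^{\mathcal S_X}_{n+1}(\alpha\cup q)\}.$$
Lemma \ref{L2-5} gives $\mathcal S_X\lhd W$, so $A\in W$, and $A$ is closed.
\item In the weasel cases, take the short extender $G\in W$ from Lemma \ref{L2-6} or Lemma \ref{L2-7}. Let $A$ be the set of closure points of the function $h$ defined from $j_G$, together with $c(\mathcal S_X)$ in the protomouse case.
\end{itemize}

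The unboundedness argument is unchanged: soundness of $\mathcal M^{\mathcal T_X}_\eta$ above $\mu^X_\eta$, together with $i^{\mathcal T_X}_{\eta,\theta_X}(\mu^X_\eta)=\mu_X$, gives
$$\mu^X_\eta=\mu_X\cap\mathrm{Hull}^{\mathcal M_X}(\mu^X_\eta\cup\cdots).$$
In the protomouse cases we transfer this to $\mathcal Q_X$ via Lemma \ref{L6-5}, replacing Lemma \ref{L2-4}. We then push forward by $\Pi_X$, and in the weasel cases through $\iota_1,\iota_2$. This shows $C_X\subseteq^* A$.

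For $\overline d\in\pi_X^{-1}[\mathbf d]$ with $\mu^X_k<\overline d<\mu^X_{k+1}$, Claim \ref{c2-7-1} (whose proof goes through here since $X\in\mathcal F$) gives $\eta^X_{k+1}=\eta^X_k+1$. So $\mu^X_{k+1}$ is the sup of the hull of $(\mu^X_k+1)\cup p$. Hence $\overline d$ is not a hull point, and pushing forward shows $\pi_X(\overline d)\notin A$.

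\emph{Main obstacle.} The difficulty is the protomouse transfer between $\mathcal M_X$ and $\mathcal Q_X$. In Section 3 this came from Lemma \ref{L2-4} applied to a single ordinal $\xi$. Here we need the hull equality at every $\mu^X_k$ for $k>k_0$, which is exactly what Lemma \ref{L6-5} supplies. We must also check that $\mathcal S_X$ behaves as in Section 3 even though ${}^\omega X\not\subseteq X$. Requirements 1 and 2 still hold because $X\in\mathcal F$, so Lemmas \ref{L2-5}–\ref{L2-7} apply unchanged.

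Finally, the membership $\mathbf d\in X$ is never used in the construction; we only need $\mathbf d\subseteq X$ so that $\pi_X^{-1}[\mathbf d]$ makes sense. So the argument works for arbitrary such $\mathbf d$.
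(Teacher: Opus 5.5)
Your plan is the paper's own. The paper proves Lemma~\ref{L6-10} by noting that the four-case argument of Lemma~\ref{L3-4} never used countable closure: hull closure points in $\mathcal S_X\lhd W$ in the set mouse cases, and closure points of $h$ built from the short extender $G\in W$ in the weasel cases. Your forward direction and case split reproduce that.

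However, one step fails as you justify it. To show that $\overline d\in(\mu^X_k,\mu^X_{k+1})$ is not a closure point, you invoke Claim~\ref{c2-7-1} to get $\eta^X_{k+1}=\eta^X_k+1$. That claim belongs to the covering half (Lemma~\ref{L3-6}). Its proof reaches a contradiction only from the extra hypothesis that the measurable cardinals of $K$ are bounded in $\nu$: if $\eta+1\notin I$, then $\overline W$ has unboundedly many measurables below $\mu$. Membership $X\in\mathcal F$ plays no role in it. Lemma~\ref{L6-10} serves the maximality half of Theorem~\ref{T6-1}, which has no such hypothesis. When measurables are unbounded the claim can genuinely fail: it would force a linked tail of $[0,\theta)_{\mathcal T}$, and the discussion after Theorem~\ref{proposition} describes an $o^K(\nu)=1$ situation with no linked tail. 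In that case your non-closure step is unsupported.

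The repair is the computation from Case~1 of Lemma~\ref{L3-4}, which needs no bound on measurables. Suppose $\eta<\xi$ are consecutive in $I_X$ and $\eta+1<_{\mathcal T}\xi$. Then:
\begin{itemize}
\item $i^{\mathcal T}_{\eta,\eta+1}(\mu_\eta)$ is the supremum of the ordinals below it in $\mathrm{Hull}^{\mathcal M^{\mathcal T}_{\eta+1}}_{n+1}((\mu_\eta+1)\cup p)$;
\item $\mathrm{crit}(i^{\mathcal T}_{\eta+1,\xi})>\mu_\eta$;
\item $\sup i^{\mathcal T}_{\eta+1,\xi}[i^{\mathcal T}_{\eta,\eta+1}(\mu_\eta)]=\mu_\xi$.
\end{itemize}
So the hull of $(\mu_\eta+1)\cup p$ is still cofinal in $\mu_\xi$.

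Relatedly, your ``main obstacle'' misreads Lemma~\ref{L2-4}. Its conditions on $\xi$ are all ``large enough'' conditions, so it gives the $\mathcal M_X$/$\mathcal Q_X$ hull equality at every sufficiently large ordinal below $\mu_X$, for any $X$ satisfying Requirements~1 and~2. In the protomouse cases you need that equality not only at the $\mu^X_k$ but also at $\overline d$ (or at $\mu^X_k+1$). Only then can you carry the non-closure of $\overline d$ from $\mathcal M_X$ to $\mathcal Q_X$ and then to $\mathcal S_X$. Lemma~\ref{L6-5} as stated speaks only about the $\mu^X_k$, so use Lemma~\ref{L2-4} there, as the paper does in Cases~2 and~4 of Lemma~\ref{L3-4}.
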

Readers may check that our proof for Lemma \ref{L3-4} does not rely on the assumption that $|\nu|$ is countably closed, and all of the lemmas and corollaries used in that proof is showen for $X$. 

We will show the second half of Theorem \ref{T6-1} at the very end of this section.
As for now, recall that we assumed $\mathrm{type}(C_X) = \omega$ in the middle of the argument.
We need to show that this auxilliary assumption is not necessary.
To do this, we need to go back to the construction of $X$.
Recall that we initially defined an internally approachable sequence $\langle Y_j\mid j<\omega_2\rangle$ and a stationary subset $S''\subseteq\omega_2$.
In the worst case, for each $j<\omega_2$ such that $Y_j\in\mathcal F$, $\mathrm{type}(C_{Y_j})>\omega$.
We modify the definition of $S''$ as follows.

Consider the case where $\langle Y_j\mid j<\omega_2\rangle$ is an internal approachable sequence, and let $\mathcal C\subseteq\omega_2$ be the club subset defined in Theorem \ref{T2-3}.
Let $\mathcal C\subseteq \omega_2$ be the club subset defined in Theorem \ref{T2-3}.
Let $S$ be the collection of all $j<\omega_2$ such that $j\in \mathcal C$, $j = \mathrm{type}(j\cap \mathcal C)$, $\mathrm{cf}(j) = \omega_1$.
We will keep our simplified notations such as $i^{Y_j}_k$, $\mu^{Y_j}_k$ and $\mathcal M^{Y_j}_k$, but for $j\in S$, $k\leq \omega$.
Let $f:S\to H_{\Omega^+}$ be a function such that $C_{Y_j}\cap \omega\subseteq f(j)$ and $f(j)\cup \{f(j)\}\subseteq Y_j$.
By Lemma \ref{L6-3}, there exists a stationary $S'\subseteq S$ such that the value of $f(j)$ together with $\pi_{Y_j}(\mu^{Y_j}_\omega)$ is fixed for all $j\in S'$. 
Find a stationary subset $S''\subseteq S'$ such that the answer to each of the following questions is fixed for $j\in S''$.

(List C)
\begin{itemize}
    \item Is $\mathcal{M}^{Y_j}_\omega$ a set mouse or a weasel?   
    \item What is the value of $m_{Y_i}\left(\mu^{Y_j}_\omega\right)$?
    \item Is $\mathcal{Q}^{Y_j}_\omega$ defined by the protomouse case?
    \item Is $\mathcal{Q}^{Y_j}_\omega$ a set mouse or a weasel?
    \item What is the value of $n_{Y_j}\left(\mu^{Y_j}_\omega\right)$?
\end{itemize}
Let the shared values be $m = m_{Y_i}\left(\mu^{Y_i}_\omega\right)$ and $n = n_{Y_i}\left(\mu^{Y_i}_\omega\right)$. 
Let $\ell\in S''$ with $\ell = \mathrm{type}(\ell\cap S'')$.
Together with $\ell$, we fix a cofinal subsequence 
$\langle j_i\mid i<\omega_1\rangle$ of $\ell\cap S''$. 
We will use the notation $\langle Y_i\mid i<\omega_1\rangle$ for what we used to refer as $\langle Y_{j_i}\mid i<\omega_1\rangle$.

The trick is to ``shift our attention" from $\nu$ to $\pi_X(\mu^{X}_\omega)$. 
By relabeling the parameters and notations, it is straightforward to establish the following results, following arguments analogous to those presented earlier.
\begin{lemma}\label{L6-11}
    Suppose that $\mathcal M^X_\omega$ is a set mouse which is $m+1$-sound above $\mu_X$. 
    Then there exists some $k_0<\omega$ such that for all $k>k_0$, if $A\in \mathcal P(\mu_X)\cap \mathcal M^X_\omega$ such that
    $$A\in \mathrm{Hull}^{{\mathcal M^X_\omega}}_{m+1}(\mu^X_k\cup p_{m+1}({\mathcal M^X_\omega})),$$
    then there exists unboundedly many $i<\omega_1$ such that
    \begin{itemize}
        \item[1.] If ${\mathcal S^{Y_i X}_\omega}$ is a set mouse which is $n+1$-sound above $\mu_X$, then 
        $$A\in \mathrm{Hull}^{{\mathcal S^{Y_i X}_\omega}}_{n+1}(\mu^X_k\cup p_{n+1}({\mathcal S^{Y_i X}_\omega})).$$
        \item[2.] If ${\mathcal S^{Y_i X}_\omega}$ is a weasel and $\Gamma$ is a thick class, then 
        $$A\in \mathrm{Hull}^{{\mathcal S^{Y_i X}_\omega}}(\mu^X_k\cup c({\mathcal S^{Y_i X}_\omega})\cup \Gamma).$$
    \end{itemize}
    Suppose that $\mathcal M^X_\omega$ is a weasel.
    Then there exists some $k_0<\omega$ such that for all $k>k_0$, if $A\in \mathcal P(\mu_X)\cap \mathcal M^X_\omega$ such that
    $$A\in \mathrm{Hull}^{{\mathcal M_X}}(\mu^X_k\cup\Gamma),$$
    then there exists unboundedly many $i<\omega_1$ such that
    $$A\in \mathrm{Hull}^{{\mathcal S^{Y_i X}_\omega}}(\mu^X_k\cup \Gamma).$$
\end{lemma}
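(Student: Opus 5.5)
The plan is to transcribe the proof of Lemma \ref{L6-6} with $\mu_X$, $\mathcal M_X$, $i^X_k$ and $\mathcal S_{Y_iX}$ replaced by $\mu^X_\omega$, $\mathcal M^X_\omega$, the tree embeddings $i^X_{k,\omega}:\mathcal M^X_k\to\mathcal M^X_\omega$ and $\mathcal S^{Y_iX}_\omega=\mathrm{Ult}(\mathcal Q^{Y_i}_\omega,\pi_{Y_iX},\sup\pi_{Y_iX}[\mu^{Y_i}_\omega])$. The choice of $S''$ through List C puts the relevant structures in a uniform position. Since $f(j)\supseteq C_{Y_j}\cap\omega$ (the first $\omega$ critical points) and $\pi_{Y_j}(\mu^{Y_j}_\omega)$ is constant on $S'$, the substitute for Lemma \ref{L6-4} is that the initial $\omega$-segments of the critical sequences almost agree. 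Concretely, $\pi_X(\mu^X_k)\in C_{Y_i}$ for almost all $k$, which follows by the argument in the appendix for Lemma \ref{L6-4}. I also need analogues of Lemma \ref{L2-9} and Lemma \ref{L2-10} for $\mathcal S^{Y_iX}_\omega$. These describe it either as an initial segment of $\mathcal M^X_\omega$, or as $\mathrm{Ult}_n(\mathcal P^X_\lambda,E)$, or, in the weasel case, as equivalent up to $\pi_X^{-1}(\Omega_0)$ to $\mathrm{Ult}(\mathcal P^X_\lambda,G)$ with $G\in\mathcal M^X_\omega$. They come from comparing the phalanx $\overrightarrow{\mathcal P_X}^\frown\mathcal S^{Y_iX}_\omega$, with the exchange ordinal now $\mu^X_\omega$, against $\overrightarrow{\mathcal P_X}$. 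Iterability of that phalanx is proved as in Lemma \ref{L2-8}.

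With these in hand, I would first prove the combinatorial claim analogous to Claim \ref{c6-6-1}. It gives a $k_0$ such that for each $k>k_0$ there are unboundedly many $i<\omega_1$ satisfying three conditions: $\pi_X(\mu^X_k)\in C_{Y_i}$; the relevant parameters $\mathcal S^{Y_iX}_\omega$ (or $E_i$ with $\lambda_i<\mu^X_k$, or $G_i$ with $\mathrm{crit}(G_i)<\mu^X_k$) together with the standard parameter or class parameter lie in $\mathrm{ran}(i^X_{k,\omega})$; and the preimage critical point $\mu^{Y_i}_\ell$ is large enough for Lemma \ref{L6-5}. Here $\mu^{Y_i}_\ell$ is the ordinal with $\Pi_{Y_iX}(\mu^{Y_i}_\ell)=\mu^X_k$. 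The claim is proved by the same pigeonhole argument on the increasing sets $A^1_k,A^2_k$, using that each parameter lies in the range of $i^X_{k,\omega}$ for all large $k$ and that $\omega_1$ is regular. I would then split into the same cases as before: $\mathcal Q=\mathcal M$ a set mouse; protomouse with set mice; and the weasel cases.

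The core calculation stays the same. Given $A\in\mathrm{Hull}^{\mathcal M^X_\omega}_{m+1}(\mu^X_k\cup p_{m+1}(\mathcal M^X_\omega))$, write $A=i^X_{k,\omega}(A')$ with $A'=A\cap\mu^X_k$, and pull back to $B'$ with $\Pi_{Y_iX}(B')=A'$. Use soundness of $\mathcal M^{Y_i}_\ell$ above $\mu^{Y_i}_\ell$, together with Lemma \ref{L6-5} in the protomouse case, to place $i^{Y_i}_{\ell,\omega}(B')$ in the appropriate hull of $\mathcal Q^{Y_i}_\omega$. Push forward by $\Pi$ to get $B$ in the hull of $\mathcal S^{Y_iX}_\omega$. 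Then condition 2 gives $B\in\mathrm{ran}(i^X_{k,\omega})$, so $B=i^X_{k,\omega}(B\cap\mu^X_k)=i^X_{k,\omega}(A')=A$. In the weasel case, $B$ is represented as $i_{G_i}(f)(a,c)$ with $f\in\mathcal M^X_\omega$ a bounded subset of $\mathrm{crit}(G_i)$, which again lies in $\mathrm{ran}(i^X_{k,\omega})$.

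I expect the main obstacle to be the analogue of Lemma \ref{L2-9} and Lemma \ref{L2-10} at $\mu^X_\omega$ rather than at $\mu_X$. This is because $\mu^X_\omega$ need not be a cardinal of $W_X$ in the same way, and $\mathcal M^X_\omega$ is an intermediate model of $\mathcal T_X$ rather than the model $\mathcal M^{\mathcal T_X}_{\eta(\mu_X)}$. I would first try to show that $\mathcal M^X_\omega$ agrees with $W_X$ below $\mu^X_\omega$ to the extent needed, so that the phalanx comparison runs verbatim with $\eta(\mu^X_\omega)$ replaced by the index of $\mathcal M^X_\omega$.
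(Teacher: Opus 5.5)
Your proposal follows the paper's own route: the paper proves this lemma only by ``shifting attention'' from $\nu$ to $\pi_X(\mu^X_\omega)$ and rerunning the proof of Lemma \ref{L6-6} with the notation relabeled, which is exactly the transcription you describe. The obstacle you anticipate is what that shift handles: $\mathcal M^X_\omega$ is exactly $\mathcal M^{\mathcal T_X}_{\eta(\mu^X_\omega)}$ for the same comparison $\mathcal T_X$, so the analogues of Lemmas \ref{L2-9} and \ref{L2-10} come from the same proofs with $\mu^X_\omega$ as the exchange ordinal.
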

Corollary \ref{C6-7} and \ref{C6-8} are trivially true in this case.
Using Lemma \ref{L6-11}, we have
\begin{lemma}\label{L6-12}
     $C_X\cap \omega =^* C_{Y_j}\cap\omega$ for unboundedly many $j<\omega_1$.
\end{lemma}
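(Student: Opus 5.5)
The plan is to transcribe the proof of Lemma~\ref{L6-9}, with $\nu$ replaced by $\pi_X(\mu^X_\omega)$. Here $\mu_X$ is replaced by $\mu^X_\omega$, $\mathcal M_X$ by $\mathcal M^X_\omega$, the tree embeddings $i^X_k:\mathcal M^X_k\to\mathcal M_X$ by $i^X_{k,\omega}:\mathcal M^X_k\to\mathcal M^X_\omega$, and $\mathcal S_{Y_iX}$ by $\mathcal S^{Y_iX}_\omega$. $C_X\cap\omega$ denotes the first $\omega$ elements $\{\pi_X(\mu^X_k)\mid k<\omega\}$ of the critical sequence.

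The construction of $S'$ fixes both $f(j)\supseteq C_{Y_j}\cap\omega$ and $\pi_{Y_j}(\mu^{Y_j}_\omega)$ on a stationary set, which yields two facts. First, all the $Y_i$ and $X$ share a common target ordinal $\nu_\omega=\pi_X(\mu^X_\omega)$. Second, $C_{Y_i}\cap\omega\subseteq Y_{i'}$ for all $i,i'$. The shared answers to List~C make the case split (set mouse or weasel, protomouse or not) uniform in $i$.

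First I would derive the analogue of $C_X\cap\omega\subseteq^* C_{Y_i}\cap\omega$, which is the relabeled Lemma~\ref{L6-4}. Then I would choose $k$ so that two things hold. The first is that $k$ exceeds the thresholds $k_0$ given by Lemma~\ref{L6-11} and by the relabeled Lemma~\ref{L6-5}. The second is that for unboundedly many $i<\omega_1$ we have
\[C_X\cap\omega\setminus\pi_X(\mu^X_k)\subseteq C_{Y_i}\cap\omega,\]
and that $C_{Y_i}\cap\omega\setminus\pi_X(\mu^X_k)\subseteq C_{Y_0}\cap\omega$. To get this, I would use the increasing-union argument of Claim~\ref{c6-6-1}: the sets $A_k$ of good $i$ increase with $k$ and their union is $\omega_1$, so some $A_k$ is uncountable.

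The core step is the analogue of Claim~\ref{c6-9-1}. Take $\alpha=\pi_{Y_0}(\mu^{Y_0}_n)$ above $\pi_X(\mu^X_k)$ and below $\nu_\omega$, with $\alpha\notin C_X\cap\omega$. I aim to show that $\alpha\notin C_{Y_i}$ for unboundedly many $i$. Place $\alpha$ strictly between $\pi_X(\mu^X_\ell)$ and $\pi_X(\mu^X_{\ell+1})$. In the set mouse case, soundness of $\mathcal M^X_\ell$ gives
\[\mu^X_{\ell+1}=\sup\bigl(\mu^X_\omega\cap\mathrm{Hull}^{\mathcal M^X_\omega}_{m+1}((\mu^X_\ell+1)\cup p_{m+1}(\mathcal M^X_\omega))\bigr).\]
Hence there is $f\in\mathcal M^X_\ell$ with $i^X_{\ell,\omega}(f)(\mu^X_\ell)>\pi_X^{-1}(\alpha)$. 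Regard $i^X_{\ell,\omega}(f)$ as a subset of $\mu^X_\omega$ lying in the appropriate hull over $\mu^X_{\ell}+1$. Lemma~\ref{L6-11} then places this set in $\mathrm{Hull}^{\mathcal S^{Y_iX}_\omega}_{n+1}(\mu^X_{\ell+1}\cup p_{n+1})$ for unboundedly many $i$. In the weasel case the analogous hull uses $c(\mathcal S^{Y_iX}_\omega)\cup\Gamma$. For each such $i$, the point $\pi_X^{-1}(\alpha)$ is therefore not a closure point of the corresponding hull in $\mathcal S^{Y_iX}_\omega$.

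Pulling back through $\Pi^{Y_iX}_\omega$ and the tree embeddings for $Y_i$, exactly as in the computation of Case~1 of Lemma~\ref{L6-6}, I would conclude that $\pi_{Y_i}^{-1}(\alpha)$ is not of the form $\mu^{Y_i}_{n'}$. This last step uses the characterization of the critical points $\mu^{Y_i}_{n'}$ as exactly those closure points that occur among the $\mu$'s of the linked tail. Hence $\alpha\notin C_{Y_i}\cap\omega$.

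Finally, $C_{Y_0}\cap\omega$ is countable and each bad $\alpha$ is excluded for unboundedly many $i$. Because of the monotonicity built into the choice of $k$, once $\alpha$ leaves $C_{Y_i}$ it stays out for all larger good $i$. So I can find unboundedly many $j$ avoiding all bad $\alpha$ simultaneously. For these $j$ the two tails of $C_{Y_j}\cap\omega$ and $C_X\cap\omega$ above $\pi_X(\mu^X_k)$ coincide.

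I expect the main obstacle to be this diagonalization across countably many $\alpha$ together with the protomouse and weasel cases. In the weasel case, the hull transfer through $G_i$ and the maps $e_i,f_i$ of Lemma~\ref{L2-10} must keep the witness function inside $\mathrm{ran}(i^X_{k,\omega})$. I would handle this with condition~2 of Case~3 of Lemma~\ref{L6-6}, relabeled to $\mathcal M^X_\omega$.
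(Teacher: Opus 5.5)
Your route is the paper's, which proves this lemma by rerunning the proof of Lemma~\ref{L6-9} with target $\pi_X(\mu^X_\omega)$ and Lemma~\ref{L6-11} in place of Lemma~\ref{L6-6}. However, your core step (the analogue of Claim~\ref{c6-9-1}) fails as written because of an index slip. You transfer $i^X_{\ell,\omega}(f)$ into $\mathrm{Hull}^{\mathcal S^{Y_iX}_\omega}_{n+1}(\mu^X_{\ell+1}\cup p_{n+1}(\mathcal S^{Y_iX}_\omega))$. But $\mu^X_{\ell+1}$ lies \emph{above} the bad point $\beta=\pi_X^{-1}(\alpha)$, so this gives no information about $\mathrm{Hull}^{\mathcal S^{Y_iX}_\omega}_{n+1}(\beta\cup p_{n+1}(\mathcal S^{Y_iX}_\omega))$: the set may need parameters from $[\beta,\mu^X_{\ell+1})$. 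Hence you cannot conclude that $\beta$ is not a closure point.

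The fix, which is what the paper does, has two parts. First, use the sharper fact $i^X_{\ell,\omega}(f)\in\mathrm{ran}(i^X_{\ell,\omega})=\mathrm{Hull}^{\mathcal M^X_\omega}_{m+1}(\mu^X_\ell\cup p_{m+1}(\mathcal M^X_\omega))$, which follows from soundness of $\mathcal M^X_\ell$ above $\mu^X_\ell$; mere membership in the hull over $\mu^X_\ell+1$ is not enough. Second, apply Lemma~\ref{L6-11} at index $\ell$, which is allowed since $\ell\ge k>k_0$. Then for unboundedly many $i$ the set lies in $\mathrm{Hull}^{\mathcal S^{Y_iX}_\omega}_{n+1}(\mu^X_\ell\cup p_{n+1}(\mathcal S^{Y_iX}_\omega))$. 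Evaluating it at $\mu^X_\ell<\beta$ puts an ordinal above $\beta$ into the hull over $\beta$, as required. The same correction is needed in your weasel case with $c(\mathcal S^{Y_iX}_\omega)\cup\Gamma$.

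A smaller point concerns ``once $\alpha$ leaves $C_{Y_i}$ it stays out.'' This does not follow from the choice of $k$, which only compares each $Y_i$ with $X$ and $Y_0$. It comes from the relabeled Lemma~\ref{L6-4}, namely $C_{Y_j}\cap\omega\subseteq C_{Y_i}\cap\omega$ for $i<j$, and it relies on the exact containment stated there. With that in hand, taking the supremum of the countably many exclusion stages is correct. It also makes explicit a diagonalization that the paper's proof of Lemma~\ref{L6-9} leaves implicit. Finally, for the last exclusion you only need the easy direction of your ``characterization'': each $\mu^{Y_i}_{n'}$ is a closure point of the relevant hull on the $Y_i$ side, and this property pushes forward to $\mathcal S^{Y_iX}_\omega$ as in Case~1 of Lemma~\ref{L3-4}.
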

Therefore, the usual argument will show that $\pi_X(\mu^X_\omega)$ is a measurable cardinal in $W$. 
However this is false, since Lemma \ref{L6-2} tells us that the coiteration process used $U(\mathcal M^{X}_\omega,\mu^X_\omega,0)$, and thus $\mu^X_\omega$ is not a measurable cardinal in $W_X$.
A contradiction.

The last part of this section is about the second part of Theorem \ref{T6-1}. 
In fact, the proof of Lemma \ref{L2-7} only relies on the fact that $\mathrm{type}(C_X) = \omega$ and $C_X\in X$, which we have already showed in the case where $|\nu|$ is not countably closed.
Using the same argument, we can show that

\begin{lemma}\label{L6-13}
    Assume that measurable cardinals in $K$ are bounded in $\nu$.
    Let $A\subseteq \nu$ with $|A|<|\nu|$. Then there exists
    $B\in K[C_0]$ such that $A\subseteq B$ and 
    $A\subseteq B$ and $|B|^{K[C_0]}<\nu$.
\end{lemma}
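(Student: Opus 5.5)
The plan is to rerun the proof of Lemma~\ref{L3-6} verbatim, now using the structure $X=\bigcup_{i<\omega_1}Y_i$ built in this section in place of a countably closed $X$. That earlier argument used countable closure in exactly two ways: that $\mathrm{type}(C_X)=\omega$, and that $C_X\in X$, so that $C_X$ agrees with the canonical maximal Prikry sequence $C_0$ modulo finite. Both are now available. Lemma~\ref{L6-12} and the contradiction following it give $\mathrm{type}(C_X)=\omega$. Lemma~\ref{L6-9} gives $C_X\in X$. Lemma~\ref{L6-10} together with the uniqueness remark in the introduction gives $C_X=^*C_0$.

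First, given $A\subseteq\nu$ with $|A|<|\nu|$, I would arrange $A\subseteq X$. I would do this by putting $A$ into $Y_0$ when choosing the internally approachable chain $\langle Y_j\mid j<\omega_2\rangle$; this is possible since $|A|<|\nu|$. The remaining requirements are unaffected:
\begin{itemize}
\item closure under the function $F$ of Lemma~\ref{L6-2};
\item the stationarity arguments via Lemma~\ref{L6-3}.
\end{itemize}
Then I would reprove Claim~\ref{c2-7-1}, using that the measurables of $K$ are bounded in $\nu$. This yields $I_X=\{\zeta_0+k\mid k<\omega\}$ and shows that $\mathcal M^{\mathcal T_X}_{\zeta_0}\to\mathcal M_X$ is the linear iteration by the images of $E(\mathcal M^{\mathcal T_X}_{\zeta_0},\mu_{\zeta_0},0)$, which Lemma~\ref{L6-2} makes available. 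The soundness computation then gives
\[
\mathcal M_X=\mathrm{Hull}^{\mathcal M_X}_{m+1}(\mu_{\zeta_0}\cup\overline C_X\cup[p_{m+1}(\mathcal M_X)\setminus\mu_X])
\]
in the set mouse case. In the weasel case the analogous hull with a thick class $\Gamma$ holds.

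Next I would split into the same four cases as in Lemma~\ref{L3-6}: whether $\mathcal Q_X$ is a set mouse or a weasel, and whether $\mathcal Q_X=\mathcal M_X$ or the protomouse case holds.
\begin{itemize}
\item In the set mouse cases, $\mathcal S_X\lhd W$ by Lemma~\ref{L2-5}, and I take
\[
B=\nu\cap\mathrm{Hull}^{\mathcal S_X}_{n+1}\bigl(\pi_X(\mu_{\zeta_0+k})\cup C_X\cup[p_{n+1}(\mathcal S_X)\setminus\nu]\bigr).
\]
\item In the weasel cases, I take $B=\{j(f)(a,s,c(\mathcal S_X))\}$, using the short extender $G\in W$ from Lemma~\ref{L2-6} or Lemma~\ref{L2-7}. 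Here $a$ ranges over finite subsets of $\pi_X(\mu_{\zeta_0})$ and $s$ over finite subsets of $C_X$.
\end{itemize}
In each case $B\in K[C_X]=K[C_0]$. Also $|B|^{K[C_0]}<\nu$, because $B$ is the image of a set of size $|\pi_X(\mu_{\zeta_0})|\cdot\omega$ and $\nu$ is a limit cardinal of $K[C_0]$. Finally $X\cap\nu=\Pi_X[\mu_X]\subseteq B$ by pushing Skolem terms forward along $\Pi_X$, and in the weasel case pulling back through $\iota_1$ and $\iota_2$. Hence $A\subseteq B$. Replacing $C_X$ by $C_0$ changes $B$ only by finitely many parameters, so nothing is lost.

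The main obstacle I expect is the protomouse cases. There one needs Claim~\ref{c2-7-2} and its weasel analogue (equation~\ref{E-8}) to transfer the hull equality from $\mathcal M_X$ to $\mathcal Q_X$. In Section~3 these were deferred, and they rested on Lemma~\ref{L2-4}, which only handles $\xi$ below $\mu_X$ and not the extra parameters from $\overline C_X$. I would first try to prove them from Lemma~\ref{L6-5}: for $k$ beyond its $k_0$, every subset of $\mu^X_k$ definable in $\mathcal M_X$ is definable in $\mathcal Q_X$. Then I would absorb each $\mu^X_{k'}$, $k'>k$, using that it is the critical point of $i^{\mathcal T_X}_{\zeta_0+k',\theta}$, so that it is definable from $\mu^X_{k'-1}$ and the class parameter in both structures.
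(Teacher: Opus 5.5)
Your overall route matches the paper's: rerun the proof of Lemma~\ref{L3-6} for the structure $X$ of this section, now that $\mathrm{type}(C_X)=\omega$ and $C_X\in X$ are known. However, two of the steps you add to that outline fail as written.

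\emph{Getting $A\subseteq X$.} You cannot arrange this by placing $A$ in $Y_0$ of the chain $\langle Y_j\mid j<\omega_2\rangle$. Theorem~\ref{T2-3} with $\varepsilon=\omega_2$ requires $Y_j\cap\omega_2\in\omega_2$ and $|Y_j|=|Y_j\cap\omega_2|$. So every $Y_j$, and hence $X$, has size at most $\aleph_1$, while $|A|$ can be anything below $|\nu|$ (e.g.\ $\nu=\aleph_\omega$, $|A|=\aleph_5$). Having $A\in Y_0$ does not help either, since the argument only yields $X\cap\nu\subseteq B$. The paper's one-line reduction to Lemma~\ref{L3-6} leaves this implicit.

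The repair is to run the whole construction of this section along an internally approachable chain of regular length $\varepsilon$ with $\max(\omega_2,|A|^+)\le\varepsilon\le|\nu|$ and $A\subseteq Y_0$. Here $\varepsilon<\nu$, since $\nu$, if it is a cardinal, is a limit cardinal. Lemma~\ref{L6-3} holds for such $\varepsilon$: Fodor gives a fixed $Y_{i_0}$ containing the values $f(j)$, and then $|Y_{i_0}|<\varepsilon$ plus $\varepsilon$-completeness of the club filter finishes. One still takes $\ell$ of cofinality $\omega_1$ with an $\omega_1$-sequence cofinal in $\ell\cap S''$, so Lemmas~\ref{L6-4}--\ref{L6-12} are unaffected.

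\emph{The protomouse hull equalities.} Your plan for Claim~\ref{c2-7-2} and (\ref{E-8}) does not work. The ordinals $\mu^X_{k'}$ are \emph{not} definable from $\mu^X_{k'-1}$ and the standard or class parameter. The proof of Lemma~\ref{L3-4} gives
$\mu^X_{k'}=\mu_X\cap\mathrm{Hull}^{\mathcal M_X}_1(\mu^X_{k'}\cup[p_1(\mathcal M_X)\setminus\mu_X])$,
and by (\ref{E-2}) (resp.\ (\ref{E-4})) the same holds for the corresponding hulls of $\mathcal Q_X$ for all large $k'$. So the $\mu^X_{k'}$ are closure points of these hulls, which is exactly why $\overline C_X$ has to be added as a parameter set at all.

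Lemma~\ref{L6-5} cannot substitute. It only compares subsets of $\mu^X_k$ definable from $\mu^X_k$ together with the standard parameter, so it never sees the parameters $\mu^X_{k'}$ for $k'>k$.

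What is needed is the modification of Lemma~\ref{L2-4} that the paper alludes to. Redo the argument of Lemma~\ref{LA-1}, which passes between $\Sigma_1$-definability over $\mathcal M_X$ and definability over $\mathcal Q_X=\mathrm{Ult}_n(\mathcal Q_\lambda,\dot F^{\mathcal M_X})$ via representations $[r\cup s,h]_F$ and $[a\cup q,f]_F$. The only change is to carry the finitely many parameters from $\overline C_X$ along with those from $\mu^X_k$, in place of parameters from the ordinal $\xi$.
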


Therefore, Theorem \ref{T6-1} is shown.

\section{$o^K(\nu)>1$ with closure}

The following definition was used by Mitchell in \cite{M}. 
It generalizes the idea of the Mathias condition of Prikry sequences.
\begin{definition*}\label{D6-1}
    Let $\mathcal M$ be a model with linear Mitchell order.
    Let $U$ be a normal measure in $\mathcal M$ over $\kappa$.
    We say that a cofinal subsequence $\mathbf{c}\subseteq \kappa$ is a \emph{generating sequence} for $U$, iff there exists a function $g:\mathbf{c}\to \nu$ such that $g(\alpha)\leq o^\mathcal M(\alpha)$ for all $\alpha\in\mathbf{c}$, and for any $A\in \mathcal P(\kappa)\cap \mathcal M$, $A\in U$ iff there exists some $\delta<\kappa$ such that for all $\alpha\in\mathbf{c}\setminus \delta$, 
    \begin{itemize}
        \item either $g(\alpha) = o^\mathcal M(\alpha)$ and $\alpha\in A$, 
        \item or $g(\alpha)<o^\mathcal M(\alpha)$ and $A\cap \alpha\in U(\mathcal M,\alpha,g(\alpha))$.
    \end{itemize}
    If $U$ is not a member of $\mathcal M$, we say that $\mathbf{c}$ generates a measure on $\mathcal M$ over $\kappa$.
\end{definition*}
Prikry sequences provide simple examples of generating sequences. 
Suppose $\nu$ is a measurable cardinal in $\mathcal M$ and $C$ is a Prikry sequence for $U(\mathcal M,\nu,0)$. 
Then $C$ is a generating sequence for $U(\mathcal M,\nu,0)$. 
Moreover, every generating sequence of any measure is almost contained by any club subset of $\nu$ in $\mathcal M$.
Let $\beta$ be the least ordinal, if there is any, such that $U(K,\nu,\beta)$ has no generating sequences. 
Otherwise, let it be $o^K(\nu)$.
In this section, we will show the following theorem.

\begin{theorem}\label{T7-1}
    Assume there is no inner model with a Woodin cardinal. 
    Suppose that in the Mitchell-Steel core model $K$, $\nu>\omega_2$ is a cardinal such that 
    $$\mathrm{cf}(\nu)<|\nu|\leq\nu = \mathrm{cf}^K(\nu),$$
    and $|\nu|$ is $\mathrm{cf}(\nu)$-closed.
    Moreover, let $\beta = \lambda+1$.
    Then there exists a maximal Prikry sequence $C$ over $U(K,\nu,\lambda)$.
\end{theorem}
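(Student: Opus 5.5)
Fix $X\in\mathcal F$ with ${}^{\mathrm{cf}(\nu)}X\subseteq X$, which exists because $|\nu|$ is $\mathrm{cf}(\nu)$-closed. We work, as in Section 3, with the critical sequence $\overline C_X=\{\mu_\eta\mid\eta\in I_X\}$ from Lemma \ref{L2-11}. The proof has three steps.

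\textbf{Step 1: identify the measures.} The first step is the analogue of Lemma \ref{L3-2}. For each $\eta\in I_X$ with $\eta^*=\min(\eta,\theta_X]_{\mathcal T_X}$, let $g(\mu_\eta)\le o^{\mathcal M^{\mathcal T_X}_\eta}(\mu_\eta)$ record the Mitchell order of the normal measure equivalent to $E^{\mathcal T_X}_{\eta^*}$. If $E^{\mathcal T_X}_{\eta^*}$ is not equivalent to a normal measure, set $g=o(\mu_\eta)$ only when it is the last total one. Coherence of the tree gives, exactly as in Cases 1 and 2 of Lemma \ref{L3-2}, the following: for every $\xi<o^{\mathcal M_X}(\mu_X)$, $A\in U(\mathcal M_X,\mu_X,\xi)$ iff for a tail of $\eta$, either $g(\mu_\eta)=\xi$ and $\mu_\eta\in A$, or $g(\mu_\eta)>\xi$ and $A\cap\mu_\eta\in U(\mathcal M^{\mathcal T_X}_\eta,\mu_\eta,\xi)$. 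The case $g(\mu_\eta)<\xi$ must occur only boundedly often.

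Since cofinal subsets of size $\mathrm{cf}(\nu)$ lie in $X$, each such characterization pushes forward via $\pi_X$. As in the concise proof of Theorem \ref{T2-1} and \cite[Lemma 5.3]{MS2}, we obtain that $\pi_X[\overline C_X]$ is a generating sequence for $U(W,\nu,\xi)$ for every $\xi$ with $\xi\le\sup_\eta g(\mu_\eta)$ eventually. The hypothesis $\beta=\lambda+1$ says that $U(W,\nu,\lambda+1)$ has no generating sequence. Therefore $g(\mu_\eta)\le\lambda$ on a tail. Since $U(W,\nu,\lambda)$ does have one, Step 1 will also show that $g(\mu_\eta)=\lambda$ for unboundedly many $\eta$; otherwise the characterization would fail for $\lambda$.

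\textbf{Step 2: define the candidate.} Let $C=\pi_X[\{\mu_\eta\mid\eta\in I_X,\ g(\mu_\eta)=\lambda\}]$, suitably tail-trimmed. We then show the following.

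First, $\mathrm{type}(C)=\omega$. This is the analogue of Lemma \ref{L3-3}. A limit point $\alpha$ of $C$ would be singular of cofinality $\omega$ yet regular in $K$. By the Mitchell-Schimmerling argument and the generating-sequence analysis at $\alpha$, $o^K(\alpha)$ would exceed what the tree records, a contradiction.

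Second, the $\mu_\eta$ with $g<\lambda$ contribute only sets of $U(\ldots,<\lambda)$-measure, so they can be absorbed. Using Mathias-style reasoning, it follows that $A\in U(W,\nu,\lambda)$ iff $C\subseteq^* A$. For this we use that $\{\alpha\mid o^W(\alpha)=\lambda'\}$ for the appropriate representing functions separates the orders, as in the choice of $A$ at the end of Lemma \ref{L3-2}.

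\textbf{Step 3: maximality, the main obstacle.} This is the analogue of Lemma \ref{L3-4}. Let $\mathbf d\subseteq\nu\cap X$ be a Prikry sequence for $U(W,\nu,\lambda)$ almost disjoint from $C$. Two kinds of points must be handled.

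Points of $\mathbf d$ strictly between consecutive members of the critical sequence are killed by the closure-point club $A$. We build $A$ from $\mathcal S_X$ in four cases, according to whether $\mathcal Q_X$ is a set mouse or a weasel and whether the protomouse case occurs, exactly as in Cases 1 through 4 of Lemma \ref{L3-4}.

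The new difficulty is points of $\mathbf d$ that equal $\pi_X(\mu_\eta)$ for some $\eta$ with $g(\mu_\eta)<\lambda$, or that lie in the critical sequences of the other measures. For these I would use the set $\{\alpha<\nu\mid o^W(\alpha)\ge\lambda\}$, or more precisely $\{\alpha\mid o^W(\alpha)=f(\alpha)\}$ where $[f]_{U(W,\nu,\lambda)}=\lambda$. This set lies in $U(W,\nu,\lambda)$ but excludes every $\alpha$ where the tree applied a measure of order below $\lambda$, since there $o^{W}(\alpha)$ is computed correctly by the agreement of $\overline W$ with $\mathcal M_X$ below $\mu_X$. Intersecting it with the club $A$ forces $\mathbf d\subseteq^* C$.

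I expect the hardest part to be verifying that $f$ can be chosen so that $o^W(\pi_X(\mu_\eta))$ compares correctly with $\pi_X(f)(\pi_X(\mu_\eta))$ when $\lambda$ is large (e.g. $\lambda\ge\nu$). I would handle it by taking $f\in X$ and using elementarity of $i^{\mathcal T_X}_{\eta,\theta_X}$ to compute $f(\mu_\eta)$ inside $\mathcal M^{\mathcal T_X}_\eta$.
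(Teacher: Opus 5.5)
Your outline has the same shape as the paper's argument: closure-point clubs as in Lemma~\ref{L3-4} to push a Prikry sequence into the critical sequence, followed by the analysis of Lemmas~\ref{L7-6}--\ref{L7-18}. However, the step you flag as hardest is a genuine gap, and the fix you propose is circular. Let $\overline\lambda_\eta$ satisfy $i^{\mathcal T_X}_{\eta,\theta_X}(\overline\lambda_\eta)=\pi_X^{-1}(\lambda)$; at $\mu_\eta$ you must compare orders with $\overline\lambda_\eta$, not with $\lambda$. Suppose the tree applied $U(\mathcal M^{\mathcal T_X}_\eta,\mu_\eta,\overline\xi_\eta)$ at $\mu_\eta$ with $\overline\xi_\eta<\overline\lambda_\eta$, and put $\xi_\eta=\pi_X(i^{\mathcal T_X}_{\eta,\theta_X}(\overline\xi_\eta))$. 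For \emph{any} $x\in X\cap\mathcal P(\nu)\cap W$ and $\eta$ large enough, elementarity of $i^{\mathcal T_X}_{\eta,\theta_X}$ gives $\pi_X(\mu_\eta)\in x$ iff $x\in U(W,\nu,\xi_\eta)$. Computing $f(\mu_\eta)$ inside $\mathcal M^{\mathcal T_X}_\eta$ is exactly this computation for $x=\{\alpha\mid o^W(\alpha)=f(\alpha)\}$, so it gives no leverage. What you actually need is a single set in $U(W,\nu,\lambda)$ lying outside all $\mathrm{cf}(\nu)$-many $U(W,\nu,\xi_\eta)$ at once, even though $\langle\xi_\eta\rangle\notin W$. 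The paper gets this in Claim~\ref{c7-6-1} from weak covering. Since $\mathrm{cf}((\nu^{++})^W)>\mathrm{cf}(\nu)$, the set $\{\lambda\}\cup\{\xi_\eta\}$ is covered by some $B\in W$ with $|B|^W\le\nu$. Inside $W$ one then picks pairwise disjoint $x_\alpha\in U(W,\nu,\alpha)$ for $\alpha\in B$ and takes $x_\lambda$. Your set does work for $\lambda<\nu^+$ if $f$ is a canonical function, since that represents $\lambda$ in every normal ultrapower; the difficulty is $\lambda\ge\nu^+$. The same separation is what your Step~1 claim ``$g=\lambda$ unboundedly often'' really needs. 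A generating sequence for $U(W,\nu,\lambda)$ need not use the tree's function, so you must show that one lying in $X$ and almost contained in the critical sequence cannot eventually sit on points where lower-order measures were applied. That is Lemma~\ref{L7-6}, and ``otherwise the characterization would fail'' does not prove it.

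Second, the last step of that computation needs $U(\mathcal M_X,\mu_X,\xi)\cap W_X=U(W_X,\mu_X,\xi)$, that is, $\mathcal P(\mu_X)\cap\mathcal M_X=\mathcal P(\mu_X)\cap W_X$ (Corollary~\ref{C7-4}). So does every transfer of Mitchell orders between $\mathcal M_X$ and $W$, for example from ``$\overline{\mathbf d}$ generates $U(\mathcal M_X,\mu_X,\overline\lambda+1)$'' to ``$\mathbf d$ generates $U(W,\nu,\lambda+1)$''. Nothing in your proposal gives this for an arbitrary $\mathrm{cf}(\nu)$-closed $X\in\mathcal F$. The paper builds $X$ as the union of a chain of length $\mathrm{cf}(\nu)^+$ and reruns Lemmas~\ref{L6-4}, \ref{L6-6} and Corollary~\ref{C6-7} (this is Lemma~\ref{L7-2}). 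Also, your Step~1 characterization is false as stated: $\overline C_X$ does not generate $U(\mathcal M_X,\mu_X,\xi)$ for every $\xi<o^{\mathcal M_X}(\mu_X)$, only along subsequences where the applied order is at least the preimage of $\xi$. The correct use of $\beta=\lambda+1$ is Lemma~\ref{L7-7}. Take any $\mathrm{cf}(\nu)$-subsequence on which the applied extender is longer than the order-$\overline\lambda_\eta$ measure; this includes extenders with generators above $\mu_\eta$, where Lemma~\ref{L7-3} gives $o^{\mathcal M_X}(\mu_\eta)=(\mu_\eta^{++})^{\mathcal M_X}$ and your $g$ is not cleanly defined. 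On it, $\mu_\eta\mapsto\overline\lambda_\eta+1$ witnesses a generating sequence for the order $\lambda+1$ measure, which is the contradiction. With that tail bound in hand, your order-type-$\omega$ argument at limit points is fine. You still have to rerun the covering analysis at the limit point $\alpha$, as in Claim~\ref{c7-8-1}, to get $o^W(\alpha)>\lambda_\alpha$; the Mitchell--Schimmerling theorem alone only gives measurability.
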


Our strategy is as follows:
We will first pick a suitable substructure $X$ as before.
Suppose that $\mathbf c\subseteq \nu$ is a generating sequence for $U(U,K,\lambda)$ with $\mathrm{type}(\mathbf{c}) = \mathrm{cf}(\nu)$ and $\mathbf c\subseteq X$.
Then $\mathbf{c}\subseteq C_X$. 
Pick a large enough $\mu^X_\xi$ such that $\pi_X(\mu^X_\xi)\in \mathbf c$.
We will show that $E^{\mathcal T_X}_\xi$ is equivalent to a normal measure on $\mu^X_\xi$, and it has Mitchell order exactly at $\overline \lambda_\xi$.
Let the collection of all such $\mu^X_\xi$ be $\overline C_0$. 
We will show that the limit points of $\overline C_0$ is bounded in $\mu_X$, and every generating sequence $\mathbf d\subseteq \nu$ of $U(K,\nu,\lambda)$ is almost contained by $C_0 = \pi_X[\overline C_0]$.
Therefore, $\overline C_0$ has an end-segment with ordertype $\omega$, and it is the maximal Prikry sequence we are going to find.
We start the proof from the following lemma, which defines the suitable structure we are going to use in this section.

\begin{lemma}\label{L7-2}
    There exists an $X\prec(H_{\Omega^+},\in,U_\Omega)$ with the following properties:
    \begin{itemize}
        \item[1.] $X$ satisfies Requirements 1 and 2 in Section 2;
        \item[2.] ${}^{\mathrm{cf}(\nu)}X\subseteq X$;
        \item[3.] $X$ satisfies Corollary \ref{C6-7}, i.e., 
        $$\mathcal P(\mu_X)\cap \mathcal M_X = \mathcal P(\mu_X)\cap W_X.$$
    \end{itemize}
\end{lemma}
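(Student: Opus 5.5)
We will obtain $X$ as the union of an increasing internally approachable chain of length $\mathrm{cf}(\nu)^+$, using the generalized form of Lemma~\ref{L6-6} noted after its proof. Since $|\nu|$ is $\mathrm{cf}(\nu)$-closed, we choose the ambient chain so that countable (indeed $\mathrm{cf}(\nu)$-sized) closure is available at the chosen points.

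\emph{Choosing the chain.} Let $\alpha=\mathrm{cf}(\nu)$ and $\varepsilon=(\alpha^+)^{+}$. Note that $\varepsilon$ is regular, $\max(\omega_2,\alpha^+)\le\varepsilon<\nu$, and $\varepsilon<|\nu|$ by $\alpha$-closure. Fix a continuous internally approachable chain $\vec Y=\langle Y_j\mid j<\varepsilon\rangle$ as in Theorem~\ref{T2-3}, arranged so that ${}^{\alpha}Y_{j+1}\subseteq Y_{j+1}$. This is possible since $|Y_j|^\alpha<|\nu|$ by $\alpha$-closure. Let $\mathcal C$ be the club of Theorem~\ref{T2-3}.

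Consider the stationary set $S$ of $j\in\mathcal C$ with $j=\mathrm{type}(j\cap\mathcal C)$ and $\mathrm{cf}(j)=\alpha^+$. For such $j$, $Y_j$ is $\alpha$-closed: any $\alpha$-sequence from $Y_j$ lies in some $Y_{j'+1}$ with $j'<j$, because $\mathrm{cf}(j)>\alpha$. Hence each $Y_j$ with $j\in S$ already satisfies items~1 and~2, by Theorem~\ref{T2-3} together with Theorem~\ref{T2-2}.

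Next we stabilize, exactly as in the construction before Lemma~\ref{L6-4}:
\begin{itemize}
\item Using the Fodor-type Lemma~\ref{L6-3} (with $\omega_2$ replaced by $\varepsilon$), shrink $S$ to a stationary $S'$ on which a set $f(j)\in Y_j$ containing a cofinal $B_{Y_j}\subseteq C_{Y_j}$ of type $\alpha$ is constant.
\item Shrink further to a stationary $S''$ on which the answers to List~B are constant.
\end{itemize}
Take $\ell\in S''$ with $\ell=\mathrm{type}(\ell\cap S'')$ and $\mathrm{cf}(\ell)=\alpha^+$, pick a cofinal $\langle j_i\mid i<\alpha^+\rangle\subseteq\ell\cap S''$, and set $X=Y_\ell$. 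Then $X$ satisfies~1 and~2 by the previous paragraph, and $X=\bigcup_i Y_{j_i}$ with $Y_{j_i}\in X$ and common answers to List~B.

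\emph{Item 3.} Run the proof of Corollary~\ref{C6-7} with $\omega,\omega_1$ replaced by $\alpha,\alpha^+$. The inclusion $\supseteq$ comes from the coiteration. For $\subseteq$, apply the generalized Lemma~\ref{L6-6} to get, for each $A\in\mathcal P(\mu_X)\cap\mathcal M_X$, some $i$ with $A$ definable in $\mathcal S_{Y_iX}$. Then split into cases:
\begin{itemize}
\item If $\mathcal Q_{Y_i}$ is a set mouse, then $\mathcal S_{Y_iX}\in N_X$ and $\pi_X(\mathcal S_{Y_iX})=\mathcal S_{Y_i}\lhd W$ by Lemma~\ref{L2-5}, so $A\in W_X$.
\item If $\mathcal Q_{Y_i}$ is a weasel, use Lemmas~\ref{L2-6} and~\ref{L2-7} to see that $\pi_X^{-1}(\mathcal S_{Y_i})$ is an internal ultrapower of $W_X$. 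Then compare with $\mathcal S_{Y_iX}$ via the cofinality argument on $(\mu_{Y_i}^+)^{\mathcal M_{Y_i}}$. That argument needs $\mathrm{cf}((\mu_{Y_i}^+)^{\mathcal M_{Y_i}})=\mathrm{cf}(\mu_{Y_i})\le\alpha$, which is different from $|\mu_{Y_i}|^+$.
\end{itemize}

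\emph{Main obstacle.} The hardest step will be checking that the generalized Lemma~\ref{L6-6} (and Lemma~\ref{L6-4}) really go through when $\alpha>\omega$. Two things change. First, $I_X$ is now a club of type $\alpha$ rather than a sequence of type $\omega$, and the critical points need not be generated by a single measure. Second, the pigeonhole in Claim~\ref{c6-6-1}, which picks $k_0$ so that uncountably many $i$ work, must become a regressive-function or pigeonhole argument on $\alpha^+$ over an increasing $\alpha$-union, where $\alpha^+$ regular gives a bound $k_0<\alpha$. I would first try to argue with the fixed cofinal $B_{Y_i}$ of type $\alpha$ in place of all of $C_{Y_i}$, so that the indexing stays of type $\alpha$ and the original counting carries over verbatim.
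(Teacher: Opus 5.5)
Your overall architecture is the paper's. You build an internally approachable chain whose successor stages are $\mathrm{cf}(\nu)$-closed. You take $X=Y_\ell$ for an $\ell$ of cofinality $\mathrm{cf}(\nu)^+$ that is a limit of stationarily many indices with uniform List~B answers, so $X$ is $\mathrm{cf}(\nu)$-closed and gets Requirements~1 and~2 from Theorem~\ref{T2-3}. Then you rerun Lemma~\ref{L6-6} and Corollary~\ref{C6-7} with $\omega,\omega_1$ replaced by $\alpha,\alpha^+$.

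\textbf{The gap: your chain length.} Taking $\varepsilon=\alpha^{++}$ makes the chain you describe impossible in general.
\begin{itemize}
\item Theorem~\ref{T2-3} requires $Y_j\cap\varepsilon\in\varepsilon$ and $|Y_j|=|Y_j\cap\varepsilon|$. Hence $|Y_j|<\varepsilon$, i.e.\ $|Y_j|\le\alpha^+$.
\item On the other hand, ${}^{\alpha}Y_{j+1}\subseteq Y_{j+1}$ together with $2\subseteq Y_{j+1}$ gives ${}^{\alpha}2\subseteq Y_{j+1}$. So $|Y_{j+1}|\ge 2^{\alpha}$.
\item The two are compatible only if $2^\alpha\le\alpha^+$. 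The $\alpha$-closure of $|\nu|$ gives only $2^\alpha<|\nu|$; for example $2^{\aleph_0}=\aleph_5$ with $|\nu|=\aleph_6$ satisfies the closure hypothesis.
\end{itemize}
Your justification ``$|Y_j|^\alpha<|\nu|$'' secures the bound $|Y_j|<|\nu|$, not the bound $|Y_j|<\varepsilon$. Separately, your remark that $\varepsilon<|\nu|$ can fail: under CH with $|\nu|=\aleph_2$ and $\alpha=\omega$ one has $\varepsilon=|\nu|$. That one is harmless, since Theorem~\ref{T2-3} needs only $\varepsilon<\nu$.

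\textbf{The fix is exactly the paper's choice.} Let the chain have length $\varepsilon=(2^{\alpha})^+$, with $|Y_j|=2^{\alpha}$ and $2^{\alpha}+1\subseteq Y_0$.
\begin{itemize}
\item Since $(2^\alpha)^\alpha=2^\alpha$, $\alpha$-closure at successor stages is consistent with $|Y_j|=|Y_j\cap\varepsilon|<\varepsilon$.
\item $2^\alpha<|\nu|$ holds by the closure of $|\nu|$.
\item $\varepsilon<\nu$ holds because $\nu$ is singular.
\end{itemize}

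With this change the rest of your argument goes through. One simplification: your Fodor-type step via Lemma~\ref{L6-3} becomes unnecessary. Closure gives $B_X\in X=\bigcup_i Y_{j_i}$ directly, so $\{B_X\}\cup B_X\subseteq Y_{j_i}$ for a tail of $i$. This is how the paper feeds Lemma~\ref{L6-4} and the generalized Lemma~\ref{L6-6}.

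The step you flag as the main obstacle is the validity of Lemmas~\ref{L6-4} and~\ref{L6-6} for uncountable $\alpha$. The paper does not prove this either; it only asserts it, via the remark following Lemma~\ref{L6-6}.
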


\begin{proof}
    We construct an internal approachable chain $\langle Y_{i}\mid i<\left(2^{\mathrm{cf}(\nu)}\right)^+\rangle$ of elementary substructures of $(H_{\Omega^+},\in,U_\Omega)$ with the following properties:
    For each $i<\left(2^{\mathrm{cf}(\nu)}\right)^+$,
    \begin{itemize}
        \item[1.] $|Y_i| = 2^{\mathrm{cf}(\nu)}$;
        \item[2.] ${}^{\mathrm{cf}(\nu)}Y_i\subseteq Y_{i+1}$;
        \item[3.] $2^{\mathrm{cf}(\nu)}+1\subseteq Y_0$.
    \end{itemize}
    These conditions make sense, since 
    $$\left(2^{\mathrm{cf}(\nu)}\right)^{\mathrm{cf}(\nu)} = 2^{\mathrm{cf}(\nu)\cdot \mathrm{cf}(\nu)} = 2^{\mathrm{cf}(\nu)},$$
    and if $j\in\lim \cap \left(2^{\mathrm{cf}(\nu)}\right)^+$, then $|j|\leq 2^{\mathrm{cf}(\nu)}$, and thus
    $$|Y_j| = \sum_{i<j}|X_i| \leq 2^{\mathrm{cf}(\nu)}\cdot 2^{\mathrm{cf}(\nu)} = 2^{\mathrm{cf}(\nu)}.$$
    By Theorem \ref{T2-3}, there exists a club $\mathcal C$ such that for each $i\in \mathcal C$ with $\mathrm{cf}(i)>\omega$, $Y_i$ satisfies Requirements 1 and 2. 
    By Fodor's Lemma, find a stationary subset $S\subseteq \mathcal C$ such that 
    \begin{itemize}
        \item For each $i\in S$, $\mathrm{cf}(i)>\omega$;
        \item Every $Y_i$ gives the same set of answers to the questions in List B in Section 5.
    \end{itemize}
    Let $\varepsilon\in S\cap \lim(S)$ with $\mathrm{cf}(\varepsilon) = \mathrm{cf}(\nu)^+$. 
    Let $\{\varepsilon_\alpha\mid \alpha<\mathrm{cf}(\nu)^+\}\subseteq S\cap \varepsilon$ with $\sup_{\alpha<\mathrm{cf}(\nu)^+}\varepsilon_\alpha = \varepsilon$. 
    Let $X_\alpha = Y_{\varepsilon_\alpha}$ and $X = X_{\varepsilon}$.
    We can conclude the following properties from our construction:
    \begin{itemize}
        \item ${}^{\mathrm{cf}(\nu)}X\subseteq X$;
        \item $X$ and each $X_\alpha$ satisfies Requirements 1 and 2.
    \end{itemize}
    It remains only to show that condition 3 holds for $X$. 
    Pick some cofinal sequence $B_X\subseteq C_X$ of order type $\mathrm{cf}(\nu)$.
    Then $B_X\in X$.
    Without loss of generality, we may assume that $\{B_X\}\cup B_X\subseteq X_\alpha$ for all $\alpha<\mathrm{cf}(\nu)^+$.
    This allows us to apply Lemma \ref{L6-4}, which implies that $B_X\subseteq^* C_{X_\alpha}$ for all $\alpha<\mathrm{cf}(\nu)^+$. 
    By adapting the argument in the proof of Lemma \ref{L6-6} to the context of $X_\alpha$ and $X$, we see that the same reasoning applies.
    Since Corollary \ref{C6-7} depends solely on on the result of Lemma \ref{L6-6}, the same proof remains valid for $X$.
\end{proof}

From this point, we will only use $X$ in our following arguments and adapt the results in Section 2. 
Similar as in Section 3, we will suppress the subscript $X$ on each symbol.
The following lemma is an observation which generalizes Claim \ref{c3-4-1}.

\begin{lemma}\label{L7-3}
    Let $\eta\in I$ and $\zeta+1 = \min(\eta,\theta)_{\mathcal T}$. If $E^{\mathcal T}_{\zeta}$ has a generator greater than $\mu_\eta$ and $\xi$ is the least such, then $E^{\mathcal T}_{\zeta}\upharpoonright \xi$ is equivalent with a total measure on $\mu_\eta$, $E^{\mathcal T}_{\zeta}\upharpoonright \xi$ is on the sequence of $\mathcal M^{\mathcal T}_\eta$, and 
    $$o^{\mathcal M^{\mathcal T}_{\eta+1}}(\mu_\eta) = o^{\mathcal M^{\mathcal T}_{\zeta}}(\mu_\eta) = o^{\mathcal M^{\mathcal T}_{\zeta+1}}(\mu_\eta) = o^{\mathcal M_X}(\mu_\eta) = (\mu_\eta^{++})^{\mathcal M_X}.$$
    If $E^{\mathcal T}_{\zeta}$ does not satisfy the above assumption, then $\zeta = \eta$ and $E^{\mathcal T}_{\eta}$ is equivalent with a normal measure. 
\end{lemma}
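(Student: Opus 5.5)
The plan is to mimic Claim~\ref{c3-4-1}, now for an arbitrary extender applied at $\mu_\eta$ along the branch to $\theta$. Write $E=E^{\mathcal T}_{\zeta}$. Since $\zeta+1=\min(\eta,\theta]_{\mathcal T}$ and $\eta\in I$, the $\mathcal T$-predecessor of $\zeta+1$ is $\eta$, $E$ is applied to (an initial segment of) $\mathcal M^{\mathcal T}_\eta$, and $\mathrm{crit}(E)=\mu_\eta$.

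\textbf{First case: $E$ has a generator above $\mu_\eta$, with $\xi$ the least one.} I will first show that $E\upharpoonright\xi$ is equivalent to a normal measure. Since there are no generators in $(\mu_\eta,\xi)$, the extender $E\upharpoonright\xi$ is generated by $\mu_\eta\cup\{\mu_\eta\}$. So $E\upharpoonright\xi$ is equivalent to the normal measure derived from $E$ at $\mu_\eta$.

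Next I place it on the sequence. By the initial segment condition (Definition 1.0.4 of \cite{MSt}), the trivial completion of $E\upharpoonright\xi$ lies on the $\mathcal M^{\mathcal T}_\zeta$-sequence with index $<\mathrm{lh}(E)$. I will check that this index is below $\mathrm{lh}(E^{\mathcal T}_\eta)$, or at least below the agreement point of $\mathcal M^{\mathcal T}_\eta$ and $\mathcal M^{\mathcal T}_\zeta$. The reason is that a normal measure on $\mu_\eta$ is indexed below $(\mu_\eta^{++})$ of its ultrapower, and $\mathcal M^{\mathcal T}_\eta$ and $\mathcal M^{\mathcal T}_\zeta$ agree past $(\mu_\eta^{++})^{\mathcal M^{\mathcal T}_\eta}$ because every extender used between $\eta$ and $\zeta$ has length beyond that point. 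Hence $E\upharpoonright\xi$ is on the $\mathcal M^{\mathcal T}_\eta$-sequence.

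The chain of equalities for $o(\mu_\eta)$ comes from the agreement of the models along the tree. $\mathcal M^{\mathcal T}_{\eta+1}$, $\mathcal M^{\mathcal T}_\zeta$ and $\mathcal M^{\mathcal T}_{\zeta+1}$ all agree below $\mathrm{lh}(E^{\mathcal T}_\eta)$. Moreover $\mathcal M^{\mathcal T}_{\zeta+1}$ and $\mathcal M_X$ agree up to $i^{\mathcal T}_{\eta,\zeta+1}(\mu_\eta)$, and $\mathrm{crit}(i^{\mathcal T}_{\zeta+1,\theta})$ lies above that value. Therefore each of these models computes the same value of $o(\mu_\eta)$, provided that value lies below the agreement point. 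This reduces everything to showing $o^{\mathcal M^{\mathcal T}_{\zeta+1}}(\mu_\eta)=(\mu_\eta^{++})^{\mathcal M_X}$.

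For this I use the factor map
$$k:\mathrm{Ult}(\mathcal M^{\mathcal T}_\eta,E\upharpoonright\xi)\to\mathrm{Ult}(\mathcal M^{\mathcal T}_\eta,E),$$
which satisfies $\mathrm{crit}(k)=\xi$. I then argue as in Claim~\ref{c3-4-1}. Suppose $o(\mu_\eta)$ in $\mathcal M^{\mathcal T}_{\zeta+1}$ were some $\gamma<(\mu_\eta^{++})$. Then $\gamma$ is already fixed by $k$ and is computed identically in the smaller ultrapower. In particular $E\upharpoonright\xi$ itself sits on the $\mathcal M^{\mathcal T}_{\zeta+1}$-sequence at some order $\gamma'<\gamma$. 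By coherence, the smaller ultrapower computes $o(\mu_\eta)=\gamma'$ exactly (no extender beyond $E\upharpoonright\xi$), while $k$ would push this to $\gamma>\gamma'$. I want to derive a contradiction from this. More generally, I aim to show that $\xi\in\mathrm{Hull}(\mu_\eta+1\cup\text{parameters})$ of the ultrapower whenever the Mitchell order is not full, contradicting that $\xi$ is a generator. I expect this step, pinning $o(\mu_\eta)$ at exactly $(\mu_\eta^{++})^{\mathcal M_X}$ rather than merely $\ge$ the order of $E\upharpoonright\xi$, to be the main obstacle. The first thing I would try is to show that the generator $\xi$ must be at least $(\mu_\eta^{++})$ of $\mathrm{Ult}(\mathcal M^{\mathcal T}_\eta,E\upharpoonright\xi)$, and that all $\mathcal M^{\mathcal T}_\eta$-measures on $\mu_\eta$ of order below that are images under $k$ of measures of the smaller model.

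\textbf{Second case: $E$ has no generator above $\mu_\eta$.} Then $E$ is equivalent to a normal measure, and it remains to show $\zeta=\eta$. Suppose $\zeta>\eta$. By normality of $\mathcal T$, $\mathrm{lh}(E)>\mathrm{lh}(E^{\mathcal T}_\eta)$, and $\mathrm{crit}(E^{\mathcal T}_\eta)\ge\mu_\eta$. On the other hand, a normal measure on $\mu_\eta$ is indexed below $(\mu_\eta^{++})$ of its ultrapower, and hence below $\mathrm{lh}(E^{\mathcal T}_\eta)$ by the agreement computation from the first case. This contradicts $\mathrm{lh}(E)>\mathrm{lh}(E^{\mathcal T}_\eta)$, so $\zeta=\eta$.
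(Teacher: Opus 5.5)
Your treatment of the second case, and of getting $E^{\mathcal T}_\zeta\upharpoonright\xi$ onto the sequence, follows the paper up to the bookkeeping problems noted in the second paragraph. There is a genuine gap, though: the heart of the first case, that $o(\mu_\eta)$ is \emph{full}, is left open, and the one step you offer toward it is unjustified.

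Put $N_0=\mathrm{Ult}(\mathcal M^{\mathcal T}_\eta,E\upharpoonright\xi)$ and $N_1=\mathrm{Ult}(\mathcal M^{\mathcal T}_\eta,E)$. Let $\xi^*=(\mu_\eta^{++})^{N_0}$, which is the index of $E\upharpoonright\xi$. Your claim that $\gamma=o^{N_1}(\mu_\eta)$ ``is already fixed by $k$'' would need $\gamma<\xi=\mathrm{crit}(k)$. Nothing you establish bounds $\gamma$, or the Mitchell order of $E\upharpoonright\xi$, below $\xi$.

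The inequality $\xi^*\le\xi$, which you list as a first attempt, is immediate, because $\xi=\mathrm{crit}(k)$ is a cardinal of $N_0$ above $(\mu_\eta^+)^{N_0}$. What is missing is how to use it. The paper uses it in two steps:
\begin{itemize}
\item It gets $\xi=\xi^*$ from coherence. Index $\xi^*$ is empty in $N_0$ but carries $E\upharpoonright\xi$ in $N_1$, which is impossible if $\xi^*<\mathrm{crit}(k)$.
\item It then pulls back along $k$ the statement ``above every $\xi'<\xi$ there is an index of a total measure on $\mu_\eta$.'' This holds in $N_1$ because $E\upharpoonright\xi$ sits at index $\xi$. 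So such measures are cofinal in $\xi=(\mu_\eta^{++})^{N_0}$, and $k$ carries $o(\mu_\eta)=\mu_\eta^{++}$ over to $N_1$.
\end{itemize}
Your own idea also closes once you add the inequality. Let $\gamma'$ be the order of $E\upharpoonright\xi$. Coherence gives $o^{N_0}(\mu_\eta)=\gamma'$. Also $\gamma'\le\xi^*\le\xi$, since $\gamma'$ is the order type of a set of indices below $\xi^*$. Since $E\upharpoonright\xi$ is also a total measure of $N_1$, $k(\gamma')=o^{N_1}(\mu_\eta)>\gamma'$, which forces $\gamma'\ge\xi$. Hence $\gamma'=\xi^*=\xi$. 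Some such argument must be supplied; as written, the displayed chain of equalities is not proved.

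Two agreement claims also need repair.
\begin{itemize}
\item When $\zeta>\eta$, the models $\mathcal M^{\mathcal T}_\eta$ and $\mathcal M^{\mathcal T}_\zeta$ need not agree past $(\mu_\eta^{++})^{\mathcal M^{\mathcal T}_\eta}$. For instance, $E^{\mathcal T}_\eta$ may itself be a measure on $\mu_\eta$ from the $\mathcal M^{\mathcal T}_\eta$-sequence, so its length lies below that ordinal. What is true for $\zeta>\eta$, and what the paper uses, is $\mathrm{lh}(E^{\mathcal T}_\eta)\ge(\mu_\eta^{++})^{\mathcal M^{\mathcal T}_{\eta+1}}=(\mu_\eta^{++})^{\mathcal M^{\mathcal T}_\zeta}$.
\item This repaired bound is enough to move $E\upharpoonright\xi$ onto the $\mathcal M^{\mathcal T}_\eta$-sequence, since it is a total measure of $\mathcal M^{\mathcal T}_\zeta$ and so is indexed below $(\mu_\eta^{++})^{\mathcal M^{\mathcal T}_\zeta}$. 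It is also what your second case needs. The relevant fact there is that a measure on the $\mathcal M^{\mathcal T}_\zeta$-sequence is indexed below $(\mu_\eta^{++})^{\mathcal M^{\mathcal T}_\zeta}$; it is indexed exactly at, not below, $\mu_\eta^{++}$ of its own ultrapower.
\item $\mathrm{crit}(i^{\mathcal T}_{\zeta+1,\theta})$ need not be $\ge i^{\mathcal T}_{\eta,\zeta+1}(\mu_\eta)$. The smaller case is exactly what Claim~\ref{c2-7-1} has to rule out, and it does so only using the bounded-measurables hypothesis. The equality $o^{\mathcal M^{\mathcal T}_{\zeta+1}}(\mu_\eta)=o^{\mathcal M_X}(\mu_\eta)$ comes instead from agreement below $\mathrm{lh}(E^{\mathcal T}_\zeta)\ge(\mu_\eta^{++})^{\mathcal M^{\mathcal T}_{\zeta+1}}$.
\end{itemize}
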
 

\begin{proof}
    We need to show the following facts:
    \begin{itemize}
        \item[1.] $(\mu_\eta^{+})^{\mathcal M^{\mathcal T}_\eta} = (\mu_\eta^{+})^{\mathcal M_{X}}$;
        \item[2.] $\mathrm{lh}(E^{\mathcal T}_{\eta})\geq (\mu_\eta^{++})^{\mathcal M^{\mathcal T}_{\eta+1}}$
        \item[3.] Assume that $E^{\mathcal T}_{\eta}$ has critical point $\mu_\eta$ and has no other generator. Then $E^{\mathcal T}_{\eta}$ is equivalent with a normal measure on $\mathcal M^{\mathcal T}_{\eta}$;
        \item[4.] Assume that $E^{\mathcal T}_{\eta}$ has critical point $\mu_\eta$ and has more than one generator. Then $o^{\mathcal M^{\mathcal T}_{\eta+1}}(\mu_\eta) = (\mu_\eta^{++})^{\mathcal M^{\mathcal T}_{\eta+1}}$.
    \end{itemize}
    
    For 1 and 2, since $E^{\mathcal T}_{\zeta}$ is a total extender for $\mathcal M^{\mathcal T}_{\eta}$, $\mathrm{lh}(E^{\mathcal T}_{\eta})>(\mu_\eta^+)^{\mathcal M^{\mathcal T}_\eta}$. Therefore 1 is shown by agreement between models. 2 is also obvious because of this. Thus if $\zeta>\eta$ but $E^{\mathcal T}_\zeta$ is equivalent with a measure, then 
    $$\mathrm{lh}(E^{\mathcal T}_\zeta)<(\mu_\eta^{++})^{\mathcal M^{\mathcal T}_{\zeta}} = (\mu_\eta^{++})^{\mathcal M^{\mathcal T}_{\eta+1}}\leq \mathrm{lh}(E^{\mathcal T}_{\eta}),$$
    which violates the rule of iteration.

    For 3, $E^{\mathcal T}_{\eta}$ is equivalent with $(E^{\mathcal T}_{\eta})_{\{\mu_\eta\}}$, which is equivalent with the measure
    $$U = \left\{\bigcup A\mid A\in (E^{\mathcal T}_{\eta})_{\{\mu_\eta\}}\right\}.$$
    For 4, we let $E = E^{\mathcal T}_{\eta}$ and $\xi$ be its second generator. By agreement between models in $\mathcal T$, we only need to show that 
    $$\mathrm{Ult}(\mathcal M^{\mathcal T}_{\eta},E)\vDash o(\mu_\eta) = \mu_\eta^{++}.$$
    Let $i:\mathcal M^{\mathcal T}_{\eta}\to \mathrm{Ult}(\mathcal M^{\mathcal T}_{\eta},E)$, $j:\mathcal M^{\mathcal T}_{\eta}\to \mathrm{Ult}(\mathcal M^{\mathcal T}_{\eta},E\upharpoonright\xi)$ and $k: \mathrm{Ult}(\mathcal M^{\mathcal T}_{\eta},E\upharpoonright\xi)\to \mathrm{Ult}(\mathcal M^{\mathcal T}_{\eta},E)$ be the factor map. By the definition of generator, $\mathrm{crit}(k) = \xi$, and thus $\xi$ is a cardinal in $\mathrm{Ult}(\mathcal M^{\mathcal T}_{\eta},E\upharpoonright\xi)$ which is greater than $(\mu_\eta^+)^{\mathrm{Ult}(\mathcal M^{\mathcal T}_{\eta},E\upharpoonright\xi)} = (\mu_\eta^+)^{\mathcal M^{\mathcal T}_{\eta}}$.
    \begin{claim}
        $\xi = (\mu_\eta^{++})^{\mathrm{Ult}(\mathcal M^{\mathcal T}_{\eta},E\upharpoonright\xi)}$.
    \end{claim}
    \begin{proof}[Proof of Claim.]
        By initial segment property, since $E\upharpoonright\xi$ is equivalent with a measure, the trivial completion of $E\upharpoonright\xi$ has length $(\mu_\eta^{++})^{\mathrm{Ult}(\mathcal M^{\mathcal T}_{\eta},E\upharpoonright\xi)}$. Let this ordinal be $\xi^*$. Suppose the claim fails. Then $E^{\mathcal M^{\mathcal T}_\eta}_{\lambda^*}$ does not exist on the $\mathrm{Ult}(\mathcal M^{\mathcal T}_\eta,E\upharpoonright\xi)$-sequence. However, since $\mathrm{crit}(k) = \xi>\xi^*$, $E^{\mathcal M^{\mathcal T}_\eta}_{\lambda^*}$ is also not on the sequence of $\mathrm{Ult}(\mathcal M^{\mathcal T}_\eta,E)$, which is absurd.
    \end{proof}
    By the claim, it follows that $E\upharpoonright\xi = E^{\mathcal M^{\mathcal T}_{\eta}}_\xi$. Notice that this extender, which is on the $\mathrm{Ult}(\mathcal M^{\mathcal T}_\eta,E)$-sequence, witnesses the following property: For each $\xi'<\xi$,
    \begin{equation*}
\begin{aligned}
    \mathrm{Ult}(\mathcal{M}^\mathcal{T}_\eta, E) \models& \exists \xi'' > \xi'\\ \big( &E_{\xi''}^{\mathrm{Ult}(\mathcal{M}^\mathcal{T}_\eta, E)} \text{ is equivalent with}
    \text{a total measure on } \mu_\eta \big).
\end{aligned}
\end{equation*}
    Thus by elementarity, there exist unboundedly many $\xi''<\xi$ such that $E^{\mathrm{Ult}(\mathcal M^{\mathcal T}_\eta,E\upharpoonright\xi)}_{\xi''}$ is equivalent with a measure on $\mu_\eta$. Thus
    $$\mathrm{Ult}(\mathcal M^{\mathcal T}_\eta,E\upharpoonright\xi)\vDash o(\mu_\eta) = \mu_{\eta}^{++}.$$
    By elementarity, this also holds in $\mathrm{Ult}(\mathcal M^{\mathcal T}_\eta,E)$. We are done.
\end{proof}

\begin{corollary}\label{C7-4}
    $(\mu^+)^{\overline W} = (\mu^+)^{\mathcal M_X}$, and $\overline W\upharpoonright (\mu^{++})^{\overline W}\unlhd \mathcal M_X$. Especially, for any $\xi<o^{\overline W}(\mu)$, $U(\overline W,\mu,\xi) = U(\mathcal M_X,\mu,\xi)$.
\end{corollary}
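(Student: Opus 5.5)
The plan is to derive Corollary \ref{C7-4} from property 3 of Lemma \ref{L7-2}, namely $\mathcal P(\mu)\cap\mathcal M_X=\mathcal P(\mu)\cap\overline W$. We combine this with the coherence of the Mitchell--Steel extender sequences of $\overline W$ and $\mathcal M_X$ up to the point where they first disagree. This is the same route by which Corollary \ref{C6-8} follows from Corollary \ref{C6-7}. Lemma \ref{L7-3} is used only to know that $\mathcal M_X$ carries enough total measures on $\mu$ that its sequence is not cut short at $(\mu^+)^{\mathcal M_X}$.

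\textbf{Step 1: the successor of $\mu$.} Recall that $\theta=\eta(\mu)$ is the least stage at which $\mathcal M^{\mathcal T}_\theta$ agrees with $\overline W$ below $\mu$. Since $\mathcal U$ is trivial by Requirement~1, the coiteration of $(W,\overline W)$ compares $\overline W$ itself against the models of $\mathcal T$.

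We show that the first disagreement between $\overline W$ and $\mathcal M_X$ lies strictly above $(\mu^+)^{\overline W}$. Suppose instead that the least disagreement index $\gamma$ satisfies $\gamma\le(\mu^+)^{\overline W}$. Then, since $\mathcal U$ does not move $\overline W$, the disagreement occurs on the $\mathcal T$ side. So $\mathcal M_X|\gamma$ would contain a subset of $\mu$ (coded by the extender at $\gamma$, or appearing in the next level) that is not in $\overline W$, or vice versa. Both possibilities contradict $\mathcal P(\mu)\cap\mathcal M_X=\mathcal P(\mu)\cap\overline W$.

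Hence $\overline W|(\mu^+)^{\overline W}=\mathcal M_X|(\mu^+)^{\mathcal M_X}$ and $(\mu^+)^{\overline W}=(\mu^+)^{\mathcal M_X}$. Here we use that in both models every subset of $\mu$ appears below the successor of $\mu$, by acceptability and condensation.

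\textbf{Step 2: agreement up to $(\mu^{++})^{\overline W}$.} Let $\gamma$ be the least index of disagreement between $\overline W$ and $\mathcal M_X$; by Step 1, $\gamma>(\mu^+)^{\overline W}$. We claim $\gamma\ge(\mu^{++})^{\overline W}$.

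Suppose $\gamma<(\mu^{++})^{\overline W}$. Since the coiteration did not use an extender with index $\gamma$ on the $\overline W$-side (that side is trivial), the disagreement would have to be resolved by $\mathcal T$ using an extender from $\mathcal M_X$'s predecessors with index $\gamma$. Every extender used in $\mathcal T$ beyond stage $\theta$ has length $\ge\mathrm{lh}$ of earlier ones, and $\theta$ is a limit (Lemma \ref{L2-11}). The models $\mathcal M^{\mathcal T}_\eta$ for $\eta\in I$ agree with $\mathcal M_X$ up to $\mathrm{lh}(E^{\mathcal T}_{\eta})$, and by Lemma \ref{L7-3}(2) these lengths are at least $(\mu_\eta^{++})$. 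Pushing forward along $i^{\mathcal T}_{\eta,\theta}$, whose critical point is $\mu_\eta$ and which sends $\mu_\eta$ to $\mu$, the agreement between $\mathcal M_X$ and the earlier models, and hence with $\overline W$, extends through the interval $[(\mu^+)^{\mathcal M_X},(\mu^{++})^{\mathcal M_X})$.

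A cleaner alternative for this step would be the following argument. Any initial segment $\mathcal N\unlhd\overline W$ with $\mathrm{Ord}\cap\mathcal N<(\mu^{++})^{\overline W}$ projecting to $\mu^+$ is coded by a subset of $(\mu^+)^{\overline W}$. Both $\overline W$ and $\mathcal M_X$ then compute their levels from the common $\mathcal P(\mu)$ via condensation, which forces $\overline W|(\mu^{++})^{\overline W}\unlhd\mathcal M_X$. I would try the condensation argument first and fall back on the tree-length argument if needed.

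\textbf{Step 3: the measures.} Each normal measure $U(\overline W,\mu,\xi)$ is represented, via the Mitchell--Steel indexing, by a total extender on the $\overline W$-sequence with index below $(\mu^{++})^{\overline W}$. By Step 2, the same extender lies on the $\mathcal M_X$-sequence with the same Mitchell rank. Total measures on $\mu$ depend only on $\mathcal P(\mu)$, and that is common to both models. Hence $U(\overline W,\mu,\xi)=U(\mathcal M_X,\mu,\xi)$ for every $\xi<o^{\overline W}(\mu)$.

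The main obstacle is Step 2. We must rule out a disagreement in the interval $((\mu^+),(\mu^{++}))$ that is invisible at the level of subsets of $\mu$. This is where the structure of $\mathcal T$ near $\theta$ from Lemma \ref{L7-3} enters.
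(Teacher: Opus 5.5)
Your overall route (derive the corollary from Lemma \ref{L7-2}(3) plus the structure of the coiteration, as Corollary \ref{C6-8} follows from Corollary \ref{C6-7}) is the intended one. Step 3 is fine once agreement up to $(\mu^{++})^{\overline W}$ is known. But Step 2, which you correctly flag as the crux, is not established by either argument you give.

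The tree-length argument transports nothing. For $\eta<\theta$, the agreement between $\mathcal M^{\mathcal T}_\eta$ and $\mathcal M_X$ (or $\overline W$) concerns only ordinals below $\mathrm{lh}(E^{\mathcal T}_\eta)<\mu$. Moreover, ``agrees with $\overline W$'' is an external relation, not a first-order property of $\mathcal M^{\mathcal T}_\eta$ that $i^{\mathcal T}_{\eta,\theta}$ could carry up to $\mu$. Where $\mathcal M_X$ and $\overline W$ first differ above $\mu$ is decided by $E^{\mathcal T}_\theta$, and Lemma \ref{L7-3} is irrelevant here.

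The condensation alternative is false: a common $\mathcal P(\mu)$ does not determine the levels between $\mu^+$ and $\mu^{++}$. Let $\mathcal M$ be a weasel with a total measure $E$ on $\mu$ on its sequence. Then $\mathcal M$ and $\mathrm{Ult}(\mathcal M,E)$ have the same subsets of $\mu$ and the same $\mu^+$, yet they first disagree at $\mathrm{lh}(E)=(\mu^{++})^{\mathrm{Ult}(\mathcal M,E)}<(\mu^{++})^{\mathcal M}$. With $\mathcal M$ in the role of $\overline W$, the conclusion fails. What rules this out is which side of the comparison moves, and your argument never uses that.

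The missing idea is that $\overline W$ does not move. Since $\mathcal U$ is trivial, the least disagreement $\gamma$ between $\mathcal M_X$ and $\overline W$ (if $\theta$ is not the last stage of $\mathcal T$) is $\gamma=\mathrm{lh}(E^{\mathcal T}_\theta)$, with $E^{\overline W}_\gamma=\emptyset$. The argument then runs as follows.

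In a normal tree, $\mathrm{lh}(E^{\mathcal T}_\theta)$ is a cardinal of every later model, in particular of $\mathcal M^{\mathcal T}_\infty\unrhd\overline W$. So $\gamma$ is a $\overline W$-cardinal above $\mu$, whence $\gamma\ge(\mu^+)^{\overline W}$.

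By Lemma \ref{L7-2}(3), $(\mu^+)^{\mathcal M_X}=(\mu^+)^{\overline W}$. This is an $\mathcal M_X$-cardinal below $\mathrm{Ord}\cap\mathcal M_X$, since $\mu$ is measurable in $\mathcal M_X$ by Lemma \ref{L2-11}.

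An index $\gamma<\mathrm{Ord}\cap\mathcal M_X$ of an extender on the $\mathcal M_X$-sequence is never an $\mathcal M_X$-cardinal, because the active level $\mathcal M_X||\gamma$ projects below $\gamma$. Hence $\gamma\ne(\mu^+)^{\overline W}$, so $\gamma>(\mu^+)^{\overline W}$.

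Since $\overline W$ has no cardinals strictly between $(\mu^+)^{\overline W}$ and $(\mu^{++})^{\overline W}$, we get $\gamma\ge(\mu^{++})^{\overline W}$, i.e.\ $\overline W|(\mu^{++})^{\overline W}\unlhd\mathcal M_X$.

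The same observation also makes your Step 1 rigorous; its ``or vice versa'' for $\gamma<(\mu^+)^{\overline W}$ is not an argument. With that repaired, Step 3 goes through as you wrote it.
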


\begin{corollary}\label{C7-5}
Suppose that $\overline{\mathbf{c}}\subseteq\mu$ is a generating sequence of a measure $U(\mathcal M_X,\mu,\xi)$ with some function $\overline f$. Then $\xi\in\mathrm{dom}(\pi)$, $U(W,\nu,\pi(\xi))$ exists and $\mathbf{c} = \pi[\overline{\mathbf c}]$ is a generating sequence of this measure with function $f = \pi[\overline f]$.
\end{corollary}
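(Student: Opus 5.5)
The plan is to transfer the generating-sequence statement from $N_X$ to $H_{\Omega^+}$ by elementarity of $\pi=\pi_X$, using Corollary \ref{C7-4} to replace $\mathcal M_X$ by $\overline W$ on the relevant subsets of $\mu$. By the choice of $X$ in Lemma \ref{L7-2}, ${}^{\mathrm{cf}(\nu)}X\subseteq X$. We may assume $\mathrm{type}(\overline{\mathbf c})=\mathrm{cf}(\nu)$ by thinning: a cofinal subsequence of a generating sequence is again generating with the restricted function. Hence $\mathbf c=\pi[\overline{\mathbf c}]$ and $f=\pi\circ\overline f$ (read as $\pi[\overline f]$ pointwise) are in $X$, so $\overline{\mathbf c}=\pi^{-1}(\mathbf c)$ and $\overline f=\pi^{-1}(f)$ are elements of $N_X$.

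First, show that $\xi<o^{\overline W}(\mu)$, so that $\xi\in N_X=\mathrm{dom}(\pi)$. By the Mitchell–Steel indexing and Lemma \ref{L7-3}, $o^{\mathcal M_X}(\mu)\le(\mu^{++})^{\mathcal M_X}$. Corollary \ref{C7-4} gives $\overline W\upharpoonright(\mu^{++})^{\overline W}\unlhd\mathcal M_X$ and that the measures on $\mu$ agree below $o^{\overline W}(\mu)$. It remains to rule out $\xi\ge o^{\overline W}(\mu)$. For this, I would consider the measure $U'$ on $\mathcal P(\mu)\cap\overline W$ generated by $(\overline{\mathbf c},\overline f)$; this set equals $\mathcal P(\mu)\cap\mathcal M_X$ by Corollary \ref{C6-7}/Lemma \ref{L7-2}(3). $U'$ is definable in $N_X$ from $\overline{\mathbf c},\overline f$. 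By elementarity, $(\mathbf c,f)$ generates an ultrafilter $U''$ on $\mathcal P(\nu)\cap W$ which is amenable and countably complete in the relevant sense. Running the comparison argument of the proof sketch of Theorem \ref{T2-1} (\cite[Lemma 5.2, 5.3]{MS2}: iterability of $((W,\mathrm{Ult}(W,U'')),\nu)$, then Schlutzenberg's theorem) shows $U''\in W$ and that its extender is on the $W$-sequence. Pulling back, $U'=\pi^{-1}(U'')\in\overline W$ is a normal measure on $\mu$ in $\overline W$, and $U'=U(\mathcal M_X,\mu,\xi)$. The Mitchell rank of $U'$ in $\overline W$ equals its rank in $\mathcal M_X$, because the lower measures agree by Corollary \ref{C7-4}, and that rank is $\xi$. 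Therefore $\xi<o^{\overline W}(\mu)$ and $U(\overline W,\mu,\xi)=U'$.

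Next, apply elementarity of $\pi:N_X\to X$ to the statement ``$\overline{\mathbf c}$ is a generating sequence for $U(\overline W,\mu,\xi)$ with function $\overline f$.'' This is first order over $N_X$ with parameters $\overline{\mathbf c},\overline f,\xi,\overline W$. The equivalence ``$A\in U$ iff eventually along $\overline{\mathbf c}$ (either $\overline f(\alpha)=o(\alpha)$ and $\alpha\in A$, or $A\cap\alpha\in U(\overline W,\alpha,\overline f(\alpha))$)'' is computed correctly in $N_X$. The only subtlety is that $o^{\overline W}(\alpha)=o^{\mathcal M_X}(\alpha)$ and $U(\overline W,\alpha,\cdot)=U(\mathcal M_X,\alpha,\cdot)$ for $\alpha<\mu$, which holds because $\mathcal M_X$ and $\overline W$ agree below $\mu$. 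Since $\pi(\overline W)=W$, $\pi(\mu)=\nu$, and $\pi(\overline{\mathbf c})=\mathbf c$ (by $\mathrm{cf}(\nu)$-closure, together with $\pi\upharpoonright\overline{\mathbf c}$ giving $\pi[\overline{\mathbf c}]$), we get that $\mathbf c$ generates $U(W,\nu,\pi(\xi))$ with function $\pi(\overline f)=f$. Finally, pass from $W$ to $K$ using $W$'s agreement with $K$ below $\Omega_0$.

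The main obstacle is the first step: verifying that $\xi$ is not beyond $o^{\overline W}(\mu)$, i.e. that the generated measure lies in $\overline W$ with the same order. I would try first to cite the iterability/phalanx comparison of \cite{MS2}, as used in Section 3, rather than reproving it.
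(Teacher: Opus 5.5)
Your proposal follows the paper's proof: $\mathrm{cf}(\nu)$-closure puts $\overline{\mathbf c},\overline f$ into $N_X$. The agreement $\mathcal P(\mu)\cap\overline W=\mathcal P(\mu)\cap\mathcal M_X$ and Corollary~\ref{C7-4} let $N_X$ see them as generating a measure on $\overline W$. Elementarity together with the phalanx comparison of \cite{MS2} puts the pushed-forward measure into $W$, and agreement of the measures on $\mu$ identifies its order as $\pi(\xi)$. You pull back before pushing forward, whereas the paper first obtains $U(W,\nu,\xi')$ and then shows $\xi'=\pi(\xi)$, but this is the same argument.

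The one thing to add is the converse of your thinning step. The conclusion concerns the full $\mathbf c=\pi[\overline{\mathbf c}]$, so you need, as in the paper's last paragraph, that any failure of $\mathbf c$ to generate $U(W,\nu,\pi(\xi))$ via $f$ would already be witnessed along a cofinal subset of type $\mathrm{cf}(\nu)$ (for $A\notin U$, use $\nu\setminus A$). That contradicts the thin case.
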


\begin{proof}
    We first look at the case where $\mathrm{type}(\overline{\mathbf{c}}) = \mathrm{cf}(\nu)$. Since $X$ is closed under $\mathrm{cf}(\nu)$-sequences, $\overline{\mathbf{c}}\in N_X$ and $\overline{f}\in N_X$. Let $\mathbf{c} = \pi(\overline{\mathbf{c}})$ and $f = \pi(\overline f)$. By Corollary \ref{C7-4}, we have
    $$N_X\vDash \overline{\mathbf{c}}\mbox{ generates a measure on }\overline W\mbox{ over }\mu\mbox{ with }\overline{f}.$$
    Thus by elementarity, 
    $$X\vDash \mathbf{c}\mbox{ generates a measure on }W\mbox{ over }\nu\mbox{ with }f.$$
    Call this measure $U'$. Using the same technique as in Section 5 of \cite{MS2}, we can show that the phalanx 
    $$\left(\left(\overline W,\mathrm{Ult}(\overline W,U(\mathcal M_X,\mu,\xi)\right),\mu\right)$$
    is iterable, and thus $((W,\mathrm{Ult}(W,U'),\nu)$ is iterable. Comparing $W$ versus $((W,\mathrm{Ult}(W,U'),\nu)$ gives us that $U'\in W$. Therefore, there exists some $\xi'$ such that $U' = U(W,\nu,\xi')$.

    We next show that $\xi' = \pi(\xi)$. Since $\mathbf{c}\in \mathrm{ran}(\pi)$, $\xi'\in\mathrm{ran}(\pi)$, and thus $\overline{\mathbf{c}}$ is a generating sequence of $U(\overline W,\mu,\pi^{-1}(\xi'))$ with $\overline f$. Since $\mathcal P(\mu)\cap \overline W = \mathcal P(\mu)\cap \mathcal M_X$, 
    $$U(\mathcal M_X,\mu,\xi) = U(\overline W,\mu,\pi^{-1}(\xi')),$$
    and thus $\xi = \pi^{-1}(\xi')$. We are done.

    We then look at the case where $\mathrm{type}(\mathbf{\overline c})>\mathrm{cf}(\nu)$. For each cofinal subsequence $\overline{\mathbf c'}\subseteq \overline{\mathbf c}$ with $\mathrm{type}(\overline{\mathbf c'}) = \mathrm{cf}(\nu)$, $\overline{\mathbf c'}$ satisfies the corollary, and thus $\mathbf c' = \pi[\overline{\mathbf{c}'}]$ generates $U(W,\nu,\pi(\xi))$ by $f\upharpoonright \mathbf{c'}$. If $\mathbf{c}$ does not generate $U(W,\nu,\pi(\xi))$ by $f$, we can pick a cofinal subsequence with order type $\mathrm{cf}(\nu)$ witnessing this fact, but there is a contradiction.
\end{proof}

Let $\lambda^*$ be such that $E^W_{\lambda^*}$ is equivalent with $U(W,\nu,\lambda)$. 
Let $\pi(\overline\lambda^*) = \lambda^*$.
By Corollary \ref{C7-4}, $E^{\mathcal M_X}_{\overline\lambda^*} = E^{\overline W}_{\overline\lambda^*}$ and it is equivalent with $U(\mathcal M_X,\mu,\overline\lambda) = U(\overline W,\mu,\overline\lambda)$.
For large enough $\eta\in I$, let $i^{\mathcal T}_{\eta,\theta}(\overline\lambda_\eta) = \overline\lambda$ and let $i^{\mathcal T}_{\eta,\theta}(\overline\lambda_\eta^*) = \overline\lambda^*$.

\begin{lemma}\label{L7-6}
    Suppose that $\mathbf{d}\subseteq C$ is a generating sequence of $U(W,\nu,\lambda)$ via a function $f\in X$. 
    Then there exists some $\delta<\nu$ such that for all $\pi(\mu_\eta)\in \mathbf{d}\setminus \delta$, $\mathrm{lh}(E^{\mathcal T}_{\eta})\geq \overline\lambda_\eta^*$.
\end{lemma}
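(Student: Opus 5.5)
We argue by contradiction, using the closure of $X$ and Corollary \ref{C7-5} to move the failure down to $\mathcal M_X$.

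Suppose no such $\delta$ exists. Then the set of $\eta$ with $\pi(\mu_\eta)\in\mathbf d$ and $\mathrm{lh}(E^{\mathcal T}_{\eta})<\overline\lambda_\eta^*$ is cofinal in $\theta$. Since $X$ is closed under $\mathrm{cf}(\nu)$-sequences, we may choose a cofinal subsequence $\mathbf d'\subseteq\mathbf d$ of order type $\mathrm{cf}(\nu)$ consisting of such points, with $\mathbf d'\in X$. Then $\mathbf d'$ is still a generating sequence for $U(W,\nu,\lambda)$, via $f\upharpoonright\mathbf d'$. Pulling back by $\pi$ gives $\overline{\mathbf d'}=\pi^{-1}[\mathbf d']\in N_X$ and $\overline f=\pi^{-1}(f)$. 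By elementarity and Corollary \ref{C7-4}, $\overline{\mathbf d'}$ generates $U(\overline W,\mu,\overline\lambda)=U(\mathcal M_X,\mu,\overline\lambda)$, which is the measure equivalent to $E^{\mathcal M_X}_{\overline\lambda^*}$.

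We now compute directly, from the tree, which measure the points $\mu_\eta$ of $\overline{\mathbf d'}$ generate. Fix such an $\eta$ and let $\zeta+1=\min(\eta,\theta)_{\mathcal T}$. By Lemma \ref{L7-3}, either $\zeta=\eta$ and $E^{\mathcal T}_\eta$ is equivalent to a normal measure $U(\mathcal M^{\mathcal T}_\eta,\mu_\eta,o_\eta)$ with $\mathrm{lh}(E^{\mathcal T}_\eta)$ its index, or $E^{\mathcal T}_\zeta$ has extra generators and $o^{\mathcal M_X}(\mu_\eta)=(\mu_\eta^{++})^{\mathcal M_X}$. In the second case the index is at least $(\mu_\eta^{++})$, which is $\geq\overline\lambda^*_\eta$, so that case is excluded for points of $\overline{\mathbf d'}$.

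In the first case the index is below $\overline\lambda^*_\eta$, hence $o_\eta<\overline\lambda_\eta$. Coherence of the iteration, as in the case analysis in Lemma \ref{L3-2}, then gives the following: for $A\in\mathcal P(\mu)\cap\mathcal M_X$ with $A\in\mathrm{ran}(i^{\mathcal T}_{\eta,\theta})$, write $A=i^{\mathcal T}_{\eta,\theta}(\bar A)$.
\begin{itemize}
\item $A\in U(\mathcal M_X,\mu,\overline\lambda)$ iff $\bar A\in U(\mathcal M^{\mathcal T}_\eta,\mu_\eta,\overline\lambda_\eta)$.
\item $A\cap\mu_\eta\in U(\mathcal M^{\mathcal T}_{\eta+1},\mu_\eta,\xi)$ for $\xi<o_\eta$ iff $\bar A\in U(\mathcal M^{\mathcal T}_\eta,\mu_\eta,\xi)$.
\item $\mu_\eta\in A$ iff $\bar A\in U(\mathcal M^{\mathcal T}_\eta,\mu_\eta,o_\eta)$.
\end{itemize}
The generating condition forces $g(\mu_\eta)\le o_\eta$, where $g$ is the generating function. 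Hence, for cofinally many $\eta$, the clause used for $\mu_\eta$ tests membership of $\bar A$ in a measure of order at most $o_\eta<\overline\lambda_\eta$.

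To finish, we separate the two measures by a set. Take
$$A=\{\alpha<\mu\mid o^{\mathcal M_X}(\alpha)\ge\text{the }\overline\lambda\text{-value read off at }\alpha\},$$
built from a function representing $\overline\lambda$ in $\mathrm{Ult}$ by $U(\mathcal M_X,\mu,\overline\lambda)$, or more simply the set of $\alpha$ whose reflected order is at least the image of $\overline\lambda$. This $A$ lies in $U(\mathcal M_X,\mu,\overline\lambda)$, but the clauses at the points of $\overline{\mathbf d'}$ witness that $A$ is not in the generated measure. This contradicts the fact that $\overline{\mathbf d'}$ generates $U(\mathcal M_X,\mu,\overline\lambda)$.

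The main obstacle is making this last separation rigorous. One must check that the measure generated by cofinally many points, each tested at order $\le o_\eta<\overline\lambda_\eta$, differs from the order-$\overline\lambda$ measure. This needs the Mitchell order to be linear and needs $\overline\lambda$ to lie in the range of $i^{\mathcal T}_{\eta,\theta}$ for large $\eta$, which holds since $\overline\lambda_\eta$ is defined for large $\eta$. I would first try the set of $\alpha$ with $o(\alpha)$ equal to the image value, as in the separation of $U_0$ from $U_1$ in Lemma \ref{L3-2}.
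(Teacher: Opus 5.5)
\textbf{Verdict: there is a genuine gap at the final step.} Your reduction matches the paper's up to the last step. You shrink to a bad cofinal subsequence of type $\mathrm{cf}(\nu)$ lying in $X$ and pull back to $\mathcal M_X$. You then observe that, for $A=i^{\mathcal T}_{\eta,\theta}(\bar A)$, the clause at $\mu_\eta$ merely tests $A\in U(\mathcal M_X,\mu,\xi'_\eta)$ for some $\xi'_\eta<\overline\lambda$. (Treating the paper's Cases a and b uniformly this way is fine.) But the step you flag as the ``main obstacle'' is the real content of the lemma, and you do not supply it.

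Write $U_\xi=U(\mathcal M_X,\mu,\xi)$. You need one set in $U_{\overline\lambda}$ that lies outside $U_{\xi'_\eta}$ for cofinally many $\eta$ simultaneously. Separating $U_{\overline\lambda}$ from each $U_{\xi'_\eta}$ individually is trivial. However, $\langle\xi'_\eta\rangle$ is not in $\mathcal M_X$, so you cannot intersect the individual separating sets.

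Your candidate is $\{\alpha<\mu\mid o^{\mathcal M_X}(\alpha)\ge h(\alpha)\}$ (or with $=$), where $[h]_{U_{\overline\lambda}}=\overline\lambda$. It belongs to $U_\xi$ iff $\xi\ge[h]_{U_\xi}$ (resp. $\xi=[h]_{U_\xi}$).
\begin{itemize}
\item If $\overline\lambda<(\mu^+)^{\mathcal M_X}$, you can take $h$ to be a canonical function, which represents $\overline\lambda$ modulo every normal measure on $\mu$. Then your set does separate $U_{\overline\lambda}$ from all lower-order measures at once.
\item Theorem \ref{T7-1} puts no bound on $\lambda$, though. For $\overline\lambda\ge(\mu^+)^{\mathcal M_X}$ there is no uniform representative. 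The value $[h]_{U_\xi}$ depends on $\xi$ and may equal $\xi$ exactly at the $\xi'_\eta$ produced by the tree, which you do not control.
\end{itemize}
Linearity of the Mitchell order, which you cite as what is needed, does not address this.

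The missing idea is the paper's claim inside this proof, and it works in $W$ rather than in $\mathcal M_X$.
\begin{enumerate}
\item By $\mathrm{cf}(\nu)$-closure of $X$, the set $\{\xi_\eta\}$ of $\pi$-images of these orders belongs to $X$.
\item Weak covering for $W$ gives $\mathrm{cf}((\nu^{++})^W)\ge\nu^+$, so $\{\lambda\}\cup\{\xi_\eta\}$ is bounded in $(\nu^{++})^W$.
\item Since $(\nu^+)^W=\nu^+$ is regular in $V$, this set is covered by some $B\in W$ with $|B|^W\le\nu$.
\item Inside $W$, choose pairwise disjoint $x_\alpha\in U(W,\nu,\alpha)$ for $\alpha\in B$. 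This is possible for at most $\nu$ distinct normal measures, via a diagonal intersection.
\item Then $x=x_\lambda\in X$ is the uniform separating set. Its preimage $\overline x$ lies in $\mathrm{ran}(i^{\mathcal T}_{\eta,\theta})$ for large $\eta$.
\end{enumerate}
With $\overline x$ in hand, you get the contradiction at large $\mu_\eta$ in your subsequence exactly as you intended.
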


\begin{proof}
    Suppose otherwise and let $\mathbf{d}$ be a counterexample. 
    Shrink $\mathbf{d}$ if necessary, we may assume that $\mathrm{type}(\mathbf{d}) = \mathrm{cf}(\nu)$ and for all $\pi(\mu_\eta)\in\mathbf d$, $\mathrm{lh}(E^{\mathcal T}_{\eta})<\overline\lambda_\eta^*$. 
    Therefore, for all $\pi(\mu_\eta)\in\mathbf d$, 
    $$o^{\mathcal M_X}(\mu_\eta) = o^{\overline W}(\mu_\eta)<\overline\lambda_\eta.$$
    By the definition of generating functions, we know that for all $\pi(\mu_\eta)\in\mathbf d$, $f(\pi(\mu_\eta))\leq o^W(\pi(\mu_\eta))$. Thus we may further shrink $\mathbf d$ to make it satisfy one of the following assumptions:
    \begin{itemize}
        \item[a.] For all $\pi(\mu_\eta)\in \mathbf{d}$, $f(\pi(\mu_\eta)) = o^W(\pi(\mu_\eta))$;
        \item[b.] For all $\pi(\mu_\eta)\in \mathbf{d}$, $f(\pi(\mu_\eta)) < o^W(\pi(\mu_\eta))$.
    \end{itemize}
    Suppose that Case a holds. By the definition of generating sequence,
    $$\forall x\in\mathcal P(\nu)\cap W\exists\delta<\nu\forall \pi(\mu_\eta)\in\mathbf d\setminus \delta\left(x\in U(W,\nu,\lambda)\iff \pi(\mu_\eta)\in x\right).$$
    By the elementarity of $\pi$,
    $$N_X\vDash \forall x\in\mathcal P(\mu)\cap \overline W\exists\delta<\mu\forall \mu_\eta\in\overline {\mathbf d}\setminus \delta\left(x\in U(\overline W,\mu,\overline\lambda)\iff \mu_\eta\in x\right).$$
    By the agreement between $\mathcal M_X$ and $\overline W$,
    $$\forall x\in\mathcal P(\mu)\cap \mathcal M_X\exists\delta<\mu\forall \mu_\eta\in\overline {\mathbf d}\setminus \delta\left(x\in U(\mathcal M_X,\mu,\overline\lambda)\iff \mu_\eta\in x\right).$$
    For each $\eta\in I$ such that $\mu_\eta\in\overline{\mathbf d}$, let $U_\eta$ be the measure derived from $i^{\mathcal T}_{\eta,\theta}$. By Lemma \ref{L7-3}, there exists an ordinal $\overline{\xi}_\eta<\overline{\lambda}_\eta$ such that $U_\eta = U(\mathcal M_\eta,\mu_\eta,\overline{\xi}_\eta)$. Thus 
    $$U(\mathcal M_X,\mu,i_{\eta,\theta}^{\mathcal T}(\overline{\xi}_\eta)) = U(\overline W,\mu,i_{\eta,\theta}^{\mathcal T}(\overline{\xi}_\eta)).$$
    Let $\xi_\eta = \pi(i^{\mathcal T}_{\eta,\theta}(\overline\xi_\eta))$. Notice that $\{\xi_\eta\mid \pi(\mu_\eta)\in \mathbf d\}\in X$ by the closure assumption.
    \begin{claim}\label{c7-6-1}
    There exists a subset $x\in U(W,\nu,\lambda)$ such that $x\not\in U(W,\nu,\xi_\eta)$ for all $\pi(\mu_\eta)\in \mathbf{d}$.
    \end{claim} 
    This would be enough to show a contradiction:
    Such $x$ exist in $X$ by elementarity. 
    Let $\overline x = \pi^{-1}(x)$. 
    Then $\overline x\in \mathcal M_X$ and $\overline x\in U(\mathcal M_X,\mu,\overline\lambda)\setminus U(\mathcal M_X,\mu,i^{\mathcal T}_{\eta,\theta}(\overline{\xi}_\eta))$. 
    Pick some $\delta<\nu$ such that for all $\pi(\mu_\eta)\in\mathbf{d}\setminus \delta$, $\pi(\mu_\eta)\in x$ and $\overline x\in\mathrm{ran}(i^{\mathcal T}_{\eta,\theta})$. 
    By elementarity, we have
    $$\mu_\eta\in \overline x\iff \overline x\cap \mu_\eta\in U(\mathcal M^{\mathcal T}_{\eta},\mu_\eta,\overline{\xi}_\eta)\iff \overline x\in U(\mathcal M_X,\mu,i^{\mathcal T}_{\eta,\theta}(\overline{\xi}_\eta)).$$
    A contradiction.
    \begin{proof}[Proof of Claim.]
        By the weak covering lemma of $W$(Theorem 1.1 of \cite{MSS}), $\mathrm{cf}((\nu^{++})^W) \geq (\nu^{+})^W = \nu^+$.
        For any subset $A\subseteq (\nu^{++})^W$ with order type $\leq\nu$, let $f:\nu^+\to \xi$ be a surjective function in $W$. 
        Thus $\sup f^{-1}[A]<\nu^+$.
        Let $g:\nu\to \sup f^{-1}[A]$. 
        Then $A\subseteq \mathrm{ran}(f\circ g)$. 
        Now we may take 
        $$A = \{\lambda\}\cup\{\xi_\eta\mid \pi(\mu_\eta)\in \mathbf{d}\}$$
        and let $B = \mathrm{ran}(f\circ g)\cap o^W(\nu)$. 
        Since $B\in W$ and $\mathrm{type}(B)\leq\nu$, there exists $(x_\alpha\mid \alpha\in B)\in W$ such that $x_\alpha\cap x_\beta = \emptyset$, and $x_\alpha\in U(W,\nu,\alpha)$, for all $\alpha\in B$. 
        Pick $x = x_\lambda$ and we are done.
    \end{proof}

     Now suppose that $\mathbf{d}$ falls under Case b. 
     We re-define the ordinals $\overline{\xi}_\eta^*$ as follows: 
     Let $\overline f = \pi^{-1}(f)$. 
     For each $\pi(\mu_\eta)\in \mathbf d$, let $\overline\xi_\eta<\overline\lambda_\eta$ be the ordinal such that
    $$U(\mathcal M_X,\mu,\overline\xi_\eta) = i^{\mathcal T}_{\eta,\theta}(U(\mathcal M_\eta^{\mathcal T},\mu_\eta,\overline f(\mu_\eta))).$$
    Thus $$U(\mathcal M_X,\mu,\overline{\xi}_\eta) = U(\overline W,\mu,\overline{\xi}_\eta).$$
    Let $\xi_\eta = \pi(\overline{\xi}_\eta)$.
    Following a similar reasoning as in case a would lead us to a contradiction.
\end{proof}

\begin{lemma}\label{L7-7}
Let $\mathbf d\subseteq C$ be a generating sequence of $U(W,\nu,\lambda)$ via a function $f\in X$.
Then there exists some $\delta<\nu$ such that for every $\pi(\mu_\eta)\in \mathbf{d}\setminus \delta$, $\eta+1=\min(\eta,\theta)_{\mathcal T}$ and $E^{\mathcal T}_{\eta}$ is equivalent to $U(\mathcal M^{\mathcal T}_{\eta},\mu_\eta,\overline\lambda_{\eta})$.
\end{lemma}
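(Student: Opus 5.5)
We argue by combining Lemma \ref{L7-6} with Lemma \ref{L7-3} and the hypothesis $\beta=\lambda+1$. By Lemma \ref{L7-6}, after discarding an initial segment of $\mathbf d$, every $\pi(\mu_\eta)\in\mathbf d$ satisfies $\mathrm{lh}(E^{\mathcal T}_{\zeta})\ge\overline\lambda^*_\eta$, where $\zeta+1=\min(\eta,\theta)_{\mathcal T}$. By Lemma \ref{L7-3} there are then two possibilities. \emph{(i)} $E^{\mathcal T}_\zeta$ has a generator above $\mu_\eta$; then $o^{\mathcal M_X}(\mu_\eta)=(\mu_\eta^{++})^{\mathcal M_X}$. \emph{(ii)} $\zeta=\eta$ and $E^{\mathcal T}_\eta$ is equivalent to some normal measure $U(\mathcal M^{\mathcal T}_\eta,\mu_\eta,\overline\xi_\eta)$ with $\overline\xi_\eta\ge\overline\lambda_\eta$. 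It therefore suffices to show that, for all but boundedly many $\eta$ with $\pi(\mu_\eta)\in\mathbf d$, case \emph{(i)} fails and $\overline\xi_\eta=\overline\lambda_\eta$.

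Suppose not, and shrink $\mathbf d$ to a cofinal subsequence $\mathbf d'$ of order type $\mathrm{cf}(\nu)$ on which the failure always occurs. By closure of $X$ under $\mathrm{cf}(\nu)$-sequences, $\mathbf d'\in X$. On $\mathbf d'$ we define a new generating function $g$ as follows. If $\overline\xi_\eta>\overline\lambda_\eta$ (case \emph{(ii)}), let $\overline g(\mu_\eta)=\overline\xi_\eta$. In case \emph{(i)}, let $\overline g(\mu_\eta)$ be the index of the measure derived from $i^{\mathcal T}_{\eta,\theta}$, which lies strictly above $\overline\lambda_\eta$ because $\mathrm{lh}(E^{\mathcal T}_\zeta)\ge\overline\lambda^*_\eta$ and the first generator sits above $\overline\lambda^*_\eta$. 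In both cases $\overline g(\mu_\eta)$ is below $o^{\mathcal M^{\mathcal T}_\eta}(\mu_\eta)$, or equal to it if $E^{\mathcal T}_\eta$ is the top measure.

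Using coherence of the iteration exactly as in Cases 1 and 2 of Lemma \ref{L3-2}, $\pi^{-1}[\mathbf d']$ with $\overline g$ generates the measure $U(\mathcal M_X,\mu,\overline\xi)$, where $\overline\xi=i^{\mathcal T}_{\eta,\theta}(\overline g(\mu_\eta))$ for large $\eta$ on a further shrinking. We can arrange this image to be constant on a further subsequence, since the $\overline\xi$'s lie in $(\mu^{++})^{\mathcal M_X}$ and $\mathrm{cf}(\nu)<\mu$. Moreover $\overline\xi>\overline\lambda$. Corollary \ref{C7-5} then transfers this to $W$: $\pi[\overline{\mathbf d'}]$ generates $U(W,\nu,\pi(\overline\xi))$ with $\pi(\overline\xi)>\lambda$. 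This contradicts the minimality of $\beta=\lambda+1$, since $U(W,\nu,\lambda+1)$ would have a generating sequence, and in the case $\pi(\overline\xi)>\lambda+1$ we argue as in Claim \ref{c7-6-1}.

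The main obstacle is case \emph{(i)}. There we must check that the measure derived from $i^{\mathcal T}_{\eta,\theta}$ really has index strictly above $\overline\lambda_\eta$ and below $o(\mu_\eta)$, and that the generation pattern is genuinely of the form required by the definition of a generating sequence, using the restriction $E^{\mathcal T}_\zeta\upharpoonright\xi$ from Lemma \ref{L7-3}. A further point needs care. Minimality of $\beta$ only forbids a generating sequence for $U(W,\nu,\lambda+1)$ when $\lambda+1<o^K(\nu)$. If $\pi(\overline\xi)$ is larger, we first use Claim \ref{c7-6-1}-style disjoint sets, together with the fact that $\mathbf d'$ already generates $U(W,\nu,\lambda)$. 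This shows that the same sequence cannot generate both $U(W,\nu,\lambda)$ and a different measure, so the $\overline g$ computed from the tree must agree with $f$, which forces $\overline\xi_\eta=\overline\lambda_\eta$ on a tail. We expect this last route, rather than the appeal to minimality of $\beta$, to be the cleanest.
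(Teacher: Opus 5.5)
Your proposal has a genuine gap: the function you build does not yield a single generated measure, and your fallback never uses $\beta=\lambda+1$, which the lemma needs.

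\textbf{The exact-index function.} You set $\overline g(\mu_\eta)$ to be the index $\overline\xi_\eta$ of the measure applied at $\mu_\eta$.
\begin{itemize}
\item In case (ii) this gives $\overline g(\mu_\eta)=\overline\xi_\eta=o^{\mathcal M_X}(\mu_\eta)$. So the operative clause is ``$\mu_\eta\in x$''.
\item By elementarity, $\mu_\eta\in x$ iff $x\in U(\mathcal M_X,\mu,i^{\mathcal T}_{\eta,\theta}(\overline\xi_\eta))$. This measure depends on $\eta$.
\item Your step making $i^{\mathcal T}_{\eta,\theta}(\overline\xi_\eta)$ constant on a subsequence is unjustified. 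A $\mathrm{cf}(\nu)$-sequence of ordinals below $(\mu^{++})^{\mathcal M_X}$ can be injective, and $\mathrm{cf}(\nu)<\mu$ gives no pigeonhole.
\item Without a constant index you have not identified any $U(\mathcal M_X,\mu,\overline\xi)$ to which Corollary \ref{C7-5} could apply.
\item Even with a constant $\overline\xi>\overline\lambda+1$, minimality of $\beta$ gives no contradiction directly.
\end{itemize}

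\textbf{The fallback fails.} Generation is relative to the function, and one sequence can generate several measures. In Lemma \ref{L3-2} a single sequence generates both $U(\mathcal M_X,\mu,0)$ and $U(\mathcal M_X,\mu,1)$. So nothing forces $\overline g=\overline f$.

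Concretely, in your case (ii) with $\overline\xi_\eta>\overline\lambda_\eta$, the order $\overline\lambda_\eta$ measure has index $\overline\lambda^*_\eta<\mathrm{lh}(E^{\mathcal T}_\eta)$, so it survives into $\mathcal M_X$. Then $\overline f(\mu_\eta)=\overline\lambda_\eta$, using the second clause, is fully consistent with the tree: $x\in U(\mathcal M_X,\mu,\overline\lambda)$ iff $x\cap\mu_\eta\in U(\mathcal M_X,\mu_\eta,\overline\lambda_\eta)$. The separation argument of Claim \ref{c7-6-1} only bites when $f$ is in the ``$\alpha\in A$'' clause. Hence the failure configuration is compatible with every hypothesis of the lemma except $\beta=\lambda+1$, and that hypothesis must be used.

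\textbf{The missing idea.} Choose the new function uniformly: $g(\pi(\mu_\eta))=\pi(\overline\lambda_\eta)+1$, which lies in $X$ by $\mathrm{cf}(\nu)$-closure. On the failure set, $\mathrm{lh}(E^{\mathcal T}_\zeta)>\overline\lambda^*_\eta$. There are two cases.
\begin{itemize}
\item If $E^{\mathcal T}_\eta$ is exactly the order $\overline\lambda_\eta+1$ measure, then $o^{\mathcal M_X}(\mu_\eta)=\overline\lambda_\eta+1$. In this case $x\in U(\mathcal M_X,\mu,\overline\lambda+1)$ iff $\mu_\eta\in x$.
\item Otherwise, including your case (i), the order $\overline\lambda_\eta+1$ measure lies below the length of the applied extender and survives into $\mathcal M_X$. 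Then $x\in U(\mathcal M_X,\mu,\overline\lambda+1)$ iff $x\cap\mu_\eta\in U(\mathcal M_X,\mu_\eta,\overline\lambda_\eta+1)$. This uses elementarity of $i^{\mathcal T}_{\eta,\theta}$ and $i^{\mathcal T}_{\eta,\theta}(\overline\lambda_\eta+1)=\overline\lambda+1$.
\end{itemize}
With this choice you never need to identify which longer extender, or which derived measure, was used, and the generated index is automatically $\overline\lambda+1$. Transferring to $W$ as in Corollary \ref{C7-5} shows that $\mathbf d$ generates $U(W,\nu,\lambda+1)$, contradicting $\beta=\lambda+1$. This is the paper's argument.
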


\begin{proof}
    Suppose otherwise.
    By our last lemma, we may shrink $\mathbf{d}$ to make sure that $\mathrm{type}(\mathbf{d}) = \mathrm{cf}(\nu)$ and for all $\pi(\mu_\eta)\in\mathbf{d}$, $\mathrm{lh}(E^{\mathcal T}_{\eta})>\overline\lambda_\eta^*$. Define a function $g:\mathbf{d}\to \mathrm{Ord}$ as follows: 
    $$g:\pi(\mu_\eta)\mapsto\pi(\overline\lambda_\eta)+1.$$
    By the closure assumption, $g\in X$. We will show that $\mathbf{d}$ is a generating sequence of $U(K,\nu,\lambda+1)$ via $g$, which leads to a contradiction.
    Let $\overline g = \pi^{-1}(g)$. It is enough to show that 
    \begin{itemize}
        \item[a.] For all $\mu_\eta\in\overline{\mathbf{d}}$, $\overline g(\mu_\eta)\leq o^{\mathcal M_X}(\mu_\eta)$;
        \item[b.] For all $x\in\mathcal P(\mu)\cap \mathcal M_X$, there exists some $\delta<\mu$ such that for all $\mu_\eta\in \overline{\mathbf{d}}\setminus\delta$, $x\in U(\mathcal M_X,\mu,\overline\lambda+1)$ if and only if
        $$\left\{\begin{aligned}&\mu_\eta\in x; &\text{ if }\overline{g}(\mu_\eta) = o^{\mathcal M_X}(\mu_\eta);\\
        &x\cap \mu_\eta\in U(\mathcal M_X,\mu_\eta,\overline g(\mu_\eta));&\text{ if }\overline g(\mu_\eta)<o^{\mathcal M_X}(\mu_\eta).\end{aligned}\right.$$
    \end{itemize}

    Assertion a is followed by the fact that
    $$U(\mathcal M^{\mathcal T}_{\eta},\mu_\eta,\overline\lambda_\eta) = U(\mathcal M_X,\mu_\eta,\overline\lambda_\eta),$$
    and thus $o^{\mathcal M_X}(\mu_\eta)\geq \overline\lambda_\eta+1$. 
    To validate assertion b, we fix some $x\in \mathcal P(\mu)\cap \mathcal M_X$. Let $\mu_\eta\in \overline{\mathbf d}$ be large enough such that $x\in\mathrm{ran}(i^{\mathcal T}_{\eta,\theta})$ and let 
    $$x_\eta = x\cap \mu_\eta = (i^{\mathcal T}_{\eta,\theta})^{-1}(x).$$
    Suppose $\overline g(\mu_\eta) = o^{\mathcal M_X}(\mu_\eta)$. Then 
    $$o^{\mathcal M^{\mathcal T}_{\eta+1}}(\mu_\eta) = o^{\mathcal M_X}(\mu_\eta) = \overline{\lambda}_\eta+1,$$
    which, by Lemma \ref{L7-3}, tells us that $E^{\mathcal T}_{\eta}$ is equivalent with $U(\mathcal M^{\mathcal T}_{\eta},\mu_\eta,\overline{\lambda}_\eta+1)$, and $\eta+1 = \min(\eta,\theta)_{\mathcal T}$. Thus 
    $$x\in U(\mathcal M_X,\mu,\overline\lambda+1)\iff x_\eta\in U(\mathcal M^{\mathcal T}_\eta,\mu_\eta,\overline\lambda_\eta+1)\iff \mu_\eta\in x.$$
    The other part can be shown similarly.
\end{proof}

Let $C^*$ be the collection of all $\pi(\mu_\eta)\in C$ such that $\eta+1=\min(\eta,\theta)_{\mathcal T}$ and $E^{\mathcal T}_{\eta}$ is equivalent to $U(\mathcal M^{\mathcal T}_{\eta},\mu_\eta,\overline\lambda_{\eta})$. The above lemma tells us that for all generating sequence $\mathbf{c}$ of $U(W,\nu,\lambda)$, $\mathbf{c}\subseteq^* C^*$. The only remaining argument against the theorem is the following one.

\begin{lemma}\label{L7-18}
    There exists a $\delta<\nu$ such that $\mathrm{type}(C^*\setminus \delta) = \omega$.
\end{lemma}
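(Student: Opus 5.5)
Suppose the conclusion fails, so that the limit points of $C^*$ are cofinal in $\nu$; pulling back by $\pi$, the limit points of $\overline C^* = \pi^{-1}[C^*]$ are cofinal in $\mu$. The aim is to derive a contradiction in the same way as Lemma \ref{L3-3} and the final step of Section 5: produce a limit point of the critical sequence at which $W$ must have a measure of order at least $\overline\lambda_\eta$-type, contradicting what the iteration $\mathcal T$ actually used there. Concretely, pick a limit point $\mu_{\eta^*}$ of $\overline C^*$ with $\eta^*\in I$ (limit points of $\overline C^*$ lie in $\overline C$ because $I$ is club in $[\zeta,\theta)_{\mathcal T}$ by Lemma \ref{L2-11}). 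Choose it so that $\mathrm{cf}(\mu_{\eta^*})=\omega$, and so that $\pi(\mu_{\eta^*})$ has countable cofinality in $V$ while being regular in $W$, since it is measurable in $\overline W$ by elementarity of the tree maps. Taking the $\omega$-th element of $\overline C^*\setminus\delta$ gives this, and the closure ${}^{\mathrm{cf}(\nu)}X\subseteq X$ gives $\overline C^*\cap\mu_{\eta^*}\in N_X$.

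Next I would show that $\overline C^*\cap\mu_{\eta^*}$, which has order type $\omega$, is a generating sequence over $\mathcal M^{\mathcal T}_{\eta^*}$ (equivalently over $\overline W$, by agreement below $\mu$) for the measure $U(\mathcal M^{\mathcal T}_{\eta^*},\mu_{\eta^*},\overline\lambda_{\eta^*})$, via $\mu_\eta\mapsto\overline\lambda_\eta$. For each $\eta<\eta^*$ in $\overline C^*$, the extender $E^{\mathcal T}_\eta$ is equivalent to $U(\mathcal M^{\mathcal T}_\eta,\mu_\eta,\overline\lambda_\eta)$ with $\eta+1=\min(\eta,\theta)_{\mathcal T}$. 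The computation should then be the Case 1/Case 2 computation in the proof of Lemma \ref{L3-2} and Lemma \ref{L7-7}b, carried out with $\eta^*$ in place of $\theta$. The point is that $i^{\mathcal T}_{\eta,\eta^*}(\overline\lambda_\eta)=\overline\lambda_{\eta^*}$ by the definition of $\overline\lambda_\eta$ via $i^{\mathcal T}_{\eta,\theta}$ and commutativity. Applying $\pi$, together with Corollary \ref{C7-5} (relativized to $\mu_{\eta^*}$, with the comparison/phalanx argument of \cite[Lemma 5.2]{MS2}), then shows that $W$ has a measure $U(W,\pi(\mu_{\eta^*}),\pi(\overline\lambda_{\eta^*}))$, and that $\pi[\overline C^*\cap\mu_{\eta^*}]$ generates it. The same argument should work for $\pi(\mu_{\eta^*})$ in $K$ via Theorem \ref{T2-1}.

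To get the contradiction, I would apply the dichotomy of Lemma \ref{L7-3} at $\eta^*$. Either $E^{\mathcal T}_{\eta^*}$ is equivalent to a measure of order $\overline\lambda_{\eta^*}$, in which case $\mu_{\eta^*}$ itself is in $\overline C^*$. Or it has a second generator, in which case $o^{\mathcal M_X}(\mu_{\eta^*})=\mu_{\eta^*}^{++}$. In the first case I get no contradiction directly, so I would instead restrict to limit points outside $\overline C^*$. If $\overline C^*$ had cofinally many limit points inside $\overline C^*$, I would argue as in Lemma \ref{L7-7} with the function $\mu_\eta\mapsto\overline\lambda_\eta+1$ on the limit points. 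This would make a cofinal $\omega$-subsequence of limit points a generating sequence for $U(W,\nu,\lambda+1)$, contradicting $\beta=\lambda+1$, because at a limit point the measure used by $\mathcal T$ has order $\overline\lambda_\eta$ while the generated one is of order $\overline\lambda_\eta$ over $\mu_\eta$ and is \emph{not} the one applied there. In the second case, the subsequence of limit points should similarly generate a measure of order greater than $\lambda$, again contradicting $\beta=\lambda+1$ by Corollary \ref{C7-5}.

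I expect the main obstacle to be the bookkeeping in this last step: showing that the pattern at limit points really yields a generating sequence for some $U(W,\nu,\xi)$ with $\lambda<\xi$, with the function lying in $X$. It requires the indices $\overline\lambda_\eta$ to cohere along the tree, and the Lemma \ref{L7-3} analysis to give exactly order $\overline\lambda_\eta+1$ or larger in $\mathcal M_X$ at the limit points.
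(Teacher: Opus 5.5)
Your outline has the right ingredients: limit points of $\overline C^*$, the fact that $\overline C^*\cap\mu_\eta$ generates $U(\mathcal M^{\mathcal T}_\eta,\mu_\eta,\overline\lambda_\eta)$, rerunning the covering argument at $\pi(\mu_\eta)$, and a final generating sequence for $U(W,\nu,\lambda+1)$. But you never draw the consequence that connects them, and the case analysis you put in its place is wrong.

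Once $U(W,\pi(\mu_\eta),\pi(\overline\lambda_\eta))$ exists, elementarity and the agreement of $\overline W$ with $\mathcal M_X$ show that $U(\mathcal M_X,\mu_\eta,\overline\lambda_\eta)$ exists. Applying a measure of order $\xi$ at $\mu_\eta$ leaves $o(\mu_\eta)=\xi$ in the ultrapower, and $\mathcal M_X$ computes $o(\mu_\eta)$ as $\mathcal M^{\mathcal T}_{\eta+1}$ does. Hence $\mathrm{lh}(E^{\mathcal T}_\eta)>\overline\lambda^*_\eta$ at every such limit point. This is what the paper draws from its Claim, and it does all the work:
\begin{itemize}
\item it shows outright that a limit point cannot lie in $\overline C^*$, so your ``first case'' is contradictory, not merely unhelpful;
\item it excludes measures of order below $\overline\lambda_\eta$;
\item it leaves only a normal measure of order $>\overline\lambda_\eta$, or an extender with extra generators.
\end{itemize}
In both remaining situations $o^{\mathcal M_X}(\mu_\eta)\ge\overline\lambda_\eta+1$. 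So the Lemma~\ref{L7-7} computation with $g(\pi(\mu_\eta))=\pi(\overline\lambda_\eta)+1$ goes through uniformly, and a cofinal subsequence of limit points generates $U(W,\nu,\lambda+1)$. The target must be exactly $\lambda+1$, since only $U(K,\nu,\lambda+1)$ is assumed to lack a generating sequence; ``a measure of order greater than $\lambda$'' does not suffice.

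Concretely, your dichotomy misreads Lemma~\ref{L7-3}. That lemma says either $E^{\mathcal T}_\zeta$ has extra generators, or $\zeta=\eta$ and $E^{\mathcal T}_\eta$ is equivalent to a normal measure of \emph{some} order. The case of a measure of order $>\overline\lambda_\eta$, which is the main surviving case, is missing from your analysis.

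Your treatment of limit points inside $\overline C^*$ also fails. There $E^{\mathcal T}_\eta\simeq U(\mathcal M^{\mathcal T}_\eta,\mu_\eta,\overline\lambda_\eta)$ gives $o^{\mathcal M_X}(\mu_\eta)=\overline\lambda_\eta$, so $\mu_\eta\mapsto\overline\lambda_\eta+1$ violates $g(\alpha)\le o(\alpha)$. Such points only generate the order-$\overline\lambda$ measure, Prikry-style. Your remark that the generated measure ``is not the one applied there'' contradicts the case hypothesis.

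Three smaller points:
\begin{itemize}
\item The covering step must be run at every limit point of the final subsequence, not at one chosen $\eta^*$. The paper does this using ${}^{\mathrm{cf}(\nu)}X\subseteq X$ and limit points of cofinality $<\mathrm{cf}(\nu)$.
\item That subsequence should have order type $\mathrm{cf}(\nu)$, not $\omega$, since $\mathrm{cf}(\nu)=\omega$ is only derived after this lemma.
\item Measurability of $\mu_{\eta^*}$ in $\overline W$ does not follow a priori from the tree maps; it fails if an order-$0$ measure was applied at $\eta^*$. It is exactly what the covering step establishes, and only regularity is needed beforehand.
\end{itemize}
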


\begin{proof}
    Let $\overline{C}^* = \pi^{-1}[C^*]$. 
    If the lemma fails, $\lim \overline C^*$ is cofinal in $\mu$.
    Let $\overline C' = \lim \overline C^*$.
    Notice that since $\overline C$ is a club in $\mu$, $\overline C'\subseteq \overline C$. Pick a cofinal subsequence $\overline C''\subseteq \overline C'$ with $\mathrm{type}(\overline C'') = \mathrm{cf}(\nu)$ and let $C'' = \pi[\overline C'']$. Then $C''\in X$.
    \begin{claim}\label{c7-8-1}
        For each $\mu_\eta\in\overline C''$ with $\overline\lambda\in \mathrm{ran}(i^{\mathcal T}_{\eta,\theta})$, $U(\mathcal M_X,\mu_\eta,\overline\lambda_\eta)$ exists.
    \end{claim}
    \begin{proof}
        Without loss of generality, we may assume that each $\mu_{\eta}\in\overline C''$ has cofinality $<\mathrm{cf}(\nu)$. Fix such $\mu_\eta$ with $\overline\lambda\in \mathrm{ran}(i^{\mathcal T}_{\eta,\theta})$. Let $\overline{\mathbf{c}}\subseteq \overline C''$ be a cofinal subsequence with $\mathrm{type}(\overline{\mathbf{c}}) = \mathrm{cf}(\pi(\mu_\eta))$. Let $\mathbf{c} = \pi[\overline{\mathbf{c}}]$. Then $\mathbf{c}\in X$. It is then clear that $\overline{\mathbf{c}}$ generates $U(\mathcal M^{\mathcal T}_{\eta},\mu_\eta,\overline\lambda_\eta)$ by the function $o^{\mathcal M^{\mathcal T}_{\eta}}\upharpoonright\mathbf{c}$.
        
        Readers may recall what we did in the last part of Section 6:
        We look at $\pi(\mu_\eta)$ as the target ordinal in the covering machinery instead of $\nu$.
        All relevant results can be proved for $\pi(\mu_\eta)$. 
        In particular, $U(W,\pi(\mu_\eta),\pi(\overline\lambda_\eta))$ exists. 
        Thus, by elementarity, $U(W_X,\mu_\eta,\overline\lambda_\eta)$ exists. By agreement between $W_X$ and $\mathcal M_X$, this measure is exactly $U(\mathcal M_X,\mu_\eta,\overline\lambda_\eta)$.
    \end{proof}
    Thus, $\mathrm{lh}(E^{\mathcal T}_{\eta})>\overline\lambda^*_\eta$ for all such $\mu_\eta$.
    Using a similar technique as in the proof of Claim \ref{c7-6-1}, $C''$ generates $U(W,\nu,\lambda+1)$, which is contradictory to the definition of $\lambda$.
\end{proof}

Therefore, $\mathrm{cf}(\nu) = \omega$. Let $\delta$ be as above and $\mathbf{c}=C^*\setminus \delta$. Then $\mathbf{c}$ is a Prikry sequence for $U(W,\nu,\lambda)$. Our earlier lemmas in this section together imply that:
\begin{lemma}\label{L7-9}
    For every generating sequence $\mathbf{d}\subseteq \nu$ of $U(W,\nu,\lambda)$, $\mathbf{d}\subseteq^* \mathbf{c}$. 
\end{lemma}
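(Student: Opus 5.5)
The plan is to reduce Lemma~\ref{L7-9} to Lemma~\ref{L7-7} and Lemma~\ref{L7-18}, using elementarity of $X$ to bring an arbitrary generating sequence into $X$. Fix a generating sequence $\mathbf d\subseteq\nu$ of $U(W,\nu,\lambda)$, witnessed by some function $f$. By Lemma~\ref{L7-18} we know $\mathrm{cf}(\nu)=\omega$, so we may thin $\mathbf d$ to a cofinal subsequence of order type $\omega$. Any cofinal subsequence of a generating sequence is again generating, via the restricted function. Since $\mathbf c\in X$, $W\in X$ and $\lambda\in X$, the statement ``$\mathbf d\subseteq^*\mathbf c$'' is first order over $H_{\Omega^+}$ in parameters from $X$. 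Therefore, if some counterexample exists, elementarity yields a counterexample $\mathbf d\in X$ together with its function $f\in X$. By ${}^{\mathrm{cf}(\nu)}X\subseteq X$ we also get $\mathbf d\subseteq X$. Note that $\mathbf d\not\subseteq^*\mathbf c$ means $\mathbf d\setminus\mathbf c$ is infinite, and any infinite subset of a generating sequence is again generating. So we may further assume $\mathbf d\cap\mathbf c=\emptyset$, with $\mathbf d\in X$ of order type $\omega$.

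Next we show $\mathbf d\subseteq^* C$, where $C=C_X$ is the critical sequence. This is the analogue of Lemma~\ref{L3-4}, and I would rerun its four-case argument (set mouse/weasel, protomouse or not) for $\mathcal Q_X$. In each case that argument produces a club $A\in W$ such that $C\subseteq^*A$ and $\pi(d)\notin A$ for every sufficiently large $d\in\pi^{-1}[\mathbf d]$ lying strictly between consecutive critical points. Here the ``consecutive'' step must be replaced by the general fact that $\mu_\xi$ is the sup of the hull of $(\mu_\eta+1)\cup p$ in $\mathcal M^{\mathcal T}_\xi$ whenever $\mu_\eta<d<\mu_\xi$ with no element of $\overline C$ in between. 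This uses that $\overline C$ is closed, which holds by Lemma~\ref{L2-11}. Since every generating sequence is almost contained in every club of $W$, as remarked after the definition of generating sequence, we get $\mathbf d\setminus C$ finite.

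Now $\mathbf d\subseteq^* C$ is a generating sequence via $f\in X$. Lemma~\ref{L7-7} then gives $\delta<\nu$ such that every $\pi(\mu_\eta)\in\mathbf d\setminus\delta$ satisfies $\eta+1=\min(\eta,\theta)_{\mathcal T}$ and $E^{\mathcal T}_\eta\simeq U(\mathcal M^{\mathcal T}_\eta,\mu_\eta,\overline\lambda_\eta)$. That is, $\mathbf d\setminus\delta\subseteq C^*$. Since $\mathbf c=C^*\setminus\delta_0$ for the $\delta_0$ of Lemma~\ref{L7-18}, it follows that $\mathbf d\subseteq^*\mathbf c$. This contradicts $\mathbf d\cap\mathbf c=\emptyset$ with $\mathbf d$ infinite.

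The main obstacle is the middle step: adapting Lemma~\ref{L3-4} when $\overline C$ need not have order type $\omega$ and $o^{\mathcal M_X}(\mu)>1$. There, between two critical points the iteration may apply extenders that are not measures of order $0$. I would first try to argue using only soundness above the critical points and closure of $\overline C$, since the hull computation never used the order of the measures. Where a gap $(\mu_\eta,\mu_\xi)$ has $\mu_\xi$ a limit of $\overline C$, I would instead show that no element of $\pi^{-1}[\mathbf d]$ lies in such a gap cofinally often.
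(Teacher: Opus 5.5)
Your proposal is correct and is essentially the paper's argument. The paper gets Lemma~\ref{L7-9} by combining two things: the containment $\mathbf d\subseteq^* C_X$ for generating sequences lying in $X$, which it asserts in the section's outline by analogy with Lemmas~\ref{L3-4} and~\ref{L6-10}, and Lemmas~\ref{L7-7} and~\ref{L7-18}; your elementarity reduction to $\mathbf d\in X$ with $f\in X$ and $\mathbf d\cap\mathbf c=\emptyset$ just makes explicit what the paper leaves implicit. The worry in your last paragraph is vacuous: since $\overline C$ is closed, every $d\notin\overline C$ above $\min\overline C$ lies strictly between $\max(\overline C\cap d)$ and the next element of $\overline C$, and that next element is never a limit point of $\overline C$.
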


Therefore, $\mathbf{c}$ is the maximal Prikry sequence of $U(W,\nu,\lambda)$. Thus, Theorem \ref{T7-1} is shown.

\appendix
\section{}\label{Appendix}

The following lemma is a generalized version of Lemma \ref{L3-4}, and was used in Section 5 as Lemma \ref{L6-5}.
We record its proof here.
Recall that we have constructed a structure $X$ before Lemma \ref{L6-4}.
We will fix our attention on this specific structure, and omit some unnecessary decoration $X$ on notations.
So instead of $W_X$, we shall use $\overline W$, etc.
Readers may go back to Section 2, 3 or 6 for our convention of notations. 

\begin{lemma}\label{LA-1}
    Suppose $\mathcal M_X\neq \mathcal Q_X$. 
    By the construction of $\mathcal Q$-structures, it implies that $\mathcal M_X$ is a Type II active set mouse.
    Take a large enough ordinal $\xi<\mu$ such that
    \begin{itemize}
        \item $(\kappa^+)^{\mathcal M_X}\leq \xi$, where $\kappa = \mathrm{crit}(\dot F^{\mathcal M_X})$;
        \item $p_1(\mathcal M_X)\cap \mu\subseteq \xi$;
        \item $s(\dot F^{\mathcal M_X})\cap \mu\subseteq \xi$;
        \item $p_1(\mathcal M_X)$ is $\Sigma_1$-definable from parameters in $\xi\cup (s(\dot F^{\mathcal M_X})\setminus \mu)$;
        \item $s(\dot F^{\mathcal M_X})$ is $\Sigma_1$-definable from parameters in $\xi\cup (p_1(\mathcal M_X)\setminus \mu)$.
    \end{itemize}
    Then one of the following holds:
    \begin{itemize}
        \item $\mathcal Q_X$ is a set mouse, and
        $$\mathcal P(\mu)\cap \mathrm{Hull}^{\mathcal M_X}_{1}(\xi\cup p_{1}(\mathcal M_X)) = \mathcal P(\mu)\cap \mathrm{Hull}^{\mathcal Q_X}_{n+1}(\xi\cup t_\mu).$$
        \item $\mathcal Q_X$ is a weasel. For every thick class $\Gamma$ of ordinals fixed by the embedding 
        $$W\xrightarrow{i^{\mathcal T}_{0,\lambda_\ell}}\mathcal P_{\lambda_\ell}\xrightarrow{\phi_{\ell}}\mathcal Q_{X},$$
        we have
        $$\mathcal P(\mu)\cap \mathrm{Hull}^{\mathcal M_X}_1(\xi\cup p_{1}(\mathcal M_X)) = \mathcal P(\mu)\cap \mathrm{Hull}^{\mathcal Q_X}(\xi\cup t_\mu\cup \Gamma).$$
    \end{itemize}
\end{lemma}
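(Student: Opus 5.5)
The plan is to follow the proof of Lemma~\ref{L2-4} (Lemma 2.1 of \cite{MS2}), upgrading the conclusion from bounded ordinals below $\mu$ to subsets of $\mu$. Fix the decomposition
$$\mathcal P_{\lambda_\ell}=\mathcal Q_{\lambda_\ell}\xrightarrow{\dot F^{\mathcal P_{\lambda_{\ell-1}}}}\cdots\xrightarrow{\dot F^{\mathcal P_{\lambda_0}}}\mathcal Q_{\lambda_0}=\mathcal Q_X,$$
with $\mathcal P_{\lambda_0}=\mathcal M_X$ and $\phi_i:\mathcal Q_{\lambda_i}\to\mathcal Q_X$ the composed ultrapower maps. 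Write $F=\dot F^{\mathcal M_X}$, $\kappa=\mathrm{crit}(F)$ and $\lambda_1=(\kappa^+)^{\mathcal M_X}$. The argument goes by induction on $\ell$; the inductive hypothesis is the same statement for $\mathcal Q_{\lambda_1}$ in place of $\mathcal Q_X$.

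\textbf{Step 1 ($\mathcal M_X$ into $\mathcal Q_X$).} Since $\mathcal M_X$ is type II active, every element of $\mathrm{Hull}^{\mathcal M_X}_1(\xi\cup p_1(\mathcal M_X))$ that is a subset of $\mu$ has the form $i_F(f)(a)$, where $a$ is built from $\xi$ and $s(F)$ and $f\in\mathcal M_X|\lambda_1$. This uses the Dodd-solidity/Dodd-squash facts from \cite[Section 2]{MS2} together with the hypotheses that $p_1(\mathcal M_X)$ and $s(F)$ are mutually $\Sigma_1$-interdefinable over $\xi$. Now $\mathcal Q_X=\mathrm{Ult}_{n}(\mathcal Q_{\lambda_1},F)$, and $\mathcal Q_{\lambda_1}$ agrees with $\mathcal M_X$ below $\lambda_1$. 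Hence each such $f$ lies in $\mathcal Q_{\lambda_1}$ and is definable there over $\lambda_1\cup t_{\lambda_1}$ (or over $\lambda_1\cup t_{\lambda_1}\cup\Gamma$ in the weasel case), because $\mathcal Q_{\lambda_1}$ has the corresponding hull property at $\lambda_1$. Applying the ultrapower map $\phi_1$ places $i_F(f)(a)$ inside $\mathrm{Hull}^{\mathcal Q_X}_{n+1}(\xi\cup t_\mu)$, or inside the $\Gamma$-hull. For this one needs $\phi_1(t_{\lambda_1})$ and $s(F)\setminus\mu$ to lie in $t_\mu$, which holds by the definition of $t_\mu$, and $\xi\ge\lambda_1$.

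\textbf{Step 2 ($\mathcal Q_X$ into $\mathcal M_X$).} A subset of $\mu$ in the hull on the $\mathcal Q_X$ side is $\tau^{\mathcal Q_X}[b,t_\mu]$ with $b\in[\xi]^{<\omega}$. Write it as $i_F(g)(b')$, with $g$ in $\mathcal Q_{\lambda_1}$ defined from $t_{\lambda_1}$, and by induction reduce it to the $\mathcal P_{\lambda_1}$-side hull. The key observation is that the relevant part of $g$ is coded by a subset of $\lambda_1$ lying in $\mathcal M_X|\lambda_1$. Its $F$-image is then computed inside $\mathcal M_X$ in a $\Sigma_1$ way from $s(F)$, and hence from $p_1(\mathcal M_X)$ using the interdefinability hypothesis.

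\textbf{Main obstacle.} The hard part will be Step 2 in the weasel case. There one must check that the $\Gamma$-hull of $\mathcal Q_X$ yields nothing new below $\mu$. My approach is to take $\Gamma$ fixed by $i^{\mathcal T}_{0,\lambda_\ell}$ and $\phi_\ell$, and then use the $t$-hull property of $\mathcal Q_{\lambda_\ell}=\mathcal P_{\lambda_\ell}$ to compress each relevant $g$ to a bounded subset of $\lambda_\ell$. Such a set lies in $\mathcal M_X$ by agreement, after which the argument concludes exactly as in the set mouse case. Throughout, I take the degree bookkeeping (the $n$-ultrapowers, and $n(\mu)=n(\lambda_1)$) verbatim from \cite{MS2}.
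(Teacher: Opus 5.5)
Your Step 1 (the inclusion of the $\mathcal M_X$-hull into the $\mathcal Q_X$-hull) has a gap. Having written $B=i_F(f)(a)$ with $f\in\mathcal M_X|\lambda_1$, you argue as follows: $f$ is definable in $\mathcal Q_{\lambda_1}$ over $\lambda_1\cup t_{\lambda_1}$, and $\xi\ge\lambda_1$, so applying $\phi_1$ puts $B$ into $\mathrm{Hull}^{\mathcal Q_X}_{n+1}(\xi\cup t_\mu)$. But from $f=\tau^{\mathcal Q_{\lambda_1}}[\beta,t_{\lambda_1}]$ with $\beta\in[\lambda_1]^{<\omega}$ you only get $\phi_1(f)=\tau^{\mathcal Q_X}[\phi_1(\beta),\phi_1(t_{\lambda_1})]$.

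The map $\phi_1$ is not the identity on $[\kappa,\lambda_1)$; already $\phi_1(\kappa)=i_F(\kappa)\ge\mu>\xi$. So $\xi\ge\lambda_1$ puts $\beta$, not $\phi_1(\beta)$, into the hull. The hull property of $\mathcal Q_{\lambda_1}$ at $\lambda_1$ also plays no role, since every $f\in\mathcal Q_{\lambda_1}|\lambda_1$ is trivially definable from an ordinal below $\lambda_1$. The extender $F$ is not on the $\mathcal Q_X$-sequence, so $\phi_1\upharpoonright(\mathcal M_X|\lambda_1)$ cannot simply be read off inside $\mathcal Q_X$. Showing that $\phi_1(f)(a)$ lies in the hull of $\xi\cup t_\mu$ (or of $\xi\cup t_\mu\cup\Gamma$) is exactly the nontrivial content of this inclusion.

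The paper's sketch does not elaborate on this point either: it records only $h\in\mathcal Q_\lambda$ and $s\subseteq q$. So you cannot just defer to it; you need an actual argument. One natural attempt is a factor-map comparison of $\mathcal Q_X$ with $\mathrm{Ult}_n(\mathcal Q_{\lambda_1},\bar F)$, where $\bar F$ is the top extender of the transitive collapse of $\mathrm{Hull}^{\mathcal M_X}_1(\xi\cup p_1(\mathcal M_X))$. The aim would be to reduce this subset version to the ordinal version, Lemma \ref{L2-4}.

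Your Step 2 is what the paper does. You write $B=\tau^{\mathcal Q_X}[a,q,\gamma]$ as $[a\cup q,f]_F$ with $f(-,-)=\tau^{\mathcal Q_{\lambda_1}}[-,-,\gamma]$. Since $B\subseteq\mu$, $f$ can be taken with values in $\mathcal P(\kappa)$, so it is coded by a subset of $\kappa$ and lies in $\mathcal M_X|\lambda_1$. Then $B$ is $\Sigma_1$ over $\mathcal M_X$ from $f$, $a$ and $s(F)\setminus\mu$, hence from $\xi\cup p_1(\mathcal M_X)$.

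Some of your extra machinery is unnecessary:
\begin{itemize}
\item The paper uses no induction on $\ell$. It uses only the last ultrapower $\mathcal Q_X=\mathrm{Ult}_n(\mathcal Q_{\lambda_1},F)$ and the agreement of $\mathcal Q_{\lambda_1}$ with $\mathcal M_X$ below $\lambda_1$; your inductive hypothesis is never actually needed.
\item The weasel case of this inclusion is not the obstacle you anticipate. The only additional point is $\phi_1(\gamma)=\gamma$, which follows from $\phi_\ell(\gamma)=\gamma$ because the intermediate ultrapower maps are nondecreasing on ordinals. Your detour through $\mathcal P_{\lambda_\ell}$ and bounded subsets of $\lambda_\ell$ can be dropped.
\end{itemize}
The real difficulty is the Step 1 issue above.
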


\begin{proof}
    Let $\lambda = (\kappa^+)^{\mathcal M_X}$ and $F = \dot F^{\mathcal M_X}$.
    Since $\mathcal Q_X$ is an ultrapower of $F$ and $\mathrm{lh}(F)>\lambda$,
    $$\mathcal{P}(\mu) \cap \mathcal{M}_X = \mathcal{P}(\mu) \cap \mathcal{Q}_X.$$
    We may first look at the left-to-right direction for both equalities.
    Let $B \in \mathcal{P}(\mu)$ with $B \in \mathrm{Hull}_{1}^{\mathcal M_X}(\alpha \cup p)$. 
    By adapting the argument from \cite[Lemma A.1]{MS2}, there exists a function $h \in \mathcal M_X\upharpoonright\lambda$ and $r \in [\mu]^{<\omega}$ such that
    $$B = [r \cup s, h]_{F}^{\mathcal M_X\upharpoonright\lambda}.$$
    Since $\mathcal{Q}_X = \mathrm{Ult}(\mathcal{Q}_\lambda, F)$ and $h \in \mathcal{Q}_\lambda$ with $s \subseteq q$, it follows that $B$ is definable over $\mathcal{Q}_X$ from parameters in $\mu \cup q$. 
    This establishes the left-to-right containment in both equations.
    
    For the other direction, we illustrate the proof by showing the second equality, where $\mathcal{Q}_X$ is a weasel. 
    The first equality can be shown in a similar way.
    Let
    $$B = \tau^{\mathcal{Q}_X}[a, q, \gamma],$$
    where $\tau$ is a Skolem term, $a \in [\mu]^{<\omega}$, and $\gamma \in \Gamma^{<\omega}$. 
    Define $f \in \mathcal{Q}_\lambda$ by
    $$f(-, -) = \tau^{\mathcal{Q}_\lambda}[-, -, \gamma].$$
    Then $[a \cup q, f]_{F}^{\mathcal{Q}_X} = B$. As $a \cup q$ is definable from $\alpha \cup p$ over $\mathcal M_X$ and $f \in \mathcal M_X$, we conclude
    $$B \in \mathrm{Hull}_{1}^{\mathcal M_X}(\alpha \cup p).$$
    This completes the proof.
\end{proof}

In doing the work in this paper, we found two minor mistakes in \cite{MS2}.
Their corrections, which are recorded below, are due to the first author of the current paper.

\subsection{First correction}
Replace Requirement 1 in Section 2, as well as in \cite{MS2}, to the following claim:
Suppose that $\nu < \Omega_1 < \Omega$ and $\Omega_1 \in X$.
Let $W_1$ be any $A_0$-soundness witness for $K \upharpoonright \Omega_1$ with $W_1 \in X$.
Put $\overline{\Omega}_1 =  \pi_X^{-1}( \Omega_1)$ and $\overline{W}_1  = \pi^{-1} ( W_1) $.
Let $\overline{\mathcal{T}_1}$ and $\mathcal{T}_1$ be the iteration trees on $\overline{W}_1$ and $W_1$
that result from comparison.
Then
$$(\delta_1^+)^{ \overline{W}_1 } < ( \delta_1^+ )^{ W_1 }.$$
and $\overline{\mathcal{T}_1}$ uses no extenders of length $< \overline{\Omega}_1$.

Requirement 1 in \cite{MS2} says nothing about $\Omega_1 \not= \Omega_0$.  In the case $\Omega_1 = \Omega_0$, it has the stronger second clause that
$\overline{\mathcal{T}}$ is trivial, meaning that no extenders are used.
Results from \cite{MSS} and \cite{MS1} about meeting the requirements are cited in Subsection 2.5 of \cite{MS2}.
These results become true with the above revision to Requirement~1.

The only place in  \cite{MS2} where the differences between Requirement~1 and its revision are relevant is in the proof of Lemma~5.2.
There, $W_X$ is an arbitrary $A_0$-soundness witness for $K \upharpoonright \Omega_0$.
Instead, we take advantage of Lemma~8.3 of \cite{St},
which allows us to specify that $W_X$ is the linear iterate of $K$ by all its order zero measures above $\Omega_0$.
With this, we sketch how to  fix the proof of \cite[5.2]{MS2}.
Put $K_X = \pi_X^{-1} ( K )$.
First we explain why $K_X$ does not move in its comparison with $K$.  Otherwise, there would be $\Omega_1$ with $\Omega_0 < \Omega_1 < \Omega$ and an $A_0$-soundness witness $W_1$ for $K \upharpoonright \Omega_1$ such that $\Omega_1, W_1 \in X$ and $\overline{W}_1$ moves by an extender of length $< \pi^{-1}( \Omega_1 ) $ in its comparison with $W_1$.  This contradicts the corrected
Requirement~1.
As a consequence, $K_X$ does not move in its comparison with $W$.
Let $\mathcal{S}$ be the iteration tree on $W$ arising from its comparison with $K_X$.
Then $K_X \lhd \mathcal{M}^\mathcal{S}_\infty$.
By elementarity,
$W_X$ is the iterate of $K_X$ by its order zero measures above $\overline{\Omega}_0$.
Let $\mathcal{I}$ be this linear iteration.
Thus, starting from $W$, we have the stack of iteration trees  $\mathcal{S} {}^\frown \mathcal{I}$ whose final model is $W_X$.\footnote{It would be interesting to know whether this stack could be normalized.}
Now observe that
$\mathcal{S}$ and $\mathcal{T}_X$ use the
same extenders of length $< \Omega_0$.  In particular,
$\mathcal{M}_X$ is also a model on $\mathcal{S}$.
Thus $\mathrm{Ult} ( W_X , 
E( \mathcal{M}_X , \mu_X ,0))$ is the kind of generalized iterate of $W$
to which the results of Steel cited in the proof of Lemma~5.2 apply.

\subsection{Second correction}

Each of the two clauses of lemma \cite[4.2]{MS2} overlooks the possibility of being  ``one ultrapower away" from its stated conclusion. 
We have replaced this lemma with Lemma \ref{L2-9} and \ref{L2-10} in Section 2 without proofs. 
We now restate those results and record their proofs below.

\begin{lemma}[Lemma 4.2(1), \cite{MS2}]\label{LA-2}
Suppose $\mathcal{S}_{XY}$ is a set mouse.
Then $\mathcal T_{XY}$ is trivial and, either
$$\mathcal{S}_{XY} \unlhd \mathcal{M}_Y,$$
or there is $\lambda < \mu_Y$ and an extender $E$ on the $\mathcal{M}_Y$ sequence such that $\mathrm{crit}(E) = \kappa$, $\lambda = (\kappa^+)^{\overline W}$ and
$$\mathcal{S}_{XY} = \mathrm{Ult}_n( \mathcal{P}^Y_\lambda, E).$$
\end{lemma}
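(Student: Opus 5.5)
We argue by comparing the phalanx $\overrightarrow{\mathcal P_Y}^\frown\mathcal S_{XY}$ (iterable by Lemma \ref{L2-8}) against $(\langle \mathcal P^Y_{(\aleph_\alpha)^{W_Y}}\mid \alpha\le\beta\rangle,\langle(\aleph_\alpha)^{W_Y}\mid\alpha\le\beta\rangle)$, with trees $\mathcal T_{XY}$ and $\mathcal U_{XY}$. The first step is to show that $\mathcal T_{XY}$ is trivial. Since $\mathcal S_{XY}$ is a set mouse, it is $(n+1)$-sound above $\mu_Y$ relative to $\Pi_{XY}(q_{\mu_X})$, and $\rho_{n+1}(\mathcal S_{XY})\le\mu_Y$. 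This follows because $\mathcal Q_X$ projects to $\mu_X$ and the ultrapower by the extender derived from $\pi_{XY}$ of length $\sup\pi_{XY}[\mu_X]=\mu_Y$ preserves this. Moreover, the phalanx agrees with $\mathcal M_Y$ below $\mu_Y$, because $\mathcal S_{XY}$ and $W_Y$ agree up to $\mu_Y$, using the agreement of $\mathcal Q_X$ with $W_X$ below $\mu_X$. We then run the standard argument as in Lemma \ref{L2-5}. If $\mathcal T_{XY}$ were nontrivial, the branch through $\mathcal S_{XY}$ would use an extender with critical point $\ge\mu_Y$ above the exchange ordinal. Soundness and projectum $\le\mu_Y$ would then force a drop, contradicting the fact that the final model on the $\mathcal U_{XY}$ side does not drop, since the $\mathcal P^Y$-models are universal enough (weasels or mice projecting appropriately). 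Hence $\mathcal S_{XY}$ is not moved, and it ends up as an initial segment of a model of $\mathcal U_{XY}$ (or as that model).

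Next we analyze $\mathcal U_{XY}$. Let $\lambda'$ be the index of the extender $E_0$ first used in $\mathcal U_{XY}$. If $\mathcal U_{XY}$ is also trivial, then comparison together with soundness gives $\mathcal S_{XY}\unlhd\mathcal M_Y$ via the usual Dodd–Jensen/soundness comparison of two sound mice projecting to $\mu_Y$. Otherwise, $\mathrm{lh}(E_0)$ exceeds $\mu_Y$ and the disagreement with $\mathcal S_{XY}$. Because $\mathcal S_{XY}$ is $(n+1)$-sound above $\mu_Y$ and is an initial segment of the last model on the $\mathcal U_{XY}$ side, only one extender can be used along the relevant branch. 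Here we invoke the standard fact that a sound mouse projecting to $\mu_Y$ can appear in a normal iterate only if at most one extender applied on the branch to it has generators below $\mu_Y$; this is the ``one ultrapower away'' phenomenon. So $E=E_0$ is on the $\mathcal M_Y$ sequence. With $\kappa=\mathrm{crit}(E)$, the phalanx rule sends $E$ to $\mathcal P^Y_\lambda$, where $\lambda=(\kappa^+)^{W_Y}$ is the least exchange ordinal above $\kappa$. The degree $n$ matches, and we conclude $\mathcal S_{XY}=\mathrm{Ult}_n(\mathcal P^Y_\lambda,E)$ by comparing the standard parameters and hulls of the two sound structures.

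The main obstacle is the second step: showing that exactly one extender is used and that the ultrapower is taken from $\mathcal P^Y_\lambda$ at degree $n$. Our first approach will be to compare $\mathrm{Hull}_{n+1}(\mu_Y\cup p)$ on both sides. Soundness of $\mathcal S_{XY}$ and of $\mathcal P^Y_\lambda$ above $\lambda$ makes the ultrapower sound above $\mu_Y$, and a second extender would create a generator above $\mu_Y$ that violates solidity of the projectum parameter. This is exactly the case that \cite[4.2]{MS2} overlooked.
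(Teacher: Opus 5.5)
Your overall shape matches the paper's: compare the two phalanxes, show the $\mathcal S_{XY}$ side does not move, then use soundness above $\mu_Y$ to see that $\mathcal M_Y$ is at most one ultrapower away. However, your first step, the triviality of $\mathcal T_{XY}$, has a genuine gap.

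First, you take for granted that the main branch of $\mathcal T_{XY}$ is rooted at $\mathcal S_{XY}$. A priori its last model could sit above some $\mathcal P^Y_\xi$ with $\xi<\mu_Y$. The paper excludes this separately in Claim \ref{cA-2-1}: soundness forces the two last models to coincide, and then the argument of Lemma 3.14 of \cite{MSS} gives a comparison-lemma style contradiction, using that every model of both phalanxes is an initial segment of a model of $\mathcal T_Y$ on $W$.

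Second, the contradiction you invoke is not available, because the $\mathcal P^Y$-models are not universal here. In the second alternative of the lemma, $\mathcal P^Y_\lambda$ is necessarily a set mouse, since its ultrapower is the set mouse $\mathcal S_{XY}$. $\mathcal M_Y$ is a set mouse too, since $\mathcal Q_X$, hence $\mathcal M_X$, is one and $\mathcal G$ was chosen uniformly. So nothing prevents drops on the $\mathcal U_{XY}$ side, and even if that side did not drop, a drop on the $\mathcal T_{XY}$ side would not contradict it. You are importing the universality of $W$ from Lemma \ref{L2-5} and the no-drop conclusion of Lemma \ref{L2-10}; both are lost once the comparison is against the $\mathcal P^Y$-phalanx. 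Also, soundness plus $\mathrm{crit}\ge\mu_Y$ does not ``force a drop''. What it gives is that a non-dropping image of $\mathcal S_{XY}$ is not sound.

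The missing idea is the hull/compatible-extender argument of Claim \ref{cA-2-2}. If $\mathcal S_{XY}$ is moved, its last model is not $\omega$-sound, so it equals the last model $\mathcal N$ on the $\mathcal U_{XY}$ side. Then $\mathcal P(\mu_Y)\cap\mathcal N$ lies in the transitive collapse of $\mathrm{Hull}^{\mathcal N}_{n+1}(\mu_Y\cup i^{\mathcal T_{XY}}(p(\mathcal S_{XY})))$.
\begin{itemize}
\item If the $\mathcal U_{XY}$-branch starts at $\mathcal M_Y$, both branch embeddings are the inverse of this collapse.
\item If it starts at $\mathcal P^Y_\lambda$ with $\lambda<\mu_Y$, the hull condition forces the first extender on that branch to have no generators above $\mu_Y$. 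So the next model is sound above $\mu_Y$, and the remaining tail embedding again coincides with $i^{\mathcal T_{XY}}$.
\end{itemize}
In either case the first extenders used on the two branches are compatible, contradicting the comparison.

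Your second step also needs two repairs.
\begin{itemize}
\item When $\mathcal S_{XY}$ is a proper initial segment of the last model of $\mathcal U_{XY}$, the conclusion is not ``one extender''. Rather, $\mathcal S_{XY}$ lies below $(\mu_Y^+)^{\mathcal M^{\mathcal U_{XY}}_\infty}$, hence below the lengths of all extenders used in $\mathcal U_{XY}$, so $\mathcal S_{XY}\lhd\mathcal M_Y$ directly.
\item In the equality case you must rule out the root $\mathcal M_Y$ (critical point $\ge\mu_Y$) before concluding $\kappa<\mu_Y$ and $\lambda=(\kappa^+)^{W_Y}$.
\end{itemize}
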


The second possibility above was overlooked in \cite{MS2} but this is what the proof sketch given there shows.
Namely, consider the coiteration of the phalanxes
$$((\overrightarrow {\mathcal{P}_Y} \upharpoonright \mu_Y , \mathcal{M}_Y ) , \mu_Y ) \text{ versus }  ((\overrightarrow {\mathcal{P}_Y} \upharpoonright \mu_Y , \mathcal{S}_{XY} ) , \mu_Y ).$$
We know that $\mathcal{S}_{XY}$ is $n$-sound above $\mu_Y$ but we do not know whether it is $n$-sound,
so it is possible that the first extender used on the $\mathcal{M}_Y$ side is applied to an earlier $\mathcal{P}^Y_\lambda$
to obtain $\mathcal{S}_{XY}$. 

\begin{proof}
    Suppose that the lemma is false.
    Compare the two phalanxes and let $\mathcal U$ be the iteration tree on the first phalanx, and $\mathcal V$ on the second one.
    Let $\mathrm{root}^{\mathcal U} = \mathcal P^Y_{\lambda}$ for some $\lambda \leq \mu_Y$. 
    Recall that $\mathcal P^Y_{\mu_Y} = \mathcal M_Y$. 
    Assume that $\mathcal S_{XY}$ is not an initial segment of $\mathcal M_Y$.

    \begin{claim}\label{cA-2-1}
        $\mathcal S_{XY}\leq_{\mathcal V} \mathcal M^{\mathcal V}_{\infty}$.
    \end{claim}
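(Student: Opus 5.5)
The plan is the usual "bad root" argument from phalanx comparisons. Let $\mathcal U,\mathcal V$ be the coiteration trees from the proof of Lemma \ref{LA-2}, and suppose toward a contradiction that the main branch of $\mathcal V$ leading to $\mathcal M^{\mathcal V}_\infty$ does not pass through $\mathcal S_{XY}$. Then it is rooted at some $\mathcal P^Y_{\lambda'}$ with $\lambda'<\mu_Y$. This is a model shared by the two phalanxes, since both begin with $\overrightarrow{\mathcal P_Y}\upharpoonright\mu_Y$.

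The first step is to determine the final models and the branch structure. I would proceed as in the proof of \cite[Lemma 4.2]{MS2}.
\begin{itemize}
\item By universality and the fact that $\mathcal S_{XY}$ is a set mouse, the $\mathcal V$-side final model is an initial segment of the $\mathcal U$-side final model.
\item Since $\mathcal P^Y_{\lambda'}$ projects to at most $\lambda'$ and is sound above $\lambda'$, a standard fine-structural argument shows there is no drop along $[\mathcal P^Y_{\lambda'},\infty)_{\mathcal V}$.
\item Hence $\mathcal M^{\mathcal V}_\infty=\mathcal M^{\mathcal U}_\infty$, and the branch on the $\mathcal U$ side is also non-dropping.
\item That $\mathcal U$-branch must also be rooted at some $\mathcal P^Y_{\lambda''}$. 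Comparing projecta and the least points where soundness fails gives $\lambda''=\lambda'$, so both branches start at the same model $\mathcal P^Y_{\lambda'}$.
\end{itemize}

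The second step is the key contradiction. We now have two iteration maps
$$i^{\mathcal U},\ i^{\mathcal V}:\mathcal P^Y_{\lambda'}\to \mathcal M_\infty,$$
both non-dropping, with the same image of the standard parameter above $\lambda'$. Because $\mathcal P^Y_{\lambda'}$ is the hull of $\lambda'\cup p_{k+1}$ at the relevant degree $k$, these two maps agree on the whole model. Let $E$ be the first extender used on the $\mathcal U$-branch and $F$ the first on the $\mathcal V$-branch. Then $E$ and $F$ are both initial segments of the extender derived from the common map up to the relevant sup of generators. Using the initial segment condition and normality of the coiteration, one of $E,F$ is an initial segment of the other and they are indexed at the same place. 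This is impossible in a coiteration, since extenders are chosen at points of disagreement and so are never used on both sides with the same index. This contradiction gives $\mathcal S_{XY}\leq_{\mathcal V}\mathcal M^{\mathcal V}_\infty$.

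I expect the main obstacle to be verifying that the $\mathcal U$- and $\mathcal V$-branches land on the same root $\mathcal P^Y_{\lambda'}$ with matching degrees. This is exactly the situation the ``one ultrapower away'' correction is about. My first approach is to compare the critical points of the first extenders against the exchange ordinals $(\aleph_\alpha)^{W_Y}$ in the phalanx. If the $\mathcal U$-branch were rooted at $\mathcal M_Y$ instead, the $n$-soundness of $\mathcal M_Y$ above $\mu_Y$, together with $\lambda'<\mu_Y$, should force a projectum mismatch.
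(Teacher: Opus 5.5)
Your skeleton matches the paper's: assume the $\mathcal V$-branch starts at some $\mathcal P^Y_\xi$ with $\xi<\mu_Y$, show the two last models coincide, then derive a comparison-lemma contradiction. There is a genuine gap, though, in how you reach that contradiction. You reduce to the textbook case of two non-dropping maps from one and the same model, and that reduction rests on three assertions you do not justify:
\begin{itemize}
\item[(i)] the $\mathcal V$-branch above $\mathcal P^Y_{\lambda'}$ does not drop;
\item[(ii)] the $\mathcal U$-branch has the same root $\mathcal P^Y_{\lambda'}$;
\item[(iii)] the two branch maps agree on all of $\mathcal P^Y_{\lambda'}$.
\end{itemize}
For (i), soundness of the root above $\lambda'$ does not prevent drops. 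Drops are forced by which extenders the coiteration selects, and the paper explicitly allows drops in model or degree on both branches.

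For (ii), which you yourself flag as the obstacle, the proposed ``projectum mismatch'' cannot work. A premouse that is $n$-sound above $\mu_Y$ can be of the form $\mathrm{Ult}_n(\mathcal P^Y_\lambda,E)$ with $\mathrm{crit}(E)<\mu_Y$. This is precisely the overlooked second alternative of Lemma \ref{LA-2}, the ``one ultrapower away'' case. So soundness of $\mathcal M_Y$ above $\mu_Y$ does not rule out the $\mathcal U$-branch being rooted at $\mathcal M_Y$ (even trivially) while the $\mathcal V$-branch comes from some $\mathcal P^Y_{\lambda'}$ with $\lambda'<\mu_Y$.

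For (iii), even granting a common non-dropping root, the first extender on each branch is applied to $\mathcal P^Y_{\lambda'}$, so both maps have critical point below the exchange ordinal $\lambda'$. Agreement on the image of $p_{k+1}$ therefore gives no agreement on $\mathrm{Hull}_{k+1}(\lambda'\cup p_{k+1})$.

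The paper avoids all three issues. It first shows $\mathcal M^{\mathcal U}_\infty=\mathcal M^{\mathcal V}_\infty$ by ruling out both proper-initial-segment cases via soundness; the case $\mathcal M^{\mathcal V}_\infty\lhd\mathcal M^{\mathcal U}_\infty$ would force the $\mathcal V$-branch above $\mathcal S_{XY}$. After that it neither matches the roots nor excludes drops. It lets the $\mathcal U$-branch start at some $\mathcal P^Y_\lambda$ (possibly $\mathcal M_Y$) and the $\mathcal V$-branch at $\mathcal P^Y_\xi$. It then precomposes each branch with the $\mathcal T_Y$-branch from $W$ to the model of which that root is an initial segment. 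This gives two possibly dropping branches from the single model $W$ to the same last model. The argument of \cite[Lemma 3.14]{MSS}, essentially the Comparison Lemma argument of \cite{MSt}, then yields the contradiction. Tracing both branches back to the common ancestor $W$ is the idea missing from your proposal: it is what makes the different-root and dropping cases harmless.
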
  
    
    \begin{proof}
        Assume otherwise and let $\mathcal P^Y_{\xi}<_{\mathcal V} \mathcal M^{\mathcal V}_{\infty}$ for some $\xi<\mu_Y$. 
        We first show that $\mathcal M^{\mathcal U}_\infty = \mathcal M^{\mathcal V}_\infty$: 
        If $\mathcal M^{\mathcal U}_{\infty}\lhd \mathcal M^{\mathcal V}_{\infty}$, then $\mathcal M^{\mathcal U}_{\infty}$ is $\omega$-sound.
        Let $\mathcal M^{\mathcal U}_{\zeta+1}$ be the immediate successor of $\mathcal P^Y_\lambda$ on the main branch of $\mathcal U$.
        It implies that either $\mathcal P^Y_\lambda$ is not $\omega$-sound, or $\mathrm{crit}(E^{\mathcal U}_{\zeta})\geq \rho_{\omega}(\mathcal P^Y_\lambda)$.
        Therefore, $\mathcal M^{\mathcal U}_{\infty}$ is not $\omega$-sound, which leads to a contradiction.
        If $\mathcal M^{\mathcal V}_{\infty}\lhd \mathcal M^{\mathcal U}_{\infty}$, then by the same reason as above, it can only be the case that $\mathcal S_{XY}\leq_{\mathcal V} \mathcal M^{V}_{\infty}$, contradicts to our assumption.

        At this point, we can reach a contradiction by using the argument of Lemma 3.14 of \cite{MSS}. In short, we have the following diagram:
        \begin{center}
\begin{center}
    \begin{tikzcd}
    & \mathcal M^{\mathcal T_Y}_{\eta^Y_\lambda} \arrow[draw=none]{r}[sloped,auto=false]{\unrhd} & \mathcal P^Y_{\lambda} \arrow[rr, "\mathcal U"] &  & \mathcal M^{\mathcal U}_{\infty} \arrow[dd, equal] \\
    W \arrow[ru, "\mathcal T_Y"] \arrow[rd, "\mathcal T_Y"'] &&&&\\
    & \mathcal M^{\mathcal T_Y}_{\eta^Y_\kappa}   \arrow[draw=none]{r}[sloped,auto=false]{\unrhd}  & \mathcal P^Y_{\kappa} \arrow[rr, "\mathcal V"] &  & \mathcal M^{\mathcal V}_{\infty}                                
    \end{tikzcd}
    \end{center}
        \end{center}
        The arrows are related to the tree branches, so there might be drops on models or degrees happening on each arrow. By a similar argument as illustrated in the proof of \cite[3.14]{MSS}(and roughly the same as the proof of Comparison Lemma in \cite{MSt}), there is a contradiction.
    \end{proof}

    \begin{claim}\label{cA-2-2}
        $\mathcal V$ is trivial.
    \end{claim}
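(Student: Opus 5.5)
We show that the main branch of $\mathcal V$ from $\mathcal S_{XY}$ to $\mathcal M^{\mathcal V}_\infty$ is empty. By Claim~\ref{cA-2-1}, $\mathcal S_{XY}\le_{\mathcal V}\mathcal M^{\mathcal V}_\infty$, so we may reason about this main branch. Assume, towards a contradiction, that it contains an ultrapower. Let $\mathcal M^{\mathcal V}_{\zeta+1}$ be the immediate successor of $\mathcal S_{XY}$ on it, and let $E=E^{\mathcal V}_\zeta$.

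The first step locates $\mathrm{crit}(E)$. All models of both trees agree with $\mathcal M_Y$ below $\mu_Y$, so every extender used has length $>\mu_Y$. By the phalanx rule for $((\overrightarrow{\mathcal P_Y}\upharpoonright\mu_Y,\mathcal S_{XY}),\mu_Y)$, an extender with critical point $\kappa<\mu_Y$ is applied to some $\mathcal P^Y_\lambda$ with $\lambda>\kappa$, and not to $\mathcal S_{XY}$. Hence $\mathrm{crit}(E)\ge\mu_Y$. On the other hand, $\mathcal S_{XY}$ is $(n+1)$-sound above $\mu_Y$ relative to $\Pi_{XY}(q_{\mu_X})$, and $\rho_{n+1}(\mathcal S_{XY})\le\mu_Y$. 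So $E$ is applied at or above the $(n+1)$-st projectum. Either the degree drops to $n$ and the ultrapower is taken with $\mathrm{crit}(E)\ge\rho_{n+1}$, or there is a model drop to a proper initial segment.

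The second step splits into two cases.
\begin{itemize}
\item If there is no drop along $(\zeta+1,\infty]_{\mathcal V}$, the standard fine-structural fact applies: a nontrivial $\mathrm{Ult}_n$ with critical point at or above $\rho_{n+1}$ is not $(n+1)$-sound above $\mu_Y$. Its core above $\mu_Y$ is $\mathcal S_{XY}$, and the branch embedding is the anticore map. Thus $\mathcal M^{\mathcal V}_\infty$ is not sound above $\mu_Y$, and $\mathcal S_{XY}$ together with $\Pi_{XY}(q)$ is recovered from it as the core.
\item If there is a drop along the branch, then $\mathcal M^{\mathcal V}_\infty$ is an iterate of a sound proper initial segment. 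This case is handled as in the proof of Claim~\ref{cA-2-1}.
\end{itemize}

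The third step compares with the $\mathcal U$ side, as in the proof of Claim~\ref{cA-2-1}. The final models satisfy either $\mathcal M^{\mathcal U}_\infty=\mathcal M^{\mathcal V}_\infty$ or one is a proper initial segment of the other.
\begin{itemize}
\item If $\mathcal M^{\mathcal V}_\infty\lhd\mathcal M^{\mathcal U}_\infty$, then $\mathcal M^{\mathcal V}_\infty$ would be $\omega$-sound. This contradicts the first case of the second step.
\item If $\mathcal M^{\mathcal U}_\infty\unlhd\mathcal M^{\mathcal V}_\infty$ and the $\mathcal U$ branch is trivial, then $\mathcal M^{\mathcal V}_\infty$ is sound above $\mu_Y$. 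This again contradicts the second step.
\item If $\mathcal M^{\mathcal U}_\infty=\mathcal M^{\mathcal V}_\infty$ and the $\mathcal U$ branch from $\mathcal P^Y_\lambda$ is nontrivial, then both final models have the same core above $\mu_Y$. Computing cores gives $\mathcal S_{XY}$ on one side and $\mathcal P^Y_\lambda$ (or an initial segment at the drop) on the other, with the same standard parameter. The branch embeddings agree on $\mu_Y\cup\{p\}$, hence on the whole core. So the first extenders used on the two main branches are compatible: each is the extender derived from the common anticore map up to the relevant length. This contradicts the rule of comparison that extenders used are disagreements. This is exactly the diagram-chase of \cite[3.14]{MSS} already used in Claim~\ref{cA-2-1}.
\end{itemize}

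The main obstacle will be the core computation in the second and third steps. We must check that the soundness of $\mathcal S_{XY}$ above $\mu_Y$, as opposed to full soundness, suffices to identify $\mathcal S_{XY}$ as the core of $\mathcal M^{\mathcal V}_\infty$. We must also check that degree drops on the two sides occur at the same $n$. For the first point we will use that $\Pi_{XY}$ is a degree-$n$ ultrapower preserving $\Sigma_{n+1}$ soundness above the projectum, together with the solidity and universality of $\Pi_{XY}(q_{\mu_X})$ inherited from $\mathcal Q_X$. For the second we will use the fixed value of $n$ from List~A.
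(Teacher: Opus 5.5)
Your outline works when $\mathrm{root}^{\mathcal U}=\mathcal M_Y$, and there it matches the paper's first case. The steps are: the first extender on the $\mathcal V$-branch has critical point $\ge\mu_Y$, so $\mathcal M^{\mathcal V}_\infty$ is not sound, $i^{\mathcal V}$ is the anticore map above $\mu_Y$, $\mathcal M^{\mathcal U}_\infty=\mathcal M^{\mathcal V}_\infty$, and a compatible-extender contradiction follows.

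\textbf{The gap.} Your third bullet of the third step fails when $\mathrm{root}^{\mathcal U}=\mathcal P^Y_\lambda$ with $\lambda<\mu_Y$. This is exactly the ``one ultrapower away'' configuration that the correction is about. In this case the first extender $E^{\mathcal U}_\zeta$ on the $\mathcal U$-main branch has critical point $\kappa<\lambda<\mu_Y$, so $i^{\mathcal U}:\mathcal P^Y_\lambda\to\mathcal M^{\mathcal U}_\infty$ moves ordinals below $\mu_Y$. Three consequences:
\begin{itemize}
\item $\mathcal P^Y_\lambda$ is not the core of $\mathcal M^{\mathcal U}_\infty$ above $\mu_Y$.
\item The two branch embeddings do not agree on $\mu_Y\cup\{p\}$.
\item $E^{\mathcal U}_\zeta$, with critical point $<\mu_Y$, cannot be compatible with the first $\mathcal V$-extender, whose critical point is $\ge\mu_Y$.
\end{itemize}
So your core computation yields no contradiction here.

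\textbf{The missing idea.} This is the paper's argument. Pass to $\mathcal M^{\mathcal U}_{\zeta+1}=\mathrm{Ult}(\mathcal P^Y_\lambda,E^{\mathcal U}_\zeta)$. Since $\mathcal P(\mu_Y)\cap\mathcal M^{\mathcal V}_\infty$ is contained in the transitive collapse of $\mathrm{Hull}^{\mathcal M^{\mathcal V}_\infty}_{n+1}(\mu_Y\cup i^{\mathcal V}(p(\mathcal S_{XY})))$, $E^{\mathcal U}_\zeta$ can have no generators above $\mu_Y$. Hence $\mathcal M^{\mathcal U}_{\zeta+1}$ is sound above $\mu_Y$. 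Then $i^{\mathcal U}_{\zeta+1,\infty}$ and $i^{\mathcal V}$ are both the inverse of that collapse, so in fact $\mathcal M^{\mathcal U}_{\zeta+1}=\mathcal S_{XY}$; this is the second alternative of Lemma~\ref{LA-2}. The comparison-lemma contradiction then comes from the extenders used after stage $\zeta+1$ on $\mathcal U$ against the first one on $\mathcal V$. If the $\mathcal U$-branch is trivial past $\zeta+1$, the contradiction is instead that $\mathcal M^{\mathcal V}_\infty$ would be sound above $\mu_Y$.

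\textbf{Two smaller points.}
\begin{itemize}
\item Your second bullet of the third step does not handle the case where $\mathcal M^{\mathcal U}_\infty$ is a proper initial segment of $\mathcal M^{\mathcal V}_\infty$. Triviality of the $\mathcal U$-branch then says nothing about the soundness of $\mathcal M^{\mathcal V}_\infty$ above $\mu_Y$.
\item For the drop case in your second step, do not defer vaguely to Claim~\ref{cA-2-1}. Cite the standard fact that a drop on the main branch makes the final model unsound. That is all the subsequent argument needs.
\end{itemize}
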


    \begin{proof}
        The soundness argument above implies that $\mathcal M^{\mathcal V}_{\infty}\unlhd\mathcal M^{\mathcal U}_{\infty}$. 
        Suppose that $\mathcal V$ is not trivial.
        Let $i^{\mathcal U}:\mathrm{root}^{\mathcal U}\to\mathcal M^{\mathcal U}_\infty$ and $i^{\mathcal V}:\mathcal S_{XY}\to \mathcal M^{\mathcal V}_\infty$. 
        By the fact that $\mathcal S_{XY}$ is sound above $\mu_Y$ and $\mathrm{crit}(i^{\mathcal V})\geq \mu_Y$, it implies that $\mathcal M^{\mathcal V}_\infty$ is not $\omega$-sound. 
        Therefore, $\mathcal M^{\mathcal V}_\infty = \mathcal M^{\mathcal U}_\infty$. 
        We can now split into two cases which inherited from the proof of Lemma 3.10, Claim 1 of \cite{MSS}.
        \begin{itemize}
            \item Suppose that $\mathrm{root}^{\mathcal U} = \mathcal M_Y$.
            Then $\mathrm{crit}(i^{\mathcal U})\geq \mu_Y$, and thus both $i^{\mathcal U}$ and $i^{\mathcal V}$ are equal to the inverse of the transitive collapse map of 
            $$\mathrm{Hull}^{\mathcal M_{\infty}^{\mathcal V}}_{n+1}(\mu_Y\cup i^{\mathcal V}(p(\mathcal S_{XY}))).$$
            We can reach a contradiction by implementing the Comparison Lemma argument in \cite{MSt}.
            \item Otherwise, $\lambda<\mu_Y$ and thus $\mathrm{crit}(i^{\mathcal U})<\mu_Y$.
            Let $\mathcal M^{\mathcal U}_{\zeta+1}$ be the immediate successor of $\mathcal P^Y_\lambda$ on the main branch of $\mathcal U$.
            We have that $\mathcal P(\mu_Y)\cap \mathcal M_{\infty}^{\mathcal V}$ is contained in the transitive collapse of 
            $$\mathrm{Hull}^{\mathcal M_{\infty}^{\mathcal V}}_{n+1}(\mu_Y\cup i^{\mathcal V}(p(S_{XY}))),$$
            but this is only possible, if $E^{\mathcal U}_{\zeta}$ has no generators larger than $\mu_Y$.
            Therefore, $\mathcal M^{\mathcal U}_{\zeta+1}$ is sound above $\mu_Y$.
            Using the same argument as above, we have that both $i^{\mathcal V}$ and $i^{\mathcal U}_{\zeta+1,\infty}$ are equal to the inverse of the transitive collapse map of 
            $$\mathrm{Hull}^{\mathcal M_{\infty}^{\mathcal V}}_{n+1}(\mu_Y\cup i^{\mathcal V}(p(\mathcal S_{XY}))),$$
            which leads to a contradiction.
        \end{itemize}        
    \end{proof} 

    Suppose that $\mathcal S_{XY}\lhd \mathcal M^{\mathcal U}_{\infty}$. Then by the fact that $\mathcal S_{XY}$ is sound above $\mu_Y$, there exists some $\alpha<(\mu_Y^+)^{\mathcal M^{\mathcal U}_{\infty}}$ such that 
    $$\mathcal S_{XY} = \mathcal J^{\mathcal M^{\mathcal U}_{\infty}}_{\alpha}.$$
    However, since all extender used on $\mathcal U$ has length $\geq (\mu_Y^+)^{\mathcal M^{\mathcal U}_{\infty}}$, we have that 
    $$\mathcal S_{XY} = \mathcal J^{\mathcal M_{Y}}_{\alpha},$$
    a contradiction. 

    Therefore $\mathcal S_{XY} = \mathcal M^{\mathcal U}_{\infty}$. Let $E$ be the first extender in constructing $\mathcal U$. Every other extender in $\mathcal U$ has generator greater than $\mu_Y$. If $\mathcal M^{\mathcal U}_{\infty}\neq \mathrm{Ult}(\mathcal P^Y_{\lambda},E)$, then $\mathcal M^{\mathcal U}_{\infty}$ cannot be sound above $\mu_Y$. A contradiction.
\end{proof}

\begin{lemma}[Lemma 4.2(2), \cite{MS2}]\label{LA-3}
Suppose $\mathcal{S}_{XY}$ is a weasel. 
Let $\mathcal U$ and $\mathcal V$ be the comparison tree pair of 
$$(\overrightarrow {\mathcal P}_Y {}^\frown \mathcal M_{Y},\mu_Y)\mbox{ and }(\overrightarrow {\mathcal P}_Y {}^\frown \mathcal S_{XY},\mu_Y),$$
respectively. 
Then both trees have successor lengths, and the last models of both trees are the same. 
In addition, $\mathrm{root}^{\mathcal V} = \mathcal S_{XY}$. 
If $\mathrm{root}^{\mathcal U} = \mathcal M_{Y}$, then $\mathcal M_{Y}$ is a weasel, and 
$$\mathcal M_Y\upharpoonright \pi^{-1}_Y(\Omega_0) = \mathcal S_{XY}\upharpoonright \pi^{-1}_Y(\Omega_0).$$
If $\mathrm{root}^{\mathcal U} = \mathcal P^{Y}_\lambda$ for some $\lambda<\mu_Y$, then $\mathcal P^{Y}_\lambda$ is a weasel, and there exists some extender $G$ such that 
$$\mathrm{Ult}(\mathcal P_\lambda^Y,G)\upharpoonright \pi_Y^{-1}(\Omega_0) = \mathcal S_{XY}\upharpoonright \pi_Y^{-1}(\Omega_0).$$
\end{lemma}

\begin{proof}
    A similar argument as in the above lemma gives us that $\mathrm{root}^{\mathcal V} = \mathcal S_{XY}$.
    In fact, if we coiterate
    $$W\mbox{ and }(\overrightarrow {\mathcal P}_Y {}^\frown \mathcal S_{XY},\mu_Y),$$
    the coiteration tree on the phalanx is exactly $\mathcal V$, and the tree on $W$ has the same tree structure, same extenders and same main branch as $\mathcal U$.
    Using the universality of $W$, we know that no drops on the main branches of $\mathcal U$ and $\mathcal V$.
    Let $\mathcal N$ be the common last model and let $i^{\mathcal U}$ and $i^{\mathcal V}$ be the tree embeddings, from roots to $\mathcal N$. 
    
    We first consider the case where $\mathrm{root}^{\mathcal U} = \mathcal M_{Y}$. 
    It implies that $\mathrm{crit}(i^{\mathcal U}) \geq \mu_Y$, $\mathcal M_Y$ is a weasel, $\mathcal M_X$ is a weasel(from the construction of $\mathcal G$), and $c(\mathcal S_{XY}) = \emptyset$.
    We claim that
    $$\max(\mathrm{crit}(i^{\mathcal U}),\mathrm{crit}(i^{\mathcal V}))<\pi_{Y}^{-1}(\Omega_0).$$
    Otherwise, since both $\mathcal M_Y$ and $\mathcal S_{XY}$ has the hull and definability properties at any ordinal $\geq\mu_Y$ and $<\pi^{-1}_Y(\Omega_0)$, we have that 
    $$\mathrm{crit}(i^{\mathcal T}_{\theta_Y,\infty}) = \mathrm{crit}(i^{\mathcal U}_{\theta_Y,\infty}),$$
    and the compatible extender argument will lead us to a contradiction.

    Next, consider $c(\mathcal S_{XY}) = \emptyset$ and $\mathrm{root}^{\mathcal T} = \mathcal P^{Y}_{\lambda}$. 
    Let $\xi$ be least such that
    $$\mathcal P^Y_{\lambda}<_{\mathcal T}\mathcal M^{\mathcal T}_{\xi+1}\leq_{\mathcal T}\mathcal N.$$
    Since $\mathcal M_Y$ and $\mathcal S_{XY}$ agrees below $\mu_Y$, $\mathrm{lh}(E^{\mathcal T}_\xi)>\mu_Y$.
    Let $G = E^{\mathcal T}_\xi\upharpoonright\mu_Y$ and let $\mathcal M' = \mathrm{Ult}(\mathcal P^Y_{\lambda},G)$. 
    Then 
    $$\mathcal M'\upharpoonright \pi_Y^{-1}(\Omega_0) = \mathcal S_{XY}\upharpoonright \pi_Y^{-1}(\Omega_0).$$
    Let $j:\mathcal M'\to \mathcal N$ be the composition of the factor map $\mathcal M'\to\mathcal M^{\mathcal U}_{\xi+1}$ and the iteration embedding $\mathcal M^{\mathcal U}_{\xi+1}\to \mathcal N$.
    We would like to show that 
    $$\min(\mathrm{crit}(j),\mathrm{crit}(i^{\mathcal V}))>\mu_Y.$$
    Suppose otherwise.
    Then 
    $$\mathrm{crit}(j) = \mathrm{crit}(i^{\mathcal V}) = \mu_Y.$$
    Since $\mu_Y$ is a limit cardinal in $\mathcal M_{\xi}^{\mathcal T}$, generators of $E^{\mathcal T}_{\xi}$ below $\mu_Y$ are unbounded in $\mu_Y$, and thus the trivial completion of $G$ is $E^{\mathcal M_{\xi}}_{\gamma}$, where $\gamma = \mu_Y^{+\mathcal M'}$.
    However, if $\mathrm{crit}(j) = \mu_Y$, then
    $$(\mu_Y)^{+\mathcal M^{\mathcal T}_{\xi+1}}> (\mu_Y)^{+\mathcal M'} = (\mu_Y)^{+\mathcal S_{XY}},$$
    which leads to a contradiction.

    Now consider the case that $c(\mathcal S_{XY}) \neq \emptyset$ and $\mathrm{root}^{\mathcal T} = \mathcal P^{Y}_{\lambda}$. 
    It implies that $\mathcal M_Y$ is a set mouse and $\mathcal Q_X$ is defined by the protomouse case. 
    Let $\xi+1$ be defined as before. 
    Using an argument as in the proof of \cite[Lemma 2.4]{MS2}, there exists some parameter $r$ such that $\mathcal M^{\mathcal T_Y}_{\xi+1}$ has $r$-hull property at $\mu_Y$, and
    $$r = i^{\mathcal T_Y}_{\xi+1,\infty_Y}(r) = i^{\mathcal V}(c(\mathcal S_{XY})).$$
    Let 
    $$G = E^{\mathcal T_Y}_{\xi}\upharpoonright (\mu_Y\cup r).$$
    As promised by the lemma, we would like to show that
    $$\mathrm{Ult}(\mathcal P_\lambda^Y,G)\upharpoonright \pi_Y^{-1}(\Omega_0) = \mathcal S_{XY}\upharpoonright \pi_Y^{-1}(\Omega_0).$$
    Let $\mathcal M' = \mathrm{Ult}(\mathcal P_\lambda^Y,G)$ and let $j:\mathcal M'\to \mathcal N$ be defined as before.
    Let $\Gamma$ be a thick class made of fix points of both $j$ and $i^{\mathcal V}$. 
    Let
    $$N = \mathrm{Hull}^{\mathcal N}(\mu_Y\cup r\cup \Gamma).$$
    As
    $$\mathcal S_{XY}\upharpoonright 
    \pi_Y^{-1}(\Omega_0)\subseteq \mathrm{Hull}^{\mathcal S_{XY}}(\mu_Y\cup c(\mathcal S_{XY})\cup \Gamma)$$
    and
    $$\mathcal M'\upharpoonright \pi_Y^{-1}(\Omega_0)\subseteq \mathrm{Hull}^{\mathcal M'}(\mu_Y\cup r\cup \Gamma),$$
    we know that both of them are the initial segment of the transitive collapse of $N$, thus equal.
\end{proof}

The following corollary was used in Section 5.

\begin{corollary}
    Under the background of Lemma \ref{LA-3}, assume that $X\in Y$. 
    Then $G\in \mathcal M_Y$.
\end{corollary}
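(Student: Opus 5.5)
The plan is to code $G$ as a bounded subset of $\mu_Y$ (together with a finite parameter), to show that this code lies in $W_Y$, and then to move it into $\mathcal M_Y$ using the agreement between $W_Y$ and $\mathcal M_Y$ below $\mu_Y$. I would take the agreement from Corollary~\ref{C6-8}, or from its analogue, Corollary~\ref{C7-4}, in the present setting.

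First step: identify $G$ intrinsically. In the proof of Lemma~\ref{LA-3}, $G = E^{\mathcal T}_\xi\upharpoonright(\mu_Y\cup r)$ with $r = i^{\mathcal V}(c(\mathcal S_{XY}))$, and $\mathrm{Ult}(\mathcal P^Y_\lambda,G)$ agrees with $\mathcal S_{XY}$ up to $\pi_Y^{-1}(\Omega_0)$. In the case $c(\mathcal S_{XY})=\emptyset$, $G$ is simply $E^{\mathcal T}_\xi\upharpoonright\mu_Y$. Let $\kappa=\mathrm{crit}(G)<\mu_Y$.
\begin{itemize}
\item For $a\in[\mu_Y]^{<\omega}$, the component $G_{a\cup r}$ consists of those $A\subseteq[\kappa]^{<\omega}$ in $\mathcal P^Y_\lambda$ with $a\cup r\in i_G(A)$.
\item By the agreement just mentioned and the hull property of $\mathcal S_{XY}$ at $\mu_Y$ relative to $c(\mathcal S_{XY})$, this is the same as $a\cup c(\mathcal S_{XY})\in A^*$, where $A^*$ is the image of $A$ computed inside $\mathcal S_{XY}$ via Skolem terms with thick-class parameters.
\end{itemize}
Hence $G$ is determined, up to the finite parameter, by $\mathcal S_{XY}$ together with $\mathcal P(\kappa)\cap\mathcal P^Y_\lambda$, and the latter is contained in $\mathcal M_Y$.

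Second step: use $X\in Y$. As in Case~1 of the proof of Corollary~\ref{C6-7}, $X\in Y$ gives $N_X\cup\{N_X\}\subseteq Y$ and $\mathrm{crit}(\pi_Y)>\mathrm{Ord}\cap N_X$, so $\pi_{XY}=\pi_Y^{-1}(\pi_X)\in N_Y$. Since $\mathcal Q_X$ is definable from $X$, also $\mathcal Q_X\in N_Y$, hence $\mathcal S_{XY}=\mathrm{Ult}(\mathcal Q_X,\pi_{XY},\sup\pi_{XY}[\mu_X])\in N_Y$. So $G$ is definable in $N_Y$ from parameters in $N_Y$, which gives $G\in N_Y$. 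Its code, a family indexed by $[\mu_Y]^{<\omega}$ of subsets of $\mathcal P([\kappa]^{<\omega})\cap \mathcal M_Y$, amounts to a subset of $\mu_Y$ in $N_Y$, modulo a bijection in $\mathcal M_Y$ between $(\kappa^+)^{\mathcal M_Y}$ and $\kappa^+$.

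Third step, which I expect to be the main obstacle: upgrade $G\in N_Y$ to $G\in W_Y$. I would run, inside $N_Y$ and by elementarity, the argument of Lemmas~\ref{L2-6} and~\ref{L2-7}.
\begin{itemize}
\item $\pi_Y^{-1}$ of the relevant weasels is realized by $\mathrm{Ult}(\mathcal P^Y_\lambda,G)$.
\item $\mathcal P^Y_\lambda$ is a weasel that is an iterate of $W_Y$ via $\mathcal T_Y$.
\item The extender derived from the composite embedding $W_Y\to\mathrm{Ult}(\mathcal P^Y_\lambda,G)$, whose relevant part is short with critical point $<\mu_Y$, is therefore an element of $W_Y$ by the same comparison/phalanx-iterability argument (Requirement~2 and Lemma~\ref{L2-8}).
\end{itemize}
The delicate point is that $G$ is the restriction of a tree extender rather than the extender derived from a map out of $W_Y$ itself. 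I would handle this by factoring $i^{\mathcal T_Y}_{0,\eta(\lambda)}$ through and noting that its critical point and generators lie below $\lambda<\mu_Y$, so that $G$ is recoverable inside $W_Y$ from the derived extender and $i^{\mathcal T_Y}\upharpoonright\lambda$, both of which lie in $W_Y$ below $\mu_Y$.

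Finally, since the code of $G$ is essentially a subset of $\mu_Y$ and $\mathcal P(\mu_Y)\cap W_Y=\mathcal P(\mu_Y)\cap\mathcal M_Y$ by Corollary~\ref{C6-7} (or by $W_Y\upharpoonright(\mu_Y^{++})^{W_Y}\unlhd\mathcal M_Y$), we conclude $G\in\mathcal M_Y$.
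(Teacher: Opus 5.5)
Your proposal has a genuine gap: Step~2 fails, and Step~3 does not repair it. In the setting of Lemma~\ref{LA-3}, $\mathcal S_{XY}$ is a weasel, so $\mathcal Q_X$ has height $\Omega$, since it is a model on $\mathcal T_X$, a tree on $W$; so does $\mathcal P^Y_\lambda$. None of these can be an element of the transitive collapse $N_Y$, whose ordinal height has cardinality at most $|Y|<|\nu|$. What $X\in Y$ actually gives you is $\pi_{XY}\in N_Y$ and $\pi_Y^{-1}(\mathcal Q_X)\in N_Y$, not $\mathcal Q_X$ or $\mathcal S_{XY}$. The argument you copy, Case~1 of Corollary~\ref{C6-7}, is the set-mouse case and uses $|\mathcal Q_{Y_i}|\le\mathrm{crit}(\pi_X)$. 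In the weasel case (Case~2) the paper has to work with $\pi_X^{-1}(\mathcal S_{Y_i})$ instead. It then proves separately that $\mathrm{crit}(\pi_X)>(\mu_{Y_i}^+)^{\pi_X^{-1}(\mathcal Q_{Y_i})}$, by a cofinality argument tied to the special chain construction preceding Lemma~\ref{L6-4}. So ``$G$ is definable in $N_Y$'' is unsupported. Your Step~1 also evaluates Skolem terms with thick-class parameters in the full weasels $\mathcal P^Y_\lambda$ and $\mathcal S_{XY}$, and that information is not available in $N_Y$. Step~3 is not an argument either. Lemmas~\ref{L2-6} and~\ref{L2-7}, pulled back by $\pi_Y$, concern the extender derived from $W_Y\to\pi_Y^{-1}(\mathcal S_X)$. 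Nothing you say identifies that extender with $E^{\mathcal T}_\xi\upharpoonright(\mu_Y\cup r)$ acting on $\mathcal P^Y_\lambda$. Nor is there any reason for $i^{\mathcal T_Y}\upharpoonright\lambda$, or for the extenders of $\mathcal T_Y$ (taken from iterates of $W$ where they disagree with $W_Y$), to lie in $W_Y$.

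Aiming at $G\in W_Y$ is also aiming too high. It implies $\mathcal P(\mu_Y)\cap\mathcal S_{XY}\subseteq W_Y$, which the paper proves only in the special configuration (Case~2 of Corollary~\ref{C6-7}), not for an arbitrary pair $X\in Y$ as in Lemma~\ref{LA-3}. In your last step you only need the easy inclusion $\mathcal P(\mu_Y)\cap W_Y\subseteq\mathcal M_Y$, which comes from the coiteration. Quoting Corollary~\ref{C6-7} or~\ref{C7-4} there would be circular: Corollary~\ref{C6-7} rests on Lemma~\ref{L6-6}, whose Case~3 uses exactly the present corollary ($G_i\in\mathcal M_X$).

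The missing idea is the one the paper uses. The hypothesis $X\in Y$ serves only to bound the class parameter: $\max c(\mathcal Q_X)<\mu_Y$, hence $\max c(\mathcal S_{XY})<(\mu_Y^+)^{W_Y}$. Then the support $\mu_Y\cup c(\mathcal S_{XY})$ of the restricted tree extender $G$ lies inside $(\mu_Y^+)^{\mathcal M_Y}$, and $G$ is coded by a subset of $\mu_Y$ directly, with no detour through $N_Y$ or $W_Y$. Your finite-parameter coding via $r$ does reduce $G$ to a subset of $\mu_Y$ in complexity. But it says nothing about which model that subset belongs to, and that is precisely the content of the corollary.
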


\begin{proof}
    In any case, we can code the extender $G$ into a subset $A\subseteq \mu_Y$, if the support of $G$ is contained in $(\mu_Y)^{+\mathcal M_Y}$. 
    In the case where $c(\mathcal S_{XY})\neq \emptyset$, that $X\in Y$ implies that $\max(c(\mathcal Q_X))<\mu_Y$.
    Therefore, $\max(c(\mathcal S_{XY}))<(\mu_Y)^{+W_Y}$.
\end{proof}

\cite[Lemma~4.2]{MS2} was used to prove \cite[Lemma~4.3]{MS2}, so the proof of the latter must also be revised to take the two overlooked possibilities into account. 
To illustrate the modification of the proof, we will provide the correction at the case as in the second half of Lemma \ref{LA-2}, where
$$\mathcal S_{XY} = \mathrm{Ult}_{n}(\mathcal P^Y_\lambda,E),$$
for some extender $E$ on the $\mathcal M_Y$-sequence. 
To modify the proof to make it work in our case, instead of assuming 
$$\mathcal S_{XY}\in \mathrm{Hull}^{\mathcal M_Y}_{m+1}(\mu^Y_\eta\cup p_{m+1}(\mathcal M_Y)),$$
we assume
$$E\in \mathrm{Hull}^{\mathcal M_Y}_{m+1}(\mu^Y_\eta\cup p_{m+1}(\mathcal M_Y))$$
and $\mathrm{crit}(E)<\mu^Y_\eta$. The argument in the proof of \cite[4.3]{MS2} lead us to
$$\mu^Y_\eta\neq \mu_Y\cap \mathrm{Hull}^{\mathcal S_{XY}}_{m+1}(\mu^Y_\eta\cup p_{m+1}(\mathcal S_{XY})).$$
By the above inequality, we have
$$\alpha = \tau^{\mathcal S_{XY}}[\overline\alpha,p(\mathcal S_{XY})],$$
where $\overline\alpha\in [\mu_\eta^Y]^{<\omega}$ and 
$$\alpha = \mu_Y\cap \mathrm{Hull}^{\mathcal S_{XY}}_{m+1}(\mu^Y_\eta\cup p_{m+1}(\mathcal S_{XY})).$$
Let $f$ be a function in $\mathcal P^Y_\lambda$ such that 
$$f(a) = \tau^{\mathcal P^Y_\lambda}[a,p(\mathcal P^Y_\lambda)].$$
Then $f$ can be viewed as a member in $\mathcal P(\lambda)\cap \mathcal P^Y_\lambda$, and $i_E(f)(\overline\alpha) = \alpha$. Let $\mathcal M_Y^*$ be the longest initial segment of $\mathcal M_Y$ where $E$ can be applied to. Thus $f\in \mathcal M_Y^*$, and thus
$$\alpha\in \mu_Y\cap \mathrm{Hull}^{\mathcal M_Y}_{\Sigma_{m+1}}(\mu^Y_\eta\cup p_{m+1}(\mathcal M_Y)).$$

\end{document}